\theoremstyle{plain}
\newtheorem{Theorem}[equation]{Theorem}
\newtheorem{Corollary}[equation]{Corollary}
\newtheorem{Lemma}[equation]{Lemma}
\newtheorem{Proposition}[equation]{Proposition}
\newtheorem*{Theorem*}{Theorem}
\theoremstyle{definition}
\newtheorem{Definition}[equation]{Definition}
\newtheorem{Example}[equation]{Example}
\theoremstyle{remark}
\newtheorem{Remark}[equation]{Remark}
\numberwithin{equation}{section}
\newcommand{\CC}{{\mathbb C}}
\newcommand{\ZZ}{{\mathbb Z}}
\newcommand{\RR}{{\mathbb R}}
\newcommand{\AAA}{{\mathbb A}}
\newcommand{\NN}{\mathbb N}
\newcommand{\QQ}{\mathbb Q}
\newcommand{\GL}{\operatorname{GL}}
\newcommand{\LU}{\operatorname{U}}
\newcommand{\lieu}{\operatorname{\mathfrak{u}}}
\newcommand{\Lie}{\operatorname{Lie}}
\newcommand{\Hur}{\operatorname{Hur}}
\newcommand{\Spec}{\operatorname{Spec}\nolimits}
\newcommand{\Proj}{\operatorname{Proj}\nolimits}
\newcommand{\End}{\operatorname{End}}
\newcommand{\Hom}{\operatorname{Hom}}
\newcommand{\Ker}{\operatorname{Ker}}
\newcommand{\Ima}{\operatorname{Im}}
\newcommand{\sdim}{\mathop{\text{\rm dim}}\nolimits}
\newcommand{\codim}{\mathop{\text{\rm codim}}\nolimits}
\newcommand{\GIT}{/\!\!/}
\newcommand{\II}{\mathcal{I}}
\newcommand{\EE}{\mathcal{E}}
\newcommand{\GG}{\mathcal{G}}
\newcommand{\MM}{\mathbb{M}}
\newcommand{\HR}{/\!\!\GIT}
\newcommand{\IS}{\mathcal{I}^s}
\newcommand{\M}{\mathcal{M}}
\newcommand{\TM}{\widetilde{{\mathcal{M}}}}
\newcommand{\bdot}[1]{\overset{\bullet}{#1}}
\newcommand{\NA}{\mathcal{A}}
\newcommand{\HH}{\mathbb{H}}
\newcommand{\T}{\operatorname{T}}
\newcommand{\Rep}{\mathrm{\mathrm{Rep}}}
\title{\bfseries Quiver description of bow varieties in general type}
\author{Tiziano Gaibisso}
\date{}
\begin{document}
\maketitle
\begin{sloppypar}
\begin{abstract}
We provide a quiver description for Cherkis bow varieties in arbitrary type. We explain how this generalizes the construction of Nakajima quiver varieties. We give criteria for stability, non-emptiness, smoothness and discuss deformations. In the appendix, we discuss the relation between the quiver description and the original Cherkis' construction of bow varieties.
\end{abstract}

\tableofcontents

\section{Introduction}
In \cite{cherkis2011instantons}, Cherkis defined an interesting family of spaces, starting from a generalization of quivers called bows, in order to construct moduli spaces of instantons on ALF spaces. We will refer to these spaces as \textit{bow varieties}. Let us recall that a quiver $Q$ is a (finite) directed graph. Then, a \textit{bow} is defined as a pair $B=(\II,\EE)$ consisting of a set $\II$ of closed intervals, called \textit{wavy lines}, and a set $\EE$ of \textit{arrows} going from the endpoints of some wavy line to the starting points of some wavy line (not necessarily distinct). For instance:
\begin{center}
    \tikzset{every picture/.style={line width=0.75pt}} 

\begin{tikzpicture}[x=0.75pt,y=0.75pt,yscale=-1,xscale=1]

\draw    (141.2,175.86) .. controls (142.9,174.23) and (144.56,174.27) .. (146.19,175.98) .. controls (147.82,177.68) and (149.49,177.72) .. (151.19,176.09) .. controls (152.9,174.46) and (154.56,174.5) .. (156.19,176.21) .. controls (157.82,177.91) and (159.49,177.95) .. (161.19,176.32) .. controls (162.9,174.69) and (164.56,174.73) .. (166.19,176.44) .. controls (167.82,178.14) and (169.49,178.18) .. (171.19,176.55) .. controls (172.9,174.92) and (174.56,174.96) .. (176.19,176.67) .. controls (177.82,178.38) and (179.48,178.42) .. (181.19,176.79) .. controls (182.89,175.16) and (184.55,175.2) .. (186.18,176.9) .. controls (187.81,178.61) and (189.47,178.65) .. (191.18,177.02) .. controls (192.88,175.39) and (194.55,175.43) .. (196.18,177.13) -- (199.92,177.22) -- (199.92,177.22) ;
\draw [shift={(175.35,176.65)}, rotate = 181.33] [color={rgb, 255:red, 0; green, 0; blue, 0 }  ][line width=0.75]    (8.74,-3.92) .. controls (5.56,-1.84) and (2.65,-0.53) .. (0,0) .. controls (2.65,0.53) and (5.56,1.84) .. (8.74,3.92)   ;
\draw    (199.92,177.22) -- (257.2,177.15) ;
\draw [shift={(233.36,177.18)}, rotate = 179.93] [color={rgb, 255:red, 0; green, 0; blue, 0 }  ][line width=0.75]    (8.74,-3.92) .. controls (5.56,-1.84) and (2.65,-0.53) .. (0,0) .. controls (2.65,0.53) and (5.56,1.84) .. (8.74,3.92)   ;
\draw    (257.2,177.15) .. controls (258.87,175.48) and (260.53,175.48) .. (262.2,177.15) .. controls (263.87,178.82) and (265.53,178.82) .. (267.2,177.15) .. controls (268.87,175.48) and (270.53,175.48) .. (272.2,177.15) .. controls (273.87,178.82) and (275.53,178.82) .. (277.2,177.15) .. controls (278.87,175.48) and (280.53,175.48) .. (282.2,177.15) .. controls (283.87,178.82) and (285.53,178.82) .. (287.2,177.15) .. controls (288.87,175.48) and (290.53,175.48) .. (292.2,177.15) .. controls (293.87,178.82) and (295.53,178.82) .. (297.2,177.15) .. controls (298.87,175.48) and (300.53,175.48) .. (302.2,177.15) .. controls (303.87,178.82) and (305.53,178.82) .. (307.2,177.15) .. controls (308.87,175.48) and (310.53,175.48) .. (312.2,177.15) -- (314.76,177.15) -- (314.76,177.15) ;
\draw [shift={(290.78,177.15)}, rotate = 180] [color={rgb, 255:red, 0; green, 0; blue, 0 }  ][line width=0.75]    (8.74,-3.92) .. controls (5.56,-1.84) and (2.65,-0.53) .. (0,0) .. controls (2.65,0.53) and (5.56,1.84) .. (8.74,3.92)   ;
\draw    (314.76,177.15) -- (373.05,177.15) ;
\draw [shift={(348.71,177.15)}, rotate = 180] [color={rgb, 255:red, 0; green, 0; blue, 0 }  ][line width=0.75]    (8.74,-3.92) .. controls (5.56,-1.84) and (2.65,-0.53) .. (0,0) .. controls (2.65,0.53) and (5.56,1.84) .. (8.74,3.92)   ;
\draw    (401.83,177.15) .. controls (403.5,175.48) and (405.16,175.48) .. (406.83,177.15) .. controls (408.5,178.82) and (410.16,178.82) .. (411.83,177.15) .. controls (413.5,175.48) and (415.16,175.48) .. (416.83,177.15) .. controls (418.5,178.82) and (420.16,178.82) .. (421.83,177.15) .. controls (423.5,175.48) and (425.16,175.48) .. (426.83,177.15) .. controls (428.5,178.82) and (430.16,178.82) .. (431.83,177.15) .. controls (433.5,175.48) and (435.16,175.48) .. (436.83,177.15) .. controls (438.5,178.82) and (440.16,178.82) .. (441.83,177.15) .. controls (443.5,175.48) and (445.16,175.48) .. (446.83,177.15) .. controls (448.5,178.82) and (450.16,178.82) .. (451.83,177.15) .. controls (453.5,175.48) and (455.16,175.48) .. (456.83,177.15) -- (457.24,177.15) -- (457.24,177.15) ;
\draw [shift={(434.34,177.15)}, rotate = 180] [color={rgb, 255:red, 0; green, 0; blue, 0 }  ][line width=0.75]    (8.74,-3.92) .. controls (5.56,-1.84) and (2.65,-0.53) .. (0,0) .. controls (2.65,0.53) and (5.56,1.84) .. (8.74,3.92)   ;
\draw  [dash pattern={on 0.84pt off 2.51pt}]  (373.05,177.15) -- (401.83,177.15) ;
\draw    (114,140.62) -- (141.2,175.86) ;
\draw [shift={(130.53,162.04)}, rotate = 232.34] [color={rgb, 255:red, 0; green, 0; blue, 0 }  ][line width=0.75]    (8.74,-3.92) .. controls (5.56,-1.84) and (2.65,-0.53) .. (0,0) .. controls (2.65,0.53) and (5.56,1.84) .. (8.74,3.92)   ;
\draw    (114,213.69) -- (141.2,175.86) ;
\draw [shift={(130.4,190.88)}, rotate = 125.72] [color={rgb, 255:red, 0; green, 0; blue, 0 }  ][line width=0.75]    (8.74,-3.92) .. controls (5.56,-1.84) and (2.65,-0.53) .. (0,0) .. controls (2.65,0.53) and (5.56,1.84) .. (8.74,3.92)   ;
\draw    (457.24,177.15) -- (487.47,213.69) ;
\draw [shift={(475.41,199.12)}, rotate = 230.4] [color={rgb, 255:red, 0; green, 0; blue, 0 }  ][line width=0.75]    (8.74,-3.92) .. controls (5.56,-1.84) and (2.65,-0.53) .. (0,0) .. controls (2.65,0.53) and (5.56,1.84) .. (8.74,3.92)   ;
\draw    (457.24,177.15) -- (488.3,144.47) ;
\draw [shift={(476.08,157.33)}, rotate = 133.54] [color={rgb, 255:red, 0; green, 0; blue, 0 }  ][line width=0.75]    (8.74,-3.92) .. controls (5.56,-1.84) and (2.65,-0.53) .. (0,0) .. controls (2.65,0.53) and (5.56,1.84) .. (8.74,3.92)   ;
\draw    (70.82,139.71) .. controls (72.52,138.08) and (74.19,138.11) .. (75.82,139.81) .. controls (77.45,141.51) and (79.12,141.55) .. (80.82,139.92) .. controls (82.52,138.29) and (84.19,138.33) .. (85.82,140.03) .. controls (87.45,141.73) and (89.12,141.76) .. (90.82,140.13) .. controls (92.52,138.5) and (94.18,138.54) .. (95.81,140.24) .. controls (97.44,141.94) and (99.11,141.97) .. (100.81,140.34) .. controls (102.51,138.71) and (104.18,138.75) .. (105.81,140.45) .. controls (107.44,142.15) and (109.11,142.18) .. (110.81,140.55) -- (114,140.62) -- (114,140.62) ;
\draw [shift={(97.21,140.27)}, rotate = 181.21] [color={rgb, 255:red, 0; green, 0; blue, 0 }  ][line width=0.75]    (8.74,-3.92) .. controls (5.56,-1.84) and (2.65,-0.53) .. (0,0) .. controls (2.65,0.53) and (5.56,1.84) .. (8.74,3.92)   ;
\draw    (70.1,214.6) .. controls (71.73,212.9) and (73.4,212.87) .. (75.1,214.5) .. controls (76.8,216.13) and (78.47,216.09) .. (80.1,214.39) .. controls (81.73,212.69) and (83.4,212.66) .. (85.1,214.29) .. controls (86.8,215.92) and (88.47,215.88) .. (90.1,214.18) .. controls (91.73,212.48) and (93.39,212.45) .. (95.09,214.08) .. controls (96.79,215.71) and (98.46,215.68) .. (100.09,213.98) .. controls (101.72,212.28) and (103.39,212.24) .. (105.09,213.87) .. controls (106.79,215.5) and (108.46,215.47) .. (110.09,213.77) -- (114,213.69) -- (114,213.69) ;
\draw [shift={(96.85,214.04)}, rotate = 178.81] [color={rgb, 255:red, 0; green, 0; blue, 0 }  ][line width=0.75]    (8.74,-3.92) .. controls (5.56,-1.84) and (2.65,-0.53) .. (0,0) .. controls (2.65,0.53) and (5.56,1.84) .. (8.74,3.92)   ;
\draw    (488.3,144.47) .. controls (489.93,142.76) and (491.6,142.73) .. (493.3,144.36) .. controls (495,145.99) and (496.67,145.96) .. (498.3,144.26) .. controls (499.93,142.56) and (501.6,142.52) .. (503.3,144.15) .. controls (505,145.78) and (506.67,145.75) .. (508.3,144.05) .. controls (509.93,142.35) and (511.6,142.32) .. (513.3,143.95) .. controls (515,145.58) and (516.67,145.54) .. (518.3,143.84) .. controls (519.93,142.14) and (521.6,142.11) .. (523.3,143.74) .. controls (525,145.37) and (526.67,145.33) .. (528.3,143.63) -- (532.2,143.55) -- (532.2,143.55) ;
\draw [shift={(515.05,143.91)}, rotate = 178.81] [color={rgb, 255:red, 0; green, 0; blue, 0 }  ][line width=0.75]    (8.74,-3.92) .. controls (5.56,-1.84) and (2.65,-0.53) .. (0,0) .. controls (2.65,0.53) and (5.56,1.84) .. (8.74,3.92)   ;
\draw    (487.47,213.69) .. controls (489.17,212.06) and (490.84,212.09) .. (492.47,213.79) .. controls (494.1,215.49) and (495.76,215.53) .. (497.46,213.9) .. controls (499.16,212.27) and (500.83,212.3) .. (502.46,214) .. controls (504.09,215.7) and (505.76,215.74) .. (507.46,214.11) .. controls (509.16,212.48) and (510.83,212.52) .. (512.46,214.22) .. controls (514.09,215.92) and (515.76,215.95) .. (517.46,214.32) .. controls (519.16,212.69) and (520.83,212.73) .. (522.46,214.43) .. controls (524.09,216.13) and (525.76,216.16) .. (527.46,214.53) -- (530.64,214.6) -- (530.64,214.6) ;
\draw [shift={(513.85,214.24)}, rotate = 181.21] [color={rgb, 255:red, 0; green, 0; blue, 0 }  ][line width=0.75]    (8.74,-3.92) .. controls (5.56,-1.84) and (2.65,-0.53) .. (0,0) .. controls (2.65,0.53) and (5.56,1.84) .. (8.74,3.92)   ;

\end{tikzpicture}
\end{center}
We notice that any bow $B$ has a natural \textit{underlying quiver} obtained by replacing wavy lines with vertices.
In other words, as noticed by Cherkis, we can think of a general bow as limiting to a quiver as interval lengths tend to zero. This yields a natural definition of \textit{Dynkin bow}.
Cherkis introduced an analogous of the framed dimension vector of a quiver, which he called a ``representation of a bow".
Given a representation of a bow, Cherkis considered an infinite-dimensional affine hyperk\"ahler space $\overline{\mathcal{M}}$ together with a Hamiltonian action by an infinite-dimensional group $\GG^\RR$.
Hence, bow varieties are defined as (infinite-dimensional) hyperkähler reductions, i.e. $\mathcal{M}=\overline{\mathcal{M}}\HR\GG^\RR$.
Finally, Cherkis noticed that one may divide the above reductions into two steps (see \cite[Section 8]{cherkis2011instantons}). Firstly, one consider a particular infinite-dimensional normal subgroup $\GG^\RR_{00}$ of $\GG^\RR$ and the hyperk\"ahler reduction $\TM=\overline{\mathcal{M}}\HR\GG^\RR_{00}$. 
Then, the quotient $\TM$ is a finite dimensional hyperk\"ahler manifold which can be described as a product of hyperk\"ahler vector spaces and moduli spaces of solutions to Nahm equations over an interval. The second step consists to take hyperk\"ahler reductions of $\TM$ by the residual action of the finite dimensional group $\GG^\RR/\GG^\RR_{00}$. In this way we have a realization of bow varieties as finite dimensional hyperk\"ahler reductions. This point of view has many advantages, for instance it allows us to use Kempf-Ness type theorems to provide a description of bow varieties as GIT quotients (see Appendix \ref{Appendix A}).

In \cite{Tak16} and \cite{NT17}, Nakajima and Takayama initiated algebro-geometric study of Cherkis bow varieties generated by Dynkin bows of affine type $A$ (and hence of finite type $A$ too), by introducing a linear algebraic description of these varieties, called \textit{quiver description}. The idea is to combine some known results on  moduli spaces of solutions to Nahm equations over an interval (\cite{Don84}, \cite{hurtubise1989classification}) to realize them as complex algebraic symplectic varieties via quivers.
Then, we can construct bow varieties by performing symplectic reductions via GIT. This approach seems appropriate to address algebraic and geometric properties analogously to Nakajima quiver varieties. In this paper, we generalize this approach to any type of bow. Let us briefly recall the quiver description of bow varieties (for more details, see Section \ref{S3}). Fix a bow.
In the quiver description, a representation of a bow is replaced by a \textit{bow diagram}, which consists of a collection, $\Lambda$, of points in the interior of wavy lines, called \textit{x-points}, and an assignment of a non-negative natural number for every segment of a wavy line cut out by x-points. For instance, if $B$ is a bow as above, a bow diagram over $B$ is given by:
\begin{center}
    \tikzset{every picture/.style={line width=0.75pt}} 

\begin{tikzpicture}[x=0.75pt,y=0.75pt,yscale=-1,xscale=1]

\draw    (148.25,131.34) .. controls (149.96,129.71) and (151.62,129.75) .. (153.25,131.46) .. controls (154.88,133.17) and (156.54,133.21) .. (158.25,131.59) .. controls (159.96,129.97) and (161.62,130.01) .. (163.25,131.72) .. controls (164.88,133.43) and (166.54,133.47) .. (168.25,131.85) .. controls (169.96,130.23) and (171.62,130.27) .. (173.25,131.98) .. controls (174.87,133.69) and (176.53,133.73) .. (178.24,132.1) .. controls (179.95,130.48) and (181.61,130.52) .. (183.24,132.23) .. controls (184.87,133.94) and (186.53,133.98) .. (188.24,132.36) .. controls (189.95,130.74) and (191.61,130.78) .. (193.24,132.49) .. controls (194.87,134.2) and (196.53,134.24) .. (198.24,132.62) -- (201.18,132.69) -- (201.18,132.69) ;
\draw    (201.18,132.69) -- (252.8,132.63) ;
\draw [shift={(231.79,132.65)}, rotate = 179.93] [color={rgb, 255:red, 0; green, 0; blue, 0 }  ][line width=0.75]    (8.74,-3.92) .. controls (5.56,-1.84) and (2.65,-0.53) .. (0,0) .. controls (2.65,0.53) and (5.56,1.84) .. (8.74,3.92)   ;
\draw    (339.45,132.52) -- (380.56,132.63) ;
\draw [shift={(364.8,132.58)}, rotate = 180.15] [color={rgb, 255:red, 0; green, 0; blue, 0 }  ][line width=0.75]    (8.74,-3.92) .. controls (5.56,-1.84) and (2.65,-0.53) .. (0,0) .. controls (2.65,0.53) and (5.56,1.84) .. (8.74,3.92)   ;
\draw    (406.5,132.63) .. controls (408.21,131) and (409.88,131.05) .. (411.5,132.76) .. controls (413.12,134.47) and (414.79,134.52) .. (416.5,132.9) .. controls (418.21,131.28) and (419.87,131.32) .. (421.5,133.03) .. controls (423.13,134.74) and (424.79,134.78) .. (426.5,133.16) .. controls (428.21,131.54) and (429.88,131.59) .. (431.5,133.3) .. controls (433.12,135.01) and (434.78,135.05) .. (436.49,133.43) .. controls (438.2,131.81) and (439.87,131.86) .. (441.49,133.57) .. controls (443.12,135.28) and (444.78,135.32) .. (446.49,133.7) .. controls (448.2,132.08) and (449.87,132.13) .. (451.49,133.84) .. controls (453.12,135.55) and (454.78,135.59) .. (456.49,133.97) .. controls (458.2,132.35) and (459.87,132.4) .. (461.48,134.11) .. controls (463.11,135.82) and (464.77,135.86) .. (466.48,134.24) .. controls (468.19,132.62) and (469.86,132.67) .. (471.48,134.38) -- (473.95,134.45) -- (473.95,134.45) ;
\draw [shift={(440.23,133.54)}, rotate = 46.55] [color={rgb, 255:red, 0; green, 0; blue, 0 }  ][line width=0.75]    (-4.47,0) -- (4.47,0)(0,4.47) -- (0,-4.47)   ;
\draw  [dash pattern={on 0.84pt off 2.51pt}]  (380.56,132.63) -- (406.5,132.63) ;
\draw    (123.74,96.23) -- (148.25,131.34) ;
\draw [shift={(138.75,117.72)}, rotate = 235.08] [color={rgb, 255:red, 0; green, 0; blue, 0 }  ][line width=0.75]    (8.74,-3.92) .. controls (5.56,-1.84) and (2.65,-0.53) .. (0,0) .. controls (2.65,0.53) and (5.56,1.84) .. (8.74,3.92)   ;
\draw    (123.74,169.02) -- (148.25,131.34) ;
\draw [shift={(138.61,146.16)}, rotate = 123.04] [color={rgb, 255:red, 0; green, 0; blue, 0 }  ][line width=0.75]    (8.74,-3.92) .. controls (5.56,-1.84) and (2.65,-0.53) .. (0,0) .. controls (2.65,0.53) and (5.56,1.84) .. (8.74,3.92)   ;
\draw    (473.95,134.45) -- (501.19,170.84) ;
\draw [shift={(490.45,156.49)}, rotate = 233.19] [color={rgb, 255:red, 0; green, 0; blue, 0 }  ][line width=0.75]    (8.74,-3.92) .. controls (5.56,-1.84) and (2.65,-0.53) .. (0,0) .. controls (2.65,0.53) and (5.56,1.84) .. (8.74,3.92)   ;
\draw    (473.95,134.45) -- (501.95,101.88) ;
\draw [shift={(491.08,114.52)}, rotate = 130.68] [color={rgb, 255:red, 0; green, 0; blue, 0 }  ][line width=0.75]    (8.74,-3.92) .. controls (5.56,-1.84) and (2.65,-0.53) .. (0,0) .. controls (2.65,0.53) and (5.56,1.84) .. (8.74,3.92)   ;
\draw    (84.06,96.7) .. controls (85.71,95.01) and (87.37,94.99) .. (89.06,96.64) .. controls (90.75,98.29) and (92.41,98.27) .. (94.06,96.58) .. controls (95.71,94.89) and (97.37,94.87) .. (99.06,96.52) .. controls (100.75,98.17) and (102.41,98.15) .. (104.06,96.46) .. controls (105.71,94.77) and (107.37,94.75) .. (109.06,96.4) .. controls (110.75,98.05) and (112.41,98.03) .. (114.06,96.34) .. controls (115.71,94.65) and (117.37,94.63) .. (119.06,96.28) -- (123.74,96.23) -- (123.74,96.23) ;
\draw    (84.18,169.93) .. controls (85.81,168.23) and (87.47,168.19) .. (89.18,169.82) .. controls (90.89,171.45) and (92.55,171.41) .. (94.18,169.7) .. controls (95.81,168) and (97.47,167.96) .. (99.17,169.59) .. controls (100.88,171.22) and (102.54,171.18) .. (104.17,169.47) .. controls (105.8,167.77) and (107.47,167.73) .. (109.17,169.36) .. controls (110.88,170.99) and (112.54,170.95) .. (114.17,169.24) .. controls (115.8,167.54) and (117.47,167.5) .. (119.17,169.13) -- (123.74,169.02) -- (123.74,169.02) ;
\draw [shift={(103.96,169.48)}, rotate = 43.68] [color={rgb, 255:red, 0; green, 0; blue, 0 }  ][line width=0.75]    (-4.47,0) -- (4.47,0)(0,4.47) -- (0,-4.47)   ;
\draw    (501.19,170.84) .. controls (502.87,169.19) and (504.54,169.21) .. (506.19,170.89) .. controls (507.84,172.57) and (509.51,172.59) .. (511.19,170.94) .. controls (512.87,169.29) and (514.54,169.31) .. (516.19,170.99) .. controls (517.84,172.67) and (519.51,172.69) .. (521.19,171.04) .. controls (522.87,169.39) and (524.54,169.41) .. (526.19,171.09) .. controls (527.84,172.77) and (529.51,172.79) .. (531.19,171.14) .. controls (532.87,169.49) and (534.54,169.51) .. (536.19,171.19) .. controls (537.84,172.87) and (539.51,172.89) .. (541.19,171.24) -- (542.2,171.25) -- (542.2,171.25) ;
\draw [shift={(521.7,171.05)}, rotate = 45.57] [color={rgb, 255:red, 0; green, 0; blue, 0 }  ][line width=0.75]    (-4.47,0) -- (4.47,0)(0,4.47) -- (0,-4.47)   ;
\draw    (252.8,132.63) .. controls (254.43,130.93) and (256.1,130.9) .. (257.8,132.53) .. controls (259.49,134.16) and (261.16,134.13) .. (262.8,132.44) .. controls (264.44,130.75) and (266.11,130.72) .. (267.8,132.35) .. controls (269.5,133.98) and (271.17,133.95) .. (272.8,132.25) -- (275.98,132.19) -- (275.98,132.19) ;
\draw [shift={(275.98,132.19)}, rotate = 43.93] [color={rgb, 255:red, 0; green, 0; blue, 0 }  ][line width=0.75]    (-4.47,0) -- (4.47,0)(0,4.47) -- (0,-4.47)   ;
\draw    (275.98,132.19) .. controls (277.66,130.54) and (279.33,130.55) .. (280.98,132.22) .. controls (282.64,133.89) and (284.31,133.9) .. (285.98,132.25) .. controls (287.65,130.59) and (289.32,130.6) .. (290.98,132.27) .. controls (292.64,133.94) and (294.31,133.95) .. (295.98,132.3) .. controls (297.65,130.64) and (299.32,130.65) .. (300.98,132.32) .. controls (302.64,133.99) and (304.31,134) .. (305.98,132.35) .. controls (307.65,130.69) and (309.32,130.7) .. (310.98,132.37) .. controls (312.64,134.04) and (314.31,134.05) .. (315.98,132.4) .. controls (317.65,130.74) and (319.32,130.75) .. (320.98,132.42) .. controls (322.64,134.09) and (324.31,134.1) .. (325.98,132.45) .. controls (327.65,130.8) and (329.32,130.81) .. (330.98,132.48) .. controls (332.64,134.15) and (334.31,134.16) .. (335.98,132.5) -- (339.45,132.52) -- (339.45,132.52) ;
\draw [shift={(307.71,132.36)}, rotate = 45.29] [color={rgb, 255:red, 0; green, 0; blue, 0 }  ][line width=0.75]    (-4.47,0) -- (4.47,0)(0,4.47) -- (0,-4.47)   ;
\draw    (501.95,101.88) .. controls (503.6,100.19) and (505.26,100.17) .. (506.95,101.82) .. controls (508.64,103.47) and (510.3,103.45) .. (511.95,101.76) .. controls (513.6,100.07) and (515.26,100.05) .. (516.95,101.7) .. controls (518.64,103.35) and (520.3,103.33) .. (521.95,101.64) .. controls (523.6,99.95) and (525.26,99.93) .. (526.95,101.58) .. controls (528.64,103.23) and (530.3,103.21) .. (531.95,101.52) .. controls (533.59,99.83) and (535.25,99.81) .. (536.94,101.46) -- (541.62,101.41) -- (541.62,101.41) ;

\draw (95.33,77.54) node [anchor=north west][inner sep=0.75pt]  [font=\small]  {$v_{0}$};
\draw (86.18,173.33) node [anchor=north west][inner sep=0.75pt]  [font=\small]  {$v_{1}^{0} \ \ \ v_{1}^{1}$};
\draw (514.36,78.33) node [anchor=north west][inner sep=0.75pt]  [font=\small]  {$v_{n-1}^{0}$};
\draw (503.19,174.24) node [anchor=north west][inner sep=0.75pt]  [font=\small]  {$v_{n}^{0} \ \ v_{n}^{1}$};
\draw (164,103) node [anchor=north west][inner sep=0.75pt]    {$v_{2} \ \ \ \ \ \ \ \ \ \ \ \ \ \ \ \ \ v_{3}^{0} \ \ \ \ v_{3}^{1} \ \ \ \ \ v{_{3}^{2}} \ \ \ \ \ \ \ \ \ \ \ \ \ \ \ \ v_{n-2}^{0} \ \ \ v_{n-2}^{1}$};

\end{tikzpicture}
\end{center}
The vector $\underline{v}$ of natural numbers is called the \textit{dimension vector}. Given a bow diagram, we define a symplectic complex affine algebraic variety $\TM$ as the product of:
\begin{itemize}
    \item \textit{Two-way parts}: to any arrow $t(e)\xlongrightarrow{e}h(e)\in\EE$, we associate the symplectic vector space $\M^e\coloneqq \T^*\Hom(\CC^{v_{t(e)}},\CC^{v_{h(e)}})$.
    \item  \textit{Triangles}: to any \begin{tikzpicture}[x=0.5pt,y=0.5pt,yscale=-1,xscale=1]

\draw    (190.2,129.93) .. controls (191.87,128.26) and (193.53,128.26) .. (195.2,129.93) .. controls (196.87,131.6) and (198.53,131.6) .. (200.2,129.93) .. controls (201.87,128.26) and (203.53,128.26) .. (205.2,129.93) .. controls (206.87,131.6) and (208.53,131.6) .. (210.2,129.93) .. controls (211.87,128.26) and (213.53,128.26) .. (215.2,129.93) .. controls (216.87,131.6) and (218.53,131.6) .. (220.2,129.93) .. controls (221.87,128.26) and (223.53,128.26) .. (225.2,129.93) .. controls (226.87,131.6) and (228.53,131.6) .. (230.2,129.93) .. controls (231.87,128.26) and (233.53,128.26) .. (235.2,129.93) .. controls (236.87,131.6) and (238.53,131.6) .. (240.2,129.93) .. controls (241.87,128.26) and (243.53,128.26) .. (245.2,129.93) .. controls (246.87,131.6) and (248.53,131.6) .. (250.2,129.93) .. controls (251.87,128.26) and (253.53,128.26) .. (255.2,129.93) .. controls (256.87,131.6) and (258.53,131.6) .. (260.2,129.93) .. controls (261.87,128.26) and (263.53,128.26) .. (265.2,129.93) .. controls (266.87,131.6) and (268.53,131.6) .. (270.2,129.93) .. controls (271.87,128.26) and (273.53,128.26) .. (275.2,129.93) .. controls (276.87,131.6) and (278.53,131.6) .. (280.2,129.93) -- (280.67,129.93) -- (288.67,129.93) ;
\draw [shift={(290.67,129.93)}, rotate = 180] [color={rgb, 255:red, 0; green, 0; blue, 0 }  ][line width=0.75]    (9.84,-4.41) .. controls (6.25,-2.07) and (2.97,-0.6) .. (0,0) .. controls (2.97,0.6) and (6.25,2.07) .. (9.84,4.41)   ;
\draw [shift={(240.43,129.93)}, rotate = 45] [color={rgb, 255:red, 0; green, 0; blue, 0 }  ][line width=0.75]    (-5.03,0) -- (5.03,0)(0,5.03) -- (0,-5.03)   ;

\draw (203.5,101) node [anchor=north west][inner sep=0.75pt]  [font=\small]  {$\zeta ^{-} \  x\  \ \zeta ^{+}$};

\end{tikzpicture}, we associate a complex symplectic affine variety $\M^x$ (see \S\ref{triangles}) defined as a locally closed subset of the complex vector space of ``triangles":$$\
    \vcenter{\hbox{\begin{tikzpicture}[x=0.75pt,y=0.75pt,yscale=-1,xscale=1]

\draw    (265.2,150.6) -- (301.2,150.6) ;
\draw [shift={(304.2,150.6)}, rotate = 180] [fill={rgb, 255:red, 0; green, 0; blue, 0 }  ][line width=0.08]  [draw opacity=0] (5.36,-2.57) -- (0,0) -- (5.36,2.57) -- (3.56,0) -- cycle    ;
\draw    (258.2,159.05) -- (274.19,176.82) ;
\draw [shift={(276.2,179.05)}, rotate = 228.01] [fill={rgb, 255:red, 0; green, 0; blue, 0 }  ][line width=0.08]  [draw opacity=0] (5.36,-2.57) -- (0,0) -- (5.36,2.57) -- (3.56,0) -- cycle    ;
\draw    (309,162.09) -- (290.2,179.6) ;
\draw [shift={(311.2,160.05)}, rotate = 137.05] [fill={rgb, 255:red, 0; green, 0; blue, 0 }  ][line width=0.08]  [draw opacity=0] (5.36,-2.57) -- (0,0) -- (5.36,2.57) -- (3.56,0) -- cycle    ;
\draw    (310.42,137.53) .. controls (296.03,119.65) and (324.9,114.36) .. (320.58,132.68) ;
\draw [shift={(319.74,135.43)}, rotate = 290.33] [fill={rgb, 255:red, 0; green, 0; blue, 0 }  ][line width=0.08]  [draw opacity=0] (5.36,-2.57) -- (0,0) -- (5.36,2.57) -- (3.56,0) -- cycle    ;
\draw    (241.42,135.53) .. controls (227.03,117.65) and (255.9,112.36) .. (251.58,130.68) ;
\draw [shift={(250.74,133.43)}, rotate = 290.33] [fill={rgb, 255:red, 0; green, 0; blue, 0 }  ][line width=0.08]  [draw opacity=0] (5.36,-2.57) -- (0,0) -- (5.36,2.57) -- (3.56,0) -- cycle    ;

\draw (237.67,137.67) node [anchor=north west][inner sep=0.75pt]   [align=left] {$\displaystyle \mathbb{C}^{v_{x}^{-}}$};
\draw (306.33,138) node [anchor=north west][inner sep=0.75pt]   [align=left] {$\displaystyle \mathbb{C}^{v_{x}^{+}}$};
\draw (307.83,105.67) node [anchor=north west][inner sep=0.75pt]  [font=\scriptsize] [align=left] {$\displaystyle B_{x}^{+}$};
\draw (277.2,179.05) node [anchor=north west][inner sep=0.75pt]   [align=left] {$\displaystyle \mathbb{C}$};
\draw (239,102.07) node [anchor=north west][inner sep=0.75pt]  [font=\scriptsize]  {$B_{x}^{-}$};
\draw (280,136.6) node [anchor=north west][inner sep=0.75pt]  [font=\scriptsize]  {$A$};
\draw (253.8,172.47) node [anchor=north west][inner sep=0.75pt]  [font=\scriptsize]  {$b$};
\draw (309.8,172.2) node [anchor=north west][inner sep=0.75pt]  [font=\scriptsize]  {$a$};

\end{tikzpicture}}}.$$
\end{itemize}
Given a bow diagram, we define $\GG=\prod\GL(v_\zeta)$, where $\zeta$ runs over the set of segments. Then, there is a natural Hamiltonian action of $\GG$ on $\TM$ given by changing bases. This gives rise to the notion of stability and deformation parameters, respectively $\theta$ and $\lambda$, and the associated bow variety defined as the Hamiltonian reduction $\mathcal{M}=\TM\HR_{\lambda,\theta}\GG$.

One of the reasons we are interested in Cherkis bow varieties is because they can be regarded as generalizations of Nakajima quiver varieties (see Section \ref{S5}).
We need to briefly recall the construction of Nakajima quiver varieties as GIT quotients (see \S\ref{NQV}). Let $Q$ be a quiver with vertex set $Q_0$ and $(v,w)\in\NN^{Q_0}\times\NN^{Q_0}$ a \textit{framed dimension vector}. To these combinatorial data is associated a Hamiltonian action of the group $G_v=\prod\limits_{i \in Q_0}GL(v_i)$ on the cotangent bundle of a vector space denoted by $\T^*\mathrm{\mathrm{Rep}}(Q,v,w)$. 
Then, for any pair of stability and deformation parameters, respectively $\theta \in \ZZ^{Q_0}$ and $\lambda\in\CC^{Q_0}$, we define the Nakajima quiver variety associated to $(Q,v,w)$ of parameters $(\lambda,\theta)$ as the Hamiltonian reduction:
    \begin{equation*}
        \mathcal{M}_{\lambda,\theta}\bigl(Q,v,w\bigr)=\T^*\mathrm{Rep}(Q,v,w)\HR_{\lambda,\theta}G_v.
    \end{equation*}
Coming back to bow varieties, let us consider a particular class of bow diagrams satisfying a symmetry condition, called \textit{cobalanced}. A bow diagram is cobalanced if, for any x-point, the dimensions on its left and on its right are equal. Bow varieties arising from cobalanced bow diagrams will be called \textit{cobalanced bow varieties}. 
In \cite{cherkis2011instantons}, Cherkis noticed that cobalanced bow varieties are isomorphic to Nakajima quiver varieties.
In this paper we prove this result using only the quiver description of bow varieties (Corollary \ref{maincor}) and as a corollary of a stronger result that relates the construction of Nakajima quiver varieties to that of bow varieties (see Theorem \ref{4.1}).
The first step consists of noticing that if $B$ is a bow and $Q$ is the underlying quiver, then cobalanced bow diagrams over $B$ are equivalent to framed dimension vectors over $Q$. Let us fix a cobalanced bow diagram $(B,\Lambda,v)$ and denote by $(Q,v,w)$ the corresponding quiver and framed dimension vector. The group $\GG$ can be regarded as the product of $G_v$ and a residual group $H$. Let $(\lambda,\theta)\in\CC^\II\times\ZZ^\II$ be a pair of, respectively, deformation and stability parameters with respect to the $G_v$-action. 
Then, the symplectic reduction to obtain the bow variety may be divided into two steps by performing two consecutive symplectic reductions, that is:
\begin{equation*}
    \mathcal{M}=\TM\HR\GG=\Bigl(\TM\HR_{0,0}H\Bigr)\HR_{\lambda,\theta} G_v,
\end{equation*}
Finally, we prove: 
\begin{Theorem*}[\ref{4.1}]
There is a $G_v$-equivariant isomorphism of affine varieties
\begin{equation*}\TM\HR_{0,0} H \cong \T^*\mathrm{\mathrm{Rep}}(Q,v,w).\end{equation*}
Moreover the isomorphism respects the symplectic forms and the moment maps. In particular, we have $\mathcal{M}_{\lambda,\theta}\bigl(B,\Lambda,v\bigr)\cong\mathcal{M}_{\lambda,\theta}\bigl(Q,v,w\bigr)$.
\end{Theorem*}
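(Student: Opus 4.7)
The plan is to construct the isomorphism by a factor-by-factor analysis of the two sides. First I would fix notation: label the x-points of wavy line $i$ as $x_i^1,\dots,x_i^{w_i}$, so that the cobalanced condition forces every segment on wavy line $i$ to carry the space $\CC^{v_i}$. The gauge group then takes the shape $\GG=\prod_i \GL(v_i)^{w_i+1}$, and one selects a subgroup $G_v=\prod_i\GL(v_i)$ acting on one chosen segment per wavy line, with complement $H\cong\prod_i\GL(v_i)^{w_i}$, giving a splitting $\GG=G_v\times H$.

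Next I would perform the $H$-reduction one wavy line at a time, exploiting the block structure $H=\prod_i H_i$ together with the fact that only the two-way parts link different wavy lines: each $H_i$ acts just on the triangles $\M^{x_i^1},\dots,\M^{x_i^{w_i}}$ and on the gauge factors of the two-way parts attached at the endpoints of wavy line $i$. The local claim, handled one wavy line at a time, is that the symplectic quotient of $\M^{x_i^1}\times\cdots\times\M^{x_i^{w_i}}$ by $H_i$ at $0$ is isomorphic to $T^*\Hom(\CC^{w_i},\CC^{v_i})$, with the one-dimensional triangle framings assembling into $\CC^{w_i}$ and the maps $a,b$ of the triangles becoming the standard Nakajima framing maps; meanwhile the two-way parts are preserved in shape, with their endpoint gauge factors now identified with the corresponding $G_v$-copies.

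The key technical step is this local collapse along one wavy line. I would argue by induction on $w_i$, using the triangle moment-map relations at each intermediate segment to eliminate the linking map $A$ and the endomorphisms $B^{\pm}$ in terms of the framing data $a,b$; the locally-closed condition built into the definition of $\M^x$ (a transversality condition involving $A$ and the $B$-fields) is precisely what makes this elimination an isomorphism of affine schemes rather than a birational equivalence. Once this local claim is in hand, the global statement is a product: the two-way parts $T^*\Hom(\CC^{v_{t(e)}},\CC^{v_{h(e)}})$ match the arrow factors of $T^*\Rep(Q,v,w)$ and the assembled framings match the framing factors, yielding the required isomorphism.

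The main obstacle is the bookkeeping in this iterated reduction: one must track how the symplectic form, the residual $G_v$-moment map, and the locally-closed conditions propagate at each step and reassemble into the Nakajima data on the nose. Once these are settled, $G_v$-equivariance is built into the construction, the matching of symplectic forms follows from the fact that symplectic reduction is a symplectic operation, and the two $G_v$-moment maps agree by direct comparison on each factor. The final statement $\mathcal{M}_{\lambda,\theta}(B,\Lambda,v)\cong\mathcal{M}_{\lambda,\theta}(Q,v,w)$ is then obtained by performing the remaining $G_v$-reduction at $(\lambda,\theta)$ on both sides.
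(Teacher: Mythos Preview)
Your overall structure is correct and matches the paper's at the level of strategy: split $\GG=G_v\times H$, reduce by $H$ first, and identify the result with $T^*\Rep(Q,v,w)$. But your execution diverges from the paper's in one important way, and this difference is exactly where the real work lies.

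The paper does \emph{not} work in triangle coordinates $(A,B^\pm,a,b)$ or argue by induction on $w_i$. Instead it passes immediately to Hurtubise normal forms: in the cobalanced case each triangle factor is $F(v,v)\cong \GL(v_\sigma)\times\mathfrak{gl}(v_\sigma)\times\CC^{v_\sigma}\times(\CC^{v_\sigma})^*$ with the explicit symplectic form $\operatorname{tr}(dh\wedge duu^{-1}+h\,duu^{-1}\wedge duu^{-1}+dI\wedge dJ)$. In these coordinates the $H$-moment map equations read off as determining $h$ in terms of the other data, and the quotient by $H$ is visibly the normalization $u=\operatorname{id}$ on each factor, leaving precisely $\bigoplus_{x}T^*\CC^{v_\sigma}\oplus\bigoplus_e\M^e$ with its \emph{standard} symplectic form. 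No induction, no elimination of $B^\pm$ step by step.

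Your route---eliminating $A$ and $B^\pm$ inductively via the moment-map relations---is the alternative the paper sketches in the Remark immediately after the proof, and it does produce the correct isomorphism of varieties. The gap in your proposal is the symplectic form: saying ``symplectic reduction is a symplectic operation'' only tells you the quotient carries \emph{some} symplectic form, not that it coincides with the standard one on $T^*\Rep(Q,v,w)$. Since the symplectic form on $\M^\triangle$ is \emph{defined} via the Hurtubise isomorphism (Proposition~\ref{Prop3.14}), you would in practice have to pass through those coordinates anyway to verify the match. The paper's choice to work in Hurtubise coordinates from the start is what makes the symplectic and moment-map compatibilities immediate rather than an additional computation; your inductive scheme would need a separate argument here that you have not supplied.

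A smaller inaccuracy: the open conditions $(S1),(S2)$ are not ``a transversality condition involving $A$ and the $B$-fields''; in the cobalanced case they are equivalent simply to $A$ being invertible (the paper's Remark~\ref{r5.2}), which is indeed what makes the elimination an isomorphism of affine varieties.
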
 
Notice that parameters $\lambda$ and $\theta$ only relate to the action of $G_v$. 
One may wonder if parameters defined for the entire group $\GG=G_v\times H$ could provide cobalanced bow varieties which are not isomorphic to any Nakajima quiver varieties.
However, it follows from subsections \ref{s.ncb} and \ref{s.gdp}, that the resulting varieties can be constructed by using parameters $(\lambda,\theta)\in\CC^\II\times\ZZ^\II$ as above.
\\

The paper is organized as follows. In Section \ref{S2} we recall basic \textit{Geometric Invariant Theory} (GIT) and the construction of Nakajima quiver varieties via GIT.
In Section \ref{S3}, we present the quiver construction of bow varieties.
In Section \ref{S4}, we prove some basic facts about bow varieties for arbitrary types of bows. We start by providing a numerical criterion for semistability conditions. Then, we prove that two deformation parameters provide isomorphic level sets of the moment maps if they satisfy a particular numerical condition. These criteria allow us to restrict ourselves to considering only a particular class of deformation and stability parameters. Moreover, we show that the stable locus of bow varieties is always smooth. Combining this result on the stable locus with the numerical criterion for semistability we can easily obtain a sufficient numerical criterion for smoothness of bow varieties. Finally, we will provide a necessary (not sufficient) numerical conditions for emptiness of bow varieties.
In Section \ref{S5}, we recall the notion of cobalanced bow diagrams and prove the main result of this paper (Theorem \ref{4.1}) about the relation between Cherkis bow varieties and Nakajima quiver varieties.
In Appendix \ref{Appendix A}, we recall the construction of bow varieties as finite-dimensional hyperk\"ahler reduction \cite{cherkis2011instantons} and discuss the relation with the quiver description.

\section*{Acknowledgement}
The author is particularly grateful to Travis Schedler for his explanations and guidance throughout this work. The author also thanks Amihay Hanany for many helpful discussions and Hiraku Nakajima for helpful comments. This work is part of the author's PhD project at Imperial College London.

\section{Preliminaries}\label{S2}
In this section we will recall some general theory which will be used later in the paper.

\subsection{GIT}
Let us recall some basic facts on Geometric Invariant Theory (GIT). For more details see  \cite{mumford1994geometric} as well as the expository books \cite{mukai2003introduction} and \cite{Ki16}. 

Let $X$ be a complex affine variety and $\CC[X]$ its coordinate ring. 
Let $G$ be a reductive complex algebraic group acting on X. We define the \textit{classical GIT quotient of X by G} as $$X\GIT G \coloneqq    \Spec\CC[X]^G,$$ where $\CC[X]^G$ is the algebra of $G$-invariant functions on X. Let us recall that, since $G$ is reductive, $\CC[X]^G$ is finitely generated and $X\GIT G$ is an affine variety.
Let us recall a useful description of the underlying topological space of $X\GIT G$. Given $x \in X$ we denote the orbit of $x$ under the action of $G$ by $G\cdot x$ and its closure by $\overline{G\cdot x}$. 
The action of $G$ defines an equivalence relation on $X$ as follows. Given $x,x' \in X$, we say that $x \sim x'$ if and only if their orbits are \textit{closure-equivalent}, i.e. $\overline{G\cdot x} \cap \overline{G\cdot x'} \neq \emptyset$. 
\begin{Theorem}[\cite{Ki16}, Theorem 9.5]\label{theo2.1}
    There exists a homeomorphism between $X\GIT G$ and $X/\sim$. In particular, there is an explicit description of $X\GIT G$ as the topological space of closed $G$-orbits in $X$. 
\end{Theorem}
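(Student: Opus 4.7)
The natural candidate for the homeomorphism is the map dual to $\CC[X]^G\hookrightarrow\CC[X]$, namely $\pi\colon X\to X\GIT G=\Spec\CC[X]^G$. Since every $f\in\CC[X]^G$ is constant on $G$-orbits and continuous, it takes a single value on each closure $\overline{G\cdot x}$. If $\overline{G\cdot x}\cap\overline{G\cdot x'}\neq\emptyset$, picking a point in the intersection gives $f(x)=f(x')$ for every invariant $f$, so $\pi(x)=\pi(x')$. Thus $\pi$ factors as $\pi=\bar{\pi}\circ q$ for a continuous map $\bar\pi\colon X/\sim\,\to X\GIT G$, once one checks that $\sim$ is an equivalence relation. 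The cleanest way to see transitivity is to prove simultaneously that every $\sim$-class contains a \emph{unique} closed $G$-orbit: choose an orbit of minimal dimension inside any $\overline{G\cdot x}$; it must be closed, and two closed orbits in one $\sim$-class would contradict minimality.

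The main step is to show $\bar\pi$ is bijective. For injectivity I would invoke the fundamental separation lemma for reductive group actions: if $Y_1,Y_2\subset X$ are disjoint $G$-invariant closed subsets, then there exists $f\in\CC[X]^G$ with $f|_{Y_1}=0$ and $f|_{Y_2}=1$. Applied to $Y_i=\overline{G\cdot x_i}$ when the closures are disjoint, this produces an invariant distinguishing $\pi(x_1)$ from $\pi(x_2)$. This lemma is where reductivity enters: one starts from an ideal-theoretic separation $I(Y_1)+I(Y_2)=\CC[X]$ in $\CC[X]$ and averages using the Reynolds operator to land in $\CC[X]^G$. For surjectivity, given a maximal ideal $\mathfrak{m}\subset\CC[X]^G$, one shows $\mathfrak{m}\cdot\CC[X]$ is a proper ideal of $\CC[X]$ (otherwise $1=\sum f_i a_i$ with $f_i\in\mathfrak{m}$, and applying the Reynolds operator forces $1\in\mathfrak{m}$); any maximal ideal of $\CC[X]$ containing $\mathfrak{m}\cdot\CC[X]$ corresponds to a point of $\pi^{-1}(\mathfrak{m})$.

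It remains to upgrade the continuous bijection $\bar\pi$ to a homeomorphism. I would do this by showing $\pi$ sends $G$-invariant closed sets to closed sets: for such a $Z\subset X$ with ideal $I(Z)\subset\CC[X]$, the Reynolds operator implies that $\pi(Z)$ is cut out by $I(Z)\cap\CC[X]^G$, hence is closed. Since the $\sim$-saturation of any closed set $Z$ equals $\pi^{-1}(\pi(Z))$ and the closed saturated sets are exactly the $G$-invariant closed sets (again via uniqueness of the closed orbit in a $\sim$-class), this means the quotient topology on $X/\sim$ agrees with the topology pulled back from $X\GIT G$. The in-particular clause follows immediately by picking the unique closed orbit in each class as canonical representative.

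\textbf{Main obstacle.} Everything topological is formal once the algebraic input, i.e.\ the invariant-separation lemma and properness of extended ideals, is in hand; both rest squarely on the Reynolds operator and therefore on reductivity of $G$. Since the paper explicitly cites \cite{Ki16} and \cite{mumford1994geometric}, I would simply quote those references for the Reynolds-operator step rather than redevelop it, and the remaining argument is the topological unpacking above.
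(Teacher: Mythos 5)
The paper does not prove this statement at all: it is quoted verbatim as a known result with a citation to \cite{Ki16}, Theorem 9.5 (and ultimately to \cite{mumford1994geometric}), so there is no in-paper argument to compare against. Your sketch is the standard textbook proof of that cited result --- the factorization of $\pi\colon X\to\Spec\CC[X]^G$ through closure-equivalence, injectivity via the Reynolds-operator separation lemma for disjoint closed invariant sets, surjectivity via properness of $\mathfrak{m}\cdot\CC[X]$, and the identification of the quotient topology via closedness of $\pi$ on invariant closed sets --- and it is essentially correct. One small logical point: minimality of orbit dimension inside $\overline{G\cdot x}$ gives \emph{existence} of a closed orbit in each class, but not uniqueness; two distinct closed orbits are each minimal in their own closures, so ``contradict minimality'' does not by itself rule them out. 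Uniqueness of the closed orbit in a $\sim$-class is exactly where the separation lemma must be invoked (two disjoint closed orbits are separated by an invariant, hence lie in different fibers of $\pi$), and since you prove that lemma anyway for injectivity, the slip is cosmetic rather than a gap --- but the order of the argument should be adjusted so that transitivity of $\sim$ and uniqueness of the closed orbit come after, not before, the separation statement.
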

However, it is clear that this affine quotients are very restrictive in many cases; indeed they only depend on closed $G$-orbits in $X$. Therefore, we define some deformations of these quotients, called \textit{twisted GIT quotients} or simply \textit{GIT quotients}. Let us briefly recall their construction. Let $G^{\vee}$ be the character group of $G$. For $\chi \in G^\vee$ and $n\in\NN$, we define the space of $\chi^n$-\textit{semi-invariants} functions on $X$ by: 
$$\CC[X]^{\chi^n,G}=\bigl\{f\in\CC[X] \ \vert \ f(g\cdot x)=\chi(g)^{n}f(x) \text{ for all }g \in G, x \in X\bigr\}.$$ 
Then, the GIT quotient of $X$ by $G$ of \textit{stability parameter} $\chi$, is given by:
$$X\GIT_\chi G \coloneqq     \Proj\bigl(\bigoplus\limits_{n\geq0}\CC[X]^{\chi^n,G}\bigr).$$

Next, we recall the notion of $\chi$\textit{-semistability}.
\begin{Definition}
 A point $x \in X$ is called $\chi$-semistable if and only if there exists $n>0$ and $f \in \CC[X]^{\chi^n,G}$ such that $f(x)\neq0$. We will denote by $X^{\chi-ss}$ the subset of $\chi$-semistable points in $X$. We will refer to it as the \textit{semistable locus}.
\end{Definition}
We can now state a topological characterization, analogously to Theorem \ref{theo2.1}, for general $GIT$ quotients:
\begin{Proposition}\label{prop2.3}
    The semistable locus $X^{\chi-ss}$ is a $G$-invariant open subset of $X$. Furthermore, there is a homeomorphism between $X\GIT_\chi G$ and $X^{\chi-ss}/ \sim$. In particular, there is an explicit description of $X\GIT_\chi G$ as the topological space of the closed $G$-orbits in the semistable locus.
\end{Proposition}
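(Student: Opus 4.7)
The plan is to establish the three assertions in turn, reducing the global statement to the affine GIT of Theorem \ref{theo2.1} via a standard $\Proj$-cover argument.

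First, I would handle the $G$-invariance and openness of $X^{\chi-ss}$. Invariance is immediate from the transformation rule: if $f\in\CC[X]^{\chi^n,G}$ has $f(x)\neq 0$, then $f(g\cdot x)=\chi(g)^n f(x)\neq 0$ since $\chi(g)\in\CC^\ast$. For openness, write
\[
X^{\chi-ss}=\bigcup_{n>0}\bigcup_{f\in\CC[X]^{\chi^n,G}} X_f,
\]
where $X_f:=\{x\in X:f(x)\neq 0\}$ is a principal open subset, so the union is open.

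Next, I would construct the homeomorphism using the standard affine cover of $\Proj R$ for $R=\bigoplus_{n\geq 0}\CC[X]^{\chi^n,G}$. For each homogeneous $f\in R$ of positive degree $n$, one has the affine chart $D_+(f)=\Spec(R_f)_0$, whose coordinate ring consists of fractions $h/f^k$ with $h\in\CC[X]^{\chi^{nk},G}$. The inclusion $(R_f)_0\hookrightarrow\CC[X]_f$ is the pullback of a natural $G$-invariant morphism $\pi_f\colon X_f\to D_+(f)$. These agree on overlaps $X_f\cap X_{f'}=X_{ff'}$ (both factor through $D_+(ff')\subseteq D_+(f)\cap D_+(f')$), so they glue to a $G$-invariant morphism $\pi\colon X^{\chi-ss}\to X\GIT_\chi G$ whose image covers $\Proj R$ since the $D_+(f)$'s do. The key reduction is the observation that $(R_f)_0$ is precisely $\CC[X_f]^G$: any $G$-invariant regular function on $X_f$ has the form $h/f^k$, and $G$-invariance together with the weight of $f^k$ forces $h\in\CC[X]^{\chi^{nk},G}$. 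Hence $\pi_f$ is the classical affine GIT quotient of $X_f$, and applying Theorem \ref{theo2.1} to each $(X_f,G)$ identifies the fibers of $\pi_f$ with closure-equivalence classes in $X_f$, each containing a unique closed orbit. Gluing over the cover yields the desired homeomorphism $X\GIT_\chi G\cong X^{\chi-ss}/\sim$ together with the identification of points of the quotient with orbits closed in $X^{\chi-ss}$.

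The main obstacle, and the point deserving real care, is the compatibility between closedness in the affine patches $X_f$ and closedness in the whole semistable locus: given $y\in\overline{G\cdot x}\cap X^{\chi-ss}$, one must find a common $X_{f'}$ containing both $x$ and $y$, so that closedness of $G\cdot x$ in $X_{f'}$ forces $y\in G\cdot x$. This is arranged by choosing any semi-invariant $f'$ with $f'(y)\neq 0$; then $X_{f'}$ is open and $G$-invariant, and any open set meeting $\overline{G\cdot x}$ must meet $G\cdot x$ itself, so by $G$-invariance of $X_{f'}$ we have $x\in X_{f'}$ as well. With this verified, the gluing of the local affine-GIT descriptions is routine and completes the proof.
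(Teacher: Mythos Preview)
Your argument is correct and is the standard $\Proj$-cover reduction to Theorem~\ref{theo2.1}. There is, however, nothing to compare it to: the paper does not supply a proof of Proposition~\ref{prop2.3}. It appears in the preliminaries section as a recalled fact from GIT, alongside references to \cite{mumford1994geometric}, \cite{mukai2003introduction}, and \cite{Ki16}, and the paper simply uses it without further justification. Your write-up would serve perfectly well as the omitted proof; the only place one might tighten it is the final paragraph, where the cleanest way to pass between ``closed in $X_f$'' and ``closed in $X^{\chi-ss}$'' is to first observe that $\pi^{-1}(D_+(f))=X_f$ exactly, so that the unique closed orbit in a $\pi$-fiber is independent of the chart and the local descriptions glue without ambiguity.
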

\begin{Definition}
A point $x \in X$ is called $\chi$-stable if and only if it is semistable, its $G$-orbit is closed in $X^{\chi-ss}$ and the isotropy group is finite. We will denote by $X^{\chi-s}$ the subset of $\chi$-stable points in $X$. We will refer to it as the \textit{stable locus}.
\end{Definition}
Whenever the character is clear or irrelevant, we will denote the semistable and the stable locus by $X^{ss}$ and $X^s$, respectively.
\begin{Theorem}[\cite{Ki16}, Theorem 9.23]\label{2.5}
In the previous assumptions, let us additionally assume that $X^s$ is non-empty. Then, the following hold:
\begin{enumerate}
    \item $X^s$ is an open subset of $X^{ss}$.
    \item $X^{\chi-s}/G$ can be naturally identified with an open subset of $X\GIT_\chi G$.
    \item If $X$ is nonsingular and for every stable point $x$ the stabilizer $G_x$ is trivial, then $X^s/G$ is a non-singular open subvariety of $X\GIT_\chi G$ of dimension $\dim X^s/G= \dim X-\dim G$.
\end{enumerate}
\end{Theorem}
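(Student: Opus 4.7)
The plan is to establish the three parts in turn, exploiting the fact that the semistable locus $X^{ss}$ is covered by the affine $G$-invariant opens $X_f=\{f\neq 0\}$ as $f$ ranges over $\CC[X]^{\chi^n,G}$ for $n>0$, and that on each such $X_f$ the restriction of the quotient map $\pi: X^{ss}\to X\GIT_\chi G$ agrees with the classical affine GIT quotient $X_f\to X_f\GIT G=\Spec \CC[X_f]^G$. This reduces everything to standard affine reductive GIT on each chart, and then one glues.

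\textbf{Part (1).} First I would reformulate $\chi$-stability locally: a semistable $x$ is stable iff there exist $n>0$ and $f\in\CC[X]^{\chi^n,G}$ with $f(x)\neq 0$ such that, in the affine $G$-variety $X_f$, the orbit $G\cdot x$ is closed and the stabilizer $G_x$ is finite. Granting this, $X^s$ is the union, over all such $f$, of two loci inside $X_f$: the set where $G_y$ is finite, and the set where $G\cdot y$ is closed in $X_f$. The first locus is open because $\dim G_y$ is upper semicontinuous in $y$. The second is open because the map $X_f\to X_f\GIT G$ has upper semicontinuous fiber dimension, and closed orbits are exactly those of minimal dimension in their fiber. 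Intersecting and taking the union over $f$ gives that $X^s$ is open in $X^{ss}$.

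\textbf{Part (2).} Inside $X^s$ every orbit is closed in $X^{ss}$ with finite stabilizer, so the closure-equivalence relation of Proposition~\ref{prop2.3} restricted to $X^s$ is just orbit equivalence. Hence $\pi$ induces a set-theoretic bijection $X^s/G\to \pi(X^s)$. I would show that $\pi(X^s)$ is open by working locally: on each $X_f$ the stable locus $X^s\cap X_f$ is open and $G$-saturated (since being stable depends only on the orbit), and the affine quotient $X_f\to X_f\GIT G$ sends open $G$-saturated sets to open sets; then one transfers the open-ness through the identification of $X_f\GIT G$ with $\pi^{-1}(\text{image of }X_f)$. Finally, the induced map $X^s/G\to \pi(X^s)$ is a homeomorphism and actually a geometric quotient, by verifying the universal property chart by chart.

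\textbf{Part (3).} The triviality of stabilizers on $X^s$ means $G$ acts freely on $X^s$. Since $X$ is nonsingular and $X^s$ is open in $X$ by part (1), $X^s$ is smooth. A free action of a reductive algebraic group on a smooth variety whose geometric quotient exists produces a principal $G$-bundle $X^s\to X^s/G$ in the étale topology, so $X^s/G$ is smooth and $\dim X^s/G=\dim X^s-\dim G=\dim X-\dim G$. Combined with (2), this exhibits $X^s/G$ as a smooth open subvariety of $X\GIT_\chi G$ of the asserted dimension.

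\textbf{Main obstacle.} The real work is in part (1); the local characterisation of stability on the $X_f$ and the semicontinuity arguments are delicate because the quotient is only quasi-projective and one must check the compatibility of the local affine GIT pictures under restriction from $X_f$ to $X_f\cap X_{f'}$. I would handle this by invoking Luna's étale slice theorem at points with closed orbit in $X^{ss}$, which reduces both the finite-stabilizer and closed-orbit conditions to open conditions on a slice and propagates them to a $G$-invariant open neighbourhood.
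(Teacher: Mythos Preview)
The paper does not give its own proof of this theorem: it is stated in the preliminaries as a background result with a citation to \cite{Ki16}, Theorem~9.23, and no argument is supplied. So there is nothing in the paper to compare your proposal against.

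That said, your sketch is essentially the standard textbook proof one finds in the cited reference and in Mumford's GIT: reduce to the affine charts $X_f$, use upper semicontinuity of stabilizer and fiber dimension for openness of the stable locus, identify $X^s/G$ with an open subset via the geometric quotient property, and invoke Luna's slice theorem for smoothness under a free action. One minor point: in Part~(2) you should be a bit more careful that ``open $G$-saturated sets map to open sets'' under the affine quotient; the correct statement is that the quotient map is open on $G$-saturated sets because the affine quotient is a good categorical quotient, and you should also verify that $X^s\cap X_f$ is saturated not just $G$-invariant (i.e., that no unstable orbit closure meets it), which follows since stable orbits are already closed in $X_f$. Otherwise the outline is sound.
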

 In order to have some characterization of semistability conditions we are going to recall some criteria which are particularly useful in explicit computations (for an example, see \S\ref{E.1}). 
 The first criterion we will state is \textit{Mumford's numerical criterion}. Indeed, it is common to find specific versions of this criterion when working with particular classes of varieties. For instance, in the case of Nakajima's quiver varieties it takes the form of Theorem \ref{ncN}.
 More generally, for bow varieties we will prove that it has the form of Theorem \ref{ncB}. However, before stating the Mumford's numerical criterion, we need to introduce more notation. Let $\lambda : \CC^\times \rightarrow G$ be a one parameter subgroup of $G$ and $\chi\in G^\vee$ a character. Then, we define $\langle\lambda,\chi\rangle$ as the unique integer such that $\chi\circ\lambda(t)=t^{\langle\lambda,\chi\rangle}$. Given $x \in X$ we say that $\lim_{t\to0} \lambda(t)\cdot x$ exists in $X$ if
$t\mapsto\lambda(t)\cdot x$ extends to a morphism of algebraic varieties $\AAA^1 \rightarrow X$. In such a case, the limit is defined as the image of the origin $0 \in \AAA^1$. Let us now state the criterion as formulated in \cite[Section 2]{king1994moduli}.
\begin{Theorem}[Mumford's numerical criterion]\label{ncM}
  Under the previous assumptions, the following hold:
  \begin{enumerate}
       \item  A point $x \in X$ is $\chi$-semistable if and only if for every one parameter subgroup $\lambda$ such that $\lim_{t\to0} \lambda(t)\cdot x$ exists in $X$,  $\langle\lambda,\chi\rangle\geq0$.
       \item A point $x \in X$ is $\chi$-stable if and only if for every non-trivial one parameter subgroup $\lambda$ such that $\lim_{t\to0} \lambda(t)\cdot x$ exists in $X$,  $\langle\lambda,\chi\rangle >0$.
   \end{enumerate} 
\end{Theorem}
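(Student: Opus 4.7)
The plan is to reduce the twisted setting to the classical affine Hilbert--Mumford situation via a standard $(x,z)$-trick, and then invoke the fundamental affine version, which asserts that for a closed $G$-invariant subvariety $Y \subseteq X$ and $y \in X$ with $\overline{G\cdot y}\cap Y \neq \emptyset$, there exists a one-parameter subgroup $\mu$ with $\lim_{t\to 0}\mu(t)\cdot y$ existing in $Y$. This latter statement will be taken as known (it is proved by an application of the Iwahori decomposition together with a valuative argument on $G$-orbits in reductive groups).

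First I would carry out the necessity direction, which is straightforward. Assume $x$ is $\chi$-semistable, pick $n>0$ and $f\in\CC[X]^{\chi^n,G}$ with $f(x)\neq 0$, and let $\lambda$ be a one-parameter subgroup with $\lim_{t\to 0}\lambda(t)\cdot x$ existing. Then
\begin{equation*}
f\bigl(\lambda(t)\cdot x\bigr)=\chi(\lambda(t))^n f(x)=t^{n\langle\lambda,\chi\rangle}f(x),
\end{equation*}
and the left-hand side extends to a morphism $\AAA^1\to\CC$, forcing $\langle\lambda,\chi\rangle\geq 0$. If in addition $x$ is $\chi$-stable and $\lambda$ is nontrivial, then $\langle\lambda,\chi\rangle=0$ would mean $f(\lambda(t)\cdot x)=f(x)\neq 0$ for all $t$, so the limit point $x_0$ satisfies $f(x_0)\neq 0$; combined with a finite-stabilizer and closed-orbit argument this forces $\lambda$ to act trivially, a contradiction. (I would flesh this out via the degeneration sending $G\cdot x$ to a strictly smaller orbit.)

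For sufficiency, I would use the trick of considering the affine variety $\widehat X \coloneqq X\times\CC$ with the twisted $G$-action $g\cdot(x,z)=\bigl(g\cdot x,\chi(g)^{-1}z\bigr)$. The key reformulation is that $f\in\CC[X]^{\chi^n,G}$ corresponds bijectively to the $G$-invariant function $\widehat f(x,z)\coloneqq z^n f(x)$ on $\widehat X$. Hence $x$ is $\chi$-semistable if and only if some $G$-invariant function on $\widehat X$ separates $(x,1)$ from $X\times\{0\}$, i.e.\ if and only if $\overline{G\cdot(x,1)}\cap\bigl(X\times\{0\}\bigr)=\emptyset$. Apply the affine Hilbert--Mumford criterion to $\widehat X$ and the closed invariant subvariety $X\times\{0\}$: if $x$ is not $\chi$-semistable, there is a one-parameter subgroup $\mu$ with
\begin{equation*}
\lim_{t\to 0}\mu(t)\cdot(x,1)=\lim_{t\to 0}\bigl(\mu(t)\cdot x,\; t^{-\langle\mu,\chi\rangle}\bigr)\in X\times\{0\}.
\end{equation*}
Existence of this limit forces $\lim_{t\to 0}\mu(t)\cdot x$ to exist in $X$ and $\langle\mu,\chi\rangle<0$, contradicting the hypothesis of part (1). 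The contrapositive of this gives (1).

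For part (2), I would extend the argument as follows. Suppose $x$ is $\chi$-semistable but not $\chi$-stable. Then either $G_x$ is infinite, in which case its connected component contains a nontrivial one-parameter subgroup $\lambda$ fixing $x$, giving $\langle\lambda,\chi\rangle=0$; or $G\cdot x$ is not closed in $X^{\chi\text{-}ss}$, so there is $x'\in\overline{G\cdot x}\setminus G\cdot x$ with $x'\in X^{\chi\text{-}ss}$. Applying the affine Hilbert--Mumford criterion inside $X^{\chi\text{-}ss}$ (viewed via the twisted $\widehat X$ picture, where now the boundary orbit stays away from $X\times\{0\}$) produces a nontrivial one-parameter subgroup $\mu$ with $\lim_{t\to 0}\mu(t)\cdot x\in\overline{G\cdot x'}$, and the pairing $\langle\mu,\chi\rangle$ is forced to be $0$ because the limit lies in the semistable locus and the $\chi^n$-semi-invariant values at $x$ and at the limit coincide up to a power of $t$ that must be constant. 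Either scenario contradicts the strict inequality in (2). Conversely, strict positivity of $\langle\lambda,\chi\rangle$ on all nontrivial $\lambda$ with existing limit prevents both a nontrivial stabilizer (it would give $\langle\lambda,\chi\rangle=0$) and a non-closed orbit in the semistable locus (by the same Hilbert--Mumford argument as above).

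The main obstacle is the sufficiency direction, which essentially rests on the affine Hilbert--Mumford criterion; the rest is packaging via the $(x,z)$-trick. The stability refinement is the most delicate bookkeeping, because one must carefully separate the contributions of $G_x$ and of the orbit closure inside $X^{\chi\text{-}ss}$, and argue that in both failure modes the constructed one-parameter subgroup has $\langle\mu,\chi\rangle=0$ rather than $<0$.
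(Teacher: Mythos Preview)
The paper does not actually prove this theorem: it is stated as a known result with a citation to \cite[Section 2]{king1994moduli}, and no proof is given. Your proposal, by contrast, supplies a genuine proof sketch, and the approach you take --- the $(x,z)$-trick to reduce $\chi$-semistability to closure-separation in $X\times\CC$, followed by an appeal to the affine Hilbert--Mumford criterion --- is precisely the argument in King's paper, which is the very reference the paper cites. So in effect you have reconstructed the proof the paper is pointing to.

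Your sketch is essentially correct. The necessity direction is clean. For sufficiency in part~(1), the reduction via $\widehat X = X\times\CC$ is exactly right and matches King's Lemma~2.2/Proposition~2.5. For part~(2), your two-case analysis (infinite stabilizer versus non-closed orbit) is the right structure; the only place where your write-up is slightly loose is the phrase ``applying the affine Hilbert--Mumford criterion inside $X^{\chi\text{-}ss}$'', since $X^{\chi\text{-}ss}$ is not affine. The standard fix is to work in $\widehat X$ throughout: stability of $x$ is equivalent to $G\cdot(x,1)$ being closed in $\widehat X$ with finite stabilizer, and one applies affine Hilbert--Mumford directly in $\widehat X$ to produce a one-parameter subgroup $\mu$ whose limit on $(x,1)$ exists but lies outside $G\cdot(x,1)$; since the limit has nonzero second coordinate (else semistability fails), one gets $\langle\mu,\chi\rangle=0$. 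This is a minor repackaging of what you wrote, not a gap.
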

Next, there is another useful characterization of semistability due to King (see \cite[Section 2]{king1994moduli}). Given a $G$-character $\chi$, we lift the $G$-action on $X$ to a $G$-action on $X\times\CC$ by: $g\cdot(x,z)=(gx,\chi(g)^{-1}z)$ with $g\in G, \ x \in X, z \in \CC$.
\begin{Theorem}[King's criterion]\label{ncK}
    Under the previous assumptions, the following hold:
    \begin{enumerate}
        \item A point $x$ in $X$ is semistable if and only if the closure of the orbit $G\cdot(x,z)$ does not intersect the zero section $X\times\{0\}$ for all $z\neq0$.
        \item A point $x$ in $X$ is stable if and only if for all $z\neq0$ the orbit of $(x,z)$ in $X\times\CC$ is closed and the stabilizer $G_{(x,z)}$ is finite.
    \end{enumerate}
\end{Theorem}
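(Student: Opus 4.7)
The plan is to translate both statements into affine GIT for the twisted $G$-action $g\cdot(x,z) = (gx,\chi(g)^{-1}z)$ on $X\times\CC$. The key bookkeeping observation is that a polynomial $F(x,z) = \sum_{n\geq 0} f_n(x)z^n$ is $G$-invariant for this action if and only if each component satisfies $f_n(gx) = \chi(g)^n f_n(x)$, i.e.\ $f_n \in \CC[X]^{\chi^n, G}$. Thus $\chi^n$-semi-invariants on $X$ are exactly the homogeneous components (in $z$) of honest $G$-invariants on $X\times\CC$, and the standard separation theorem for reductive group actions on affine varieties --- disjoint closed $G$-invariant subvarieties are separated by a $G$-invariant polynomial --- becomes available.

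For part (1), the forward direction is immediate: from a $\chi^n$-semi-invariant $f$ with $n\geq 1$ and $f(x)\neq 0$, I build $F(y,w)=f(y)w^n$, which is $G$-invariant, vanishes on $X\times\{0\}$, and satisfies $F(x,z)=f(x)z^n\neq 0$ for any $z\neq 0$; since $F$ is constant along orbits, the closure $\overline{G\cdot(x,z)}$ misses the zero section. For the converse, given non-intersection, the separation theorem produces a $G$-invariant $F$ vanishing on $X\times\{0\}$ with $F(x,z)\neq 0$; writing $F=\sum_{n\geq 1}f_n(x)z^n$ (the $n=0$ term must vanish), some $f_n(x)$ with $n\geq 1$ is nonzero, exhibiting $x$ as $\chi$-semistable.

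For part (2), I handle the stabilizer and closedness equivalences in turn. For $z\neq 0$ one has $G_{(x,z)} = \ker(\chi\colon G_x\to\CC^\times)$; if $G_x$ is finite then so is $G_{(x,z)}$, and conversely if $G_x^\circ$ is positive-dimensional then either $\chi|_{G_x^\circ}$ is trivial (and $G_{(x,z)}\supseteq G_x^\circ$ is infinite) or $\chi(G_x^\circ)=\CC^\times$ (and $G\cdot(x,z)\supseteq\{x\}\times\CC^\times$ has closure meeting the zero section, contradicting part (1)). Under orbit-closedness the two finiteness conditions are therefore equivalent. For the orbit-closedness equivalence, assume $G_x$ is finite throughout. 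If $G\cdot x$ is closed in $X^{ss}$ and $(y,w)\in\overline{G\cdot(x,z)}$, then part (1) forces $w\neq 0$ and $y\in X^{ss}$, hence $y\in G\cdot x$; the fiber of $G\cdot(x,z)$ over $y$ is a $G_x$-torsor and thus finite, so $(y,w)$ lies in it. Conversely, assume $G\cdot(x,z)$ is closed and take $y\in\overline{G\cdot x}\cap X^{ss}$ with a net $g_\alpha x\to y$; picking a $\chi^n$-semi-invariant $f$ with $f(y)\neq 0$ forces $f(x)\neq 0$ and $\chi(g_\alpha)^n = f(g_\alpha x)/f(x) \to f(y)/f(x)\neq 0$, so $\chi(g_\alpha)$ stays in a compact subset of $\CC^\times$. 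Passing to a subsequence, $(g_\alpha x,\chi(g_\alpha)^{-1}z)\to (y,w)$ with $w\neq 0$; by closedness this lies in $G\cdot(x,z)$, forcing $y\in G\cdot x$.

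The main obstacle is this last lifting step: without semistability of $y$, there is no way to prevent $\chi(g_\alpha)$ from drifting to $0$ or $\infty$, which would send the lifted net either onto the zero section or off to infinity and break the closedness argument. The semi-invariant $f$ plays exactly the role of a barrier in $\CC^\times$, and its existence is precisely the content of $y$ being semistable. This is the mechanism by which the strict inequality in Mumford's numerical criterion (stability vs.\ semistability) matches the promotion from semistable behaviour in $X$ to closed-orbit behaviour in $X\times\CC$.
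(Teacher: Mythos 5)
The paper does not actually prove this statement: it is recalled verbatim from King (\cite{king1994moduli}) and used as a black box, so there is no in-paper argument to compare yours against. Judged on its own, your proof follows the standard route and most of it is correct: the identification of $G$-invariants on $X\times\CC$ with graded families of $\chi^n$-semi-invariants, the separation-theorem argument for part (1), the computation $G_{(x,z)}=\ker(\chi|_{G_x})$ together with the dichotomy $\chi(G_x^\circ)\in\{\{1\},\CC^\times\}$, and the ``barrier'' argument showing $\chi(g_\alpha)$ stays in a compact subset of $\CC^\times$ are all exactly right.

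The one step that does not stand as written is in the implication ``$G\cdot x$ closed in $X^{ss}$ and $G_x$ finite $\Rightarrow$ $G\cdot(x,z)$ closed''. After you place $(y,w)\in\overline{G\cdot(x,z)}$ with $w\neq 0$ and $y\in G\cdot x$, you conclude that $(y,w)$ lies in the fiber of the orbit over $y$ ``because that fiber is finite''. Finiteness of that fiber does not by itself force a point of the closure lying over $y$ to belong to it: a priori $\overline{G\cdot(x,z)}\cap(\{y\}\times\CC)$ could strictly contain the orbit fiber $\{y\}\times\chi(g_0G_x)^{-1}z$, and the function-theoretic information only yields $z^n=w^n$ for the relevant exponents $n$, not membership in $\chi(G_x)$. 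The standard way to close this is a dimension count on the boundary: $\overline{G\cdot(x,z)}\setminus G\cdot(x,z)$ is a $G$-invariant closed subset of dimension strictly less than $\dim G\cdot(x,z)=\dim G$ (the stabilizer $G_{(x,z)}$ being finite), so every orbit contained in it has positive-dimensional stabilizer; if $(y,w)$ lay in this boundary, then $G_{(y,w)}\subseteq G_y$ would be positive-dimensional, contradicting the fact that $G_y$ is conjugate to the finite group $G_x$ because $y\in G\cdot x$. Hence $(y,w)\in G\cdot(x,z)$. With that insertion your argument is complete.
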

Let us finish this subsection by recalling the construction of symplectic quotients via GIT (see \cite{Ki16} and \cite{thomas2005notes}). Let $G$ be a complex algebraic reductive group and $X$ a complex nonsingular affine variety with an algebraic symplectic structure. Let us assume that there is a Hamiltonian action of $G$ on $X$ with a moment map $\mu: X \rightarrow \mathfrak{g}^*\coloneqq    \Lie(G)^*$. Let $\chi \in G^\vee$ be rational $G$-character and $\lambda \in \mathfrak{g}^*$ a $G$-invariant element. Then, we have an action of $G$ on the fiber $\mu^{-1}(\lambda)$. So, we can define the \textit{symplectic quotient} (or Hamiltonian reduction) of $X$ by $G$ with \textit{stability parameter} $\chi$ and \textit{deformation parameter} $\lambda$, as:
\begin{center}
    $\mathcal{M}_{\chi, \lambda} \coloneqq     X\HR_{\chi, \lambda}G=\mu^{-1}(\lambda)\GIT_{\chi} G$.
\end{center}
It can be proved that $\mathcal{M}_{\chi,\lambda}$ inherits a Poisson structure from $X$. If the open subset induced by stable points is smooth, then the restriction of the Poisson structure to it is non-degenerate (see Theorem 9.53 \cite{Ki16}).

\subsection{Nakajima quiver varieties}\label{NQV}
The purpose of this subsection is to briefly recall the construction of Nakajima quiver varieties via GIT. We refer the interested reader to the original works by Nakajima \cite{Nak94}, \cite{nakajima1998quiver} as well as \cite{ginzburg2009lectures} and \cite{Ki16}.

Let $Q=(Q_0,Q_1)$ be a (finite) quiver, with $Q_0$ the (finite) set of vertices and $Q_1$ the set of arrows. Let $t(e)$ and $h(e)$ be, respectively, the starting and ending points of an arrow $e \in Q_1$, i.e. $t(e) \xlongrightarrow{e} h(e)$.

Given $v \in \mathbb{N}^{Q_0}$ a \textit{dimension vector} for $Q$, we define the vector space of $v$-dimensional $Q$-representations by 
\begin{equation*}
    \mathrm{\mathrm{Rep}}(Q,v)=\bigoplus\limits_{e:i\rightarrow j \in Q_1}\Hom(V_i,V_j)
\end{equation*} 
 where $V_i = \CC^{v_i}$ for any $i \in Q_0$. We define $G_v=\prod\limits_{i \in Q_0} \GL_{v_i}$ the associated \textit{gauge group}. Thus, we have a natural linear action of $G_v$ on $\mathrm{\mathrm{Rep}}(Q,v)$ given by changing bases, that is
\begin{equation}\label{eq2.9}
(g_i)_{i \in Q_0} \cdot (x_e)_{e \in Q_1} = (g_{h(e)}x_e g_{t(e)}^{-1})_{e \in Q_1},
\end{equation}
where $(g_i)_i \in G_v$ and $(x_e)_e \in \mathrm{\mathrm{Rep}}(Q,v)$.  We notice that two representations are in the same $G_v$-orbit if and only if they are isomorphic $Q$-representations. 

Let us denote $\overline{Q}$ the double quiver of $Q$, that is the quiver obtained by adding to $Q$ the reverse arrow for each arrow in $Q$. Given $e \in Q_1$, we denote $e^*$ the reverse arrow in $\overline{Q}$. Thanks to the perfect trace pairing $\Hom(U,V) \times \Hom(V,U) \rightarrow \CC$, follows that $T^{*}\mathrm{\mathrm{Rep}}(Q,v)$ is nothing but the space of representations of dimension $v$ of the double quiver $\overline{Q}$ of $Q$, namely:
\begin{equation}\label{eq2.10}
T^{*}\mathrm{\mathrm{Rep}}(Q,v)=\mathrm{\mathrm{Rep}}(\overline{Q},v)=\bigoplus\limits_{\substack{e:i\rightarrow j \in Q_1}}\Bigl[\Hom(V_i,V_j)\oplus \Hom(V_j,V_i)\Bigr].\end{equation}
We denote by $(x_e,y_{e^*})_{e\in Q_1}$ an element of $\mathrm{\mathrm{Rep}}(\overline{Q},v)$. 
 Hence, through the previous identification, the lift of the $G_v$-action $(\ref{eq2.9})$ on $\T^*\Rep(Q,v)$ is again given by changing bases. We consider $\T^*\Rep(Q,v)$ with the standard symplectic structure: 
\begin{equation}\label{eq2.11}
    \omega=tr(dx\wedge dy),
\end{equation}
explicitly given by: 
\begin{equation*}\omega\bigl((x^1,y^1),(x^2,y^2)\bigr)\coloneqq    \sum\limits_{e\in Q_1} tr_{V_{t(e)}}\bigl(y^2_{e^*}x^1_e-y^1_{e^*}x^2_e\bigr).\end{equation*}
 Then, the $G_v$ action is Hamiltonian with standard moment map:
   \begin{equation}\label{eq2.12}
       \begin{gathered}
       \mu = \mu_v : \mathrm{\mathrm{Rep}}(\overline{Q},v)=\bigoplus\limits_{e:i\rightarrow j \in Q_1}\bigl[\Hom(V_i,V_j)\oplus \Hom(V_j,V_i)\bigr] \rightarrow \mathfrak{g}_v \\
       (x,y) \mapsto [x,y]\coloneqq    \Bigl( \sum\limits_{\substack{e \in Q_1 \\ h(e)=i}} x_e y_{e^*}-\sum\limits_{\substack{e \in Q_1\\ t(e)=i}}y_{e^*}x_e \Bigr)_{i \in Q_0}
    \end{gathered}
    \end{equation}
   where we have used the trace pairing to identify  the Lie algebra of the gauge group, $\mathfrak{g}_v\coloneqq   \Lie(G_v)= \bigoplus\limits_{i \in Q_0}\mathfrak{gl}_{v_i}$, with its dual. 

There is also another important quiver associated to $Q$, called the \textit{framed quiver associated to} $Q$, that will be denoted by $Q^F$. The set of vertices of $Q^F$ is defined as $Q_0 \sqcup Q_0'$ where $Q_0'$ is another copy of the set $Q_0$ equipped with a bijection $Q_0' \rightarrow Q_0$, $i' \mapsto i$. The set of arrows is defined as a disjoint union of $Q_1$ and a set of arrows $\{I_i:i' \mapsto i\}_{i \in Q_0}$.
 \begin{Example}\label{ex2.2}\text{}
 \begin{center}
        \begin{tikzpicture}[x=0.75pt,y=0.75pt,yscale=-1,xscale=1]

\draw   (133,100.1) .. controls (133,96.18) and (136.18,93) .. (140.1,93) .. controls (144.02,93) and (147.2,96.18) .. (147.2,100.1) .. controls (147.2,104.02) and (144.02,107.2) .. (140.1,107.2) .. controls (136.18,107.2) and (133,104.02) .. (133,100.1) -- cycle ;
\draw   (183,100.1) .. controls (183,96.18) and (186.18,93) .. (190.1,93) .. controls (194.02,93) and (197.2,96.18) .. (197.2,100.1) .. controls (197.2,104.02) and (194.02,107.2) .. (190.1,107.2) .. controls (186.18,107.2) and (183,104.02) .. (183,100.1) -- cycle ;
\draw    (147.2,100.1) -- (181,100.1) ;
\draw [shift={(183,100.1)}, rotate = 180] [color={rgb, 255:red, 0; green, 0; blue, 0 }  ][line width=0.75]    (7.65,-3.43) .. controls (4.86,-1.61) and (2.31,-0.47) .. (0,0) .. controls (2.31,0.47) and (4.86,1.61) .. (7.65,3.43)   ;
\draw    (133,100.1) .. controls (100.86,89.81) and (139.59,62.71) .. (140.12,91.19) ;
\draw [shift={(140.1,93)}, rotate = 272.01] [color={rgb, 255:red, 0; green, 0; blue, 0 }  ][line width=0.75]    (7.65,-3.43) .. controls (4.86,-1.61) and (2.31,-0.47) .. (0,0) .. controls (2.31,0.47) and (4.86,1.61) .. (7.65,3.43)   ;
\draw   (298,100.1) .. controls (298,96.18) and (301.18,93) .. (305.1,93) .. controls (309.02,93) and (312.2,96.18) .. (312.2,100.1) .. controls (312.2,104.02) and (309.02,107.2) .. (305.1,107.2) .. controls (301.18,107.2) and (298,104.02) .. (298,100.1) -- cycle ;
\draw   (348,100.1) .. controls (348,96.18) and (351.18,93) .. (355.1,93) .. controls (359.02,93) and (362.2,96.18) .. (362.2,100.1) .. controls (362.2,104.02) and (359.02,107.2) .. (355.1,107.2) .. controls (351.18,107.2) and (348,104.02) .. (348,100.1) -- cycle ;
\draw    (312.2,100.1) -- (346,100.1) ;
\draw [shift={(348,100.1)}, rotate = 180] [color={rgb, 255:red, 0; green, 0; blue, 0 }  ][line width=0.75]    (7.65,-3.43) .. controls (4.86,-1.61) and (2.31,-0.47) .. (0,0) .. controls (2.31,0.47) and (4.86,1.61) .. (7.65,3.43)   ;
\draw    (298,100.1) .. controls (265.86,89.81) and (304.59,62.71) .. (305.12,91.19) ;
\draw [shift={(305.1,93)}, rotate = 272.01] [color={rgb, 255:red, 0; green, 0; blue, 0 }  ][line width=0.75]    (7.65,-3.43) .. controls (4.86,-1.61) and (2.31,-0.47) .. (0,0) .. controls (2.31,0.47) and (4.86,1.61) .. (7.65,3.43)   ;
\draw   (299,134) -- (311.2,134) -- (311.2,146.2) -- (299,146.2) -- cycle ;
\draw   (349,134) -- (361.2,134) -- (361.2,146.2) -- (349,146.2) -- cycle ;
\draw    (305.2,133.6) -- (305.11,109.2) ;
\draw [shift={(305.1,107.2)}, rotate = 89.78] [color={rgb, 255:red, 0; green, 0; blue, 0 }  ][line width=0.75]    (7.65,-2.3) .. controls (4.86,-0.97) and (2.31,-0.21) .. (0,0) .. controls (2.31,0.21) and (4.86,0.98) .. (7.65,2.3)   ;
\draw    (355.2,134.6) -- (355.11,109.2) ;
\draw [shift={(355.1,107.2)}, rotate = 89.79] [color={rgb, 255:red, 0; green, 0; blue, 0 }  ][line width=0.75]    (7.65,-2.3) .. controls (4.86,-0.97) and (2.31,-0.21) .. (0,0) .. controls (2.31,0.21) and (4.86,0.98) .. (7.65,2.3)   ;

\draw (100,74.4) node [anchor=north west][inner sep=0.75pt]    {$Q$};
\draw (258,72.4) node [anchor=north west][inner sep=0.75pt]    {$Q^{F}$};

\end{tikzpicture}
    \end{center}

\end{Example}
A dimension vector for $Q^F$ is a pair of dimension vectors for $Q$, i.e. $(v,w) \in \NN^{I}\times\NN^{I}$, which we will call \textit{framed dimension vector} for $Q$. The space of representations of $Q^F$ of dimension vector $(v,w)$ is $\mathrm{\mathrm{Rep}}(Q,v,w)=\mathrm{\mathrm{Rep}}(Q,v) \oplus \bigoplus\limits_{i \in Q_0}\Hom(\CC^{w_i},\CC^{v_{i}})$. We will also use the notation $W_i=\CC^{w_i}$ for any $i \in Q_0$. 
\begin{Example}For the quiver of Example \ref{ex2.2}, we have:
\begin{center}
    $\mathrm{\mathrm{Rep}}(Q,v,w)=$\begin{tikzcd}
\mathbb{C}^{v_1} \arrow[r, "x_{21}"]  \arrow["x_{11}"', loop, distance=2em, in=125, out=55] & \mathbb{C}^{v_2}  \\
\mathbb{C}^{w_1}  \arrow[u, "I_1"']                                                                                           & \mathbb{C}^{w_2} \arrow[u, "I_2"]                 
\end{tikzcd}
\end{center}
\end{Example}
$\Rep(Q,v,w)$ is the space of $(v,w)$-dimensional $Q^F$-representations, then $G_{v,w}=G_v\times G_w$ acts by changing bases. However, we consider $Q^F$ as the framing of $Q$ and so we will consider $\Rep(Q,v,w)$ as a $G_v$-module. The lift of the $G_v$-action on $\T^*\mathrm{\mathrm{Rep}}(Q,v,w)=\mathrm{\mathrm{Rep}}(\overline{Q^F},(v,w))$ is again Hamiltonian.
Explicitly, let $(x,y,J,I) \in \mathrm{\mathrm{Rep}}(\overline{Q^F},v,w)$ where $x\in \mathrm{\mathrm{Rep}}(Q,v)$, $y\in \mathrm{\mathrm{Rep}}(Q^{op},v)$, $I\in \bigoplus\limits_{i \in I}\Hom(W_i,V_i)$ and $J\in \bigoplus\limits_{i\in I}\Hom(V_i,W_i)$. Then, the standard symplectic form is given by:
\begin{equation*}
    \omega = tr(dx \wedge dy + dI \wedge dJ).
\end{equation*}
The $G_v$-action is:
\begin{equation}\label{eq.2.16}
    g\cdot(x,y,I,J)=(gxg^{-1},gyg^{-1},gI,Jg^{-1}).
\end{equation}
where $g \in G_v$ and $(x,y,J,I) \in \mathrm{Rep}(\overline{Q^F},v,w)$.
The standard moment map is:
\begin{align*}
    \mu=\mu_{v,w}:\mathrm{Rep}(\overline{Q^F},v,w) &\longrightarrow \mathfrak{g}_v\\
    (x,y,I,J) &\mapsto [x,y]+IJ
\end{align*}
 \begin{Remark}
    Let $Q$ be a quiver and $v \in \NN^{Q_0}$ be a dimension vector for $Q$.
    \begin{enumerate}[label=(\roman*)]
        \item Rational characters of $G_v$ have the form:
    \begin{equation}\label{eq2.17}
    \chi_{\theta}:G_v \rightarrow \CC^{\times}, (g_i)_{i \in Q_0} \mapsto\prod\limits_{i \in Q_0} det(g_i)^{-\theta_i}
    \end{equation}
    for some $\theta=(\theta_i) \in \mathbb{Z}^{Q_0}$. So, we will regard stability parameters for $G_v$-actions as elements in $\ZZ^{Q_0}$. 
    \item Deformation parameters for a Hamiltonian $G_v$-action are given by elements in $(\mathfrak{g}_v)^{G_v}$, i.e. elements in $\mathfrak{g}_v$ fixed by $G_v$. Then the set of deformation parameters can be identified with $\CC^{Q_0}$ after diagonally embedding the latter in $\mathfrak{g}_v$.
    \end{enumerate}
 \end{Remark} 
\begin{Definition}
    Let $(\lambda,\theta) \in \CC^{Q_0} \times \ZZ^{Q_0}$ be fixed parameters. We define the \textit{Nakajima quiver variety} associated to a quiver $Q$ and framed dimension vector $(v,w) \in \NN^{Q_0}\times\NN^{Q_0}$ with parameters $(\lambda,\theta)$:
    \begin{equation*}
        \mathcal{M}_{\lambda,\theta}(Q,v,w)\coloneqq    \mathcal{M}_{\lambda,\theta}(v,w)\coloneqq    \mathrm{Rep}(\overline{Q^F},v,w) \HR_{\lambda,\theta}G_v\coloneqq    \mu^{-1}(\lambda)\GIT_{\theta}G_{v}
    \end{equation*}
    where $\GIT_\theta\coloneqq    \GIT_{\chi_\theta}$.
\end{Definition}
 Let us recall a characterization of semistability condition for the $G_v$-action on $\mathrm{Rep}(\overline{Q^{F}},v,w)$, which is just a special case of Mamford's numerical criterion (Theorem \ref{ncM}).
\begin{Theorem}[Theorem 10.32 \cite{Ki16}]\label{ncN}
        Let $Q$ be a quiver, $(v,w)\in\NN^{Q_0}\times\NN^{Q_0}$ and $\theta \in \ZZ^{Q_0}$. Then an element $(z,I,J) \in \mathrm{Rep}(\overline{Q^F},v,w)$ is $\theta$-semistable if and only if given a $\overline{Q}$-subrepresentation $V' \subset V$ we have:
        \begin{itemize}
            \item $V' \subset \Ker J \Rightarrow \theta \cdot \sdim V' \leq 0$.
            \item $V' \supset \Ima I \Rightarrow \theta \cdot \codim V' \geq 0$.
        \end{itemize}
    \end{Theorem}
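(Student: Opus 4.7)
The plan is to apply Mumford's numerical criterion (Theorem \ref{ncM}) to the action of $G_v$ on $\mathrm{Rep}(\overline{Q^F},v,w)$ with character $\chi_\theta$. Up to conjugation, any one-parameter subgroup $\lambda : \CC^\times \to G_v$ is diagonalizable and is determined by weight decompositions $V_i = \bigoplus_{n\in\ZZ} V_i(n)$; let $V_i^{\geq k}=\bigoplus_{n\geq k}V_i(n)$ denote the associated decreasing filtration. A direct weight analysis using the action formula~(\ref{eq.2.16}) shows that $\lim_{t\to 0}\lambda(t)\cdot(x,y,I,J)$ exists in $\mathrm{Rep}(\overline{Q^F},v,w)$ if and only if (i) each $V^{\geq k}$ is a $\overline{Q}$-subrepresentation of $V$; (ii) $\Ima I\subset V^{\geq 0}$; and (iii) $V^{\geq 1}\subset \Ker J$. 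Using the description of characters~(\ref{eq2.17}) one computes
\begin{equation*}
\langle\lambda,\chi_\theta\rangle = -\sum_{i\in Q_0}\theta_i\sum_{n\in\ZZ} n\dim V_i(n).
\end{equation*}

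For necessity I would specialize to two-step filtrations. Given a $\overline{Q}$-subrepresentation $V'\subset\Ker J$, place $V'$ at weight $1$ and a chosen complement at weight $0$: conditions (i)-(iii) hold, and the pairing becomes $\langle\lambda,\chi_\theta\rangle=-\theta\cdot\sdim V'$, so Mumford's criterion forces $\theta\cdot\sdim V'\leq 0$. Dually, for $V'\supset\Ima I$, place $V'$ at weight $0$ and a complement at weight $-1$: this yields $\langle\lambda,\chi_\theta\rangle=\theta\cdot\codim V'$, producing the second inequality.

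For sufficiency, the key step is Abel summation, which gives
\begin{equation*}
\sum_{n\in\ZZ} n\dim V_i(n) = \sum_{n\geq 1}\dim V_i^{\geq n} - \sum_{n\leq 0}\codim V_i^{\geq n},
\end{equation*}
and therefore
\begin{equation*}
\langle\lambda,\chi_\theta\rangle = \sum_{n\geq 1}\bigl(-\theta\cdot\sdim V^{\geq n}\bigr) + \sum_{n\leq 0}\bigl(\theta\cdot\codim V^{\geq n}\bigr).
\end{equation*}
For $n\geq 1$, condition (iii) together with $V^{\geq n}\subset V^{\geq 1}$ gives $V^{\geq n}\subset\Ker J$, so the hypothesis renders each summand in the first sum non-negative; for $n\leq 0$, condition (ii) together with $V^{\geq n}\supset V^{\geq 0}$ gives $V^{\geq n}\supset\Ima I$, making the second family non-negative too. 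Hence $\langle\lambda,\chi_\theta\rangle\geq 0$ for every admissible $\lambda$, and Theorem~\ref{ncM} yields $\theta$-semistability.

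The main obstacle is purely bookkeeping: one must verify that the weight analysis of the lifted action on $I$ and $J$ produces exactly the bounds $\Ima I\subset V^{\geq 0}$ and $V^{\geq 1}\subset\Ker J$ (rather than any shifted versions arising from sign conventions in~(\ref{eq2.17})), and that the Abel summation identity is applied with the correct signs so that the non-negativity of each summand lines up precisely with the two cases in the statement.
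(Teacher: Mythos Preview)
Your proof is correct and follows the standard approach via Mumford's numerical criterion: parametrize one-parameter subgroups by weight filtrations, identify the conditions for the limit to exist, and use Abel summation to decompose the pairing. The paper itself does not prove this statement---it is quoted verbatim as Theorem~10.32 from \cite{Ki16} with no accompanying argument---so there is no in-paper proof to compare against; your argument is precisely the one that underlies the cited result.
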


\subsection{Example: \texorpdfstring{$\T^{*}\mathbb{P}^1$}{}}\label{E.1}
In this subsection we explicitly describe one of the simplest Nakajima's quiver varieties. Later, we will recover the same variety as a bow variety and this will work as a guide to understand the relation between Nakajima's quiver varieties and Cherkis bow varieties. Let $Q$ be the $A_1$-quiver, i.e. a single vertex and no arrows. Let $(v,w)=(1,2)$, $\lambda=0$ and $\theta \in \NN^+$. So, $G_v=\CC^\times$ and
\begin{gather*}
    \mathrm{Rep}(\overline{Q^F},v,w)=\Bigl\{ \CC \mathrel{\mathop{\rightleftarrows}^{\mathrm{J}}_{\mathrm{I}}} \CC^2 \Bigr\} \\
    \mathrm{Rep}(\overline{Q^F},v,w)^{\theta-ss}=\Bigl\{\CC \mathrel{\mathop{\rightleftarrows}^{\mathrm{J}}_{\mathrm{I}}} \CC^2, \ \vert \ J\text{ is injective }\Bigr\}
\end{gather*}
where the second identification follows from Theorem \ref{ncN}. The associated moment map is
\begin{equation*}
    \mu : \Bigl\{ \CC \mathrel{\mathop{\rightleftarrows}^{\mathrm{J}}_{\mathrm{I}}} \CC^2 \Bigr\} \rightarrow \CC, \ \ \bigl(I,J  \bigr) \mapsto IJ,
\end{equation*}
hence we have
\begin{equation*}
    \Bigl(\mu^{-1}(0)\Bigr)^{\theta-ss}=\Bigl\{\CC \mathrel{\mathop{\rightleftarrows}^{\mathrm{J}}_{\mathrm{I}}} \CC^2, \ \vert \ J\text{ is injective and }IJ=0\Bigr\}.
\end{equation*}
Recalling that \begin{equation*}
    \T^*\mathbb{P}^{1}\cong\Bigl\{\bigl(L,A) \ \vert \ L\text{ is a line in }\CC^2, \ A \in M_2(\CC), \ AL=0, \ \Ima A \subseteq L\},
\end{equation*}
we have
\begin{equation*}
    \mathcal{M}_{0,\theta}\bigl(1,2\bigr)\cong \T^*\mathbb{P}^1
\end{equation*}
for any $\theta >0$.

\section{Quiver description of bow varieties}\label{S3}
In this section we will construct bow varieties as Poisson algebraic varieties. Starting from a generalization of quivers called \textit{bows}, we can associate a symplectic affine variety with a natural Hamiltonian group action. Then, we define bow varieties as Hamiltonian reductions.

\subsection{Bows}\label{bows}
Let us recall the definition of bow (see \cite{cherkis2011instantons}) and its graphical representation.

A \textit{bow} $B$ is defined by the following data.
\begin{itemize}
    \item A collection $\II$ of oriented closed distinct intervals, called \textit{wavy lines}.
    \item A collection $\EE$ of \textit{arrows}, such that each edge $e \in \EE$ begins at the end of some interval $t(e)$ and ends at the beginning of some (possibly the same) interval $h(e)$. 
\end{itemize}

   \begin{Remark}
        Let us denote by $l_\sigma$ the length of an interval $\sigma$. It is clear from the definition that a bow degenerates into a quiver as all the interval lengths $l_\sigma$ go to $0$. Then, we can talk of the \textit{underlying quiver of a bow} which determines the form of the bow. For example, we will say that a bow is a \textit{Dynkin bow of affine type $A$} if the underlying quiver is a Dynkin quiver of affine type $A$, as in Figure \ref{fig:2}.
         \end{Remark} 
    
     In order to graphically represent a bow, we represent an interval by a \textit{wavy line} \begin{tikzpicture}
    \draw[decorate, decoration = {snake, segment length = .2cm}] (0,0) -- (1,0);
\end{tikzpicture}  and an oriented edge by an arrow $\rightarrow$.  
 \begin{Example}\label{E.2} Some examples of bows.
\begin{figure}[H]
\begin{center}
\begin{tikzpicture}[x=0.75pt,y=0.75pt,yscale=-1,xscale=1]

\draw    (101.2,100.8) .. controls (102.87,99.13) and (104.53,99.13) .. (106.2,100.8) .. controls (107.87,102.47) and (109.53,102.47) .. (111.2,100.8) .. controls (112.87,99.13) and (114.53,99.13) .. (116.2,100.8) .. controls (117.87,102.47) and (119.53,102.47) .. (121.2,100.8) .. controls (122.87,99.13) and (124.53,99.13) .. (126.2,100.8) .. controls (127.87,102.47) and (129.53,102.47) .. (131.2,100.8) .. controls (132.87,99.13) and (134.53,99.13) .. (136.2,100.8) .. controls (137.87,102.47) and (139.53,102.47) .. (141.2,100.8) .. controls (142.87,99.13) and (144.53,99.13) .. (146.2,100.8) .. controls (147.87,102.47) and (149.53,102.47) .. (151.2,100.8) .. controls (152.87,99.13) and (154.53,99.13) .. (156.2,100.8) .. controls (157.87,102.47) and (159.53,102.47) .. (161.2,100.8) .. controls (162.87,99.13) and (164.53,99.13) .. (166.2,100.8) .. controls (167.87,102.47) and (169.53,102.47) .. (171.2,100.8) .. controls (172.87,99.13) and (174.53,99.13) .. (176.2,100.8) .. controls (177.87,102.47) and (179.53,102.47) .. (181.2,100.8) .. controls (182.87,99.13) and (184.53,99.13) .. (186.2,100.8) -- (189.2,100.8) -- (189.2,100.8) ;
\draw [shift={(148.8,100.8)}, rotate = 180] [color={rgb, 255:red, 0; green, 0; blue, 0 }  ][line width=0.75]    (6.56,-2.94) .. controls (4.17,-1.38) and (1.99,-0.4) .. (0,0) .. controls (1.99,0.4) and (4.17,1.38) .. (6.56,2.94)   ;
\draw    (101.2,100.8) .. controls (102.2,54.6) and (190.2,55.6) .. (189.2,100.8) ;
\draw [shift={(141.33,66.67)}, rotate = 358.84] [color={rgb, 255:red, 0; green, 0; blue, 0 }  ][line width=0.75]    (6.56,-2.94) .. controls (4.17,-1.38) and (1.99,-0.4) .. (0,0) .. controls (1.99,0.4) and (4.17,1.38) .. (6.56,2.94)   ;

\end{tikzpicture}
\end{center}
\caption{Jordan bow} \label{fig:1}
\end{figure}
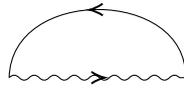
\begin{figure}[H]
    \centering
    \begin{tikzpicture}[x=0.75pt,y=0.75pt,yscale=-1,xscale=1]

\draw    (165.86,125.27) -- (182.14,164.58) ;
\draw [shift={(175.38,148.25)}, rotate = 247.5] [color={rgb, 255:red, 0; green, 0; blue, 0 }  ][line width=0.75]    (6.56,-2.94) .. controls (4.17,-1.38) and (1.99,-0.4) .. (0,0) .. controls (1.99,0.4) and (4.17,1.38) .. (6.56,2.94)   ;
\draw    (221.46,69.67) -- (182.14,85.95) ;
\draw [shift={(198.47,79.19)}, rotate = 337.5] [color={rgb, 255:red, 0; green, 0; blue, 0 }  ][line width=0.75]    (6.56,-2.94) .. controls (4.17,-1.38) and (1.99,-0.4) .. (0,0) .. controls (1.99,0.4) and (4.17,1.38) .. (6.56,2.94)   ;
\draw    (221.46,180.87) -- (260.77,164.58) ;
\draw [shift={(244.44,171.35)}, rotate = 157.5] [color={rgb, 255:red, 0; green, 0; blue, 0 }  ][line width=0.75]    (6.56,-2.94) .. controls (4.17,-1.38) and (1.99,-0.4) .. (0,0) .. controls (1.99,0.4) and (4.17,1.38) .. (6.56,2.94)   ;
\draw    (277.06,125.27) -- (260.77,85.95) ;
\draw [shift={(267.54,102.28)}, rotate = 67.5] [color={rgb, 255:red, 0; green, 0; blue, 0 }  ][line width=0.75]    (6.56,-2.94) .. controls (4.17,-1.38) and (1.99,-0.4) .. (0,0) .. controls (1.99,0.4) and (4.17,1.38) .. (6.56,2.94)   ;
\draw    (182.14,85.95) .. controls (183.05,88.13) and (182.41,89.67) .. (180.23,90.57) .. controls (178.05,91.47) and (177.41,93.01) .. (178.31,95.19) .. controls (179.22,97.37) and (178.58,98.91) .. (176.4,99.81) .. controls (174.22,100.71) and (173.58,102.25) .. (174.49,104.43) .. controls (175.39,106.61) and (174.75,108.15) .. (172.57,109.05) .. controls (170.39,109.95) and (169.75,111.49) .. (170.66,113.67) .. controls (171.57,115.85) and (170.93,117.39) .. (168.75,118.29) .. controls (166.57,119.19) and (165.93,120.73) .. (166.83,122.91) -- (165.86,125.27) -- (165.86,125.27) ;
\draw [shift={(172.62,108.94)}, rotate = 292.5] [color={rgb, 255:red, 0; green, 0; blue, 0 }  ][line width=0.75]    (6.56,-2.94) .. controls (4.17,-1.38) and (1.99,-0.4) .. (0,0) .. controls (1.99,0.4) and (4.17,1.38) .. (6.56,2.94)   ;
\draw    (260.77,85.95) .. controls (258.59,86.86) and (257.05,86.22) .. (256.15,84.04) .. controls (255.25,81.86) and (253.71,81.22) .. (251.53,82.12) .. controls (249.35,83.03) and (247.81,82.39) .. (246.91,80.21) .. controls (246.01,78.03) and (244.47,77.39) .. (242.29,78.3) .. controls (240.11,79.2) and (238.57,78.56) .. (237.67,76.38) .. controls (236.77,74.2) and (235.23,73.56) .. (233.05,74.47) .. controls (230.87,75.38) and (229.33,74.74) .. (228.43,72.56) .. controls (227.54,70.38) and (226,69.74) .. (223.82,70.64) -- (221.46,69.67) -- (221.46,69.67) ;
\draw [shift={(237.79,76.43)}, rotate = 22.5] [color={rgb, 255:red, 0; green, 0; blue, 0 }  ][line width=0.75]    (6.56,-2.94) .. controls (4.17,-1.38) and (1.99,-0.4) .. (0,0) .. controls (1.99,0.4) and (4.17,1.38) .. (6.56,2.94)   ;
\draw    (182.14,164.58) .. controls (184.32,163.68) and (185.86,164.32) .. (186.76,166.5) .. controls (187.66,168.68) and (189.2,169.32) .. (191.38,168.41) .. controls (193.56,167.5) and (195.1,168.14) .. (196,170.32) .. controls (196.9,172.5) and (198.44,173.14) .. (200.62,172.24) .. controls (202.8,171.33) and (204.34,171.97) .. (205.24,174.15) .. controls (206.14,176.33) and (207.68,176.97) .. (209.86,176.06) .. controls (212.04,175.16) and (213.58,175.8) .. (214.48,177.98) .. controls (215.38,180.16) and (216.92,180.8) .. (219.1,179.89) -- (221.46,180.87) -- (221.46,180.87) ;
\draw [shift={(205.12,174.1)}, rotate = 202.5] [color={rgb, 255:red, 0; green, 0; blue, 0 }  ][line width=0.75]    (6.56,-2.94) .. controls (4.17,-1.38) and (1.99,-0.4) .. (0,0) .. controls (1.99,0.4) and (4.17,1.38) .. (6.56,2.94)   ;
\draw  [dash pattern={on 0.84pt off 2.51pt}]  (260.77,164.58) -- (277.06,125.27) ;

\end{tikzpicture}
    \caption{Bow of affine type $A$}
    \label{fig:2}
\end{figure}
\begin{figure}[H]
    \centering
\tikzset{every picture/.style={line width=0.75pt}} 

\begin{tikzpicture}[x=0.75pt,y=0.75pt,yscale=-1,xscale=1]

\draw    (141.2,175.86) .. controls (142.9,174.23) and (144.56,174.27) .. (146.19,175.98) .. controls (147.82,177.68) and (149.49,177.72) .. (151.19,176.09) .. controls (152.9,174.46) and (154.56,174.5) .. (156.19,176.21) .. controls (157.82,177.91) and (159.49,177.95) .. (161.19,176.32) .. controls (162.9,174.69) and (164.56,174.73) .. (166.19,176.44) .. controls (167.82,178.14) and (169.49,178.18) .. (171.19,176.55) .. controls (172.9,174.92) and (174.56,174.96) .. (176.19,176.67) .. controls (177.82,178.38) and (179.48,178.42) .. (181.19,176.79) .. controls (182.89,175.16) and (184.55,175.2) .. (186.18,176.9) .. controls (187.81,178.61) and (189.47,178.65) .. (191.18,177.02) .. controls (192.88,175.39) and (194.55,175.43) .. (196.18,177.13) -- (199.92,177.22) -- (199.92,177.22) ;
\draw [shift={(175.35,176.65)}, rotate = 181.33] [color={rgb, 255:red, 0; green, 0; blue, 0 }  ][line width=0.75]    (8.74,-3.92) .. controls (5.56,-1.84) and (2.65,-0.53) .. (0,0) .. controls (2.65,0.53) and (5.56,1.84) .. (8.74,3.92)   ;
\draw    (199.92,177.22) -- (257.2,177.15) ;
\draw [shift={(233.36,177.18)}, rotate = 179.93] [color={rgb, 255:red, 0; green, 0; blue, 0 }  ][line width=0.75]    (8.74,-3.92) .. controls (5.56,-1.84) and (2.65,-0.53) .. (0,0) .. controls (2.65,0.53) and (5.56,1.84) .. (8.74,3.92)   ;
\draw    (257.2,177.15) .. controls (258.87,175.48) and (260.53,175.48) .. (262.2,177.15) .. controls (263.87,178.82) and (265.53,178.82) .. (267.2,177.15) .. controls (268.87,175.48) and (270.53,175.48) .. (272.2,177.15) .. controls (273.87,178.82) and (275.53,178.82) .. (277.2,177.15) .. controls (278.87,175.48) and (280.53,175.48) .. (282.2,177.15) .. controls (283.87,178.82) and (285.53,178.82) .. (287.2,177.15) .. controls (288.87,175.48) and (290.53,175.48) .. (292.2,177.15) .. controls (293.87,178.82) and (295.53,178.82) .. (297.2,177.15) .. controls (298.87,175.48) and (300.53,175.48) .. (302.2,177.15) .. controls (303.87,178.82) and (305.53,178.82) .. (307.2,177.15) .. controls (308.87,175.48) and (310.53,175.48) .. (312.2,177.15) -- (314.76,177.15) -- (314.76,177.15) ;
\draw [shift={(290.78,177.15)}, rotate = 180] [color={rgb, 255:red, 0; green, 0; blue, 0 }  ][line width=0.75]    (8.74,-3.92) .. controls (5.56,-1.84) and (2.65,-0.53) .. (0,0) .. controls (2.65,0.53) and (5.56,1.84) .. (8.74,3.92)   ;
\draw    (314.76,177.15) -- (373.05,177.15) ;
\draw [shift={(348.71,177.15)}, rotate = 180] [color={rgb, 255:red, 0; green, 0; blue, 0 }  ][line width=0.75]    (8.74,-3.92) .. controls (5.56,-1.84) and (2.65,-0.53) .. (0,0) .. controls (2.65,0.53) and (5.56,1.84) .. (8.74,3.92)   ;
\draw    (401.83,177.15) .. controls (403.5,175.48) and (405.16,175.48) .. (406.83,177.15) .. controls (408.5,178.82) and (410.16,178.82) .. (411.83,177.15) .. controls (413.5,175.48) and (415.16,175.48) .. (416.83,177.15) .. controls (418.5,178.82) and (420.16,178.82) .. (421.83,177.15) .. controls (423.5,175.48) and (425.16,175.48) .. (426.83,177.15) .. controls (428.5,178.82) and (430.16,178.82) .. (431.83,177.15) .. controls (433.5,175.48) and (435.16,175.48) .. (436.83,177.15) .. controls (438.5,178.82) and (440.16,178.82) .. (441.83,177.15) .. controls (443.5,175.48) and (445.16,175.48) .. (446.83,177.15) .. controls (448.5,178.82) and (450.16,178.82) .. (451.83,177.15) .. controls (453.5,175.48) and (455.16,175.48) .. (456.83,177.15) -- (457.24,177.15) -- (457.24,177.15) ;
\draw [shift={(434.34,177.15)}, rotate = 180] [color={rgb, 255:red, 0; green, 0; blue, 0 }  ][line width=0.75]    (8.74,-3.92) .. controls (5.56,-1.84) and (2.65,-0.53) .. (0,0) .. controls (2.65,0.53) and (5.56,1.84) .. (8.74,3.92)   ;
\draw  [dash pattern={on 0.84pt off 2.51pt}]  (373.05,177.15) -- (401.83,177.15) ;
\draw    (114,140.62) -- (141.2,175.86) ;
\draw [shift={(130.53,162.04)}, rotate = 232.34] [color={rgb, 255:red, 0; green, 0; blue, 0 }  ][line width=0.75]    (8.74,-3.92) .. controls (5.56,-1.84) and (2.65,-0.53) .. (0,0) .. controls (2.65,0.53) and (5.56,1.84) .. (8.74,3.92)   ;
\draw    (114,213.69) -- (141.2,175.86) ;
\draw [shift={(130.4,190.88)}, rotate = 125.72] [color={rgb, 255:red, 0; green, 0; blue, 0 }  ][line width=0.75]    (8.74,-3.92) .. controls (5.56,-1.84) and (2.65,-0.53) .. (0,0) .. controls (2.65,0.53) and (5.56,1.84) .. (8.74,3.92)   ;
\draw    (457.24,177.15) -- (487.47,213.69) ;
\draw [shift={(475.41,199.12)}, rotate = 230.4] [color={rgb, 255:red, 0; green, 0; blue, 0 }  ][line width=0.75]    (8.74,-3.92) .. controls (5.56,-1.84) and (2.65,-0.53) .. (0,0) .. controls (2.65,0.53) and (5.56,1.84) .. (8.74,3.92)   ;
\draw    (457.24,177.15) -- (488.3,144.47) ;
\draw [shift={(476.08,157.33)}, rotate = 133.54] [color={rgb, 255:red, 0; green, 0; blue, 0 }  ][line width=0.75]    (8.74,-3.92) .. controls (5.56,-1.84) and (2.65,-0.53) .. (0,0) .. controls (2.65,0.53) and (5.56,1.84) .. (8.74,3.92)   ;
\draw    (70.82,139.71) .. controls (72.52,138.08) and (74.19,138.11) .. (75.82,139.81) .. controls (77.45,141.51) and (79.12,141.55) .. (80.82,139.92) .. controls (82.52,138.29) and (84.19,138.33) .. (85.82,140.03) .. controls (87.45,141.73) and (89.12,141.76) .. (90.82,140.13) .. controls (92.52,138.5) and (94.18,138.54) .. (95.81,140.24) .. controls (97.44,141.94) and (99.11,141.97) .. (100.81,140.34) .. controls (102.51,138.71) and (104.18,138.75) .. (105.81,140.45) .. controls (107.44,142.15) and (109.11,142.18) .. (110.81,140.55) -- (114,140.62) -- (114,140.62) ;
\draw [shift={(97.21,140.27)}, rotate = 181.21] [color={rgb, 255:red, 0; green, 0; blue, 0 }  ][line width=0.75]    (8.74,-3.92) .. controls (5.56,-1.84) and (2.65,-0.53) .. (0,0) .. controls (2.65,0.53) and (5.56,1.84) .. (8.74,3.92)   ;
\draw    (70.1,214.6) .. controls (71.73,212.9) and (73.4,212.87) .. (75.1,214.5) .. controls (76.8,216.13) and (78.47,216.09) .. (80.1,214.39) .. controls (81.73,212.69) and (83.4,212.66) .. (85.1,214.29) .. controls (86.8,215.92) and (88.47,215.88) .. (90.1,214.18) .. controls (91.73,212.48) and (93.39,212.45) .. (95.09,214.08) .. controls (96.79,215.71) and (98.46,215.68) .. (100.09,213.98) .. controls (101.72,212.28) and (103.39,212.24) .. (105.09,213.87) .. controls (106.79,215.5) and (108.46,215.47) .. (110.09,213.77) -- (114,213.69) -- (114,213.69) ;
\draw [shift={(96.85,214.04)}, rotate = 178.81] [color={rgb, 255:red, 0; green, 0; blue, 0 }  ][line width=0.75]    (8.74,-3.92) .. controls (5.56,-1.84) and (2.65,-0.53) .. (0,0) .. controls (2.65,0.53) and (5.56,1.84) .. (8.74,3.92)   ;
\draw    (488.3,144.47) .. controls (489.93,142.76) and (491.6,142.73) .. (493.3,144.36) .. controls (495,145.99) and (496.67,145.96) .. (498.3,144.26) .. controls (499.93,142.56) and (501.6,142.52) .. (503.3,144.15) .. controls (505,145.78) and (506.67,145.75) .. (508.3,144.05) .. controls (509.93,142.35) and (511.6,142.32) .. (513.3,143.95) .. controls (515,145.58) and (516.67,145.54) .. (518.3,143.84) .. controls (519.93,142.14) and (521.6,142.11) .. (523.3,143.74) .. controls (525,145.37) and (526.67,145.33) .. (528.3,143.63) -- (532.2,143.55) -- (532.2,143.55) ;
\draw [shift={(515.05,143.91)}, rotate = 178.81] [color={rgb, 255:red, 0; green, 0; blue, 0 }  ][line width=0.75]    (8.74,-3.92) .. controls (5.56,-1.84) and (2.65,-0.53) .. (0,0) .. controls (2.65,0.53) and (5.56,1.84) .. (8.74,3.92)   ;
\draw    (487.47,213.69) .. controls (489.17,212.06) and (490.84,212.09) .. (492.47,213.79) .. controls (494.1,215.49) and (495.76,215.53) .. (497.46,213.9) .. controls (499.16,212.27) and (500.83,212.3) .. (502.46,214) .. controls (504.09,215.7) and (505.76,215.74) .. (507.46,214.11) .. controls (509.16,212.48) and (510.83,212.52) .. (512.46,214.22) .. controls (514.09,215.92) and (515.76,215.95) .. (517.46,214.32) .. controls (519.16,212.69) and (520.83,212.73) .. (522.46,214.43) .. controls (524.09,216.13) and (525.76,216.16) .. (527.46,214.53) -- (530.64,214.6) -- (530.64,214.6) ;
\draw [shift={(513.85,214.24)}, rotate = 181.21] [color={rgb, 255:red, 0; green, 0; blue, 0 }  ][line width=0.75]    (8.74,-3.92) .. controls (5.56,-1.84) and (2.65,-0.53) .. (0,0) .. controls (2.65,0.53) and (5.56,1.84) .. (8.74,3.92)   ;

\end{tikzpicture}
\caption{Bow of affine type $D$}    \label{fig:3}
\end{figure}
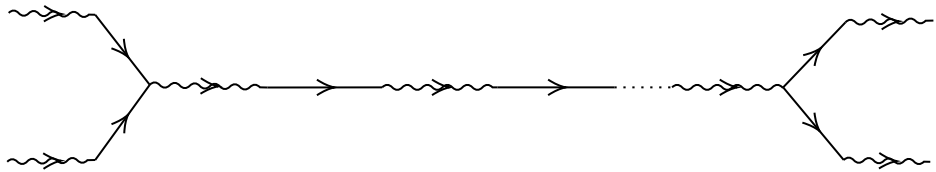
 \end{Example}
\begin{Remark}
In order to provide an algebro-geometric description of bow varieties, we will not need interval lengths. Therefore we may think of a bow as a quiver with ``oriented" vertices.
\end{Remark}
\begin{Remark}
In this paper, affine type A bows (see Figure \ref{fig:2}) have always at least one arrow. This contrasts slightly with \cite{NT17}, where bows of this type with only one wavy line and no arrow are allowed, so that the wavy line forms a circle. The definitions and results below can be easily adapted to include this case.
\end{Remark}

\subsection{Bow diagrams}
Let $B=(\II,\EE)$ be a bow. A \textit{bow diagram} over $B$ consists of the following data. 
\begin{itemize}
    \item A collection $\Lambda$ of distinct points on the wavy lines, called \textit{x-points} and graphically represented by \begin{tikzpicture}[x=0.75pt,y=0.75pt,yscale=-1,xscale=1]

\draw    (105.2,31) .. controls (106.87,29.33) and (108.53,29.33) .. (110.2,31) .. controls (111.87,32.67) and (113.53,32.67) .. (115.2,31) .. controls (116.87,29.33) and (118.53,29.33) .. (120.2,31) .. controls (121.87,32.67) and (123.53,32.67) .. (125.2,31) .. controls (126.87,29.33) and (128.53,29.33) .. (130.2,31) .. controls (131.87,32.67) and (133.53,32.67) .. (135.2,31) .. controls (136.87,29.33) and (138.53,29.33) .. (140.2,31) .. controls (141.87,32.67) and (143.53,32.67) .. (145.2,31) .. controls (146.87,29.33) and (148.53,29.33) .. (150.2,31) -- (151,31) -- (151,31) ;
\draw [shift={(128.1,31)}, rotate = 45] [color={rgb, 255:red, 0; green, 0; blue, 0 }  ][line width=0.75]    (-5.03,0) -- (5.03,0)(0,5.03) -- (0,-5.03)   ;
\end{tikzpicture}. Note that x-points partition wavy lines into \textit{segments} $\zeta$, for instance: \begin{tikzpicture}[x=0.75pt,y=0.75pt,yscale=-1,xscale=1]

\draw    (104.13,47) .. controls (105.88,45.41) and (107.54,45.49) .. (109.13,47.23) .. controls (110.72,48.97) and (112.38,49.05) .. (114.12,47.46) .. controls (115.86,45.87) and (117.53,45.95) .. (119.12,47.69) .. controls (120.71,49.43) and (122.37,49.5) .. (124.11,47.91) -- (124.53,47.93) -- (124.53,47.93) ;
\draw [shift={(124.53,47.93)}, rotate = 47.62] [color={rgb, 255:red, 0; green, 0; blue, 0 }  ][line width=0.75]    (-5.59,0) -- (5.59,0)(0,5.59) -- (0,-5.59)   ;
\draw    (124.53,47.93) .. controls (126.2,46.26) and (127.86,46.26) .. (129.53,47.93) .. controls (131.2,49.6) and (132.86,49.6) .. (134.53,47.93) .. controls (136.2,46.26) and (137.86,46.26) .. (139.53,47.93) -- (144.2,47.93) -- (144.2,47.93) ;
\draw [shift={(144.2,47.93)}, rotate = 45] [color={rgb, 255:red, 0; green, 0; blue, 0 }  ][line width=0.75]    (-5.59,0) -- (5.59,0)(0,5.59) -- (0,-5.59)   ;
\draw    (144.2,47.93) .. controls (145.79,46.19) and (147.45,46.11) .. (149.19,47.7) -- (154.14,47.47) -- (162.14,47.09) ;
\draw [shift={(164.13,47)}, rotate = 177.32] [color={rgb, 255:red, 0; green, 0; blue, 0 }  ][line width=0.75]    (6.56,-2.94) .. controls (4.17,-1.38) and (1.99,-0.4) .. (0,0) .. controls (1.99,0.4) and (4.17,1.38) .. (6.56,2.94)   ;

\draw (120,27.4) node [anchor=north west][inner sep=0.75pt]  [font=\scriptsize]  {$x\ \zeta \ x'$};
\end{tikzpicture}. Extreme points $t(\zeta)$ and $h(\zeta)$ of a segment $\zeta$ are either end-points of wavy lines or x-points. We denote by $\IS$ the set of segments and for any $\sigma \in \II$ we denote by $\zeta_\sigma$ the first segment in $\sigma$.
    \item A dimension vector $v=\bigl(v_\zeta\bigr)_\zeta \in \NN^{\IS}$.
\end{itemize}

We will think of a bow diagram as a triple $(B,\Lambda,v)$.
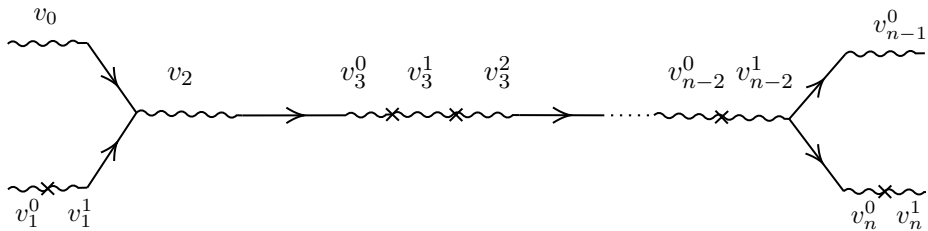
\begin{figure}[H]
    \centering
    \tikzset{every picture/.style={line width=0.75pt}} 

\begin{tikzpicture}[x=0.75pt,y=0.75pt,yscale=-1,xscale=1]

\draw    (148.25,131.34) .. controls (149.96,129.71) and (151.62,129.75) .. (153.25,131.46) .. controls (154.88,133.17) and (156.54,133.21) .. (158.25,131.59) .. controls (159.96,129.97) and (161.62,130.01) .. (163.25,131.72) .. controls (164.88,133.43) and (166.54,133.47) .. (168.25,131.85) .. controls (169.96,130.23) and (171.62,130.27) .. (173.25,131.98) .. controls (174.87,133.69) and (176.53,133.73) .. (178.24,132.1) .. controls (179.95,130.48) and (181.61,130.52) .. (183.24,132.23) .. controls (184.87,133.94) and (186.53,133.98) .. (188.24,132.36) .. controls (189.95,130.74) and (191.61,130.78) .. (193.24,132.49) .. controls (194.87,134.2) and (196.53,134.24) .. (198.24,132.62) -- (201.18,132.69) -- (201.18,132.69) ;
\draw    (201.18,132.69) -- (252.8,132.63) ;
\draw [shift={(231.79,132.65)}, rotate = 179.93] [color={rgb, 255:red, 0; green, 0; blue, 0 }  ][line width=0.75]    (8.74,-3.92) .. controls (5.56,-1.84) and (2.65,-0.53) .. (0,0) .. controls (2.65,0.53) and (5.56,1.84) .. (8.74,3.92)   ;
\draw    (339.45,132.52) -- (380.56,132.63) ;
\draw [shift={(364.8,132.58)}, rotate = 180.15] [color={rgb, 255:red, 0; green, 0; blue, 0 }  ][line width=0.75]    (8.74,-3.92) .. controls (5.56,-1.84) and (2.65,-0.53) .. (0,0) .. controls (2.65,0.53) and (5.56,1.84) .. (8.74,3.92)   ;
\draw    (406.5,132.63) .. controls (408.21,131) and (409.88,131.05) .. (411.5,132.76) .. controls (413.12,134.47) and (414.79,134.52) .. (416.5,132.9) .. controls (418.21,131.28) and (419.87,131.32) .. (421.5,133.03) .. controls (423.13,134.74) and (424.79,134.78) .. (426.5,133.16) .. controls (428.21,131.54) and (429.88,131.59) .. (431.5,133.3) .. controls (433.12,135.01) and (434.78,135.05) .. (436.49,133.43) .. controls (438.2,131.81) and (439.87,131.86) .. (441.49,133.57) .. controls (443.12,135.28) and (444.78,135.32) .. (446.49,133.7) .. controls (448.2,132.08) and (449.87,132.13) .. (451.49,133.84) .. controls (453.12,135.55) and (454.78,135.59) .. (456.49,133.97) .. controls (458.2,132.35) and (459.87,132.4) .. (461.48,134.11) .. controls (463.11,135.82) and (464.77,135.86) .. (466.48,134.24) .. controls (468.19,132.62) and (469.86,132.67) .. (471.48,134.38) -- (473.95,134.45) -- (473.95,134.45) ;
\draw [shift={(440.23,133.54)}, rotate = 46.55] [color={rgb, 255:red, 0; green, 0; blue, 0 }  ][line width=0.75]    (-4.47,0) -- (4.47,0)(0,4.47) -- (0,-4.47)   ;
\draw  [dash pattern={on 0.84pt off 2.51pt}]  (380.56,132.63) -- (406.5,132.63) ;
\draw    (123.74,96.23) -- (148.25,131.34) ;
\draw [shift={(138.75,117.72)}, rotate = 235.08] [color={rgb, 255:red, 0; green, 0; blue, 0 }  ][line width=0.75]    (8.74,-3.92) .. controls (5.56,-1.84) and (2.65,-0.53) .. (0,0) .. controls (2.65,0.53) and (5.56,1.84) .. (8.74,3.92)   ;
\draw    (123.74,169.02) -- (148.25,131.34) ;
\draw [shift={(138.61,146.16)}, rotate = 123.04] [color={rgb, 255:red, 0; green, 0; blue, 0 }  ][line width=0.75]    (8.74,-3.92) .. controls (5.56,-1.84) and (2.65,-0.53) .. (0,0) .. controls (2.65,0.53) and (5.56,1.84) .. (8.74,3.92)   ;
\draw    (473.95,134.45) -- (501.19,170.84) ;
\draw [shift={(490.45,156.49)}, rotate = 233.19] [color={rgb, 255:red, 0; green, 0; blue, 0 }  ][line width=0.75]    (8.74,-3.92) .. controls (5.56,-1.84) and (2.65,-0.53) .. (0,0) .. controls (2.65,0.53) and (5.56,1.84) .. (8.74,3.92)   ;
\draw    (473.95,134.45) -- (501.95,101.88) ;
\draw [shift={(491.08,114.52)}, rotate = 130.68] [color={rgb, 255:red, 0; green, 0; blue, 0 }  ][line width=0.75]    (8.74,-3.92) .. controls (5.56,-1.84) and (2.65,-0.53) .. (0,0) .. controls (2.65,0.53) and (5.56,1.84) .. (8.74,3.92)   ;
\draw    (84.06,96.7) .. controls (85.71,95.01) and (87.37,94.99) .. (89.06,96.64) .. controls (90.75,98.29) and (92.41,98.27) .. (94.06,96.58) .. controls (95.71,94.89) and (97.37,94.87) .. (99.06,96.52) .. controls (100.75,98.17) and (102.41,98.15) .. (104.06,96.46) .. controls (105.71,94.77) and (107.37,94.75) .. (109.06,96.4) .. controls (110.75,98.05) and (112.41,98.03) .. (114.06,96.34) .. controls (115.71,94.65) and (117.37,94.63) .. (119.06,96.28) -- (123.74,96.23) -- (123.74,96.23) ;
\draw    (84.18,169.93) .. controls (85.81,168.23) and (87.47,168.19) .. (89.18,169.82) .. controls (90.89,171.45) and (92.55,171.41) .. (94.18,169.7) .. controls (95.81,168) and (97.47,167.96) .. (99.17,169.59) .. controls (100.88,171.22) and (102.54,171.18) .. (104.17,169.47) .. controls (105.8,167.77) and (107.47,167.73) .. (109.17,169.36) .. controls (110.88,170.99) and (112.54,170.95) .. (114.17,169.24) .. controls (115.8,167.54) and (117.47,167.5) .. (119.17,169.13) -- (123.74,169.02) -- (123.74,169.02) ;
\draw [shift={(103.96,169.48)}, rotate = 43.68] [color={rgb, 255:red, 0; green, 0; blue, 0 }  ][line width=0.75]    (-4.47,0) -- (4.47,0)(0,4.47) -- (0,-4.47)   ;
\draw    (501.19,170.84) .. controls (502.87,169.19) and (504.54,169.21) .. (506.19,170.89) .. controls (507.84,172.57) and (509.51,172.59) .. (511.19,170.94) .. controls (512.87,169.29) and (514.54,169.31) .. (516.19,170.99) .. controls (517.84,172.67) and (519.51,172.69) .. (521.19,171.04) .. controls (522.87,169.39) and (524.54,169.41) .. (526.19,171.09) .. controls (527.84,172.77) and (529.51,172.79) .. (531.19,171.14) .. controls (532.87,169.49) and (534.54,169.51) .. (536.19,171.19) .. controls (537.84,172.87) and (539.51,172.89) .. (541.19,171.24) -- (542.2,171.25) -- (542.2,171.25) ;
\draw [shift={(521.7,171.05)}, rotate = 45.57] [color={rgb, 255:red, 0; green, 0; blue, 0 }  ][line width=0.75]    (-4.47,0) -- (4.47,0)(0,4.47) -- (0,-4.47)   ;
\draw    (252.8,132.63) .. controls (254.43,130.93) and (256.1,130.9) .. (257.8,132.53) .. controls (259.49,134.16) and (261.16,134.13) .. (262.8,132.44) .. controls (264.44,130.75) and (266.11,130.72) .. (267.8,132.35) .. controls (269.5,133.98) and (271.17,133.95) .. (272.8,132.25) -- (275.98,132.19) -- (275.98,132.19) ;
\draw [shift={(275.98,132.19)}, rotate = 43.93] [color={rgb, 255:red, 0; green, 0; blue, 0 }  ][line width=0.75]    (-4.47,0) -- (4.47,0)(0,4.47) -- (0,-4.47)   ;
\draw    (275.98,132.19) .. controls (277.66,130.54) and (279.33,130.55) .. (280.98,132.22) .. controls (282.64,133.89) and (284.31,133.9) .. (285.98,132.25) .. controls (287.65,130.59) and (289.32,130.6) .. (290.98,132.27) .. controls (292.64,133.94) and (294.31,133.95) .. (295.98,132.3) .. controls (297.65,130.64) and (299.32,130.65) .. (300.98,132.32) .. controls (302.64,133.99) and (304.31,134) .. (305.98,132.35) .. controls (307.65,130.69) and (309.32,130.7) .. (310.98,132.37) .. controls (312.64,134.04) and (314.31,134.05) .. (315.98,132.4) .. controls (317.65,130.74) and (319.32,130.75) .. (320.98,132.42) .. controls (322.64,134.09) and (324.31,134.1) .. (325.98,132.45) .. controls (327.65,130.8) and (329.32,130.81) .. (330.98,132.48) .. controls (332.64,134.15) and (334.31,134.16) .. (335.98,132.5) -- (339.45,132.52) -- (339.45,132.52) ;
\draw [shift={(307.71,132.36)}, rotate = 45.29] [color={rgb, 255:red, 0; green, 0; blue, 0 }  ][line width=0.75]    (-4.47,0) -- (4.47,0)(0,4.47) -- (0,-4.47)   ;
\draw    (501.95,101.88) .. controls (503.6,100.19) and (505.26,100.17) .. (506.95,101.82) .. controls (508.64,103.47) and (510.3,103.45) .. (511.95,101.76) .. controls (513.6,100.07) and (515.26,100.05) .. (516.95,101.7) .. controls (518.64,103.35) and (520.3,103.33) .. (521.95,101.64) .. controls (523.6,99.95) and (525.26,99.93) .. (526.95,101.58) .. controls (528.64,103.23) and (530.3,103.21) .. (531.95,101.52) .. controls (533.59,99.83) and (535.25,99.81) .. (536.94,101.46) -- (541.62,101.41) -- (541.62,101.41) ;

\draw (95.33,77.54) node [anchor=north west][inner sep=0.75pt]  [font=\small]  {$v_{0}$};
\draw (86.18,173.33) node [anchor=north west][inner sep=0.75pt]  [font=\small]  {$v_{1}^{0} \ \ \ v_{1}^{1}$};
\draw (514.36,78.33) node [anchor=north west][inner sep=0.75pt]  [font=\small]  {$v_{n-1}^{0}$};
\draw (503.19,174.24) node [anchor=north west][inner sep=0.75pt]  [font=\small]  {$v_{n}^{0} \ \ v_{n}^{1}$};
\draw (164,103) node [anchor=north west][inner sep=0.75pt]    {$v_{2} \ \ \ \ \ \ \ \ \ \ \ \ \ \ \ \ \ v_{3}^{0} \ \ \ \ v_{3}^{1} \ \ \ \ \ v{_{3}^{2}} \ \ \ \ \ \ \ \ \ \ \ \ \ \ \ \ v_{n-2}^{0} \ \ \ v_{n-2}^{1}$};

\end{tikzpicture}
    \caption{Bow diagram over an affine type D bow.}
    \label{fig:4}
\end{figure}
\begin{Remark}
Bow diagrams can be thought of as a `bow version' of framed dimension vectors for quivers. This will become clear later in Section \ref{S5}.
\end{Remark}
For simplicity, we will use the following notation. Given a segment $\zeta$ we define $V_\zeta\coloneqq    \CC^{v_\zeta}$. Given an arrow $e \in \EE$ we denote by $t(e)$ and $h(e)$ the segments, respectively, at the origin and at the end of the arrow $e$.

\subsection{Two-way parts}\label{bif}
Let us fix a bow diagram $(B,\Lambda,v)$. Given an arrow $e \in \EE$, we define
\begin{equation*}
    \M^e\coloneqq    \Hom(V_{t(e)},V_{h(e)})\oplus \Hom(V_{h(e)},V_{t(e)}),
\end{equation*}
where $t(e)$ and $h(e)$ denote the segments containing, respectively, the tail and the head of $e$. We denote elements of $\M^e$ by $(C_e,D_e)$. As in $(\ref{eq2.10})$, we can think $\M^e$ as the cotangent space of the space of representation of a quiver of type $A_2$ with dimension vector $(v_{t(e)},v_{h(e)})$. 
In particular, it can be equipped with the standard symplectic structure, given by the formula $(\ref{eq2.11})$. If we consider the action of $\GL(v_{t(e)})\times \GL(v_{h(e)})$ on $\M^e$ by changing bases, then it is Hamiltonian with moment map given by the formula $(\ref{eq2.12})$:
\begin{equation}
    \mu_e:\M^e \rightarrow \mathfrak{gl}(v_{t(e)})\oplus\mathfrak{gl}(v_{h(e)}), \ \ \ \ \ (C,D) \mapsto [C,D]\coloneqq    (-DC, CD),
\end{equation}
where we identify a Lie algebra $\mathfrak{g}$ with its dual via the trace pairing.

\subsection{Triangles}\label{triangles}
   In this section we will review the construction of the space of \textit{triangles}, which has been introduced by Takayama in \cite{Tak16}. Let us fix a dimension vector $\underline{v} \in \NN^2$. A \textit{triangle} of dimension $\underline{v}$ is a collection of maps $(A,B_1,B_2,a,b)$ defined as follows:
\begin{center}
    \begin{tikzcd}
\mathbb{C}^{v_1} \arrow["B_1", loop, distance=2em, in=55, out=125] \arrow[rr, "A"] \arrow[rd, "b"'] &                             & \mathbb{C}^{v_2} \arrow["B_2", loop, distance=2em, in=55, out=125] \\
                                                                                                    & \mathbb{C} \arrow[ru, "a"'] &                                                                   
\end{tikzcd}
\end{center}
satisfying the following algebraic conditions:
    \begin{equation*}
        B_2A-AB_1+ab= 0. \tag{$a$}
    \end{equation*}
        \begin{gather*}
        \text{There is no subspace }0\neq S \subset V_{1},\text{ }B_1\text{-invariant, s.t. }A(S)=0=b(S).\tag{S1}\\
        \text{There is no subspace }T \subsetneq V_{2},\text{ }B_2\text{-invariant, s.t. }\Ima A+\Ima a \subset T.\tag{S2}
    \end{gather*}

We will denote the set of triangles of dimension $\underline{v}$ by $\M^{\triangle}_{\underline{v}}$ or by $\M^\triangle$ if the dimension vector is not relevant. On $\M^\triangle$ there is a natural action of the group $\GL(\underline{v})=\GL(v_1)\times \GL(v_2)$ by changing bases, namely:
\begin{equation}
    (g_1,g_2)\cdot(A,B_1,B_2,a,b) = (g_2Ag_1^{-1},g_1B_1g_1^{-1},g_2B_2g_2^{-1},bg_1^{-1},g_2a),
\end{equation}
for every $(g_1,g_2)\in \GL(\underline{v})$ and $(A,B_1,B_2,a,b) \in \M^\triangle$. Let us notice that the space of triangles satisfying only condition $(a)$ is an affine variety. Furthermore, as shown in \cite[Proposition 2.2]{NT17}, if we consider the space of triangles satisfying only the condition $(a)$, then we can see conditions $(S1)$ and $(S2)$ as a condition of semistability for the action of $\GL(\underline{v})$. It follows from Proposition \ref{prop2.3} that $\M^\triangle$ is a locally closed subset of the vector space $\Hom(\CC^{v_1},\CC^{v_2})\oplus\End(\CC^{v_1})\oplus\End(\CC^{v_2})\oplus(\CC^{v_1})^*\oplus\CC^{v_2}$ (that is the vector space of triangles that do not satisfy any condition).
Let us now state a useful property of the map $A$ in a triangle.

\begin{Lemma}[\cite{Tak16} Lemma 2.18]\label{3.9}
    If $(A,B_1,B_2,a,b) \in \mathcal{M}^\triangle$, then $A$ has full rank.
\end{Lemma}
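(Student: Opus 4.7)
The plan is to argue by contradiction, after first reducing to the case $v_1 \leq v_2$ via a duality. Transposing all maps---replacing $(A, B_1, B_2, a, b)$ by $(A^{\top}, B_2^{\top}, B_1^{\top}, b^{\top}, -a^{\top})$ acting on $(V_2^{*}, V_1^{*})$---yields another triangle: the transposed equation $A^{\top} B_2^{\top} - B_1^{\top} A^{\top} + b^{\top} a^{\top} = 0$ rewrites, with the sign on $-a^{\top}$, as relation $(a)$ for the new data; and annihilator duality $S \leftrightarrow S^{\perp}$ interchanges conditions $(S1)$ and $(S2)$. Since $\rank(A) = \rank(A^{\top})$ while source and target dimensions are exchanged, it suffices to prove ``$v_1 \leq v_2 \Rightarrow \Ker A = 0$''. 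So assume $v_1 \leq v_2$ and, toward contradiction, $\Ker A \neq 0$.

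The central step is a reduction to the case where $V_2$ is cyclically generated over $\CC[B_2]$ by $a$. Let $T := \sum_{k \geq 0} B_2^{k}(\Ima a) \subseteq V_2$ be the smallest $B_2$-stable subspace containing $\Ima a$. Using $(a)$ in the form $A B_1 v = B_2 A v + (bv)\,a$, one checks that $A^{-1}(T) \subseteq V_1$ is $B_1$-stable, and that the restricted data on $(A^{-1}(T), T)$ forms another triangle. Condition $(S1)$ descends immediately since $\Ker A \subseteq A^{-1}(T)$; the key point is that $(S2)$ descends too, which I would verify by first applying the original $(S2)$ to the $B_2$-stable subspace $\Ima A + T$ to obtain $\Ima A + T = V_2$, and then observing that any hypothetical proper $B_2$-stable $T^{*} \subsetneq T$ containing $(\Ima A \cap T) + \Ima a$ would make $\Ima A + T^{*}$ a proper $B_2$-stable subspace of $V_2$ containing $\Ima A + \Ima a$, violating $(S2)$. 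Since $\Ker A \subseteq A^{-1}(T)$, replacing the original data by its restriction, we may assume $V_2 = T = \CC[B_2] a$.

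With $V_2$ cyclic on $a$, I split by whether $a \in \Ima A$. If yes, pick $w$ with $Aw = a$; relation $(a)$ then gives $B_2 a = B_2 A w = A(B_1 w - (bw) w) \in \Ima A$, and by induction every $B_2^{k} a$ lies in $\Ima A$, so $\Ima A = V_2$. Combined with $v_1 \leq v_2$, this forces $A$ to be an isomorphism, contradicting $\Ker A \neq 0$. If instead $a \notin \Ima A$, then $\Ima A \cap \CC a = 0$; for any $v \in \Ker A$, relation $(a)$ yields $(bv)\, a = A B_1 v \in \Ima A$, forcing $bv = 0$ and $A B_1 v = 0$, so $B_1 v \in \Ker A$. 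Iterating, $\CC[B_1]\,v \subseteq \Ker A \cap \Ker b$ is a $B_1$-stable subspace, hence zero by $(S1)$, so $v = 0$, the desired contradiction.

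The main obstacle is verifying that $(S2)$ descends to the restricted triangle on $(A^{-1}(T), T)$, since this is the only step in which both stability conditions interact nontrivially; once the cyclic setting is reached, the dichotomy on whether $a \in \Ima A$ quickly produces the contradiction from $(S1)$.
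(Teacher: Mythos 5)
The paper offers no proof of this lemma at all---it is imported verbatim from Takayama's \cite{Tak16}---so there is nothing in-text to compare against; your argument must stand on its own, and it essentially does. The transposition duality is set up correctly (the sign on $-a^{\top}$ makes relation $(a)$ come out right, and annihilator duality does swap $(S1)$ and $(S2)$), and the closing dichotomy on whether $a \in \Ima A$ is exactly the right engine: if $a \notin \Ima A$ then for $v \in \Ker A$ relation $(a)$ gives $(bv)a = AB_1v \in \Ima A \cap \CC a = 0$, so $\Ker A$ is $B_1$-stable and killed by $b$, and $(S1)$ annihilates it. Two remarks. First, the one soft spot: your reduction to the cyclic case $V_2 = T = \CC[B_2]a$ silently assumes that the restricted triangle on $(A^{-1}(T),T)$ still satisfies the inequality ``source dimension $\leq$ target dimension,'' which your Case 1 uses to upgrade surjectivity of the restricted $A$ to bijectivity. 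This is true, but it requires the one-line count $\dim A^{-1}(T) = \dim\Ker A + \dim(\Ima A \cap T) = \dim V_1 + \dim T - \dim V_2 \leq \dim T$, which follows from $\Ima A + T = V_2$ (which you do establish) together with the original $v_1 \leq v_2$; as written this step is asserted rather than checked. Second, the entire cyclic reduction is dispensable: the dichotomy works directly on the original triangle, because when $a \in \Ima A$ the subspace $\Ima A$ itself contains $\Ima a$ and is $B_2$-stable by relation $(a)$, so $(S2)$ forces $\Ima A = V_2$ outright, with no induction on $B_2^k a$ needed. Streamlining this way removes the only step of your proof that needs patching.
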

Next, following \cite{Tak16}, we endow the space of triangles $\M^\triangle$ with a structure of nonsingular affine variety carrying a holomorphic symplectic structure such that the $\GL(\underline{v})$-action is Hamiltonian.
\begin{Definition}
    Let $\underline{v}=(v_1,v_2) \in \NN^2$, $n=\max(v_1,v_2)$ and $m=\min(v_1,v_2)$. We define $F(\underline{v})=F(v_1,v_2)$, as:
    \begin{itemize}
        \item If $v_1 \neq v_2$, 
      \begin{equation*}\left\{(u,\eta)\in\GL_n \times \mathfrak{gl}_n : \eta=\setcounter{MaxMatrixCols}{9}
\begin{NiceMatrix}[columns-width=6mm]
 \NotEmpty & \NotEmpty & \NotEmpty & \NotEmpty & \NotEmpty \\
   \NotEmpty \Block{2-2}<\huge>{h} & \NotEmpty& \NotEmpty \Block{2-2}<\huge>{0} & \NotEmpty & \NotEmpty \Block{2-1}{g\ \ \ }
    & \NotEmpty \Block{2-1}{\quad m} \\
    \\
   \NotEmpty \Block{1-2}{f} & \NotEmpty &  \NotEmpty \Block{1-2}{0}  & \NotEmpty & \NotEmpty \Block{1-1}{e_0\ \ \ } & \NotEmpty \Block{1-1}{\quad 1} \\
   \NotEmpty \Block{2-2}<\huge>{0} & \NotEmpty &\NotEmpty \Block{2-2}<\huge>{\operatorname{id}} & \NotEmpty & \NotEmpty \Block{2-1}{e\ \ \ }  &\NotEmpty\Block{2-1}{\quad n-m-1}\\
    \\
    \CodeAfter
  \SubMatrix[{2-1}{6-5}]
  \OverBrace[shorten,yshift=3mm]{2-1}{2-2}{m}
  \OverBrace[shorten,yshift=3mm]{2-3}{2-4}{n-m-1}
  \OverBrace[shorten,yshift=3mm]{2-5}{2-5}{1}
    \SubMatrix{.}{2-1}{3-5}{\}}[xshift=4mm]
    \SubMatrix{.}{4-1}{4-5}{\}}[xshift=4mm]
    \SubMatrix{.}{5-1}{6-5}{\}}[xshift=4mm]
    \SubMatrix{.}{2-1}{6-2}{\vert}[xshift=4mm]
    \SubMatrix{.}{2-1}{6-4}{\vert}[xshift=4mm]
    \tikz\draw[shorten > = 0.6em, shorten < = 1em](4-|1) -- (4-|6);
    \tikz\draw[shorten > = 0.6em, shorten < = 1em](5-|1) -- (5-|6);
\end{NiceMatrix}
\right\}\end{equation*}
       where  $(e_0,e,f,g,h) \in \CC \times \CC^{n-m-1} \times (\CC^m)^* \times \CC^m \times \End(\CC^m)$.

    \item If $v_1=v_2$,
    \begin{equation*}
              \GL_n \times \mathfrak{gl}_n \times \CC^n \times (\CC^n)^* \ni  \bigl(u,h,I,J\bigr).
    \end{equation*}
    \end{itemize}
\end{Definition}
It is clear that $F(\underline{v})$ is a nonsingular affine variety for any $\underline{v}\in\NN$. We will call \textit{Hurtubise normal forms} the elements of $F(\underline{v})$. Moreover, we have an algebraic $\GL(\underline{v})$-action on $F(\underline{v})$ given by:
\begin{itemize}
    \item If $v_1 \neq v_2$,
     \begin{equation}
    \bigl(g_n,g_m\bigr)\cdot\bigl(u,\eta) = \Bigl(\begin{bmatrix}
            g_m &  0 \\
            0 &  \operatorname{id} 
        \end{bmatrix}ug_n^{-1}, \begin{bmatrix}
            g_m &  0 \\
            0 &  \operatorname{id} 
        \end{bmatrix}\eta\begin{bmatrix}
            g_m^{-1} &  0 \\
            0 &  \operatorname{id} 
        \end{bmatrix}\Bigr),
        \end{equation}
    where $n=\max(v_1,v_2)$, $m=\min(v_1,v_2)$, $(g_n,g_m) \in \GL(n)\times \GL(m)$ and $(u,\eta) \in F(\underline{v})$.

\item If $v_1=v_2$,
\begin{equation}\bigl(g_1,g_2) \cdot \bigl(u,h,I,J\bigr) = \bigl(g_2ug_1^{-1},g_2hg_2^{-1},g_2I,Jg_2^{-1}\bigr),\end{equation}
where $(g_1,g_2) \in \GL(\underline{v})$ and $(u,h,I,J) \in F(v_1,v_2)$.
\end{itemize}

\begin{Proposition}[\cite{Tak16}]\label{Prop3.13}
Let $\underline{v}=(v_1, v_2) \in \NN^2$. Then we have an isomorphism of $\GL(\underline{v})$-variety $F(\underline{v})\cong\M^\triangle_{\underline{v}}$ given by:
\begin{gather*}
    (A,B_1,B_2,a,b) = \begin{cases}
        \Bigl(
        [\operatorname{id}_{v_2},0,0]u,u^{-1}\eta u,h,g,[0,0,1]u\Bigr) & \text{ if }v_1 > v_2, \\
        \Bigl(u,u^{-1}hu,h-IJ,I,Ju\Bigr) & \text{ if } v_1=v_2,\\
        \Bigl(-u^{-1}\begin{bmatrix}
            \operatorname{id}_{v_1} \\
            0 \\
            0
        \end{bmatrix},-h,-u^{-1}\eta u, u^{-1}\begin{bmatrix}
            0 \\
            1 \\
            0 
        \end{bmatrix},-f\Bigr) & \text{ if } v_1<v_2.
    \end{cases}
\end{gather*}
\end{Proposition}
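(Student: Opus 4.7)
The plan is to verify that the stated formula defines an isomorphism $F(\underline{v}) \xrightarrow{\sim} \M^\triangle_{\underline{v}}$ of $\GL(\underline{v})$-varieties by treating the three cases $v_1 > v_2$, $v_1 = v_2$, $v_1 < v_2$ separately. The case $v_1 = v_2$ is direct: the assignment $(u, h, I, J) \mapsto (u, u^{-1}hu, h-IJ, I, Ju)$ is polynomial, relation $(a)$ is a short matrix calculation, and both stability conditions are automatic since $A = u$ is invertible. The cases $v_1 > v_2$ and $v_1 < v_2$ are related by the involution $(A, B_1, B_2, a, b) \mapsto (A^\ast, B_2^\ast, B_1^\ast, b^\ast, a^\ast)$ on triangles, which swaps the dimensions and exchanges $(S1)$ with $(S2)$, so it suffices to treat $v_1 > v_2$.

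Let $n = v_1$ and $m = v_2$. The forward map is polynomial, hence a morphism of varieties. Relation $(a)$, after right-multiplication by $u^{-1}$, reduces to the block identity
\begin{equation*}
    h[\mathrm{id}_m, 0, 0] - [\mathrm{id}_m, 0, 0]\eta + g[0, 0, 1] = 0,
\end{equation*}
which holds by inspection of the top row-block $[h, 0, g]$ of $\eta$. Stability $(S2)$ is immediate from surjectivity of $A = [\mathrm{id}_m, 0, 0]u$. Stability $(S1)$ uses the staircase structure of $\eta$: a $B_1$-stable $S \subset V_1$ with $A(S) = b(S) = 0$ transports via $u$ to an $\eta$-stable subspace annihilated by two coordinate projections, and the identity block in $\eta$ then iteratively forces all coordinates of $u(S)$ to vanish. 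Equivariance under $\GL(\underline{v})$ is a routine check from the two group actions.

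The main obstacle is the construction of the inverse. Given a triangle $(A, B_1, B_2, a, b)$, Lemma \ref{3.9} ensures $A$ is surjective, so $\dim \ker A = n - m$. Relation $(a)$ gives $AB_1 v = a\,b(v)$ for $v \in \ker A$, so $B_1$ carries $\ker A \cap \ker b$ into $\ker A$. Define the descending filtration $M_i = \{v \in V_1 : B_1^j v \in \ker A \cap \ker b \text{ for } 0 \leq j \leq i\}$. Its stable limit $M = \bigcap_i M_i$ is $B_1$-invariant and contained in $\ker A \cap \ker b$, hence vanishes by $(S1)$. Each inclusion $M_{i+1} \subset M_i$ imposes at most one linear condition on $M_i$, and one verifies that the filtration drops by exactly one at each step until it reaches zero; in particular $\dim M_{n-m-2} = 1$. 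Picking $w \in M_{n-m-2} \setminus \{0\}$ and rescaling so that $b(B_1^{n-m-1}w) = 1$ yields a basis $w, B_1 w, \ldots, B_1^{n-m-1} w$ of $\ker A$. Extending it to a basis of $V_1$ by a lift $w_1, \ldots, w_m$ of the standard basis of $V_2$ via $A$ (adjusted to lie in $\ker b$ using $w_n$), and setting $u^{-1}$ to be the resulting change-of-basis matrix, one reads off $\eta = u B_1 u^{-1}$ in the prescribed block form with $h = B_2$ and $g = a$.

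Finally, the two constructions are mutually inverse by direct inspection of the formulas. The crux of the argument is the strict-decrease property of the filtration $(M_i)$, i.e., the classification of the $B_1$-module structure on $V_1$ compatible with $A$, $b$, and $a$; this is a linear-algebraic analogue of the Hurtubise classification of solutions of the Nahm equations on an interval (\cite{hurtubise1989classification}), and is carried out in detail in \cite{Tak16}.
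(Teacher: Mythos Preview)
The paper does not give its own proof of this proposition: it is stated with attribution to \cite{Tak16} and no argument is supplied. So there is nothing in the paper to compare your proof against beyond the citation. Your sketch follows the natural line of the argument in \cite{Tak16}: verify the forward map lands in $\M^\triangle_{\underline v}$ (relation $(a)$ via the block identity, $(S2)$ from surjectivity of $A$, $(S1)$ from the shift structure of $\eta$), then reconstruct $(u,\eta)$ from a triangle by building a distinguished basis of $\ker A$ using iterated application of $B_1$ and the functional $b$. You correctly identify the crux as the strict-decrease of the filtration $(M_i)$, and you correctly defer that to \cite{Tak16}.

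One point of caution: the reduction of the case $v_1<v_2$ to $v_1>v_2$ via the involution $(A,B_1,B_2,a,b)\mapsto (A^*,B_2^*,B_1^*,b^*,a^*)$ is morally right but does not literally match the stated formulas. Transposing relation $(a)$ introduces a sign, and if you compare the two displayed cases in the proposition you will see sign changes (the $-h$, $-u^{-1}\eta u$, $-f$) and a different block position for the framing vector ($[0,0,1]$ versus $[0;1;0]$). So the duality is there, but it is not the bare transpose; you need to track these signs and block permutations if you actually want to \emph{derive} one case from the other rather than repeat the argument. This is bookkeeping, not a gap.
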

It follows that $\M^\triangle$ is a nonsingular affine complex $\GL(\underline{v})$-variety of dimension $v_1^2+v_1+v_2^2+v_2$. 
\begin{Proposition}\label{Prop3.14} The space of Hurtubise normal forms $F(\underline{v})$ has a holomorphic symplectic structure given by:
\begin{equation}\label{symp.NT}
    \omega = \begin{cases}
        tr\bigl(d\eta \wedge  duu^{-1}+ \eta duu^{-1}\wedge duu^{-1}\bigr) &\text{ if }v_1\neq v_2, \\
        tr\bigl(dh \wedge  duu^{-1} + hduu^{-1}\wedge duu^{-1} + dI\wedge dJ\bigr)& \text{ if }v_1=v_2.
    \end{cases}
\end{equation}
where $duu^{-1}$ denotes the right-invariant Maurer-Cartan form.
\end{Proposition}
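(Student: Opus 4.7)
The plan is to verify closedness and non-degeneracy, treating the cases $v_1=v_2$ and $v_1\neq v_2$ in parallel. For closedness, exhibit an explicit Liouville-type primitive in each case: take $\theta=tr(\eta\, duu^{-1})$ when $v_1\neq v_2$, and $\theta=tr(h\, duu^{-1})+tr(I\, dJ)$ when $v_1=v_2$. The only computational input is the Maurer-Cartan identity $d(duu^{-1})=duu^{-1}\wedge duu^{-1}$; expanding $d\theta$ with the graded Leibniz rule for matrix-valued forms reproduces the formulas in the statement, giving $\omega=d\theta$ and hence $d\omega=0$.

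For non-degeneracy when $v_1=v_2$, observe that $F(\underline v)=\GL_n\times\mathfrak{gl}_n\times\CC^n\times(\CC^n)^*$ is a direct product, and $\omega$ splits as the sum of the canonical symplectic form on $T^*\GL_n$ in the right trivialization (with primitive $tr(h\, duu^{-1})$) and the standard symplectic form on $T^*\CC^n$. Both pieces are manifestly non-degenerate, so $\omega$ is symplectic in this case.

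The only nontrivial part is non-degeneracy when $v_1\neq v_2$. Fix $(u,\eta)\in F(\underline v)$, write a tangent vector as $(\dot\eta,Xu)$ with $\dot\eta$ in the linear subspace $V\subset\mathfrak{gl}_n$ prescribed by the Hurtubise normal form and $X\in\mathfrak{gl}_n$ unconstrained, and compute
\begin{equation*}
\omega\bigl((\dot\eta_1,X_1u),(\dot\eta_2,X_2u)\bigr)=tr\bigl((\dot\eta_1-[X_1,\eta])X_2\bigr)-tr(\dot\eta_2 X_1).
\end{equation*}
If $(\dot\eta_1,X_1u)$ lies in the radical, varying $\dot\eta_2\in V$ with $X_2=0$ forces $X_1\in V^\perp$ under the trace pairing, and varying $X_2\in\mathfrak{gl}_n$ with $\dot\eta_2=0$ forces $\dot\eta_1=[X_1,\eta]\in V$. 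It remains to show
\begin{equation*}
V^\perp\cap\{X\in\mathfrak{gl}_n:[X,\eta]\in V\}=0,
\end{equation*}
which follows from a block-matrix inspection using the specific shape of the Hurtubise normal form: the $\operatorname{id}$ block of $\eta$ pins down the interior blocks of $X$, and Lemma \ref{3.9} (the full rank of the associated $A$) handles the remaining entries, so $X_1=0$ and then $\dot\eta_1=0$.

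The main obstacle is precisely this final block-matrix check: it is elementary but bookkeeping-heavy because of the asymmetric shape of $\eta$. A cleaner alternative is to transport the question across the isomorphism of Proposition \ref{Prop3.13} to the triangle model $\M^\triangle_{\underline v}$, where the symplectic structure arises by restriction from the ambient vector space of triangles and non-degeneracy admits a more conceptual proof along the lines of \cite{Tak16}, to which one may appeal for the detailed pointwise verification.
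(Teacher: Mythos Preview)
The paper does not give a proof of this proposition: it is stated as a result taken from \cite{Tak16}, and the Remark immediately after (together with Appendix~\ref{Appendix A}) explains that these $2$-forms are precisely the holomorphic symplectic structures inherited from the hyperk\"ahler structure on moduli spaces of solutions to Nahm's equations over an interval, via the identifications of Kronheimer, Bielawski, and Hurtubise. So the paper's justification is indirect and conceptual rather than computational.

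Your route is genuinely different and more elementary. The closedness argument via the explicit primitive $\theta=tr(\eta\,duu^{-1})$ (resp.\ $tr(h\,duu^{-1})+tr(I\,dJ)$) together with the Maurer--Cartan identity $d(duu^{-1})=duu^{-1}\wedge duu^{-1}$ is correct and clean, and the non-degeneracy for $v_1=v_2$ via the product decomposition $T^*\GL_n\times T^*\CC^n$ is standard. For $v_1\neq v_2$, your reduction of non-degeneracy to the linear-algebra statement
\[
V^\perp\cap\{X\in\mathfrak{gl}_n:[X,\eta]\in V\}=0
\]
is also correct.

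There is, however, a gap in the last step. You do not actually carry out the block-matrix check, and your appeal to Lemma~\ref{3.9} is misplaced: in the Hurtubise model $u\in\GL_n$ by definition, so under the isomorphism of Proposition~\ref{Prop3.13} the map $A$ is automatically of full rank (it is $[\operatorname{id}_{v_2},0,0]u$ or $-u^{-1}[\operatorname{id}_{v_1};0;0]$), and Lemma~\ref{3.9} contributes no additional constraint on $X$. The verification does go through---the key is that the $\operatorname{id}$ block places $\eta$ in a companion-type form, so that commutation with $\eta$ propagates the vanishing of the first $m$ rows and last row of $X\in V^\perp$ through the remaining entries---but as written this is an assertion rather than a proof. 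Your fallback of deferring to \cite{Tak16} is, of course, exactly what the paper itself does.
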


\begin{Remark}We notice that if $v=v_1=v_2$, then $ tr\bigl(dh \wedge  duu^{-1} + hduu^{-1}\wedge duu^{-1})$ is the canonical symplectic structure on $\T^*\GL_v$ obtained as differential of the Liouville form. More generally, we can identify spaces of Hurtubise normal forms above with some moduli spaces of solutions to Nahm's equations over an interval. In particular, it has a hyperk\"ahler structure. For more details, see  Appendix \ref{Appendix A} and \cite[Remark 3.4]{NT17}.\end{Remark}
We will consider the space of triangles $\M^\triangle$ equipped with the holomorphic symplectic structure induced by the previous isomorphism with the space of Hurtubise normal forms.

Let us consider the case $n\coloneqq v_1=v_2$. The action of $\GL_n\times \GL_n$ on $\GL_n\times\mathfrak{gl}_n$ is nothing but the lift of the action of $\GL_n\times\GL_n$ on $\GL_n$ to $\T^*\GL_n$ after identifying $\T^*\GL_n$ and $\GL_n\times\mathfrak{gl}_n$ via right-trivialization and the trace pairing. More generally, the $\GL(\underline{v})$-action on $F(\underline{v})$ is Hamiltonian for any $\underline{v} \in \NN^2$. In terms of triangles, the moment map is given by:
\begin{equation*}\begin{gathered}
    \mu_\triangle : \M^\triangle \rightarrow \mathfrak{gl}_{v_1}\times\mathfrak{gl}_{v_2}, \\
    \Bigl(A,B^-,B^+,a,b\Bigr) \mapsto \Bigl(B^-,-B^{+}\Bigr).
\end{gathered}\end{equation*}

\begin{Remark}\label{Remark.Poissonstructure}
    For a later purpose, we recall the Poisson structures on $\M^\triangle$, associated with the symplectic form given in (\ref{symp.NT}), expressed in terms of $(A,B_1,B_2,a,b)$. Further details can be found in \cite[Section 5.1]{NT17}. 

Let $A_{ij}$ be the linear function on $\M^\triangle$ given by the $(ij)$ entry of $A$. Similarly, we define $(B_1)_{ij},(B_2)_{ij},a_i,b_j$ for every $i\in\{1,\dots,v_2\}$ and $j\in\{1,\dots,v_1\}$. Then, the Poisson bracket is given by:
\begin{equation*}
\begin{aligned}
   \bigl\{&A_{ij},A_{kl}\bigr\}=0, &\bigl\{&a_i,a_j\bigr\}=\bigl\{b_i,b_j\bigr\}=0, \\
   \bigl\{&(B_1)_{ij},(B_1)_{kl}\bigr\}=\delta_{kj}(B_1)_{il}-\delta_{il}(B_1)_{kj},  & \bigl\{&(B_1)_{ij},b_k\bigr\}=-\delta_{ik}b_j,\\
   \bigl\{&(B_2)_{ij},(B_2)_{kl}\bigr\}=\delta_{il}(B_2)_{kj}-\delta_{kj}(B_2)_{il}, & \bigl\{&(B_2)_{ij},a_k\bigr\}=-\delta_{kj}a_i,\\
   \bigl\{&(B_1)_{ij},a_k\bigr\}=\bigl\{(B_2)_{ij},b_k\bigr\}=0, & \bigl\{&(B_1)_{ij},(B_2)_{kl}\bigr\}=0,\\
   \bigl\{&(B_1)_{ij},A_{kl}\bigr\}=-\delta_{il}A_{kj}, &\bigl\{&(B_2)_{ij},A_{kl}\bigr\}=-\delta_{kj}A_{il}\\
   \bigl\{&b_i,a_j\bigr\}=A_{ji}, &\bigl\{&A_{ij},b_k\bigr\}=\bigl\{A_{ij},a_k\bigr\}=0.
\end{aligned}
\end{equation*}
\end{Remark}

 Let $(B,\Lambda,v)$ be a bow diagram and $x \in \Lambda$ a fixed x-point. We denote by $\zeta_x^-$ and $\zeta_x^+$ the adjacent segments to $x$, so that
 \begin{center}
     \tikzset{every picture/.style={line width=0.75pt}} 

\begin{tikzpicture}[x=0.75pt,y=0.75pt,yscale=-1,xscale=1]

\draw    (190.2,129.93) .. controls (191.87,128.26) and (193.53,128.26) .. (195.2,129.93) .. controls (196.87,131.6) and (198.53,131.6) .. (200.2,129.93) .. controls (201.87,128.26) and (203.53,128.26) .. (205.2,129.93) .. controls (206.87,131.6) and (208.53,131.6) .. (210.2,129.93) .. controls (211.87,128.26) and (213.53,128.26) .. (215.2,129.93) .. controls (216.87,131.6) and (218.53,131.6) .. (220.2,129.93) .. controls (221.87,128.26) and (223.53,128.26) .. (225.2,129.93) .. controls (226.87,131.6) and (228.53,131.6) .. (230.2,129.93) .. controls (231.87,128.26) and (233.53,128.26) .. (235.2,129.93) .. controls (236.87,131.6) and (238.53,131.6) .. (240.2,129.93) .. controls (241.87,128.26) and (243.53,128.26) .. (245.2,129.93) .. controls (246.87,131.6) and (248.53,131.6) .. (250.2,129.93) .. controls (251.87,128.26) and (253.53,128.26) .. (255.2,129.93) .. controls (256.87,131.6) and (258.53,131.6) .. (260.2,129.93) .. controls (261.87,128.26) and (263.53,128.26) .. (265.2,129.93) .. controls (266.87,131.6) and (268.53,131.6) .. (270.2,129.93) .. controls (271.87,128.26) and (273.53,128.26) .. (275.2,129.93) .. controls (276.87,131.6) and (278.53,131.6) .. (280.2,129.93) -- (280.67,129.93) -- (288.67,129.93) ;
\draw [shift={(290.67,129.93)}, rotate = 180] [color={rgb, 255:red, 0; green, 0; blue, 0 }  ][line width=0.75]    (9.84,-4.41) .. controls (6.25,-2.07) and (2.97,-0.6) .. (0,0) .. controls (2.97,0.6) and (6.25,2.07) .. (9.84,4.41)   ;
\draw [shift={(240.43,129.93)}, rotate = 45] [color={rgb, 255:red, 0; green, 0; blue, 0 }  ][line width=0.75]    (-5.03,0) -- (5.03,0)(0,5.03) -- (0,-5.03)   ;

\draw (213,105) node [anchor=north west][inner sep=0.75pt]  [font=\small]  {$\zeta ^{-\ } \ x\ \ \ \zeta ^{+}$};

\end{tikzpicture}
 \end{center}
and $\underline{v}_x=(v_x^-,v_x^+)=(v_{\zeta_{x}^-},v_{\zeta_{x}^+})$. To make notation simpler, we may denote them by $v_{-}$ and $v_{+}$. Hence, we define the space of \textit{triangles at $x$} as: $\M^x\coloneqq    \M^\triangle_{\underline{v}_x}$ and denote its elements by $(A_x,B_x^-,B_x^+,a_x,b_x)$. We will omit $x$ from the notation when there is no ambiguity. We will denote the associated moment map by $\mu_x$.

\subsection{Bow varieties}
Let $B=(\II,\EE)$ be a bow and $(\Lambda,v)$ be a bow diagram over $B$. We define:
\begin{equation}\label{eq3.19}
    \TM\coloneqq\TM(B,\Lambda,v)\coloneqq \prod\limits_{x \in \Lambda}\M^x \times \bigoplus\limits_{e \in \EE}\M^e.
\end{equation}
$\TM(B,\Lambda,v)$ is a product of nonsingular affine symplectic varieties (see \S\ref{bif}, \S\ref{triangles}). Hence, $\TM$ has a structure of nonsingular affine variety with an induced holomorphic symplectic form given by the sum of the symplectic forms on each factors.
Using the notation of previous subsections, we denote its elements by $(A_x,B^\pm_x,a_x,b_x,C_e,D_e)_{x\in\Lambda, \ e\in\EE}=(A_x,B_x^-,B_x^+,a_x,b_x,C_e,D_e)_{x\in\Lambda, \ e \in \EE}$. When there is no ambiguity, we omit subscripts $x$ and $e$.

To any bow diagram we associate the group $\GG=\prod\limits_{\zeta\in\IS}\GL(v_\zeta)$ which acts on $\TM$ by changing bases. Namely, given $(A,B^\pm,a,b,C,D) \in \TM$ and $g=(g_\zeta) \in \GG$, then $g\cdot(A,B^\pm,a,b,C,D)$ is given by
\begin{equation}
  \bigl(g_{\zeta_x^+}A_xg^{-1}_{\zeta_x^-}, g_{\zeta_x^\pm}B_x^\pm g_{\zeta_x^\pm}^{-1},g_{\zeta_x^+}a_x,b_xg_{\zeta_x^-}^{-1},g_{h(e)}C_e g_{t(e)}^{-1},g_{t(e)}D_e g_{h(e)}^{-1}\bigr).
\end{equation}
It follows from the above discussion on two-way parts and triangles  (cf. \S\ref{bif}, \S\ref{triangles}), that the action of $\GG$ on $\TM$ is Hamiltonian with moment map:
\begin{equation*}\begin{gathered}
    \mu:\TM \longrightarrow  \mathfrak{g} := \Lie(\GG) = \bigoplus\limits_{\zeta \in \IS} \End(V_\zeta) \\
    (A_x,B_x^-,B_x^+,a_x,b_x,C_e,D_e) \mapsto \sum\limits_{x \in \Lambda}\mu_x(A_x,B_x^-,B_x^+,a_x,b_x)+\sum\limits_{e\in\EE}\mu_e(C_e,D_e).
\end{gathered}\end{equation*}
Explicitly, $\mu(A,B,a,b,C,D)_\zeta$ is given by:
\begin{equation}\label{eq.3.22.}
\begin{aligned}
    \begin{tikzpicture}[x=0.75pt,y=0.75pt,yscale=-1,xscale=1]

\draw    (375.73,83.6) -- (401.69,100.5) ;
\draw [shift={(404.2,102.13)}, rotate = 213.07] [fill={rgb, 255:red, 0; green, 0; blue, 0 }  ][line width=0.08]  [draw opacity=0] (7.14,-3.43) -- (0,0) -- (7.14,3.43) -- (4.74,0) -- cycle    ;
\draw    (377.07,121.6) -- (401.76,103.88) ;
\draw [shift={(404.2,102.13)}, rotate = 144.34] [fill={rgb, 255:red, 0; green, 0; blue, 0 }  ][line width=0.08]  [draw opacity=0] (7.14,-3.43) -- (0,0) -- (7.14,3.43) -- (4.74,0) -- cycle    ;
\draw    (404.2,102.13) .. controls (405.83,100.44) and (407.5,100.41) .. (409.2,102.04) .. controls (410.9,103.67) and (412.57,103.64) .. (414.2,101.94) .. controls (415.84,100.25) and (417.51,100.22) .. (419.2,101.85) .. controls (420.9,103.48) and (422.57,103.45) .. (424.2,101.75) .. controls (425.84,100.06) and (427.51,100.03) .. (429.2,101.66) -- (433.87,101.57) -- (441.87,101.42) ;
\draw [shift={(444.87,101.36)}, rotate = 178.91] [fill={rgb, 255:red, 0; green, 0; blue, 0 }  ][line width=0.08]  [draw opacity=0] (7.14,-3.43) -- (0,0) -- (7.14,3.43) -- (4.74,0) -- cycle    ;
\draw    (444.87,101.36) -- (470.82,118.26) ;
\draw [shift={(473.33,119.89)}, rotate = 213.07] [fill={rgb, 255:red, 0; green, 0; blue, 0 }  ][line width=0.08]  [draw opacity=0] (7.14,-3.43) -- (0,0) -- (7.14,3.43) -- (4.74,0) -- cycle    ;
\draw    (444.87,101.36) -- (469.56,83.64) ;
\draw [shift={(472,81.89)}, rotate = 144.34] [fill={rgb, 255:red, 0; green, 0; blue, 0 }  ][line width=0.08]  [draw opacity=0] (7.14,-3.43) -- (0,0) -- (7.14,3.43) -- (4.74,0) -- cycle    ;
\draw    (375.73,137.6) -- (401.69,154.5) ;
\draw [shift={(404.2,156.13)}, rotate = 213.07] [fill={rgb, 255:red, 0; green, 0; blue, 0 }  ][line width=0.08]  [draw opacity=0] (7.14,-3.43) -- (0,0) -- (7.14,3.43) -- (4.74,0) -- cycle    ;
\draw    (377.07,175.6) -- (401.76,157.88) ;
\draw [shift={(404.2,156.13)}, rotate = 144.34] [fill={rgb, 255:red, 0; green, 0; blue, 0 }  ][line width=0.08]  [draw opacity=0] (7.14,-3.43) -- (0,0) -- (7.14,3.43) -- (4.74,0) -- cycle    ;
\draw    (404.2,156.13) .. controls (405.85,154.45) and (407.52,154.44) .. (409.2,156.09) .. controls (410.88,157.74) and (412.55,157.73) .. (414.2,156.05) .. controls (415.85,154.37) and (417.52,154.36) .. (419.2,156.01) .. controls (420.88,157.66) and (422.55,157.65) .. (424.2,155.97) .. controls (425.85,154.29) and (427.52,154.28) .. (429.2,155.93) .. controls (430.88,157.58) and (432.55,157.57) .. (434.2,155.89) .. controls (435.85,154.21) and (437.52,154.2) .. (439.2,155.85) .. controls (440.88,157.5) and (442.55,157.49) .. (444.2,155.81) .. controls (445.86,154.14) and (447.53,154.13) .. (449.2,155.78) .. controls (450.88,157.43) and (452.55,157.42) .. (454.2,155.74) .. controls (455.85,154.06) and (457.52,154.05) .. (459.2,155.7) -- (460.2,155.69) -- (468.2,155.62) ;
\draw [shift={(471.2,155.6)}, rotate = 179.54] [fill={rgb, 255:red, 0; green, 0; blue, 0 }  ][line width=0.08]  [draw opacity=0] (7.14,-3.43) -- (0,0) -- (7.14,3.43) -- (4.74,0) -- cycle    ;
\draw [shift={(437.7,155.87)}, rotate = 44.54] [color={rgb, 255:red, 0; green, 0; blue, 0 }  ][line width=0.75]    (-3.35,0) -- (3.35,0)(0,3.35) -- (0,-3.35)   ;
\draw    (371.86,204.73) .. controls (373.54,203.08) and (375.21,203.1) .. (376.86,204.78) .. controls (378.51,206.46) and (380.18,206.47) .. (381.86,204.82) .. controls (383.54,203.17) and (385.2,203.18) .. (386.85,204.86) .. controls (388.5,206.54) and (390.17,206.56) .. (391.85,204.91) .. controls (393.53,203.26) and (395.2,203.27) .. (396.85,204.95) .. controls (398.5,206.63) and (400.17,206.64) .. (401.85,204.99) .. controls (403.53,203.34) and (405.2,203.35) .. (406.85,205.03) .. controls (408.5,206.71) and (410.17,206.73) .. (411.85,205.08) .. controls (413.53,203.43) and (415.2,203.44) .. (416.85,205.12) .. controls (418.5,206.8) and (420.17,206.81) .. (421.85,205.16) .. controls (423.53,203.51) and (425.2,203.53) .. (426.85,205.21) .. controls (428.5,206.89) and (430.17,206.9) .. (431.85,205.25) -- (433.87,205.27) -- (441.87,205.34) ;
\draw [shift={(444.87,205.36)}, rotate = 180.49] [fill={rgb, 255:red, 0; green, 0; blue, 0 }  ][line width=0.08]  [draw opacity=0] (7.14,-3.43) -- (0,0) -- (7.14,3.43) -- (4.74,0) -- cycle    ;
\draw [shift={(408.36,205.05)}, rotate = 45.49] [color={rgb, 255:red, 0; green, 0; blue, 0 }  ][line width=0.75]    (-3.35,0) -- (3.35,0)(0,3.35) -- (0,-3.35)   ;
\draw    (444.87,205.36) -- (470.82,222.26) ;
\draw [shift={(473.33,223.89)}, rotate = 213.07] [fill={rgb, 255:red, 0; green, 0; blue, 0 }  ][line width=0.08]  [draw opacity=0] (7.14,-3.43) -- (0,0) -- (7.14,3.43) -- (4.74,0) -- cycle    ;
\draw    (444.87,205.36) -- (469.56,187.64) ;
\draw [shift={(472,185.89)}, rotate = 144.34] [fill={rgb, 255:red, 0; green, 0; blue, 0 }  ][line width=0.08]  [draw opacity=0] (7.14,-3.43) -- (0,0) -- (7.14,3.43) -- (4.74,0) -- cycle    ;
\draw    (381.2,60.6) .. controls (382.87,58.93) and (384.53,58.93) .. (386.2,60.6) .. controls (387.87,62.27) and (389.53,62.27) .. (391.2,60.6) .. controls (392.87,58.93) and (394.53,58.93) .. (396.2,60.6) .. controls (397.87,62.27) and (399.53,62.27) .. (401.2,60.6) .. controls (402.87,58.93) and (404.53,58.93) .. (406.2,60.6) .. controls (407.87,62.27) and (409.53,62.27) .. (411.2,60.6) .. controls (412.87,58.93) and (414.53,58.93) .. (416.2,60.6) -- (420.7,60.6) -- (420.7,60.6) .. controls (422.37,58.93) and (424.03,58.93) .. (425.7,60.6) .. controls (427.37,62.27) and (429.03,62.27) .. (430.7,60.6) .. controls (432.37,58.93) and (434.03,58.93) .. (435.7,60.6) .. controls (437.37,62.27) and (439.03,62.27) .. (440.7,60.6) .. controls (442.37,58.93) and (444.03,58.93) .. (445.7,60.6) -- (449.2,60.6) -- (457.2,60.6) ;
\draw [shift={(460.2,60.6)}, rotate = 180] [fill={rgb, 255:red, 0; green, 0; blue, 0 }  ][line width=0.08]  [draw opacity=0] (7.14,-3.43) -- (0,0) -- (7.14,3.43) -- (4.74,0) -- cycle    ;
\draw [shift={(400.95,60.6)}, rotate = 45] [color={rgb, 255:red, 0; green, 0; blue, 0 }  ][line width=0.75]    (-3.35,0) -- (3.35,0)(0,3.35) -- (0,-3.35)   ;
\draw [shift={(440.45,60.6)}, rotate = 45] [color={rgb, 255:red, 0; green, 0; blue, 0 }  ][line width=0.75]    (-3.35,0) -- (3.35,0)(0,3.35) -- (0,-3.35)   ;

\draw (123,47.6) node [anchor=north west][inner sep=0.75pt]  [font=\small] [align=left] {$\displaystyle -B_{\zeta _{x}^{+}} +B_{\zeta _{x'}^{-}}$};
\draw (351.33,46.06) node [anchor=north west][inner sep=0.75pt]   [align=left] {{\small if}};
\draw (398.39,40.58) node [anchor=north west][inner sep=0.75pt]  [font=\scriptsize] [align=left] {$\displaystyle x\ \ \ \zeta \ \ \ \ x'$};
\draw (370,88.67) node [anchor=north west][inner sep=0.75pt]  [font=\scriptsize] [align=left] {$\displaystyle \vdots $};
\draw (467.1,87.4) node [anchor=north west][inner sep=0.75pt]  [font=\scriptsize] [align=left] {$\displaystyle \vdots $};
\draw (388,79.67) node [anchor=north west][inner sep=0.75pt]  [font=\tiny] [align=left] {$\displaystyle e$};
\draw (447.33,79.33) node [anchor=north west][inner sep=0.75pt]  [font=\tiny] [align=left] {$\displaystyle e'$};
\draw (419,84.33) node [anchor=north west][inner sep=0.75pt]  [font=\scriptsize] [align=left] {$\displaystyle \zeta $};
\draw (352,92.73) node [anchor=north west][inner sep=0.75pt]   [align=left] {{\small if}};
\draw (62.67,80.33) node [anchor=north west][inner sep=0.75pt]  [font=\small] [align=left] {$\displaystyle \sum _{h :\ h( e) =\zeta } C_{e} D_{e} -\sum _{e' :\ t( e') =\zeta } D_{e'} C_{e'}$};
\draw (369.47,144) node [anchor=north west][inner sep=0.75pt]  [font=\scriptsize] [align=left] {$\displaystyle \vdots $};
\draw (388,133.67) node [anchor=north west][inner sep=0.75pt]  [font=\tiny] [align=left] {$\displaystyle e$};
\draw (415,136.33) node [anchor=north west][inner sep=0.75pt]  [font=\scriptsize] [align=left] {$\displaystyle \zeta \ \ \ x$};
\draw (352,146.73) node [anchor=north west][inner sep=0.75pt]   [align=left] {{\small if}};
\draw (89.33,131.67) node [anchor=north west][inner sep=0.75pt]   [align=left] {$\displaystyle \sum _{e:\ h( e) =\zeta } C_{e} D_{e} \ +\ B_{\zeta _{x}^{-}}$};
\draw (467.1,191) node [anchor=north west][inner sep=0.75pt]  [font=\scriptsize] [align=left] {$\displaystyle \vdots $};
\draw (448.33,183.33) node [anchor=north west][inner sep=0.75pt]  [font=\tiny] [align=left] {$\displaystyle e'$};
\draw (404,186.33) node [anchor=north west][inner sep=0.75pt]  [font=\scriptsize] [align=left] {$\displaystyle x\ \ \ \ \zeta $};
\draw (352,196.73) node [anchor=north west][inner sep=0.75pt]   [align=left] {{\small if}};
\draw (77.33,179.67) node [anchor=north west][inner sep=0.75pt]   [align=left] {$\displaystyle -
\sum _{e':\ o( e') =\zeta } D_{e'} C_{e'} \ -\ B_{\zeta _{x}^{+}}$};

\end{tikzpicture}
\end{aligned}
\end{equation} 
\begin{Remark}\label{rem3.22}
    In order to perform symplectic reductions, we need to choose a stability parameter and a deformation parameter.
 \begin{enumerate}
        \item[(i)] Given $\lambda \in \CC^\II$, we define $\overline{\lambda}=\bigl(\overline{\lambda}_\zeta\bigr) \in \CC^{\IS}$ by $\overline{\lambda}_\zeta= \lambda_\sigma$ if $\zeta=\zeta_\sigma$,  $\overline{\lambda}_\zeta=0$ otherwise. That is, we have defined an injection of $\CC^\II$ in $\CC^{\IS}$ via first segments of intervals. Hence, we can talk about \textit{deformation parameters} $\lambda$ in $\CC^\II$ and use the notation $\mu^{-1}(\lambda)$. Let us notice that $\mu^{-1}(\lambda)$ is a $\GG$-invariant Zariski closed subset of $\TM$.
        \item[(ii)] Given $\theta \in \ZZ^\II$ we define a rational character of $\GG$ by:
        \begin{equation*}
            \chi_\theta : \GG \rightarrow \CC^\times, \ \ \ \bigl(g_\zeta\bigr) \mapsto \prod\limits_{\sigma \in \II}det(g_{\zeta_{\sigma}})^{-\theta_\sigma}
        \end{equation*}
        Therefore, we can talk of $\theta$-(semi)stable points in $\TM$ and call \textit{stability parameter} an element of $\ZZ^\II$. As usual, for any character $\chi$, we have that $\chi$ and $\chi^N$ give the same (semi)stability conditions for any $N \in \ZZ_{>0}$, therefore $\theta$ and $N\theta$ also give the same (semi)stability conditions. 
        Hence the notion of $\theta$-semistability is well-defined for any $\theta \in \QQ^\II$.
    \end{enumerate}
The previous choice of parameters is not restrictive, see subsections  \ref{s.ncb} and \ref{s.gdp}.
    \end{Remark}
\begin{Definition}
    Given a bow diagram $(B,\Lambda,v)$ and parameters $(\lambda,\theta) \in \CC^\II\times\ZZ^\II$, we define the associated bow variety as: 
    \begin{equation}
        \mathcal{M}_{\lambda,\theta}\bigl(B,\Lambda,v):=\mu^{-1}(\lambda)\GIT_\theta\GG.
    \end{equation}
\end{Definition}
\begin{Remark}\label{rem3.26}
    It is clear from the definition that we can think $\TM$ as a locally closed subset of a larger vector space in the following way. Let $(B,\Lambda,v)$ be a bow diagram. For any $x \in \Lambda$ we define the space of \textit{free triangles} at $x$ by:
\begin{equation*}
\MM^x\coloneqq\Hom(V_{\zeta_x^-},V_{\zeta_x^+})\oplus \End(V_{\zeta_x^-})\oplus \End(V_{\zeta_x^+})\oplus \Hom(\mathbb{C},V_{\zeta_x^+}) \oplus
\Hom(V_{\zeta_x^-},\mathbb{C}).
\end{equation*} 
Then, we define
\begin{equation*}
    \MM\coloneqq\bigoplus\limits_{x \in \Lambda}\MM^x \oplus\bigoplus\limits_{e \in \EE}\M^e.
\end{equation*}
and
\begin{equation*}\begin{gathered}
    \mu_1:\MM \mapsto \bigoplus\limits_{x\in\Lambda}\Hom(V_{\zeta_x^-},V_{\zeta_x^+}), \\
    (A_x,B_x^-,B_x^+,a_x,b_x,C_e,D_e) \mapsto (B_x^+A_x-AB_x^-+a_xb_x)_{x\in\Lambda}.
    \end{gathered}
\end{equation*}
It follows that $\TM$ is given by points in $\mu_1^{-1}(0)$ such that each triangle satisfies conditions $(S1)$ and $(S2)$. Moreover, we can define $\mu_2:\MM \rightarrow \mathfrak{g}$ using the equations (\ref{eq.3.22.}). Then, if 
\begin{equation*}
    \overline{\mu}=\mu_1\oplus\mu_2:\MM \mapsto \bigoplus\limits_{x\in\Lambda}\Hom(V_{\zeta_x^-},V_{\zeta_x^+}) \oplus \mathfrak{g},
\end{equation*}
the subvariety $\mu^{-1}(\lambda) \subset \TM$ is given by the elements in $\overline{\mu}^{-1}(0,\lambda)=\mu_1^{-1}(0)\cap\mu_2^{-1}(\lambda)$ satisfying conditions $(S1)$ and $(S2)$. Finally, the action of $\GG$ on $\TM$ can be extended to the action of $\GG$ on $\MM$ by changing bases.
\end{Remark}
\section{Numerical conditions}\label{S4}

\subsection{Numerical criterion for semistability}\label{s.ncb}
In this subsection we will characterize semistability conditions for the action of $\GG$ on $\TM$. It turns out that the subsets of $\GG$-characters we considered to define bow varieties (see Remark \ref{rem3.22}) is not restrictive for our purposes. Let us fix a bow diagram $(B,\Lambda,v)$ for the entire subsection. We start recalling that any rational character of $\GG=\prod\limits_{\zeta \in \IS} \GL(v_\zeta)$ has the form
\begin{equation*}
    \chi_\nu : \GG \mapsto \CC^\times, \ \ \bigl(g_\zeta  \bigr) \mapsto \prod\limits_{\zeta \in \IS} det(g_\zeta)^{-\nu_\zeta}
\end{equation*}
for some $\nu \in \ZZ^{\IS}$. For any $\nu \in \ZZ^{\IS}$, we will refer to $\chi_\nu$-semistability for the $\GG$-action on $\mu^{-1}(\lambda)$ as the $\nu$-semistability. We notice our choice of stability parameters in $\ZZ^\II$ can be regarded as a special case if we consider $\ZZ^\II$ embedded in $\ZZ^{\IS}$ via first segments of intervals.

Let $\nu \in \ZZ^{\IS}$ be a fixed stability parameter and let us define $\theta(\nu)\in \ZZ^\II$ by $\theta(\nu)_\sigma = \sum\limits_{\zeta \subset \sigma} v_\zeta$, for any $\sigma \in \II$.
\begin{Definition}\label{dncB}
    Let $(A,B^-,B^+,a,b,C,D) \in \mu^{-1}(\lambda)$, $\nu\in \ZZ^{\IS}$ and $\theta=\theta(\nu)$. We define the following conditions:
    \begin{enumerate}
        \item[$(\nu_{1}):$] Let $S=\bigoplus\limits_{\zeta\in\IS}S_\zeta \subset \bigoplus\limits_{\zeta\in\IS} V_\zeta$ be a graded subspace $(A,B^\pm,C,D)$-invariant. Suppose $b(S)=0$ and for any $x\in\Lambda$ the restriction of $A_x$ induces an isomorphism $S_{\zeta_{x}^-} \xrightarrow[\cong]{A} S_{\zeta_{x}^+}$. Then
        \begin{equation*}
            \sum\limits_{\zeta\in\IS}\nu_\zeta \sdim S_\zeta = \sum\limits_{\sigma \in \II}\theta_\sigma \sdim S_\sigma \leq 0.
        \end{equation*}
       Here $\sdim S_\sigma$ denotes the dimension of $S_\zeta$ for some segment $\zeta$ in a wavy line $\sigma$. By assumption, it does not depend on the chosen segment.
        \item[$(\nu_{2}):$] Let $T=\bigoplus\limits_{\zeta\in\IS}T_\zeta \subset \bigoplus\limits_{\zeta\in\IS} V_\zeta$ be a graded subspace $(A,B^\pm,C,D)$-invariant. Suppose $\Ima a \subset T$ and for any $x\in\Lambda$ the restriction of $A_x$ on $T$ induces an isomorphism $V_{\zeta^-}/T_{\zeta^-} \xrightarrow[\cong]{A} V_{\zeta^+}/T_{\zeta^+}$. Then
        \begin{equation*}
            \sum\limits_{\zeta\in\IS}\nu_\zeta \codim T_\zeta = \sum\limits_{\sigma \in \II}\theta_\sigma \codim T_\sigma \geq 0.
        \end{equation*}
    Here $\codim T_\sigma$ denotes the codimension of $T_\zeta$ for some segment $\zeta$ in a wavy line $\sigma$. By assumption, it does not depend on the chosen segment. 
    \end{enumerate}
\end{Definition}

The following proposition gives a numerical criterion for $\nu$-semistability and implies that the semistability structure induced by $\nu$ depends only on $\theta(\nu)$.

 \begin{Theorem}\label{ncB}
    Let $\lambda\in\CC^\II$, $\nu \in \ZZ^{\IS}$ and $m=(A,B^\pm,a,b,C,D) \in \mu^{-1}(\lambda)$. Then the following holds.
    \begin{enumerate}
        \item[(i)] $m$ is $\nu$-semistable if and only if $(\nu_{1})$ and $(\nu_{2})$ are satisfied.
        \item[(ii)] $m$ is $\nu$-stable if and only if $(\nu_{1})$ and $(\nu_{2})$ are satisfied with strict inequalities unless $S=0$ and $T=V$.
    \end{enumerate}
\end{Theorem}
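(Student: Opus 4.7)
The plan is to apply Mumford's numerical criterion (Theorem \ref{ncM}) to the $\GG$-action on $\mu^{-1}(\lambda) \subseteq \TM$ with character $\chi_\nu$. A one-parameter subgroup $\lambda \colon \CC^\times \to \GG$ is equivalent to a $\ZZ$-grading $V_\zeta = \bigoplus_n V_\zeta(n)$ on each segment, with associated decreasing filtration $V_\zeta^{\geq n} = \bigoplus_{k \geq n} V_\zeta(k)$, and a direct computation gives $\langle \lambda, \chi_\nu \rangle = -\sum_\zeta \nu_\zeta \sum_n n \sdim V_\zeta(n)$. Unwinding what it means for $\lim_{t \to 0} \lambda(t) \cdot m$ to lie in $\TM$ component by component yields exactly the following conditions: $V^{\geq n}$ is stable under $B^\pm$, $C$, $D$; $A_x(V_{\zeta_x^-}^{\geq n}) \subseteq V_{\zeta_x^+}^{\geq n}$ for every $x$ and $n$; $\Ima a \subseteq V^{\geq 0}$; $b(V^{>0}) = 0$; and the associated-graded triangle still satisfies $(S1), (S2)$. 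Invoking Lemma \ref{3.9} on the limit forces each block $A_x^{(n)} \colon V_{\zeta_x^-}(n) \to V_{\zeta_x^+}(n)$ to have full rank, and the dimensions $\sdim V_{\zeta_x^\pm}(n)$ to be ordered in the same direction as $v_{\zeta_x^\pm}$, uniformly in $n$.

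For the necessity direction, given an admissible graded subspace $S$ as in $(\nu_1)$, I would choose graded complements $V_\zeta = S_\zeta \oplus U_\zeta$ (compatibly with $A_x$ at each triangle, which is possible thanks to $A_x(S_{\zeta_x^-}) = S_{\zeta_x^+}$ and the full rank of $A_x$) and define $\lambda(t)$ to act as $t$ on $S_\zeta$ and as the identity on $U_\zeta$. The hypotheses on $S$ are exactly the filtration conditions from the previous paragraph and guarantee that the limit triangle still satisfies $(S1), (S2)$; the pairing evaluates to $-\sum_\zeta \nu_\zeta \sdim S_\zeta$, and Mumford's criterion delivers $(\nu_1)$. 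The symmetric construction in which $\lambda(t)$ acts as the identity on $T_\zeta$ and as $t^{-1}$ on a complement gives $(\nu_2)$. For sufficiency, starting from any one-parameter subgroup with limit in $\TM$, the telescoping identity
\begin{equation*}
    \sum_{n} n \sdim V_\zeta(n) \;=\; \sum_{m \geq 1} \sdim V_\zeta^{\geq m} \;-\; \sum_{m \leq 0} \codim V_\zeta^{\geq m}
\end{equation*}
rewrites the pairing as
\begin{equation*}
    \langle \lambda, \chi_\nu \rangle \;=\; \sum_{m \leq 0} \sum_\zeta \nu_\zeta \codim V_\zeta^{\geq m} \;-\; \sum_{m \geq 1} \sum_\zeta \nu_\zeta \sdim V_\zeta^{\geq m},
\end{equation*}
and the plan is to make each summand of the correct sign by applying $(\nu_1)$ to $V^{\geq m}$ for each $m \geq 1$ and $(\nu_2)$ to $V^{\geq m}$ for each $m \leq 0$.

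The stability half of the theorem follows by recycling the same arguments with strict inequalities in Mumford's criterion: the degenerate choices $S = 0$ and $T = V$ in the necessity construction are precisely those yielding the trivial $1$-PS, for which the pairing vanishes and no constraint is extracted. The main technical obstacle lies in the sufficiency step, namely verifying that $V^{\geq m}$ genuinely satisfies the $A_x$-isomorphism clause of $(\nu_1)$ (resp.\ $(\nu_2)$) at each triangle when $v_{\zeta_x^-} \neq v_{\zeta_x^+}$. Even though the graded pieces $A_x^{(n)}$ have full rank and the dimensions $\sdim V_{\zeta_x^\pm}(n)$ are uniformly ordered, the plain restriction $A_x \colon V_{\zeta_x^-}^{\geq m} \to V_{\zeta_x^+}^{\geq m}$ is typically not an isomorphism. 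Overcoming this will likely require replacing $V^{\geq m}$ at each triangle by either the $A_x$-image of the filtration on the opposite side or its $A_x$-preimage, and then rearranging the telescoping sum to absorb the resulting dimensional discrepancy, possibly invoking $(S1), (S2)$ for the limit triangle to show that no such discrepancy contributes to the pairing.
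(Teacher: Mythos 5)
Your overall strategy is the same as the paper's (which itself defers to \cite[Prop.~2.8]{NT17}): the necessity direction via the one-parameter subgroup acting by $t$ on $S$ and trivially on a complement matches the paper's argument exactly, and your telescoping identity for $\langle\gamma,\chi_\nu\rangle$ is the standard way to run the converse, whether one phrases it through Mumford's criterion as you do or through King's criterion as the paper does. The problem is that the one step you flag as "the main technical obstacle" is in fact the entire mathematical content of the sufficiency direction, and your proposed workaround points in the wrong direction. Replacing $V^{\geq m}_{\zeta_x^+}$ by $A_x(V^{\geq m}_{\zeta_x^-})$ (or by preimages) changes the dimensions entering the pairing, and since the $\nu_\zeta$ have arbitrary signs there is no way to "absorb the resulting dimensional discrepancy": a discrepancy $d_\zeta=\sdim V_\zeta^{\geq m}-\sdim S_\zeta^{(m)}>0$ contributes $-\nu_\zeta d_\zeta$ to the pairing, which can be negative, and nothing in $(\nu_1)$, $(\nu_2)$ controls it. Moreover the modification propagates along a wavy line through consecutive triangles and arrows, so the modified collection need not remain $(A,B^\pm,C,D)$-invariant in a way compatible with the graded structure.

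The missing idea is that no modification is needed: the plain filtration pieces \emph{do} satisfy the isomorphism clauses, but proving this requires using $(S1)$ and $(S2)$ for the limit triangle in an essential way. Concretely, say $v_{\zeta_x^-}<v_{\zeta_x^+}$, so each graded block $A_x^{(n)}$ is injective. The subspace $T=\bigoplus_{n\neq 0}A_x^{(n)}(V_{\zeta_x^-}(n))\oplus V_{\zeta_x^+}(0)$ contains $\Ima(\operatorname{gr}A_x)+\Ima a_x^{(0)}$ and is $\operatorname{gr}B_x^+$-stable (the cross term $a^{(0)}b^{(0)}$ vanishes on $V_{\zeta_x^-}(n)$ for $n\neq0$ because $a^{(0)}$ and $b^{(0)}$ are concentrated in weight $0$), so $(S2)$ for the limit forces $T=V_{\zeta_x^+}$, i.e.\ $A_x^{(n)}$ is an isomorphism for every $n\neq 0$ and the whole defect $v_{\zeta_x^+}-v_{\zeta_x^-}$ sits in weight $0$; the case $v_{\zeta_x^-}>v_{\zeta_x^+}$ is symmetric via $(S1)$. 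It follows that for $m\geq 1$ the restriction $A_x\colon V_{\zeta_x^-}^{\geq m}\to V_{\zeta_x^+}^{\geq m}$ is an isomorphism (its associated graded is $\bigoplus_{n\geq m}A_x^{(n)}$ with all $n>0$), and for $m\leq 0$ the induced map on quotients is an isomorphism, so $(\nu_1)$ and $(\nu_2)$ apply verbatim to $V^{\geq m}$ and your telescoping sum closes. Without this weight-zero concentration lemma the sufficiency half of (i), and hence also (ii), is not proved.
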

\begin{proof}
Let us starting by proving $(i)$. We will use the argument in \cite[Prop. 2.8]{NT17}. We start by proving that if $m \in \mu^{-1}(\lambda)$ is $\nu$-semistable then condition $(\nu_1)$ is satisfied. Let $S=\bigoplus\limits_{\zeta}S_\zeta$ a graded subspace satisfying assumptions in $(\nu_1)$ but not the inequality in $(\nu_1)$, and $S^\perp$ a graded complement of $S$ in $V$. We consider the one parameter subgroup $\gamma(t)$ of $\GG$ defined by $\gamma\vert_S=t\operatorname{id}$ and $\gamma\vert_{S^\perp}=\operatorname{id}$. Then, 
\begin{equation*}
    \langle\chi_\nu,\gamma\rangle=-\sum\limits_{\zeta}\nu_\zeta\dim S_\zeta \lneq 0,
\end{equation*}
where the last inequality follows from the assumption on $S$. Therefore, because of Mumford's numerical criterion (see Theorem \ref{ncM}), we obtain a contradiction by proving the existence of $\overline{m}=\lim\limits_{t\rightarrow0}\gamma(t)\cdot m$ in $\mu^{-1}(\lambda)$. We notice that if such a limit exists in $\MM$ it has to be an element of $\mu_1^{-1}(0) \cap \mu_2^{-1}(\lambda)$ since they are closed subsets. So, we need to prove the existence of the limit $\overline{m}$ in $\MM$ such that its triangles satisfy conditions $(S1)$ and $(S2)$. This has essentially been proved in \cite[Prop. 2.8]{NT17}, so we will omit it. Let us notice that, as noticed in \cite{NT17}, we could prove only the existence of such a limit via Hurtubise normal forms. With a similar argument, we can prove that $\nu$-semistability implies condition $(\nu_2)$. To conclude, the proof that $(\nu_1)$ and $(\nu_2)$ imply $\nu$-semistability, follows by modifying the proof in \cite[Prop. 2.8]{NT17}, which uses King's criterion \ref{ncK}, in a similar fashion to the previous case. Point $(ii)$ follows from the criteria of Mumford and King for stable points (see Theorem \ref{ncM} and Theorem \ref{ncK}).
\end{proof}
\begin{Remark}
For bow varieties of affine type $A$, if $\nu\in\ZZ^{\II}\subset\ZZ^{\IS}$ via the last segments of intervals, then, the numerical criterion above coincides with the numerical criterion proved by Nakajima and Takayama (see \cite[Proposition 2.8]{NT17}). Moreover, it makes our choice to assign a character $\chi_\theta$ for any $\theta \in \ZZ^{\II}$ not restrictive and equivalent to the choice of Nakajima and Takayama of embedding $\ZZ^\II$ via last segments of intervals.
\end{Remark}   

\subsection{Deformation parameters}\label{s.gdp}
In the definition of bow varieties, we used only deformation parameters of a particular form. In this subsection, we will prove that such a choice is not restrictive. Similar considerations hold in the hyperk\"ahler construction, see \cite{cherkis2011instantons} and \cite[Remark 4.5]{T15}. 

Let us fix $(B,\Lambda,v)$ a bow diagram. As in Remark \ref{rem3.22}, we consider $\CC^{\II}$ embedded in $\CC^{\II^s}$ via first segment of intervals and $\CC^{\II^s}$ diagonally embedded into $\mathfrak{g}$. For any $\nu \in \CC^{\II^s}$, we define $\lambda(\nu)\in\CC^{\II}$ by $\lambda(\nu)_\sigma=\sum\limits_{\zeta\in\sigma}\nu_\zeta$.

We need to introduce some notation. Given a wavy line $\sigma\in\II$, we order x-points on it by following its orientation and denote them by $x_{\sigma,i}$ with $i=1,\cdots,w_\sigma$. Similarly, we order segments and denote them by $\zeta_\sigma^j$ with $j=0,\cdots,w_\sigma$. That is: 
\begin{center}
    \tikzset{every picture/.style={line width=0.75pt}} 

\begin{tikzpicture}[x=0.75pt,y=0.75pt,yscale=-1,xscale=1]

\draw    (100.1,120.05) .. controls (101.77,118.38) and (103.43,118.38) .. (105.1,120.05) .. controls (106.77,121.72) and (108.43,121.72) .. (110.1,120.05) .. controls (111.77,118.38) and (113.43,118.38) .. (115.1,120.05) .. controls (116.77,121.72) and (118.43,121.72) .. (120.1,120.05) .. controls (121.77,118.38) and (123.43,118.38) .. (125.1,120.05) .. controls (126.77,121.72) and (128.43,121.72) .. (130.1,120.05) .. controls (131.77,118.38) and (133.43,118.38) .. (135.1,120.05) .. controls (136.77,121.72) and (138.43,121.72) .. (140.1,120.05) .. controls (141.77,118.38) and (143.43,118.38) .. (145.1,120.05) .. controls (146.77,121.72) and (148.43,121.72) .. (150.1,120.05) -- (150.6,120.05) -- (150.6,120.05) ;
\draw [shift={(150.6,120.05)}, rotate = 45] [color={rgb, 255:red, 0; green, 0; blue, 0 }  ][line width=0.75]    (-4.47,0) -- (4.47,0)(0,4.47) -- (0,-4.47)   ;
\draw    (150.6,120.05) .. controls (152.27,118.38) and (153.93,118.38) .. (155.6,120.05) .. controls (157.27,121.72) and (158.93,121.72) .. (160.6,120.05) .. controls (162.27,118.38) and (163.93,118.38) .. (165.6,120.05) .. controls (167.27,121.72) and (168.93,121.72) .. (170.6,120.05) .. controls (172.27,118.38) and (173.93,118.38) .. (175.6,120.05) .. controls (177.27,121.72) and (178.93,121.72) .. (180.6,120.05) .. controls (182.27,118.38) and (183.93,118.38) .. (185.6,120.05) .. controls (187.27,121.72) and (188.93,121.72) .. (190.6,120.05) .. controls (192.27,118.38) and (193.93,118.38) .. (195.6,120.05) -- (200.1,120.05) -- (200.1,120.05) ;
\draw [shift={(200.1,120.05)}, rotate = 45] [color={rgb, 255:red, 0; green, 0; blue, 0 }  ][line width=0.75]    (-4.47,0) -- (4.47,0)(0,4.47) -- (0,-4.47)   ;
\draw    (200.1,120.05) .. controls (201.77,118.38) and (203.43,118.38) .. (205.1,120.05) .. controls (206.77,121.72) and (208.43,121.72) .. (210.1,120.05) .. controls (211.77,118.38) and (213.43,118.38) .. (215.1,120.05) .. controls (216.77,121.72) and (218.43,121.72) .. (220.1,120.05) .. controls (221.77,118.38) and (223.43,118.38) .. (225.1,120.05) -- (225.1,120.05) ;
\draw  [dash pattern={on 0.84pt off 2.51pt}]  (226,120.05) -- (250.6,120.05) ;
\draw    (249.6,120.05) .. controls (251.27,118.38) and (252.93,118.38) .. (254.6,120.05) .. controls (256.27,121.72) and (257.93,121.72) .. (259.6,120.05) .. controls (261.27,118.38) and (262.93,118.38) .. (264.6,120.05) .. controls (266.27,121.72) and (267.93,121.72) .. (269.6,120.05) .. controls (271.27,118.38) and (272.93,118.38) .. (274.6,120.05) .. controls (276.27,121.72) and (277.93,121.72) .. (279.6,120.05) .. controls (281.27,118.38) and (282.93,118.38) .. (284.6,120.05) .. controls (286.27,121.72) and (287.93,121.72) .. (289.6,120.05) .. controls (291.27,118.38) and (292.93,118.38) .. (294.6,120.05) .. controls (296.27,121.72) and (297.93,121.72) .. (299.6,120.05) -- (300.1,120.05) -- (300.1,120.05) ;
\draw [shift={(300.1,120.05)}, rotate = 45] [color={rgb, 255:red, 0; green, 0; blue, 0 }  ][line width=0.75]    (-4.47,0) -- (4.47,0)(0,4.47) -- (0,-4.47)   ;
\draw    (300.1,120.05) .. controls (301.77,118.38) and (303.43,118.38) .. (305.1,120.05) .. controls (306.77,121.72) and (308.43,121.72) .. (310.1,120.05) .. controls (311.77,118.38) and (313.43,118.38) .. (315.1,120.05) .. controls (316.77,121.72) and (318.43,121.72) .. (320.1,120.05) .. controls (321.77,118.38) and (323.43,118.38) .. (325.1,120.05) .. controls (326.77,121.72) and (328.43,121.72) .. (330.1,120.05) .. controls (331.77,118.38) and (333.43,118.38) .. (335.1,120.05) .. controls (336.77,121.72) and (338.43,121.72) .. (340.1,120.05) .. controls (341.77,118.38) and (343.43,118.38) .. (345.1,120.05) -- (349.6,120.05) -- (349.6,120.05) ;
\draw  [dash pattern={on 0.84pt off 2.51pt}]  (349.6,120.05) -- (374.2,120.05) ;
\draw    (374.2,120.05) .. controls (375.87,118.38) and (377.53,118.38) .. (379.2,120.05) .. controls (380.87,121.72) and (382.53,121.72) .. (384.2,120.05) .. controls (385.87,118.38) and (387.53,118.38) .. (389.2,120.05) .. controls (390.87,121.72) and (392.53,121.72) .. (394.2,120.05) .. controls (395.87,118.38) and (397.53,118.38) .. (399.2,120.05) .. controls (400.87,121.72) and (402.53,121.72) .. (404.2,120.05) .. controls (405.87,118.38) and (407.53,118.38) .. (409.2,120.05) .. controls (410.87,121.72) and (412.53,121.72) .. (414.2,120.05) .. controls (415.87,118.38) and (417.53,118.38) .. (419.2,120.05) .. controls (420.87,121.72) and (422.53,121.72) .. (424.2,120.05) -- (424.7,120.05) -- (424.7,120.05) ;
\draw [shift={(424.7,120.05)}, rotate = 45] [color={rgb, 255:red, 0; green, 0; blue, 0 }  ][line width=0.75]    (-4.47,0) -- (4.47,0)(0,4.47) -- (0,-4.47)   ;
\draw    (424.7,120.05) .. controls (426.37,118.38) and (428.03,118.38) .. (429.7,120.05) .. controls (431.37,121.72) and (433.03,121.72) .. (434.7,120.05) .. controls (436.37,118.38) and (438.03,118.38) .. (439.7,120.05) .. controls (441.37,121.72) and (443.03,121.72) .. (444.7,120.05) .. controls (446.37,118.38) and (448.03,118.38) .. (449.7,120.05) .. controls (451.37,121.72) and (453.03,121.72) .. (454.7,120.05) .. controls (456.37,118.38) and (458.03,118.38) .. (459.7,120.05) -- (464.2,120.05) -- (472.2,120.05) ;
\draw [shift={(474.2,120.05)}, rotate = 180] [color={rgb, 255:red, 0; green, 0; blue, 0 }  ][line width=0.75]    (8.74,-3.92) .. controls (5.56,-1.84) and (2.65,-0.53) .. (0,0) .. controls (2.65,0.53) and (5.56,1.84) .. (8.74,3.92)   ;

\draw (63.5,115) node [anchor=north west][inner sep=0.75pt]  [font=\normalsize]  {$\sigma :$};
\draw (122,95.05) node [anchor=north west][inner sep=0.75pt]    {$\zeta _{\sigma }^{0} \ \ \ \ \ \ \ \zeta _{\sigma }^{1} \ \ \ \ \ \ \ \ \ \ \ \ \ \ \ \ \ \ \zeta _{\sigma }^{i-1} \ \ \ \ \ \ \zeta _{\sigma }^{i} \ \ \ \ \ \ \ \ \ \ \ \ \ \ \ \zeta _{\sigma }^{w_{\sigma } -1} \ \  \zeta _{\sigma }^{w_{\sigma }}$};
\draw (143,129.9) node [anchor=north west][inner sep=0.75pt]    {$\ x_{\sigma ,1} \ \ \ \ \ x_{\sigma ,2} \ \ \ \ \ \ \ \ \ \ \ \ \ \ \ \ \ x_{\sigma ,i} \ \ \ \ \ \ \ \ \ \ \ \ \ \ \ \ \ \ \ \ \ \ x_{\sigma ,w_{\sigma }}$};

\end{tikzpicture}
\end{center}
Then, we will use the following notation for triangles associated with an x-point $x_{\sigma,i}$:
\begin{center}
    \begin{tikzcd}
{V_{\sigma,i}^-} \arrow[rd, "{b_{\sigma,i}}"'] \arrow[rr, "{A_{\sigma,i}}"] \arrow["{B_{\sigma,i}^-}"', loop, distance=2em, in=125, out=55] &                                          & {V_{\sigma,i}^+} \arrow["{B_{\sigma,i}^+}"', loop, distance=2em, in=125, out=55] \\
                                                                                                                                            & \mathbb{C} \arrow[ru, "{a_{\sigma,i}}"'] &                                                                                 
\end{tikzcd}
\end{center}

\begin{Theorem}\label{nCd}
There is an isomorphism of affine $\GG$-varieties between $\mu^{-1}(\nu)$ and $\mu^{-1}(\lambda(\nu))$. Furthermore, it induces an isomorphism of Poisson varieties between $\M_{\nu,\theta}$ and $\M_{\lambda(\nu),\theta}$ for every $\theta\in\ZZ^\II$.
\end{Theorem}
\begin{proof}
Let us start by defining
    \begin{equation}\label{eq4.2}\begin{gathered}
     \phi: \TM \longrightarrow \TM, \\(A,B^\pm,a,b,C,D) \mapsto (A,\overline{B}^\pm,a,b,C,D),
    \end{gathered}\end{equation}
where $\overline{B}_{\sigma,i}^\pm=B_{\sigma,i}^\pm + \sum\limits_{j=i}^{w_\sigma}\nu_{\zeta_\sigma^j}\operatorname{id}_{V_{\sigma,i}^\pm}$ for any $\sigma \in \II$ and $i\in\{1,\cdots,w_\sigma\}$.
We notice it is well-defined. Indeed, since we are just translating components $B$ of triangles by adding scalar matrices then invariant subspaces of $B$ and $\overline{B}$ coincide. Therefore, if $(A,B^\pm,a,b)$ satisfies conditions $(S1)$ and $(S2)$, so $(A,\overline{B}^\pm,a,b)$ does. Condition $(a)$ is also preserved. Indeed, for any $x \in \Lambda$ we have $\overline{B}^\pm=B^\pm + \alpha\operatorname{id}$ for some $\alpha\in\CC$ and so, if $(A,B^\pm,a,b)$ satisfies condition $(a)$, then:
\begin{equation*}
    \overline{B}^+A-A\overline{B}^-+ab=(B^++\alpha)A-A(B^-+\alpha)+ab=0.
\end{equation*}
So, the map (\ref{eq4.2}) is an isomorphism of affine $\GG$-varieties. We need to show that the image of its restriction to $\mu^{-1}(\nu)$ is $\mu^{-1}(\lambda(\nu))$. Let $(A,B^\pm,a,b,C,D) \in \mu^{-1}(\nu)$. Then, given $\zeta$ a segment, $\mu(A,\overline{B}^\pm,a,b,C,D)_{\zeta}$ is equal to
\begin{align*}
  \sum\limits_{e: h(e)=\zeta}C_eD_e+B_{\sigma,1}^- + \sum\limits_{j=1}^{w_\sigma}\nu_{\zeta_\sigma^j}=&\lambda(\nu)_\sigma& \text{ if }&\vcenter{\hbox{\begin{tikzpicture}[x=0.75pt,y=0.75pt,yscale=-1,xscale=1]

\draw    (395.73,157.6) -- (421.69,174.5) ;
\draw [shift={(424.2,176.13)}, rotate = 213.07] [fill={rgb, 255:red, 0; green, 0; blue, 0 }  ][line width=0.08]  [draw opacity=0] (7.14,-3.43) -- (0,0) -- (7.14,3.43) -- (4.74,0) -- cycle    ;
\draw    (397.07,195.6) -- (421.76,177.88) ;
\draw [shift={(424.2,176.13)}, rotate = 144.34] [fill={rgb, 255:red, 0; green, 0; blue, 0 }  ][line width=0.08]  [draw opacity=0] (7.14,-3.43) -- (0,0) -- (7.14,3.43) -- (4.74,0) -- cycle    ;
\draw    (424.2,176.13) .. controls (425.85,174.45) and (427.52,174.44) .. (429.2,176.09) .. controls (430.88,177.74) and (432.55,177.73) .. (434.2,176.05) .. controls (435.85,174.37) and (437.52,174.36) .. (439.2,176.01) .. controls (440.88,177.66) and (442.55,177.65) .. (444.2,175.97) .. controls (445.85,174.29) and (447.52,174.28) .. (449.2,175.93) .. controls (450.88,177.58) and (452.55,177.57) .. (454.2,175.89) .. controls (455.85,174.21) and (457.52,174.2) .. (459.2,175.85) .. controls (460.88,177.5) and (462.55,177.49) .. (464.2,175.81) .. controls (465.86,174.14) and (467.53,174.13) .. (469.2,175.78) .. controls (470.88,177.43) and (472.55,177.42) .. (474.2,175.74) .. controls (475.85,174.06) and (477.52,174.05) .. (479.2,175.7) -- (480.2,175.69) -- (488.2,175.62) ;
\draw [shift={(491.2,175.6)}, rotate = 179.54] [fill={rgb, 255:red, 0; green, 0; blue, 0 }  ][line width=0.08]  [draw opacity=0] (7.14,-3.43) -- (0,0) -- (7.14,3.43) -- (4.74,0) -- cycle    ;
\draw [shift={(457.7,175.87)}, rotate = 44.54] [color={rgb, 255:red, 0; green, 0; blue, 0 }  ][line width=0.75]    (-3.35,0) -- (3.35,0)(0,3.35) -- (0,-3.35)   ;

\draw (389.47,164) node [anchor=north west][inner sep=0.75pt]  [font=\scriptsize] [align=left] {$\displaystyle \vdots $};
\draw (408,153.67) node [anchor=north west][inner sep=0.75pt]  [font=\tiny] [align=left] {$ $};
\draw (425,156.33) node [anchor=north west][inner sep=0.75pt]  [font=\scriptsize] [align=left] {$\displaystyle \zeta =\zeta _{\sigma}^0 \ \ \ \ $};
\draw (452,183.4) node [anchor=north west][inner sep=0.75pt]  [font=\scriptsize]  {$x_{\sigma ,1}$};
\end{tikzpicture}
}} \\ 
  (B_{\sigma,i+1}^-+\sum\limits_{j=i+1}^{w_\sigma}\nu_{\zeta_\sigma^j})-(B_{\sigma,i}^++\sum\limits_{j=i}^{w_\sigma}\nu_{\zeta_\sigma^j})=&0&  \text{ if }&\vcenter{\hbox{\begin{tikzpicture}[x=0.75pt,y=0.75pt,yscale=-1,xscale=1]

\draw    (400.53,158.89) .. controls (402.2,157.22) and (403.86,157.22) .. (405.53,158.89) .. controls (407.2,160.56) and (408.86,160.56) .. (410.53,158.89) .. controls (412.2,157.22) and (413.86,157.22) .. (415.53,158.89) .. controls (417.2,160.56) and (418.86,160.56) .. (420.53,158.89) .. controls (422.2,157.22) and (423.86,157.22) .. (425.53,158.89) .. controls (427.2,160.56) and (428.86,160.56) .. (430.53,158.89) .. controls (432.2,157.22) and (433.86,157.22) .. (435.53,158.89) .. controls (437.2,160.56) and (438.86,160.56) .. (440.53,158.89) .. controls (442.2,157.22) and (443.86,157.22) .. (445.53,158.89) -- (448,158.89) -- (448,158.89) .. controls (449.67,157.22) and (451.33,157.22) .. (453,158.89) .. controls (454.67,160.56) and (456.33,160.56) .. (458,158.89) .. controls (459.67,157.22) and (461.33,157.22) .. (463,158.89) .. controls (464.67,160.56) and (466.33,160.56) .. (468,158.89) .. controls (469.67,157.22) and (471.33,157.22) .. (473,158.89) .. controls (474.67,160.56) and (476.33,160.56) .. (478,158.89) .. controls (479.67,157.22) and (481.33,157.22) .. (483,158.89) -- (486.33,158.89) -- (494.33,158.89) ;
\draw [shift={(496.33,158.89)}, rotate = 180] [color={rgb, 255:red, 0; green, 0; blue, 0 }  ][line width=0.75]    (6.56,-2.94) .. controls (4.17,-1.38) and (1.99,-0.4) .. (0,0) .. controls (1.99,0.4) and (4.17,1.38) .. (6.56,2.94)   ;
\draw [shift={(424.27,158.89)}, rotate = 45] [color={rgb, 255:red, 0; green, 0; blue, 0 }  ][line width=0.75]    (-3.35,0) -- (3.35,0)(0,3.35) -- (0,-3.35)   ;
\draw [shift={(472.17,158.89)}, rotate = 45] [color={rgb, 255:red, 0; green, 0; blue, 0 }  ][line width=0.75]    (-3.35,0) -- (3.35,0)(0,3.35) -- (0,-3.35)   ;

\draw (421.39,165.08) node [anchor=north west][inner sep=0.75pt]  [font=\scriptsize] [align=left] {$\displaystyle x_{\sigma ,i} \ \ \ \ \ \ \ x_{\sigma ,i+1}$};
\draw (432.13,141.73) node [anchor=north west][inner sep=0.75pt]  [font=\scriptsize]  {$\zeta =\zeta _{\sigma }^{i}$};

\end{tikzpicture}

}} \\
  -\sum\limits_{e:\ t(e)=\zeta}D_eC_e-\bigl(B_{\sigma,w_\sigma}^++\nu_{\zeta_\sigma^{w_\sigma}}\bigr)=&0& \text{ if }&\vcenter{\hbox{\begin{tikzpicture}[x=0.75pt,y=0.75pt,yscale=-1,xscale=1]

\draw    (391.86,224.73) .. controls (393.54,223.08) and (395.21,223.1) .. (396.86,224.78) .. controls (398.51,226.46) and (400.18,226.47) .. (401.86,224.82) .. controls (403.54,223.17) and (405.2,223.18) .. (406.85,224.86) .. controls (408.5,226.54) and (410.17,226.56) .. (411.85,224.91) .. controls (413.53,223.26) and (415.2,223.27) .. (416.85,224.95) .. controls (418.5,226.63) and (420.17,226.64) .. (421.85,224.99) .. controls (423.53,223.34) and (425.2,223.35) .. (426.85,225.03) .. controls (428.5,226.71) and (430.17,226.73) .. (431.85,225.08) .. controls (433.53,223.43) and (435.2,223.44) .. (436.85,225.12) .. controls (438.5,226.8) and (440.17,226.81) .. (441.85,225.16) .. controls (443.53,223.51) and (445.2,223.53) .. (446.85,225.21) .. controls (448.5,226.89) and (450.17,226.9) .. (451.85,225.25) -- (453.87,225.27) -- (461.87,225.34) ;
\draw [shift={(464.87,225.36)}, rotate = 180.49] [fill={rgb, 255:red, 0; green, 0; blue, 0 }  ][line width=0.08]  [draw opacity=0] (7.14,-3.43) -- (0,0) -- (7.14,3.43) -- (4.74,0) -- cycle    ;
\draw [shift={(428.36,225.05)}, rotate = 45.49] [color={rgb, 255:red, 0; green, 0; blue, 0 }  ][line width=0.75]    (-3.35,0) -- (3.35,0)(0,3.35) -- (0,-3.35)   ;
\draw    (464.87,225.36) -- (490.82,242.26) ;
\draw [shift={(493.33,243.89)}, rotate = 213.07] [fill={rgb, 255:red, 0; green, 0; blue, 0 }  ][line width=0.08]  [draw opacity=0] (7.14,-3.43) -- (0,0) -- (7.14,3.43) -- (4.74,0) -- cycle    ;
\draw    (464.87,225.36) -- (489.56,207.64) ;
\draw [shift={(492,205.89)}, rotate = 144.34] [fill={rgb, 255:red, 0; green, 0; blue, 0 }  ][line width=0.08]  [draw opacity=0] (7.14,-3.43) -- (0,0) -- (7.14,3.43) -- (4.74,0) -- cycle    ;

\draw (487.1,211) node [anchor=north west][inner sep=0.75pt]  [font=\scriptsize] [align=left] {$\displaystyle \vdots $};
\draw (414,205.33) node [anchor=north west][inner sep=0.75pt]  [font=\scriptsize] [align=left] {$\displaystyle \ \ \ \ \ \zeta =\zeta {_{\sigma }^{w_{\sigma }}}$};
\draw (419,232.4) node [anchor=north west][inner sep=0.75pt]  [font=\scriptsize]  {$x_{\sigma ,w_{\sigma }}$};
\end{tikzpicture}}} \\
   \sum\limits_{e:\ h(e)=\zeta}C_eD_e-\sum\limits_{e:\ t(e)=\zeta}D_eC_e=\nu_\zeta=&\lambda(\nu)_\sigma&  \text{ if }&\vcenter{\hbox{\begin{tikzpicture}[x=0.75pt,y=0.75pt,yscale=-1,xscale=1]

\draw    (395.73,103.6) -- (421.69,120.5) ;
\draw [shift={(424.2,122.13)}, rotate = 213.07] [fill={rgb, 255:red, 0; green, 0; blue, 0 }  ][line width=0.08]  [draw opacity=0] (7.14,-3.43) -- (0,0) -- (7.14,3.43) -- (4.74,0) -- cycle    ;
\draw    (397.07,141.6) -- (421.76,123.88) ;
\draw [shift={(424.2,122.13)}, rotate = 144.34] [fill={rgb, 255:red, 0; green, 0; blue, 0 }  ][line width=0.08]  [draw opacity=0] (7.14,-3.43) -- (0,0) -- (7.14,3.43) -- (4.74,0) -- cycle    ;
\draw    (424.2,122.13) .. controls (425.83,120.44) and (427.5,120.41) .. (429.2,122.04) .. controls (430.9,123.67) and (432.57,123.64) .. (434.2,121.94) .. controls (435.84,120.25) and (437.51,120.22) .. (439.2,121.85) .. controls (440.9,123.48) and (442.57,123.45) .. (444.2,121.75) .. controls (445.84,120.06) and (447.51,120.03) .. (449.2,121.66) -- (453.87,121.57) -- (461.87,121.42) ;
\draw [shift={(464.87,121.36)}, rotate = 178.91] [fill={rgb, 255:red, 0; green, 0; blue, 0 }  ][line width=0.08]  [draw opacity=0] (7.14,-3.43) -- (0,0) -- (7.14,3.43) -- (4.74,0) -- cycle    ;
\draw    (464.87,121.36) -- (490.82,138.26) ;
\draw [shift={(493.33,139.89)}, rotate = 213.07] [fill={rgb, 255:red, 0; green, 0; blue, 0 }  ][line width=0.08]  [draw opacity=0] (7.14,-3.43) -- (0,0) -- (7.14,3.43) -- (4.74,0) -- cycle    ;
\draw    (464.87,121.36) -- (489.56,103.64) ;
\draw [shift={(492,101.89)}, rotate = 144.34] [fill={rgb, 255:red, 0; green, 0; blue, 0 }  ][line width=0.08]  [draw opacity=0] (7.14,-3.43) -- (0,0) -- (7.14,3.43) -- (4.74,0) -- cycle    ;

\draw (390,108.67) node [anchor=north west][inner sep=0.75pt]  [font=\scriptsize] [align=left] {$\displaystyle \vdots $};
\draw (487.1,107.4) node [anchor=north west][inner sep=0.75pt]  [font=\scriptsize] [align=left] {$\displaystyle \vdots $};
\draw (439,104.33) node [anchor=north west][inner sep=0.75pt]  [font=\scriptsize] [align=left] {$\displaystyle \zeta $};
\end{tikzpicture}
}}
\end{align*}

Therefore, we have an isomorphism as algebraic varieties between $\M_{\nu,\theta}$ and $\M_{\lambda(\nu),\theta}$. To conclude the proof, we prove that $\phi$ preserves the Poisson bracket. Note that $\TM=\prod\limits_{x\in\Lambda}\M^x\times\bigoplus\limits_{e\in\EE}\M^e$ and $\phi=\prod\limits_{x\in\Lambda}\phi^x\times\bigoplus\limits_{e\in\EE}\phi^e$, where $\phi^a:\M^a\rightarrow\M^a$ for every $a\in \Lambda\cup\EE$. Then, since the Poisson structure on $\TM$ is the one induced by the Poisson structure on each factor, we have to show that $\phi^a$ is Poisson for every $a$. For every arrow $e\in\EE$, $\phi^e$ is the identity and in particular it is Poisson. Given $x\in\Lambda$, $\phi^x(A_x,B^\pm_x,a_x,b_x)=(A_x,B^\pm_x+\alpha_x\operatorname{id}_{v_x^\pm},a_x,b_x)$ where $\alpha_x\in\CC$ depends on $x$ as defined in (\ref{eq4.2}). It follows from explicit formulas for the Poisson bracket on triangles (see Remark \ref{Remark.Poissonstructure}) that $\phi^x$ is Poisson.
\end{proof}
\begin{Remark}
    Let us consider the map $\phi$ defined in (\ref{eq4.2}). One may use Hurtubise normal forms (cf. \S\ref{triangles}) to prove that for every $x\in\Lambda$, the map $\phi^x$ is a symplectic isomorphism. Let us focus on $x$ cobalanced, i.e. $n\coloneqq v_x^-=v_x^+$. Then, in terms of Hurtubise normal forms, we have:
\begin{equation*}
\begin{aligned}
    \phi^x: \GL_n\times\mathfrak{gl}_n\times \CC^n\times(\CC^n)^* \longrightarrow \GL_n\times\mathfrak{gl}_n&\times\CC^n\times(\CC^n)^*.\\
    (u,h,I,J) \mapsto (u,h+\alpha_x&\operatorname{id},I,J)
\end{aligned}
\end{equation*}
For some $\alpha_x\in\CC$. From the explicit formula of the symplectic structure (\ref{symp.NT}) we can now deduce that it is a symplectic morphism by a direct computation. Alternatively, one could notice that it is a translation along the fibres of $\T^*\GL_n$ induced by a closed 1-form and therefore it is symplectic with respect to the canonical symplectic structure.
\end{Remark}

\subsection{The stable locus}
In this section we want to study the subset of bow varieties induced by the stable locus of level sets of the moment map $\mu$. The strategy is to prove that we can apply Theorem \ref{2.5} and therefore it has to be smooth. In order to prove it, we will generalize the argument used in \cite{NT17} to the case of bows with any underlying quiver. This particular property of bow varieties, together with the numerical criterion proved in the previous section, will allow us to state a sufficient numerical condition for smoothness of bow varieties.

Let us fix a bow $B=(\II,\EE)$, a bow diagram $(\Lambda,v)$ over $B$ and a pair of parameters $(\lambda,\theta)\in\CC^\II\times\ZZ^\II$. 
\begin{Proposition}\label{prop3.30}
    Let $\theta \in \ZZ^\II$. If $(A,B^-,B^+,a,b,C,D) \in \mu^{-1}(\lambda)^{\theta-s}$ then its stabilizer is trivial.
\end{Proposition}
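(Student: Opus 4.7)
The plan is to deduce triviality of the stabilizer by combining the triangle conditions $(S1)$--$(S2)$ with the stability criterion of Theorem \ref{ncB}. Let $g = (g_\zeta) \in \GG$ stabilize the $\theta$-stable point $m = (A, B^\pm, b, a, C, D)$. Since the stabilizer is a closed algebraic subgroup of $\GG$, its Jordan decomposition yields that both the semisimple part $g_s$ and the unipotent part $g_u$ stabilize $m$. By definition of $\theta$-stability the stabilizer is finite, and in characteristic zero the only finite-order unipotent element is the identity, so $g_u = \operatorname{id}$. It therefore suffices to show $g_s = \operatorname{id}$, and we may assume $g$ is semisimple.

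Decompose each $V_\zeta$ into $g_\zeta$-eigenspaces $V_\zeta = \bigoplus_\alpha V_\zeta(\alpha)$ and set $S_\zeta = \bigoplus_{\alpha \neq 1} V_\zeta(\alpha)$ and $T_\zeta = V_\zeta(1)$, so that $V = S \oplus T$ as graded subspaces. The $g$-equivariance of the data gives: $B^\pm$, $C$, $D$ preserve both $S$ and $T$; the condition $g_{\zeta_x^+} A_x = A_x g_{\zeta_x^-}$ implies $A_x$ sends $S^-$ into $S^+$ and $T^-$ into $T^+$; the relation $b_x g_{\zeta_x^-}^{-1} = b_x$ forces $b_x$ to vanish on every nontrivial eigenspace, so $b(S) = 0$; and $g_{\zeta_x^+} a_x = a_x$ forces $\Ima a_x \subseteq T$.

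The key step will be to show that at every x-point $x \in \Lambda$ the map $A_x : S_{\zeta_x^-} \to S_{\zeta_x^+}$ is an isomorphism. For injectivity, the kernel $K \subseteq S_{\zeta_x^-}$ is $B^-$-invariant with $A_x(K) = 0$ and $b_x(K) = 0$, so triangle condition $(S1)$ forces $K = 0$. For surjectivity, observe that $A_x(S_{\zeta_x^-})$ is $B^+$-invariant: for $s \in S^-$, the moment-map relation $B^+ A_x - A_x B^- + a_x b_x = 0$ combined with $b_x(s) = 0$ gives $B^+ A_x s = A_x B^- s \in A_x(S^-)$. Hence $U \coloneqq T_{\zeta_x^+} + A_x(S_{\zeta_x^-})$ is $B^+$-invariant and contains $\Ima A_x + \Ima a_x$, so $(S2)$ forces $U = V_{\zeta_x^+}$; using $T^+ \cap S^+ = 0$ we conclude $A_x(S^-) = S^+$.

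With this isomorphism property established, $S$ satisfies the hypotheses of $(\nu_1)$ and $T$ satisfies those of $(\nu_2)$ in Definition \ref{dncB}. The strict inequalities in Theorem \ref{ncB}(ii) applied to the $\theta$-stable point $m$ then yield $\theta \cdot \sdim S < 0$ unless $S = 0$, and $\theta \cdot \codim T = \theta \cdot \sdim S > 0$ unless $T = V$; these are incompatible unless $S = 0$. Thus $g_s$ has only $1$ as an eigenvalue, so $g_s = \operatorname{id}$, and the reduction above completes the proof. I expect the main subtlety to be the surjectivity half of the claim for $A_x$, which genuinely needs both the triangle stability condition $(S2)$ and the moment-map equation at the same time.
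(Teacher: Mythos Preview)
Your proof is correct, but it takes a slightly longer route than the paper's. The paper (following \cite[Lemma~2.10]{NT17}) skips the Jordan decomposition entirely: for an arbitrary $g$ in the stabilizer it sets $S_\zeta = \Ima(g_\zeta - \operatorname{id})$ and $T_\zeta = \Ker(g_\zeta - \operatorname{id})$, checks that $S$ satisfies the hypotheses of $(\nu_1)$ and $T$ those of $(\nu_2)$, and concludes $S=0$ from the strict inequalities. The key numerical fact making the contradiction work is simply rank--nullity, $\dim S_\zeta = \codim T_\zeta$, which holds for any $g$, not just semisimple ones. Your reduction to the semisimple case via finiteness of the stabilizer is valid, and your eigenspace decomposition then makes $V = T\oplus S$ an honest direct sum, which streamlines the surjectivity argument for $A_x|_{S^-}$; but this is a detour that the paper's choice of $S$ and $T$ avoids. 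Both arguments lean on exactly the same ingredients---the triangle relation $(a)$ together with $(S1)$ and $(S2)$---to verify the isomorphism condition on $A_x$, so the core mechanism is the same.
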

\begin{proof} 
The proof for the affine type A case in \cite[Lemma 2.10]{NT17} works here. Indeed, let us consider an element $g \in \GG$ stabilizing $(A,B^\pm,a,b,C,D)$. Then $S=\bigoplus\limits_{\zeta\in\IS}\Ima(g_\zeta-\operatorname{id})$ and $T=\bigoplus\limits_{\zeta\in\IS}\Ker(g_\zeta-\operatorname{id})$ satisfy, respectively, $(\nu_1)$ and $(\nu_2)$ in Definition \ref{dncB}. Hence, the numerical criterion for stable points (Theorem \ref{ncB}) implies $S=0$ (or equivalently $T=V$).
\end{proof}
Next, we want to study the stable locus of $\mu^{-1}(\lambda)$ by considering the latter as a subvariety of $\MM$ as explained in Remark \ref{rem3.26}. Given $m=(A,B^-,B^+,a,b,C,D) \in \MM$ we have maps:
\begin{equation}
\begin{gathered}
    \GG \xrightarrow{\psi^m} \MM \xrightarrow{\overline{\mu}} \bigoplus\limits_{x\in\Lambda}\Hom(V_{\zeta_x^-},V_{\zeta_x^+})\oplus\mathfrak{g},\\
    g \mapsto g\cdot m \mapsto \overline{\mu}(g\cdot m).
\end{gathered}
\end{equation}
We can differentiate them:
\begin{equation}
    \mathfrak{g} \xrightarrow{d(\psi^m)_{\operatorname{id}}} \MM \xrightarrow{d(\overline{\mu})_m} \bigoplus\limits_{x\in\Lambda}\Hom(V_{\zeta_x^-},V_{\zeta_x^+})\oplus\mathfrak{g}.
\end{equation}
Explicitly, we have
\begin{equation}
    d(\psi^m)_{\operatorname{id}}(\xi_\zeta)=\begin{bmatrix}
        \xi_{\zeta^+}A-A\xi_{\zeta^-}\\
        [\xi_{\zeta^\pm},B_{\zeta^\pm}]\\
        -b\xi_{\zeta^-}\\
        \xi_{\zeta^+}a\\
        \xi_{\zeta_{h(e)}}C_e-C_e\xi_{\zeta_{t(e)}}\\
        \xi_{\zeta_{t(e)}}D_e-D_e\xi_{\zeta_{h(e)}}
    \end{bmatrix},
\end{equation}
and $d(\overline{\mu})_m=(d\mu_1)_m+(d\mu_2)_m$, where
\begin{equation}
    (d\mu_1)_m(\begin{bmatrix}
        \bdot{A}\\
        \bdot{B^\pm}\\
        \bdot{b}\\
        \bdot{a}\\
        \bdot{C}\\
        \bdot{D}
    \end{bmatrix})=\Bigl(B_x^+\bdot{A_x}+\bdot{B_x^+}A_x-\bdot{A_x}B_x^--A_x\bdot{B_x^-}+\bdot{a}b+a\bdot{b}\Bigr),
\end{equation}
and $(d\mu_2)_m$ is given by
\begin{equation}
    \begin{tikzpicture}[baseline=(current  bounding  box.center), x=0.75pt,y=0.75pt,yscale=-1,xscale=1]

\draw    (375.73,83.6) -- (401.69,100.5) ;
\draw [shift={(404.2,102.13)}, rotate = 213.07] [fill={rgb, 255:red, 0; green, 0; blue, 0 }  ][line width=0.08]  [draw opacity=0] (7.14,-3.43) -- (0,0) -- (7.14,3.43) -- (4.74,0) -- cycle    ;
\draw    (377.07,121.6) -- (401.76,103.88) ;
\draw [shift={(404.2,102.13)}, rotate = 144.34] [fill={rgb, 255:red, 0; green, 0; blue, 0 }  ][line width=0.08]  [draw opacity=0] (7.14,-3.43) -- (0,0) -- (7.14,3.43) -- (4.74,0) -- cycle    ;
\draw    (404.2,102.13) .. controls (405.83,100.44) and (407.5,100.41) .. (409.2,102.04) .. controls (410.9,103.67) and (412.57,103.64) .. (414.2,101.94) .. controls (415.84,100.25) and (417.51,100.22) .. (419.2,101.85) .. controls (420.9,103.48) and (422.57,103.45) .. (424.2,101.75) .. controls (425.84,100.06) and (427.51,100.03) .. (429.2,101.66) -- (433.87,101.57) -- (441.87,101.42) ;
\draw [shift={(444.87,101.36)}, rotate = 178.91] [fill={rgb, 255:red, 0; green, 0; blue, 0 }  ][line width=0.08]  [draw opacity=0] (7.14,-3.43) -- (0,0) -- (7.14,3.43) -- (4.74,0) -- cycle    ;
\draw    (444.87,101.36) -- (470.82,118.26) ;
\draw [shift={(473.33,119.89)}, rotate = 213.07] [fill={rgb, 255:red, 0; green, 0; blue, 0 }  ][line width=0.08]  [draw opacity=0] (7.14,-3.43) -- (0,0) -- (7.14,3.43) -- (4.74,0) -- cycle    ;
\draw    (444.87,101.36) -- (469.56,83.64) ;
\draw [shift={(472,81.89)}, rotate = 144.34] [fill={rgb, 255:red, 0; green, 0; blue, 0 }  ][line width=0.08]  [draw opacity=0] (7.14,-3.43) -- (0,0) -- (7.14,3.43) -- (4.74,0) -- cycle    ;
\draw    (375.73,137.6) -- (401.69,154.5) ;
\draw [shift={(404.2,156.13)}, rotate = 213.07] [fill={rgb, 255:red, 0; green, 0; blue, 0 }  ][line width=0.08]  [draw opacity=0] (7.14,-3.43) -- (0,0) -- (7.14,3.43) -- (4.74,0) -- cycle    ;
\draw    (377.07,175.6) -- (401.76,157.88) ;
\draw [shift={(404.2,156.13)}, rotate = 144.34] [fill={rgb, 255:red, 0; green, 0; blue, 0 }  ][line width=0.08]  [draw opacity=0] (7.14,-3.43) -- (0,0) -- (7.14,3.43) -- (4.74,0) -- cycle    ;
\draw    (404.2,156.13) .. controls (405.85,154.45) and (407.52,154.44) .. (409.2,156.09) .. controls (410.88,157.74) and (412.55,157.73) .. (414.2,156.05) .. controls (415.85,154.37) and (417.52,154.36) .. (419.2,156.01) .. controls (420.88,157.66) and (422.55,157.65) .. (424.2,155.97) .. controls (425.85,154.29) and (427.52,154.28) .. (429.2,155.93) .. controls (430.88,157.58) and (432.55,157.57) .. (434.2,155.89) .. controls (435.85,154.21) and (437.52,154.2) .. (439.2,155.85) .. controls (440.88,157.5) and (442.55,157.49) .. (444.2,155.81) .. controls (445.86,154.14) and (447.53,154.13) .. (449.2,155.78) .. controls (450.88,157.43) and (452.55,157.42) .. (454.2,155.74) .. controls (455.85,154.06) and (457.52,154.05) .. (459.2,155.7) -- (460.2,155.69) -- (468.2,155.62) ;
\draw [shift={(471.2,155.6)}, rotate = 179.54] [fill={rgb, 255:red, 0; green, 0; blue, 0 }  ][line width=0.08]  [draw opacity=0] (7.14,-3.43) -- (0,0) -- (7.14,3.43) -- (4.74,0) -- cycle    ;
\draw [shift={(437.7,155.87)}, rotate = 44.54] [color={rgb, 255:red, 0; green, 0; blue, 0 }  ][line width=0.75]    (-3.35,0) -- (3.35,0)(0,3.35) -- (0,-3.35)   ;
\draw    (371.86,204.73) .. controls (373.54,203.08) and (375.21,203.1) .. (376.86,204.78) .. controls (378.51,206.46) and (380.18,206.47) .. (381.86,204.82) .. controls (383.54,203.17) and (385.2,203.18) .. (386.85,204.86) .. controls (388.5,206.54) and (390.17,206.56) .. (391.85,204.91) .. controls (393.53,203.26) and (395.2,203.27) .. (396.85,204.95) .. controls (398.5,206.63) and (400.17,206.64) .. (401.85,204.99) .. controls (403.53,203.34) and (405.2,203.35) .. (406.85,205.03) .. controls (408.5,206.71) and (410.17,206.73) .. (411.85,205.08) .. controls (413.53,203.43) and (415.2,203.44) .. (416.85,205.12) .. controls (418.5,206.8) and (420.17,206.81) .. (421.85,205.16) .. controls (423.53,203.51) and (425.2,203.53) .. (426.85,205.21) .. controls (428.5,206.89) and (430.17,206.9) .. (431.85,205.25) -- (433.87,205.27) -- (441.87,205.34) ;
\draw [shift={(444.87,205.36)}, rotate = 180.49] [fill={rgb, 255:red, 0; green, 0; blue, 0 }  ][line width=0.08]  [draw opacity=0] (7.14,-3.43) -- (0,0) -- (7.14,3.43) -- (4.74,0) -- cycle    ;
\draw [shift={(408.36,205.05)}, rotate = 45.49] [color={rgb, 255:red, 0; green, 0; blue, 0 }  ][line width=0.75]    (-3.35,0) -- (3.35,0)(0,3.35) -- (0,-3.35)   ;
\draw    (444.87,205.36) -- (470.82,222.26) ;
\draw [shift={(473.33,223.89)}, rotate = 213.07] [fill={rgb, 255:red, 0; green, 0; blue, 0 }  ][line width=0.08]  [draw opacity=0] (7.14,-3.43) -- (0,0) -- (7.14,3.43) -- (4.74,0) -- cycle    ;
\draw    (444.87,205.36) -- (469.56,187.64) ;
\draw [shift={(472,185.89)}, rotate = 144.34] [fill={rgb, 255:red, 0; green, 0; blue, 0 }  ][line width=0.08]  [draw opacity=0] (7.14,-3.43) -- (0,0) -- (7.14,3.43) -- (4.74,0) -- cycle    ;
\draw    (381.2,60.6) .. controls (382.87,58.93) and (384.53,58.93) .. (386.2,60.6) .. controls (387.87,62.27) and (389.53,62.27) .. (391.2,60.6) .. controls (392.87,58.93) and (394.53,58.93) .. (396.2,60.6) .. controls (397.87,62.27) and (399.53,62.27) .. (401.2,60.6) .. controls (402.87,58.93) and (404.53,58.93) .. (406.2,60.6) .. controls (407.87,62.27) and (409.53,62.27) .. (411.2,60.6) .. controls (412.87,58.93) and (414.53,58.93) .. (416.2,60.6) -- (420.7,60.6) -- (420.7,60.6) .. controls (422.37,58.93) and (424.03,58.93) .. (425.7,60.6) .. controls (427.37,62.27) and (429.03,62.27) .. (430.7,60.6) .. controls (432.37,58.93) and (434.03,58.93) .. (435.7,60.6) .. controls (437.37,62.27) and (439.03,62.27) .. (440.7,60.6) .. controls (442.37,58.93) and (444.03,58.93) .. (445.7,60.6) -- (449.2,60.6) -- (457.2,60.6) ;
\draw [shift={(460.2,60.6)}, rotate = 180] [fill={rgb, 255:red, 0; green, 0; blue, 0 }  ][line width=0.08]  [draw opacity=0] (7.14,-3.43) -- (0,0) -- (7.14,3.43) -- (4.74,0) -- cycle    ;
\draw [shift={(400.95,60.6)}, rotate = 45] [color={rgb, 255:red, 0; green, 0; blue, 0 }  ][line width=0.75]    (-3.35,0) -- (3.35,0)(0,3.35) -- (0,-3.35)   ;
\draw [shift={(440.45,60.6)}, rotate = 45] [color={rgb, 255:red, 0; green, 0; blue, 0 }  ][line width=0.75]    (-3.35,0) -- (3.35,0)(0,3.35) -- (0,-3.35)   ;

\draw (123,47.6) node [anchor=north west][inner sep=0.75pt]  [font=\small] [align=left] {$\displaystyle -\bdot{B_{\zeta _{x}^{+}}} +\bdot{B_{\zeta _{x'}^{-}}}$};
\draw (351.33,46.06) node [anchor=north west][inner sep=0.75pt]   [align=left] {{\small if}};
\draw (398.39,40.58) node [anchor=north west][inner sep=0.75pt]  [font=\scriptsize] [align=left] {$\displaystyle x\ \ \ \zeta \ \ \  x'$};
\draw (370,88.67) node [anchor=north west][inner sep=0.75pt]  [font=\scriptsize] [align=left] {$\displaystyle \vdots $};
\draw (467.1,87.4) node [anchor=north west][inner sep=0.75pt]  [font=\scriptsize] [align=left] {$\displaystyle \vdots $};
\draw (388,79.67) node [anchor=north west][inner sep=0.75pt]  [font=\tiny] [align=left] {$\displaystyle e$};
\draw (447.33,79.33) node [anchor=north west][inner sep=0.75pt]  [font=\tiny] [align=left] {$\displaystyle e'$};
\draw (419,84.33) node [anchor=north west][inner sep=0.75pt]  [font=\scriptsize] [align=left] {$\displaystyle \zeta $};
\draw (352,92.73) node [anchor=north west][inner sep=0.75pt]   [align=left] {{\small if}};
\draw (62.67,80.33) node [anchor=north west][inner sep=0.75pt]  [font=\small] [align=left] {$\displaystyle \sum _{e}\Bigl(\bdot{C_{e}} D_{e}+C_{e}\bdot{D_{e}}\Bigr) -\sum _{e'}\Bigl(\bdot{D_{e'}} C_{e'}+D_{e'} \bdot{C_{e'}}\Bigr)$};
\draw (369.47,144) node [anchor=north west][inner sep=0.75pt]  [font=\scriptsize] [align=left] {$\displaystyle \vdots $};
\draw (388,133.67) node [anchor=north west][inner sep=0.75pt]  [font=\tiny] [align=left] {$\displaystyle e$};
\draw (416,138) node [anchor=north west][inner sep=0.75pt]  [font=\scriptsize] [align=left] {$\displaystyle \zeta \ \ \ x$};
\draw (352,146.73) node [anchor=north west][inner sep=0.75pt]   [align=left] {{\small if}};
\draw (89.33,131.67) node [anchor=north west][inner sep=0.75pt]   [align=left] {$\displaystyle \sum _{e} \Bigl(\bdot{C_{e}}D_{e}+C_{e}\bdot{D_{e}}\Bigr) \ +\ \bdot{B_{\zeta _{x}^{-}}}$};
\draw (467.1,191) node [anchor=north west][inner sep=0.75pt]  [font=\scriptsize] [align=left] {$\displaystyle \vdots $};
\draw (448.33,183.33) node [anchor=north west][inner sep=0.75pt]  [font=\tiny] [align=left] {$\displaystyle e'$};
\draw (403.5,186.33) node [anchor=north west][inner sep=0.75pt]  [font=\scriptsize] [align=left] {$\displaystyle x\ \ \ \zeta $};
\draw (352,196.73) node [anchor=north west][inner sep=0.75pt]   [align=left] {{\small if}};
\draw (77.33,179.67) node [anchor=north west][inner sep=0.75pt]   [align=left] {$\displaystyle -\sum _{e'} \Bigl(\bdot{D_{e'}}C_{e'}+D_{e'}\bdot{C_{e'}}\Bigr) \ -\ \bdot{B_{\zeta _{x}^{+}}}$};

\end{tikzpicture}
\end{equation}
The following proposition has been proved in \cite[Proposition 2.12]{NT17} for the affine type A case.
\begin{Proposition}\label{4.13}
    Under the previous assumptions, the following hold.
\begin{enumerate}[label=(\roman*)]
    \item If $m\in \mu^{-1}(\lambda)$ is $\theta$-stable, then $d(\psi^m)_{\operatorname{id}}$ is injective.
    \item If $m \in \mu^{-1}(\lambda)$, then $(d\mu_1)_m$ is surjective.
    \item If $m \in \mu^{-1}(\lambda)$ is $\theta$-stable, then $(d\overline{\mu})_m$ is surjective.
\end{enumerate}
\end{Proposition}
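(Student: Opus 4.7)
The plan is to handle (i), (ii), (iii) in order, with (iii) combining the first two via the standard moment-map duality. I do not expect any real obstacle, since Proposition \ref{prop3.30} does the heavy lifting.

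For (i), I would observe that the kernel of $d(\psi^m)_{\operatorname{id}}$ is by construction the Lie algebra of the stabilizer $\GG_m \subset \GG$ of $m$: an element $\xi\in\mathfrak{g}$ lies in this kernel exactly when the infinitesimal action of $\xi$ on $m$ vanishes. Since $m$ is $\theta$-stable, Proposition \ref{prop3.30} forces $\GG_m$ to be trivial, so $\Lie(\GG_m)=0$ and $d(\psi^m)_{\operatorname{id}}$ is injective.

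For (ii), the key input is Lemma \ref{3.9}: for every $x \in \Lambda$ the component $A_x$ has full rank. I would prove surjectivity of $(d\mu_1)_m$ component-by-component over $x\in\Lambda$. Setting every tangent direction to zero except $\bdot{B_x^+}$ (respectively $\bdot{B_x^-}$), the $x$-th entry of $(d\mu_1)_m$ reduces to $\bdot{B_x^+}A_x$ (respectively $-A_x\bdot{B_x^-}$). If $v_x^+\ge v_x^-$ then $A_x$ is injective, so any $\phi\in\Hom(V_{\zeta_x^-},V_{\zeta_x^+})$ is realized as $\bdot{B_x^+}A_x$ by choosing $\bdot{B_x^+}$ to extend $\phi\circ A_x^{-1}|_{\Ima A_x}$ arbitrarily to all of $V_{\zeta_x^+}$. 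If $v_x^-\ge v_x^+$, then $A_x$ is surjective, and picking any linear section $s$ of $A_x$ and setting $\bdot{B_x^-}=-s\circ\phi$ realizes $\phi=-A_x\bdot{B_x^-}$. Either way each $x$-factor is surjective, so the total map is too.

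For (iii), I would combine (i), (ii) with the standard moment-map argument. Since $(d\mu_1)_m$ is already surjective by (ii), it suffices to prove that $(d\mu_2)_m$ restricted to $\Ker(d\mu_1)_m$ surjects onto $\mathfrak{g}$. Surjectivity of $(d\mu_1)_m$ ensures that $\mu_1^{-1}(0)$ is smooth at $m$ with tangent space $\Ker(d\mu_1)_m$; since $(S1),(S2)$ cut out an open subset, $T_m\TM=\Ker(d\mu_1)_m$. On $\TM$ the restriction $\mu=\mu_2|_{\TM}$ is the honest moment map for the Hamiltonian $\GG$-action constructed in Section \ref{S3}. The moment-map identity $\langle (d\mu)_m(v),\xi\rangle=\omega_m(\xi_{\TM}(m),v)$, together with nondegeneracy of $\omega_m$, yields $\Ima(d\mu)_m=(\Lie\GG_m)^{\perp}$, equivalently $\coker(d\mu)_m\cong\Lie(\GG_m)^*$ under the trace pairing on $\mathfrak{g}$. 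By Proposition \ref{prop3.30} this cokernel vanishes, hence $(d\mu_2)_m|_{T_m\TM}$ is surjective, and combined with (ii) this gives surjectivity of $(d\overline{\mu})_m$. The only genuine technical point is the identification $T_m\TM=\Ker(d\mu_1)_m$ and recognizing $\mu$ as a moment map on $\TM$, both of which were set up in Section \ref{S3}.
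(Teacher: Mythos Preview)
Your proof is correct. The paper itself simply defers to \cite[Proposition 2.12]{NT17}, observing that the argument there carries over verbatim because nothing in it depends on the number of arrows at a wavy line; you have instead supplied a self-contained proof.

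Your arguments for (i) and (ii) are exactly the expected ones: (i) is the trivial-stabilizer observation via Proposition~\ref{prop3.30}, and (ii) is the direct surjectivity check exploiting that $A_x$ has full rank (Lemma~\ref{3.9}). For (iii), your route is slightly slicker than the explicit transpose-kernel computation in \cite{NT17}: rather than writing down the adjoint of $(d\overline{\mu})_m$ and showing its kernel vanishes by hand, you recognize that on the smooth locus $\TM\subset\mu_1^{-1}(0)$ the map $\mu_2|_{\TM}=\mu$ is an honest moment map, so the standard identity $\coker(d\mu)_m\cong\Lie(\GG_m)^*$ together with Proposition~\ref{prop3.30} finishes immediately. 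This buys you a cleaner conceptual argument at the cost of invoking the symplectic structure on $\TM$ (built via Hurtubise normal forms in \S\ref{triangles}), whereas the direct computation in \cite{NT17} stays entirely within the ambient linear space $\MM$.
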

\begin{proof}We can use the argument in \cite[Proposition 2.12]{NT17} because it does not involve restrictions on the number of arrows incoming to and outgoing from a wavy line.
\end{proof}
\begin{Corollary}\label{prop3.31}
    For any $(\lambda,\theta) \in \CC^\II\times\ZZ^\II$, $\mu^{-1}(\lambda)^{\theta-s}$ is a (possibly empty) nonsingular subvariety of $\TM$. In particular, if $\M^s(B,\Lambda,v)$ is the (possibly empty) open subvariety of $\M(B,\Lambda,v)$ induced by the stable locus, it has to be nonsingular of dimension $\dim\MM-\dim\Bigl(\bigoplus\limits_{x\in\Lambda}\Hom(V_{\zeta_x^-},V_{\zeta_x^+})\Bigr)-2\dim\GG$.
\end{Corollary}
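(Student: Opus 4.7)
The plan is to deduce the corollary directly from Proposition \ref{4.13}(iii), Proposition \ref{prop3.30}, and the general smoothness statement in Theorem \ref{2.5}(3). The key observation is Remark \ref{rem3.26}: the level set $\mu^{-1}(\lambda)$ sits inside $\MM$ as the locally closed subvariety $\overline{\mu}^{-1}(0,\lambda)$ intersected with the open triangle conditions $(S1)$ and $(S2)$. So to get smoothness of $\mu^{-1}(\lambda)^{\theta-s}$, I will show that $\overline{\mu}\colon \MM\to \bigoplus_{x\in\Lambda}\Hom(V_{\zeta_x^-},V_{\zeta_x^+})\oplus\mathfrak{g}$ is a submersion at every $\theta$-stable point.

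First I would fix $m\in\mu^{-1}(\lambda)^{\theta-s}$. By Proposition \ref{4.13}(iii), the differential $(d\overline{\mu})_m$ is surjective onto $\bigoplus_{x\in\Lambda}\Hom(V_{\zeta_x^-},V_{\zeta_x^+})\oplus\mathfrak{g}$. Hence, by the submersion theorem, $\overline{\mu}^{-1}(0,\lambda)$ is smooth at $m$ of codimension $\dim\bigl(\bigoplus_{x\in\Lambda}\Hom(V_{\zeta_x^-},V_{\zeta_x^+})\bigr)+\dim\GG$ inside $\MM$. Since the conditions $(S1)$ and $(S2)$ defining $\TM$ inside $\mu_1^{-1}(0)$ are open (their negations are Zariski closed, by Proposition \ref{prop2.3}), the stable locus $\mu^{-1}(\lambda)^{\theta-s}$ is an open subvariety of $\overline{\mu}^{-1}(0,\lambda)$ consisting entirely of smooth points, and therefore is itself a nonsingular subvariety of $\TM$ of dimension
\begin{equation*}
\dim\MM-\dim\Bigl(\bigoplus_{x\in\Lambda}\Hom(V_{\zeta_x^-},V_{\zeta_x^+})\Bigr)-\dim\GG.
\end{equation*}

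For the second assertion, I would invoke Theorem \ref{2.5}(3). Its hypotheses are: $\mu^{-1}(\lambda)^{\theta-s}$ is a smooth $\GG$-variety (just established), and every $\theta$-stable point has trivial stabilizer in $\GG$, which is exactly Proposition \ref{prop3.30}. It follows that $\mathcal{M}^s(B,\Lambda,v)=\mu^{-1}(\lambda)^{\theta-s}/\GG$ is a nonsingular open subvariety of $\mathcal{M}(B,\Lambda,v)$ of dimension
\begin{equation*}
\dim\mu^{-1}(\lambda)^{\theta-s}-\dim\GG=\dim\MM-\dim\Bigl(\bigoplus_{x\in\Lambda}\Hom(V_{\zeta_x^-},V_{\zeta_x^+})\Bigr)-2\dim\GG,
\end{equation*}
which is the claimed formula.

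There is essentially no obstacle here: all the technical work has been done upstream, in Proposition \ref{4.13} (which supplies the submersion property for $\overline{\mu}$) and Proposition \ref{prop3.30} (which supplies trivial stabilizers), so the corollary reduces to bookkeeping about dimensions and the general GIT/symplectic quotient machinery of Theorem \ref{2.5}. The only point that requires a moment of care is making explicit that $\TM$ is open in $\overline{\mu}^{-1}(0,\lambda)\cap\mu_1^{-1}(0)$, so that smoothness of the ambient fiber transfers to the stable locus; but this is precisely the content of Remark \ref{rem3.26}.
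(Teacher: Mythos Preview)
Your proof is correct and follows essentially the same approach as the paper: the paper's proof simply cites Propositions \ref{prop3.30} and \ref{4.13} (and mentions the alternative via general Hamiltonian reduction theory), while you have spelled out explicitly how these combine via the submersion theorem and Theorem \ref{2.5}(3). One minor wording point: in your last paragraph you write that $\TM$ is open in $\overline{\mu}^{-1}(0,\lambda)\cap\mu_1^{-1}(0)$, but since $\overline{\mu}^{-1}(0,\lambda)\subset\mu_1^{-1}(0)$ already, you should simply say that $\mu^{-1}(\lambda)$ is open in $\overline{\mu}^{-1}(0,\lambda)$.
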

\begin{proof}
    The proof follows from Proposition \ref{prop3.30} and \ref{4.13}. Alternatively, it follows from Proposition \ref{prop3.30} and general theory on Hamiltonian reductions.
\end{proof}
\begin{Proposition}\label{prop4.15} Let us suppose that for any interval $\sigma \in \II$ there exists at least one segment $\zeta(\sigma)$ such that $v_{\zeta(\sigma)}=0$. Then, the (possibly empty) bow variety is a nonsingular affine variety which is independent of the stability and deformation parameters.
\end{Proposition}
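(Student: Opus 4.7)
The plan is to use the numerical criterion (Theorem \ref{ncB}) to show that, under the hypothesis, every point of $\mu^{-1}(\lambda)$ is $\theta$-stable for every $\theta \in \ZZ^\II$---this will give independence from $\theta$ together with affineness and, via Corollary \ref{prop3.31}, nonsingularity---and then to use Theorem \ref{nCd} to establish independence from $\lambda$.

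For the first step, I would let $S = \bigoplus_\zeta S_\zeta$ be a graded $(A, B^\pm, C, D)$-invariant subspace satisfying the hypotheses of $(\nu_1)$. The isomorphisms $S_{\zeta_x^-} \xrightarrow[\cong]{A} S_{\zeta_x^+}$ at every x-point force $\dim S_\zeta$ to be constant along each wavy line $\sigma$, hence bounded by $v_{\zeta(\sigma)} = 0$, so $S = 0$. The symmetric argument applied to $(\nu_2)$ forces $T = V$. Theorem \ref{ncB}(ii) then implies that every point of $\mu^{-1}(\lambda)$ is $\theta$-stable for every $\theta$, so $\mu^{-1}(\lambda)^{\theta-ss} = \mu^{-1}(\lambda)$, every $\GG$-orbit is closed in $\mu^{-1}(\lambda)$, and the $\theta$-GIT quotient coincides with the affine quotient $\mu^{-1}(\lambda) \GIT \GG$; this yields affineness and independence of $\theta$, while Corollary \ref{prop3.31} supplies nonsingularity.

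For independence of $\lambda$, I would fix $\lambda, \lambda' \in \CC^\II$, pick a segment $\zeta(\sigma)$ with $v_{\zeta(\sigma)} = 0$ for each $\sigma$, and define $\nu \in \CC^{\IS}$ by $\nu_{\zeta_\sigma^0} = \lambda'_\sigma$, by $\nu_{\zeta(\sigma)} = \lambda_\sigma - \lambda'_\sigma$ whenever $\zeta(\sigma) \neq \zeta_\sigma^0$, and by $\nu_\zeta = 0$ on every other segment. A direct computation gives $\lambda(\nu)_\sigma = \lambda_\sigma$ when $\zeta(\sigma) \neq \zeta_\sigma^0$ and $\lambda(\nu)_\sigma = \lambda'_\sigma$ otherwise; in the latter case the vanishing $V_{\zeta_\sigma^0} = 0$ makes the constraint at $\zeta_\sigma^0$ vacuous, so $\mu^{-1}(\overline{\lambda(\nu)}) = \mu^{-1}(\overline{\lambda})$ as subvarieties of $\TM$. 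Symmetrically, $\nu$ and $\overline{\lambda'}$ coincide on every segment with $V_\zeta \neq 0$ and both restrict to $0 \in \End(0)$ at $\zeta(\sigma)$, so $\mu^{-1}(\nu) = \mu^{-1}(\overline{\lambda'})$. Theorem \ref{nCd} then provides $\mu^{-1}(\nu) \cong \mu^{-1}(\overline{\lambda(\nu)})$, and composing the identifications yields a $\GG$-equivariant isomorphism $\mu^{-1}(\overline{\lambda}) \cong \mu^{-1}(\overline{\lambda'})$, which descends to an isomorphism of the associated bow varieties. The main subtlety is that the hypothesis $v_{\zeta(\sigma)} = 0$ trivializes the moment-map constraint at $\zeta(\sigma)$ for every value of $\nu_{\zeta(\sigma)}$, and it is precisely this freedom that allows Theorem \ref{nCd} to slide the parameter from $\overline{\lambda}$ to $\overline{\lambda'}$.
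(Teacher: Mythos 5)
Your proposal is correct and follows essentially the same route as the paper: the numerical criterion of Theorem \ref{ncB} forces $S=0$ and $T=V$ (since the isomorphism conditions make $\dim S_\zeta$ and $\codim T_\zeta$ constant along each wavy line, hence zero by the hypothesis $v_{\zeta(\sigma)}=0$), so every point is stable and the twisted quotient reduces to the affine one, with smoothness from Corollary \ref{prop3.31}. Your treatment of the deformation parameter is a more explicit rendering of the paper's one-line appeal to Theorem \ref{nCd} — the paper simply says one may re-embed $\CC^\II$ via $\sigma\mapsto\zeta(\sigma)$, whereas you construct the intermediate $\nu\in\CC^{\IS}$ needed to apply Theorem \ref{nCd} exactly as stated — but the underlying idea (the moment-map constraint on a zero-dimensional segment is vacuous) is identical.
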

\begin{proof}
    It follows from numerical criterion (Theorem \ref{ncB}) that every point in $\mu^{-1}(\lambda)$ is stable with respect to any stability parameter. Moreover, from Theorem \ref{nCd}, we can suppose to embed $\CC^\II \in \CC^{\IS}$ via $\sigma \mapsto \zeta({\sigma})$ and so, from the assumption on the dimension vector, the fiber of the moment map is independent of the deformation parameters. The claim follows from Corollary \ref{prop3.31}.
\end{proof}
\begin{Example}
    Let us consider the bow diagram:
\begin{equation*}
    \begin{tikzpicture}[x=0.75pt,y=0.75pt,yscale=-1,xscale=1]

\draw    (190.13,120.67) .. controls (191.8,119) and (193.46,119) .. (195.13,120.67) .. controls (196.8,122.34) and (198.46,122.34) .. (200.13,120.67) .. controls (201.8,119) and (203.46,119) .. (205.13,120.67) .. controls (206.8,122.34) and (208.46,122.34) .. (210.13,120.67) .. controls (211.8,119) and (213.46,119) .. (215.13,120.67) .. controls (216.8,122.34) and (218.46,122.34) .. (220.13,120.67) .. controls (221.8,119) and (223.46,119) .. (225.13,120.67) .. controls (226.8,122.34) and (228.46,122.34) .. (230.13,120.67) -- (230.13,120.67) -- (238.13,120.67) ;
\draw [shift={(240.13,120.67)}, rotate = 180] [color={rgb, 255:red, 0; green, 0; blue, 0 }  ][line width=0.75]    (8.74,-3.92) .. controls (5.56,-1.84) and (2.65,-0.53) .. (0,0) .. controls (2.65,0.53) and (5.56,1.84) .. (8.74,3.92)   ;
\draw    (140.13,120.67) .. controls (141.8,119) and (143.46,119) .. (145.13,120.67) .. controls (146.8,122.34) and (148.46,122.34) .. (150.13,120.67) .. controls (151.8,119) and (153.46,119) .. (155.13,120.67) .. controls (156.8,122.34) and (158.46,122.34) .. (160.13,120.67) .. controls (161.8,119) and (163.46,119) .. (165.13,120.67) .. controls (166.8,122.34) and (168.46,122.34) .. (170.13,120.67) .. controls (171.8,119) and (173.46,119) .. (175.13,120.67) .. controls (176.8,122.34) and (178.46,122.34) .. (180.13,120.67) .. controls (181.8,119) and (183.46,119) .. (185.13,120.67) .. controls (186.8,122.34) and (188.46,122.34) .. (190.13,120.67) -- (190.13,120.67) -- (190.13,120.67) ;
\draw [shift={(190.13,120.67)}, rotate = 45] [color={rgb, 255:red, 0; green, 0; blue, 0 }  ][line width=0.75]    (-4.47,0) -- (4.47,0)(0,4.47) -- (0,-4.47)   ;
\draw    (90.13,120.67) .. controls (91.8,119) and (93.46,119) .. (95.13,120.67) .. controls (96.8,122.34) and (98.46,122.34) .. (100.13,120.67) .. controls (101.8,119) and (103.46,119) .. (105.13,120.67) .. controls (106.8,122.34) and (108.46,122.34) .. (110.13,120.67) .. controls (111.8,119) and (113.46,119) .. (115.13,120.67) .. controls (116.8,122.34) and (118.46,122.34) .. (120.13,120.67) .. controls (121.8,119) and (123.46,119) .. (125.13,120.67) .. controls (126.8,122.34) and (128.46,122.34) .. (130.13,120.67) .. controls (131.8,119) and (133.46,119) .. (135.13,120.67) .. controls (136.8,122.34) and (138.46,122.34) .. (140.13,120.67) -- (140.13,120.67) -- (140.13,120.67) ;
\draw [shift={(140.13,120.67)}, rotate = 45] [color={rgb, 255:red, 0; green, 0; blue, 0 }  ][line width=0.75]    (-4.47,0) -- (4.47,0)(0,4.47) -- (0,-4.47)   ;

\draw (110.67,95.4) node [anchor=north west][inner sep=0.75pt]    {$0\ \ \ \ \ \ \ \ \ n\ \ \ \ \ \ \ \ 0$};

\end{tikzpicture}
\end{equation*}
We notice that $\mu^{-1}(\lambda)=\mu^{-1}(0)\neq\emptyset$ for any $\lambda\in\CC$. From Proposition \ref{prop4.15} we know that bow varieties associated with this bow diagram are nonsingular of dimension $2n$. Let us notice that this is an example of a balanced bow diagram of type A. This class of bow varieties can be interpreted as Coulomb branches (in the sense of Braverman-Finkelberg-Nakajima \cite{braverman2016towards}) for unitary gauge theories (see \cite{NT17} for details).
\end{Example}

\subsection{Local numerical conditions for emptiness}
In this subsection we will state a necessary numerical local condition for bow diagrams in order to generate a non-empty bow variety. We begin by generalizing \cite[Proposition 3.8]{SW23}, originally observed in the proof of \cite[Proposition 7.1]{NT17}, to the case allowing multiple arrows and an arbitrary deformation parameter.
\begin{Proposition}
    Let $(B,\Lambda,v)$ be a bow diagram, $\lambda\in\CC^{\II}$ a deformation parameter and $(A,B^-,B^+,a,b,C,D)\in\mu^{-1}(\lambda)$.
    \begin{enumerate}[label=(\roman*)]
        \item For any local configuration of $(B,\Lambda,v)$ of the type:
    \begin{center}
        \tikzset{every picture/.style={line width=0.75pt}} 
\begin{tikzpicture}[x=0.75pt,y=0.75pt,yscale=-0.95,xscale=0.95]

\draw    (256.12,86.07) -- (274.18,100.21) -- (302.32,122.26) ;
\draw [shift={(303.89,123.49)}, rotate = 218.07] [color={rgb, 255:red, 0; green, 0; blue, 0 }  ][line width=0.75]    (8.74,-3.92) .. controls (5.56,-1.84) and (2.65,-0.53) .. (0,0) .. controls (2.65,0.53) and (5.56,1.84) .. (8.74,3.92)   ;
\draw    (258.36,162.8) -- (302.38,124.8) ;
\draw [shift={(303.89,123.49)}, rotate = 139.2] [color={rgb, 255:red, 0; green, 0; blue, 0 }  ][line width=0.75]    (8.74,-3.92) .. controls (5.56,-1.84) and (2.65,-0.53) .. (0,0) .. controls (2.65,0.53) and (5.56,1.84) .. (8.74,3.92)   ;
\draw    (303.89,123.49) .. controls (305.54,121.8) and (307.21,121.79) .. (308.89,123.44) .. controls (310.57,125.09) and (312.24,125.07) .. (313.89,123.39) .. controls (315.54,121.71) and (317.21,121.7) .. (318.89,123.35) .. controls (320.57,125) and (322.24,124.98) .. (323.89,123.3) .. controls (325.54,121.62) and (327.21,121.6) .. (328.89,123.25) .. controls (330.57,124.9) and (332.24,124.88) .. (333.89,123.2) .. controls (335.54,121.52) and (337.21,121.5) .. (338.89,123.15) .. controls (340.57,124.8) and (342.24,124.79) .. (343.89,123.11) .. controls (345.54,121.43) and (347.21,121.41) .. (348.89,123.06) .. controls (350.57,124.71) and (352.24,124.69) .. (353.89,123.01) .. controls (355.54,121.33) and (357.21,121.31) .. (358.89,122.96) .. controls (360.57,124.61) and (362.24,124.6) .. (363.89,122.92) .. controls (365.54,121.24) and (367.21,121.22) .. (368.89,122.87) .. controls (370.57,124.52) and (372.24,124.5) .. (373.89,122.82) .. controls (375.54,121.14) and (377.21,121.12) .. (378.89,122.77) .. controls (380.57,124.42) and (382.24,124.4) .. (383.89,122.72) .. controls (385.54,121.04) and (387.21,121.03) .. (388.89,122.68) .. controls (390.57,124.33) and (392.24,124.31) .. (393.89,122.63) .. controls (395.54,120.95) and (397.21,120.93) .. (398.89,122.58) .. controls (400.57,124.23) and (402.24,124.21) .. (403.89,122.53) -- (406.33,122.51) -- (414.33,122.43) ;
\draw [shift={(416.33,122.41)}, rotate = 179.45] [color={rgb, 255:red, 0; green, 0; blue, 0 }  ][line width=0.75]    (8.74,-3.92) .. controls (5.56,-1.84) and (2.65,-0.53) .. (0,0) .. controls (2.65,0.53) and (5.56,1.84) .. (8.74,3.92)   ;
\draw    (207,86.67) .. controls (208.65,84.98) and (210.31,84.96) .. (212,86.61) .. controls (213.69,88.26) and (215.35,88.24) .. (217,86.55) .. controls (218.65,84.86) and (220.31,84.84) .. (222,86.49) .. controls (223.69,88.13) and (225.35,88.11) .. (227,86.42) .. controls (228.65,84.73) and (230.31,84.71) .. (232,86.36) .. controls (233.69,88.01) and (235.35,87.99) .. (237,86.3) .. controls (238.65,84.61) and (240.31,84.59) .. (242,86.24) -- (246.12,86.19) -- (254.12,86.09) ;
\draw [shift={(256.12,86.07)}, rotate = 179.3] [color={rgb, 255:red, 0; green, 0; blue, 0 }  ][line width=0.75]    (8.74,-3.92) .. controls (5.56,-1.84) and (2.65,-0.53) .. (0,0) .. controls (2.65,0.53) and (5.56,1.84) .. (8.74,3.92)   ;
\draw    (209.24,163.4) .. controls (210.89,161.71) and (212.55,161.69) .. (214.24,163.34) .. controls (215.93,164.99) and (217.59,164.97) .. (219.24,163.28) .. controls (220.89,161.59) and (222.55,161.57) .. (224.24,163.22) .. controls (225.93,164.86) and (227.59,164.84) .. (229.24,163.15) .. controls (230.89,161.46) and (232.55,161.44) .. (234.24,163.09) .. controls (235.93,164.74) and (237.59,164.72) .. (239.24,163.03) .. controls (240.88,161.34) and (242.54,161.32) .. (244.23,162.97) -- (248.36,162.92) -- (256.36,162.82) ;
\draw [shift={(258.36,162.8)}, rotate = 179.3] [color={rgb, 255:red, 0; green, 0; blue, 0 }  ][line width=0.75]    (8.74,-3.92) .. controls (5.56,-1.84) and (2.65,-0.53) .. (0,0) .. controls (2.65,0.53) and (5.56,1.84) .. (8.74,3.92)   ;

\draw (249.38,106) node [anchor=north west][inner sep=0.75pt]  [font=\small] [align=left] {$\displaystyle \vdots $};
\draw (278,87) node [anchor=north west][inner sep=0.75pt]  [font=\footnotesize]  {$e_{1}$};
\draw (278,145) node [anchor=north west][inner sep=0.75pt]  [font=\footnotesize]  {$e_{k}$};
\draw (228,61) node [anchor=north west][inner sep=0.75pt]  [font=\normalsize]  {$v_{1}$};
\draw (228,170) node [anchor=north west][inner sep=0.75pt]  [font=\normalsize]  {$v_{k}$};
\draw (322.28,101) node [anchor=north west][inner sep=0.75pt]  [font=\normalsize]  {$v_{0} \ \ \ \ \ \ \ v_{-1} \ $};
\draw (348,116) node [anchor=north west][inner sep=0.75pt]    {$\times $};
\draw (352,131.73) node [anchor=north west][inner sep=0.75pt]  [font=\footnotesize]  {$x$};

\end{tikzpicture}
    \end{center}
    the map $\alpha=(A,b,D_{e_1},\dots,D_{e_k}):V_0 \rightarrow V_{-1}\oplus\CC\oplus\bigoplus\limits_{j=1}^{k}V_j$ is injective.
    \item For any local configuration of $(B,\Lambda,v)$ of the type:
        \begin{center}
            \tikzset{every picture/.style={line width=0.75pt}} 
\begin{tikzpicture}[x=0.75pt,y=0.75pt,yscale=-0.95,xscale=0.95]

\draw    (257.69,87.3) -- (274.18,100.21) -- (303.89,123.49) ;
\draw [shift={(256.12,86.07)}, rotate = 38.07] [color={rgb, 255:red, 0; green, 0; blue, 0 }  ][line width=0.75]    (8.74,-3.92) .. controls (5.56,-1.84) and (2.65,-0.53) .. (0,0) .. controls (2.65,0.53) and (5.56,1.84) .. (8.74,3.92)   ;
\draw    (259.87,161.49) -- (303.89,123.49) ;
\draw [shift={(258.36,162.8)}, rotate = 319.2] [color={rgb, 255:red, 0; green, 0; blue, 0 }  ][line width=0.75]    (8.74,-3.92) .. controls (5.56,-1.84) and (2.65,-0.53) .. (0,0) .. controls (2.65,0.53) and (5.56,1.84) .. (8.74,3.92)   ;
\draw    (305.89,123.47) -- (313.89,123.39) .. controls (315.54,121.71) and (317.21,121.7) .. (318.89,123.35) .. controls (320.57,125) and (322.24,124.98) .. (323.89,123.3) .. controls (325.54,121.62) and (327.21,121.6) .. (328.89,123.25) .. controls (330.57,124.9) and (332.24,124.88) .. (333.89,123.2) .. controls (335.54,121.52) and (337.21,121.5) .. (338.89,123.15) .. controls (340.57,124.8) and (342.24,124.79) .. (343.89,123.11) .. controls (345.54,121.43) and (347.21,121.41) .. (348.89,123.06) .. controls (350.57,124.71) and (352.24,124.69) .. (353.89,123.01) .. controls (355.54,121.33) and (357.21,121.31) .. (358.89,122.96) .. controls (360.57,124.61) and (362.24,124.6) .. (363.89,122.92) .. controls (365.54,121.24) and (367.21,121.22) .. (368.89,122.87) .. controls (370.57,124.52) and (372.24,124.5) .. (373.89,122.82) .. controls (375.54,121.14) and (377.21,121.12) .. (378.89,122.77) .. controls (380.57,124.42) and (382.24,124.4) .. (383.89,122.72) .. controls (385.54,121.04) and (387.21,121.03) .. (388.89,122.68) .. controls (390.57,124.33) and (392.24,124.31) .. (393.89,122.63) .. controls (395.54,120.95) and (397.21,120.93) .. (398.89,122.58) .. controls (400.57,124.23) and (402.24,124.21) .. (403.89,122.53) .. controls (405.54,120.85) and (407.21,120.83) .. (408.89,122.48) .. controls (410.57,124.13) and (412.24,124.12) .. (413.89,122.44) -- (416.33,122.41) -- (416.33,122.41) ;
\draw [shift={(303.89,123.49)}, rotate = 359.45] [color={rgb, 255:red, 0; green, 0; blue, 0 }  ][line width=0.75]    (8.74,-3.92) .. controls (5.56,-1.84) and (2.65,-0.53) .. (0,0) .. controls (2.65,0.53) and (5.56,1.84) .. (8.74,3.92)   ;
\draw    (209,86.64) -- (217,86.55) .. controls (218.65,84.86) and (220.31,84.84) .. (222,86.49) .. controls (223.69,88.13) and (225.35,88.11) .. (227,86.42) .. controls (228.65,84.73) and (230.31,84.71) .. (232,86.36) .. controls (233.69,88.01) and (235.35,87.99) .. (237,86.3) .. controls (238.65,84.61) and (240.31,84.59) .. (242,86.24) .. controls (243.69,87.89) and (245.35,87.87) .. (247,86.18) .. controls (248.65,84.49) and (250.31,84.47) .. (252,86.12) -- (256.12,86.07) -- (256.12,86.07) ;
\draw [shift={(207,86.67)}, rotate = 359.3] [color={rgb, 255:red, 0; green, 0; blue, 0 }  ][line width=0.75]    (8.74,-3.92) .. controls (5.56,-1.84) and (2.65,-0.53) .. (0,0) .. controls (2.65,0.53) and (5.56,1.84) .. (8.74,3.92)   ;
\draw    (211.24,163.38) -- (219.24,163.28) .. controls (220.89,161.59) and (222.55,161.57) .. (224.24,163.22) .. controls (225.93,164.86) and (227.59,164.84) .. (229.24,163.15) .. controls (230.89,161.46) and (232.55,161.44) .. (234.24,163.09) .. controls (235.93,164.74) and (237.59,164.72) .. (239.24,163.03) .. controls (240.88,161.34) and (242.54,161.32) .. (244.23,162.97) .. controls (245.92,164.62) and (247.58,164.6) .. (249.23,162.91) .. controls (250.88,161.22) and (252.54,161.2) .. (254.23,162.85) -- (258.36,162.8) -- (258.36,162.8) ;
\draw [shift={(209.24,163.4)}, rotate = 359.3] [color={rgb, 255:red, 0; green, 0; blue, 0 }  ][line width=0.75]    (8.74,-3.92) .. controls (5.56,-1.84) and (2.65,-0.53) .. (0,0) .. controls (2.65,0.53) and (5.56,1.84) .. (8.74,3.92)   ;

\draw (249.38,106) node [anchor=north west][inner sep=0.75pt]  [font=\small] [align=left] {$\displaystyle \vdots $};
\draw (278,87) node [anchor=north west][inner sep=0.75pt]  [font=\footnotesize]  {$e_{1}$};
\draw (278,145) node [anchor=north west][inner sep=0.75pt]  [font=\footnotesize]  {$e_{k}$};
\draw (228,61) node [anchor=north west][inner sep=0.75pt]  [font=\normalsize]  {$v_{1}$};
\draw (228,170) node [anchor=north west][inner sep=0.75pt]  [font=\normalsize]  {$v_{k}$};
\draw (322.28,101) node [anchor=north west][inner sep=0.75pt]  [font=\normalsize]  {$v_{0} \ \ \ \ \ \ \ v_{-1} \ $};
\draw (348,116) node [anchor=north west][inner sep=0.75pt]    {$\times $};
\draw (352,131.73) node [anchor=north west][inner sep=0.75pt]  [font=\footnotesize]  {$x$};

\end{tikzpicture}
        \end{center}
    the map $\beta=A+a+D_{e_1}+\cdots+D_{e_k}:V_{-1}\oplus\CC\oplus\bigoplus\limits_{j=1}^{k}V_j\rightarrow V_0$ is surjective.
        \end{enumerate}
\end{Proposition}
\begin{proof}
    \begin{enumerate}[label=(\roman*)]
        \item By assumption $B_x^-=-\sum\limits_{j=1}^{k}C_kD_k+\lambda_\sigma$, where $\sigma$ is the interval we are considering. In particular, $\bigcap\limits_{j=1}^{k}\Ker(D_{e_j})$ is contained in the eigenspace of $B_x^-$ of eigenvalue $\lambda_\sigma$. Thus, $\Ker(\alpha) \subset \Ker(A)\cap\Ker(b)$ is $B_x^-$-invariant. Hence, $(S1)$ implies that $\alpha$ is injective.
        \item By assumption $B_x^+=-\sum\limits_{j=1}^{k}D_jC_j$. Then, $\Ima A+\Ima a \subset \Ima\beta$ and $\Ima\beta$ is $B_x^+$-invariant. Hence, $(S2)$ implies that $\beta$ is surjective.
    \end{enumerate}
\end{proof}
\begin{Corollary}\label{SW}
   Suppose $\M_{\lambda,\theta}(B,\Lambda,v)\neq\emptyset$, for some $\lambda\in\CC^\II$ and $\theta\in\ZZ^\II$. Then, for any local configuration in $(B,\Lambda,v)$ of the following types
    \begin{equation*}
        \tikzset{every picture/.style={line width=0.75pt}} 
\begin{tikzpicture}[x=0.75pt,y=0.75pt,yscale=-0.95,xscale=0.95]

\draw    (78.45,86.8) -- (96.51,100.95) -- (124.65,122.99) ;
\draw [shift={(126.23,124.22)}, rotate = 218.07] [color={rgb, 255:red, 0; green, 0; blue, 0 }  ][line width=0.75]    (8.74,-3.92) .. controls (5.56,-1.84) and (2.65,-0.53) .. (0,0) .. controls (2.65,0.53) and (5.56,1.84) .. (8.74,3.92)   ;
\draw    (80.69,163.53) -- (124.71,125.53) ;
\draw [shift={(126.23,124.22)}, rotate = 139.2] [color={rgb, 255:red, 0; green, 0; blue, 0 }  ][line width=0.75]    (8.74,-3.92) .. controls (5.56,-1.84) and (2.65,-0.53) .. (0,0) .. controls (2.65,0.53) and (5.56,1.84) .. (8.74,3.92)   ;
\draw    (126.23,124.22) .. controls (127.88,122.54) and (129.55,122.53) .. (131.23,124.18) .. controls (132.91,125.83) and (134.58,125.81) .. (136.23,124.13) .. controls (137.88,122.45) and (139.55,122.43) .. (141.23,124.08) .. controls (142.91,125.73) and (144.58,125.71) .. (146.23,124.03) .. controls (147.88,122.35) and (149.55,122.33) .. (151.23,123.98) .. controls (152.91,125.63) and (154.58,125.62) .. (156.23,123.94) .. controls (157.88,122.26) and (159.55,122.24) .. (161.23,123.89) .. controls (162.91,125.54) and (164.57,125.52) .. (166.22,123.84) .. controls (167.87,122.16) and (169.54,122.14) .. (171.22,123.79) .. controls (172.9,125.44) and (174.57,125.42) .. (176.22,123.74) .. controls (177.87,122.06) and (179.54,122.05) .. (181.22,123.7) .. controls (182.9,125.35) and (184.57,125.33) .. (186.22,123.65) .. controls (187.87,121.97) and (189.54,121.95) .. (191.22,123.6) .. controls (192.9,125.25) and (194.57,125.23) .. (196.22,123.55) .. controls (197.87,121.87) and (199.54,121.86) .. (201.22,123.51) .. controls (202.9,125.16) and (204.57,125.14) .. (206.22,123.46) .. controls (207.87,121.78) and (209.54,121.76) .. (211.22,123.41) .. controls (212.9,125.06) and (214.57,125.04) .. (216.22,123.36) .. controls (217.87,121.68) and (219.54,121.66) .. (221.22,123.31) .. controls (222.9,124.96) and (224.57,124.95) .. (226.22,123.27) -- (228.67,123.24) -- (236.67,123.17) ;
\draw [shift={(238.67,123.15)}, rotate = 179.45] [color={rgb, 255:red, 0; green, 0; blue, 0 }  ][line width=0.75]    (8.74,-3.92) .. controls (5.56,-1.84) and (2.65,-0.53) .. (0,0) .. controls (2.65,0.53) and (5.56,1.84) .. (8.74,3.92)   ;
\draw    (29.33,87.4) .. controls (30.98,85.71) and (32.64,85.69) .. (34.33,87.34) .. controls (36.02,88.99) and (37.68,88.97) .. (39.33,87.28) .. controls (40.98,85.59) and (42.64,85.57) .. (44.33,87.22) .. controls (46.02,88.87) and (47.68,88.85) .. (49.33,87.16) .. controls (50.98,85.47) and (52.64,85.45) .. (54.33,87.1) .. controls (56.02,88.75) and (57.68,88.73) .. (59.33,87.04) .. controls (60.98,85.35) and (62.64,85.33) .. (64.33,86.97) -- (68.45,86.92) -- (76.45,86.83) ;
\draw [shift={(78.45,86.8)}, rotate = 179.3] [color={rgb, 255:red, 0; green, 0; blue, 0 }  ][line width=0.75]    (8.74,-3.92) .. controls (5.56,-1.84) and (2.65,-0.53) .. (0,0) .. controls (2.65,0.53) and (5.56,1.84) .. (8.74,3.92)   ;
\draw    (31.57,164.13) .. controls (33.22,162.44) and (34.88,162.42) .. (36.57,164.07) .. controls (38.26,165.72) and (39.92,165.7) .. (41.57,164.01) .. controls (43.22,162.32) and (44.88,162.3) .. (46.57,163.95) .. controls (48.26,165.6) and (49.92,165.58) .. (51.57,163.89) .. controls (53.22,162.2) and (54.88,162.18) .. (56.57,163.83) .. controls (58.26,165.48) and (59.92,165.46) .. (61.57,163.77) .. controls (63.22,162.08) and (64.88,162.06) .. (66.57,163.7) -- (70.69,163.65) -- (78.69,163.56) ;
\draw [shift={(80.69,163.53)}, rotate = 179.3] [color={rgb, 255:red, 0; green, 0; blue, 0 }  ][line width=0.75]    (8.74,-3.92) .. controls (5.56,-1.84) and (2.65,-0.53) .. (0,0) .. controls (2.65,0.53) and (5.56,1.84) .. (8.74,3.92)   ;
\draw    (353.69,86.96) -- (370.18,99.87) -- (399.89,123.15) ;
\draw [shift={(352.12,85.72)}, rotate = 38.07] [color={rgb, 255:red, 0; green, 0; blue, 0 }  ][line width=0.75]    (8.74,-3.92) .. controls (5.56,-1.84) and (2.65,-0.53) .. (0,0) .. controls (2.65,0.53) and (5.56,1.84) .. (8.74,3.92)   ;
\draw    (355.87,161.15) -- (399.89,123.15) ;
\draw [shift={(354.36,162.45)}, rotate = 319.2] [color={rgb, 255:red, 0; green, 0; blue, 0 }  ][line width=0.75]    (8.74,-3.92) .. controls (5.56,-1.84) and (2.65,-0.53) .. (0,0) .. controls (2.65,0.53) and (5.56,1.84) .. (8.74,3.92)   ;
\draw    (401.89,123.13) -- (409.89,123.05) .. controls (411.54,121.37) and (413.21,121.35) .. (414.89,123) .. controls (416.57,124.65) and (418.24,124.64) .. (419.89,122.96) .. controls (421.54,121.28) and (423.21,121.26) .. (424.89,122.91) .. controls (426.57,124.56) and (428.24,124.54) .. (429.89,122.86) .. controls (431.54,121.18) and (433.21,121.16) .. (434.89,122.81) .. controls (436.57,124.46) and (438.24,124.44) .. (439.89,122.76) .. controls (441.54,121.08) and (443.21,121.07) .. (444.89,122.72) .. controls (446.57,124.37) and (448.24,124.35) .. (449.89,122.67) .. controls (451.54,120.99) and (453.21,120.97) .. (454.89,122.62) .. controls (456.57,124.27) and (458.24,124.25) .. (459.89,122.57) .. controls (461.54,120.89) and (463.21,120.87) .. (464.89,122.52) .. controls (466.57,124.17) and (468.24,124.16) .. (469.89,122.48) .. controls (471.54,120.8) and (473.21,120.78) .. (474.89,122.43) .. controls (476.57,124.08) and (478.24,124.06) .. (479.89,122.38) .. controls (481.54,120.7) and (483.21,120.68) .. (484.89,122.33) .. controls (486.57,123.98) and (488.24,123.96) .. (489.89,122.28) .. controls (491.54,120.6) and (493.21,120.59) .. (494.89,122.24) .. controls (496.57,123.89) and (498.24,123.87) .. (499.89,122.19) .. controls (501.54,120.51) and (503.21,120.49) .. (504.89,122.14) .. controls (506.57,123.79) and (508.24,123.77) .. (509.89,122.09) -- (512.33,122.07) -- (512.33,122.07) ;
\draw [shift={(399.89,123.15)}, rotate = 359.45] [color={rgb, 255:red, 0; green, 0; blue, 0 }  ][line width=0.75]    (8.74,-3.92) .. controls (5.56,-1.84) and (2.65,-0.53) .. (0,0) .. controls (2.65,0.53) and (5.56,1.84) .. (8.74,3.92)   ;
\draw    (305,86.3) -- (313,86.2) .. controls (314.65,84.51) and (316.31,84.49) .. (318,86.14) .. controls (319.69,87.79) and (321.35,87.77) .. (323,86.08) .. controls (324.65,84.39) and (326.31,84.37) .. (328,86.02) .. controls (329.69,87.67) and (331.35,87.65) .. (333,85.96) .. controls (334.65,84.27) and (336.31,84.25) .. (338,85.9) .. controls (339.69,87.55) and (341.35,87.53) .. (343,85.84) .. controls (344.65,84.15) and (346.31,84.13) .. (348,85.77) -- (352.12,85.72) -- (352.12,85.72) ;
\draw [shift={(303,86.33)}, rotate = 359.3] [color={rgb, 255:red, 0; green, 0; blue, 0 }  ][line width=0.75]    (8.74,-3.92) .. controls (5.56,-1.84) and (2.65,-0.53) .. (0,0) .. controls (2.65,0.53) and (5.56,1.84) .. (8.74,3.92)   ;
\draw    (307.24,163.03) -- (315.24,162.93) .. controls (316.89,161.24) and (318.55,161.22) .. (320.24,162.87) .. controls (321.93,164.52) and (323.59,164.5) .. (325.24,162.81) .. controls (326.89,161.12) and (328.55,161.1) .. (330.24,162.75) .. controls (331.93,164.4) and (333.59,164.38) .. (335.24,162.69) .. controls (336.88,161) and (338.54,160.98) .. (340.23,162.63) .. controls (341.92,164.28) and (343.58,164.26) .. (345.23,162.57) .. controls (346.88,160.88) and (348.54,160.86) .. (350.23,162.5) -- (354.36,162.45) -- (354.36,162.45) ;
\draw [shift={(305.24,163.06)}, rotate = 359.3] [color={rgb, 255:red, 0; green, 0; blue, 0 }  ][line width=0.75]    (8.74,-3.92) .. controls (5.56,-1.84) and (2.65,-0.53) .. (0,0) .. controls (2.65,0.53) and (5.56,1.84) .. (8.74,3.92)   ;

\draw (75.05,107) node [anchor=north west][inner sep=0.75pt]  [font=\small] [align=left] {$\displaystyle \vdots $};
\draw (102,88) node [anchor=north west][inner sep=0.75pt]  [font=\footnotesize]  {$e_{1}$};
\draw (102,148) node [anchor=north west][inner sep=0.75pt]  [font=\footnotesize]  {$e_{k}$};
\draw (47,63) node [anchor=north west][inner sep=0.75pt]  [font=\normalsize]  {$v_{1}$};
\draw (49,170) node [anchor=north west][inner sep=0.75pt]  [font=\normalsize]  {$v_{k}$};
\draw (143.28,101) node [anchor=north west][inner sep=0.75pt]  [font=\normalsize]  {$v_{0} \ \ \ \ \ \ \ v_{-1} \ $};
\draw (169,116) node [anchor=north west][inner sep=0.75pt]    {$\times $};
\draw (172,131.73) node [anchor=north west][inner sep=0.75pt]  [font=\footnotesize]  {$x$};
\draw (352.05,107) node [anchor=north west][inner sep=0.75pt]  [font=\small] [align=left] {$\displaystyle \vdots $};
\draw (377,88) node [anchor=north west][inner sep=0.75pt]  [font=\footnotesize]  {$e_{1}$};
\draw (377,148) node [anchor=north west][inner sep=0.75pt]  [font=\footnotesize]  {$e_{k}$};
\draw (319.78,63.43) node [anchor=north west][inner sep=0.75pt]  [font=\normalsize]  {$v_{1}$};
\draw (325.62,170) node [anchor=north west][inner sep=0.75pt]  [font=\normalsize]  {$v_{k}$};
\draw (420.94,101) node [anchor=north west][inner sep=0.75pt]  [font=\normalsize]  {$v_{0} \ \ \ \ \ \ \ v_{-1} \ $};
\draw (447,116) node [anchor=north west][inner sep=0.75pt]    {$\times $};
\draw (452.07,129.07) node [anchor=north west][inner sep=0.75pt]  [font=\footnotesize]  {$x$};
\draw (248,121.73) node [anchor=north west][inner sep=0.75pt]    {$,$};

\end{tikzpicture}
    \end{equation*}
    we have $v_0\leq v_{-1}+\sum\limits_{j=1}^{k}v_j+1$.
\end{Corollary}
The following example shows that not even for finite type A bows is this necessary condition also sufficient.

\begin{Example}\label{ExSW}
    Let us consider the bow diagram:
\begin{center}
    \begin{tikzpicture}[x=0.75pt,y=0.75pt,yscale=-1,xscale=1]

\draw    (90,109.67) .. controls (91.67,108) and (93.33,108) .. (95,109.67) .. controls (96.67,111.34) and (98.33,111.34) .. (100,109.67) .. controls (101.67,108) and (103.33,108) .. (105,109.67) .. controls (106.67,111.34) and (108.33,111.34) .. (110,109.67) .. controls (111.67,108) and (113.33,108) .. (115,109.67) .. controls (116.67,111.34) and (118.33,111.34) .. (120,109.67) .. controls (121.67,108) and (123.33,108) .. (125,109.67) .. controls (126.67,111.34) and (128.33,111.34) .. (130,109.67) -- (131.2,109.67) -- (139.2,109.67) ;
\draw [shift={(141.2,109.67)}, rotate = 180] [color={rgb, 255:red, 0; green, 0; blue, 0 }  ][line width=0.75]    (10.93,-4.9) .. controls (6.95,-2.3) and (3.31,-0.67) .. (0,0) .. controls (3.31,0.67) and (6.95,2.3) .. (10.93,4.9)   ;
\draw    (141.2,109.67) -- (190.4,109.67) ;
\draw [shift={(192.4,109.67)}, rotate = 180] [color={rgb, 255:red, 0; green, 0; blue, 0 }  ][line width=0.75]    (10.93,-4.9) .. controls (6.95,-2.3) and (3.31,-0.67) .. (0,0) .. controls (3.31,0.67) and (6.95,2.3) .. (10.93,4.9)   ;
\draw    (192.4,109.67) .. controls (194.07,108) and (195.73,108) .. (197.4,109.67) .. controls (199.07,111.34) and (200.73,111.34) .. (202.4,109.67) .. controls (204.07,108) and (205.73,108) .. (207.4,109.67) .. controls (209.07,111.34) and (210.73,111.34) .. (212.4,109.67) .. controls (214.07,108) and (215.73,108) .. (217.4,109.67) .. controls (219.07,111.34) and (220.73,111.34) .. (222.4,109.67) .. controls (224.07,108) and (225.73,108) .. (227.4,109.67) .. controls (229.07,111.34) and (230.73,111.34) .. (232.4,109.67) .. controls (234.07,108) and (235.73,108) .. (237.4,109.67) .. controls (239.07,111.34) and (240.73,111.34) .. (242.4,109.67) .. controls (244.07,108) and (245.73,108) .. (247.4,109.67) .. controls (249.07,111.34) and (250.73,111.34) .. (252.4,109.67) .. controls (254.07,108) and (255.73,108) .. (257.4,109.67) .. controls (259.07,111.34) and (260.73,111.34) .. (262.4,109.67) .. controls (264.07,108) and (265.73,108) .. (267.4,109.67) .. controls (269.07,111.34) and (270.73,111.34) .. (272.4,109.67) .. controls (274.07,108) and (275.73,108) .. (277.4,109.67) .. controls (279.07,111.34) and (280.73,111.34) .. (282.4,109.67) .. controls (284.07,108) and (285.73,108) .. (287.4,109.67) .. controls (289.07,111.34) and (290.73,111.34) .. (292.4,109.67) .. controls (294.07,108) and (295.73,108) .. (297.4,109.67) .. controls (299.07,111.34) and (300.73,111.34) .. (302.4,109.67) .. controls (304.07,108) and (305.73,108) .. (307.4,109.67) .. controls (309.07,111.34) and (310.73,111.34) .. (312.4,109.67) .. controls (314.07,108) and (315.73,108) .. (317.4,109.67) .. controls (319.07,111.34) and (320.73,111.34) .. (322.4,109.67) -- (323.2,109.67) -- (331.2,109.67) ;
\draw [shift={(333.2,109.67)}, rotate = 180] [color={rgb, 255:red, 0; green, 0; blue, 0 }  ][line width=0.75]    (10.93,-4.9) .. controls (6.95,-2.3) and (3.31,-0.67) .. (0,0) .. controls (3.31,0.67) and (6.95,2.3) .. (10.93,4.9)   ;

\draw (107.33,87.73) node [anchor=north west][inner sep=0.75pt]  [font=\small]  {$0\ \ \ \ \ \ \ \ \ \ \ \ \ \ \ \ \ \ \ \ \ 3\ \ \ \ \ \ \ \ \ \ 2\ \ \ \ \ \ \ \ \ 0$};
\draw (228,102.8) node [anchor=north west][inner sep=0.75pt]    {$\times \ \ \ \ \ \ \ \ \times $};

\end{tikzpicture},
\end{center}
which clearly satisfies the necessary condition in Corollary \ref{SW}. Suppose there exists a solution $m=(A,B^\pm,a,b,C,D)$ to $\mu=\lambda$ for some $\lambda=(\lambda_1,\lambda_2)\in\CC^2$. That is:
\begin{center}
   \begin{tikzcd}
0 \arrow[r, "C", shift left] & \mathbb{C}^3 \arrow[rd, "b_1"'] \arrow[rr, "A"] \arrow["B^-"', loop, distance=2em, in=125, out=55] \arrow[l, "D", shift left] &                               & \mathbb{C}^2 \arrow["B^+"', loop, distance=2em, in=125, out=55] \arrow[rr] \arrow[rd, "b_2"'] &                       & 0 \\
                             &                                                                                                                               & \mathbb{C} \arrow[ru, "a_1"'] &                                                                                               & \mathbb{C} \arrow[ru] &  
\end{tikzcd}
\end{center}
satisfying:
\begin{enumerate}
    \item[(a)] $B^+A-AB^-+a_1b_1=0$
   \item [(S1)$_1$] If $S^-\subset\Ker A \cap\Ker b_1$ is $B^-$-invariant, then $S^-=0$.
    \item [(S2)] $A$ is surjective.
    \item [(S1)$_2$] If $S^-\subset \Ker b_2$ is $B^+$-invariant, then $S^-=0$.
    \item[(I)] $B^-=\lambda_2\operatorname{id}$ 
\end{enumerate}
Therefore, from (I) and (S1)$_1$, we have $\Ker A \cap \Ker b_1 = \{0\}$ and in particular $b_1 \neq 0$. Hence, using (a) and (S2), we deduce that $B^+=\lambda_2\operatorname{id}$. This is in contraddiction with (S1)$_2$.
\end{Example}

\subsection{Hanany--Witten transitions}
In \cite[Section 7]{NT17}, Nakajima and Takayama introduced an equivalence relation for affine type A bow diagrams such that two equivalent bow diagrams give rise to the same bow varieties. It admits the following generalization to bow diagrams with arbitrary underlying quivers.
\begin{Definition}
    Let $(B,\Lambda,v)$ and $(B',\Lambda',v')$ be bow diagrams. We say that one is obtained from the other via a \textit{Hanany--Witten transition} if we can go from one to the other by exchanging the position of an arrow and an x-point in a local subdiagram as follows:
\begin{equation}
    \begin{aligned}\begin{tikzpicture}[x=0.75pt,y=0.75pt,yscale=-1,xscale=1]

\draw    (46.3,117.29) -- (54.3,117.29) .. controls (55.97,115.62) and (57.63,115.62) .. (59.3,117.29) .. controls (60.97,118.96) and (62.63,118.96) .. (64.3,117.29) .. controls (65.97,115.62) and (67.63,115.62) .. (69.3,117.29) -- (73.7,117.29) -- (73.7,117.29) ;
\draw [shift={(44.3,117.29)}, rotate = 0] [color={rgb, 255:red, 0; green, 0; blue, 0 }  ][line width=0.75]    (6.56,-2.94) .. controls (4.17,-1.38) and (1.99,-0.4) .. (0,0) .. controls (1.99,0.4) and (4.17,1.38) .. (6.56,2.94)   ;
\draw    (105.1,117.29) -- (113.1,117.29) .. controls (114.77,115.62) and (116.43,115.62) .. (118.1,117.29) .. controls (119.77,118.96) and (121.43,118.96) .. (123.1,117.29) .. controls (124.77,115.62) and (126.43,115.62) .. (128.1,117.29) .. controls (129.77,118.96) and (131.43,118.96) .. (133.1,117.29) .. controls (134.77,115.62) and (136.43,115.62) .. (138.1,117.29) .. controls (139.77,118.96) and (141.43,118.96) .. (143.1,117.29) .. controls (144.77,115.62) and (146.43,115.62) .. (148.1,117.29) .. controls (149.77,118.96) and (151.43,118.96) .. (153.1,117.29) .. controls (154.77,115.62) and (156.43,115.62) .. (158.1,117.29) -- (161.9,117.29) -- (161.9,117.29) ;
\draw [shift={(103.1,117.29)}, rotate = 0] [color={rgb, 255:red, 0; green, 0; blue, 0 }  ][line width=0.75]    (6.56,-2.94) .. controls (4.17,-1.38) and (1.99,-0.4) .. (0,0) .. controls (1.99,0.4) and (4.17,1.38) .. (6.56,2.94)   ;
\draw    (75.7,117.29) -- (103.1,117.29) ;
\draw [shift={(73.7,117.29)}, rotate = 0] [color={rgb, 255:red, 0; green, 0; blue, 0 }  ][line width=0.75]    (6.56,-2.94) .. controls (4.17,-1.38) and (1.99,-0.4) .. (0,0) .. controls (1.99,0.4) and (4.17,1.38) .. (6.56,2.94)   ;
\draw    (275.7,117.29) -- (303.1,117.29) ;
\draw [shift={(273.7,117.29)}, rotate = 0] [color={rgb, 255:red, 0; green, 0; blue, 0 }  ][line width=0.75]    (6.56,-2.94) .. controls (4.17,-1.38) and (1.99,-0.4) .. (0,0) .. controls (1.99,0.4) and (4.17,1.38) .. (6.56,2.94)   ;
\draw    (305.1,117.29) -- (313.1,117.29) .. controls (314.77,115.62) and (316.43,115.62) .. (318.1,117.29) .. controls (319.77,118.96) and (321.43,118.96) .. (323.1,117.29) .. controls (324.77,115.62) and (326.43,115.62) .. (328.1,117.29) -- (332.5,117.29) -- (332.5,117.29) ;
\draw [shift={(303.1,117.29)}, rotate = 0] [color={rgb, 255:red, 0; green, 0; blue, 0 }  ][line width=0.75]    (6.56,-2.94) .. controls (4.17,-1.38) and (1.99,-0.4) .. (0,0) .. controls (1.99,0.4) and (4.17,1.38) .. (6.56,2.94)   ;
\draw    (216.9,117.29) -- (224.9,117.29) .. controls (226.57,115.62) and (228.23,115.62) .. (229.9,117.29) .. controls (231.57,118.96) and (233.23,118.96) .. (234.9,117.29) .. controls (236.57,115.62) and (238.23,115.62) .. (239.9,117.29) .. controls (241.57,118.96) and (243.23,118.96) .. (244.9,117.29) .. controls (246.57,115.62) and (248.23,115.62) .. (249.9,117.29) .. controls (251.57,118.96) and (253.23,118.96) .. (254.9,117.29) .. controls (256.57,115.62) and (258.23,115.62) .. (259.9,117.29) .. controls (261.57,118.96) and (263.23,118.96) .. (264.9,117.29) .. controls (266.57,115.62) and (268.23,115.62) .. (269.9,117.29) -- (273.7,117.29) -- (273.7,117.29) ;
\draw [shift={(214.9,117.29)}, rotate = 0] [color={rgb, 255:red, 0; green, 0; blue, 0 }  ][line width=0.75]    (6.56,-2.94) .. controls (4.17,-1.38) and (1.99,-0.4) .. (0,0) .. controls (1.99,0.4) and (4.17,1.38) .. (6.56,2.94)   ;

\draw (126,111) node [anchor=north west][inner sep=0.75pt]    {$\times $};
\draw (59.36,94) node [anchor=north west][inner sep=0.75pt]    {$v^{-}\ \ \ \ \ \ \ \ v\ \ \ \ \ v^{+}$};
\draw (239,111) node [anchor=north west][inner sep=0.75pt]    {$\times $};
\draw (226.43,94) node [anchor=north west][inner sep=0.75pt]    {$v^{-}\ \ v'\ \ \ \ \ \ \ \ \ \ \ v^{+}$};
\draw (174.66,114) node [anchor=north west][inner sep=0.75pt]  [font=\small]  {$\underset{HW}{\longleftrightarrow}$};
\draw (346,108) node [anchor=north west][inner sep=0.75pt]    {$v+v'=v^{-} +v^{+} +1$};

\end{tikzpicture}\label{hwtransition-intro}\end{aligned}
\end{equation}
Two bow diagrams related by a sequence of Hanany--Witten transitions are said to be \textit{Hanany--Witten equivalent}.
\end{Definition}
We notice that, given a local configuration as in (\ref{hwtransition-intro}), we can perform the Hanany--Witten transition if and only if it satisfies the corresponding inequalities given in Corollary \ref{SW}.

We observe that two Hanany--Witten equivalent bow diagrams share the same underlying bow (not necessarily of type A), and therefore the same spaces of deformation and stability parameters.
\begin{Theorem}\label{NTHW}
    Let $(B,\Lambda,v)$ and $(B,\Lambda',v')$ be bow diagrams related by a Hanany--Witten transition. Then, there exists an isomorphism of Poisson algebraic varieties
    \begin{equation*}
        \M_{\lambda,\theta}(B,\Lambda,v) \cong \M_{\lambda,\theta}(B,\Lambda',v')
    \end{equation*}
    for every $\lambda\in\CC^\II$ and $\theta\in\ZZ^\II$.
\end{Theorem}
\begin{proof}
Since Hanany--Witten transitions are local moves, Nakajima and Takayama's argument for the affine type A case works here (see \cite[Proposition 7.1]{NT17}). Indeed, in the notation of (\ref{hwtransition-intro}), Nakajima and Takayama's proof provides a $\GL(v_-)\times\GL(v_+)$-equivariant isomorphism of symplectic varieties between the reductions of $\TM(B,\Lambda,v)$ and $\TM(B,\Lambda',v')$ at, respectively, $v$ and $v'$. The construction of this isomorphism only involves the local subdiagrams affected by the Hanany--Witten transition and therefore it works as for the type A case.
\end{proof}
\begin{Corollary}\label{SWHW}
    Let $(B,\Lambda,v)$ be a bow diagram such that $\M_{\lambda,\theta}\neq\varnothing$ for some $\lambda\in\CC^\II$ and $\theta \in \ZZ^\II$. Then, we can perform every possible sequence of Hanany--Witten transitions for $(B,\Lambda,v)$.
\end{Corollary}
\begin{proof}
    We observe that, from Theorem \ref{NTHW}, Hanany--Witten transitions preserve the property of bow diagrams of generating a non-empty bow variety. Therefore, the Corollary follows from Corollary \ref{SW}.
\end{proof}
\begin{Remark}
Let us consider bow diagrams of affine type A. In this case, Corollary \ref{SWHW} provides a necessary condition for bow diagrams to generate a non-empty bow variety which is stronger than the one in Corollary \ref{SW}.
In Example \ref{ExSW}, we showed that the condition in \ref{SW} is not a sufficient condition. However, in the example, we considered the bow diagram
\begin{center}
\begin{tikzpicture}[x=0.75pt,y=0.75pt,yscale=-1,xscale=1]

\draw    (90,109.67) .. controls (91.67,108) and (93.33,108) .. (95,109.67) .. controls (96.67,111.34) and (98.33,111.34) .. (100,109.67) .. controls (101.67,108) and (103.33,108) .. (105,109.67) .. controls (106.67,111.34) and (108.33,111.34) .. (110,109.67) .. controls (111.67,108) and (113.33,108) .. (115,109.67) .. controls (116.67,111.34) and (118.33,111.34) .. (120,109.67) .. controls (121.67,108) and (123.33,108) .. (125,109.67) .. controls (126.67,111.34) and (128.33,111.34) .. (130,109.67) -- (131.2,109.67) -- (139.2,109.67) ;
\draw [shift={(141.2,109.67)}, rotate = 180] [color={rgb, 255:red, 0; green, 0; blue, 0 }  ][line width=0.75]    (10.93,-4.9) .. controls (6.95,-2.3) and (3.31,-0.67) .. (0,0) .. controls (3.31,0.67) and (6.95,2.3) .. (10.93,4.9)   ;
\draw    (141.2,109.67) -- (190.4,109.67) ;
\draw [shift={(192.4,109.67)}, rotate = 180] [color={rgb, 255:red, 0; green, 0; blue, 0 }  ][line width=0.75]    (10.93,-4.9) .. controls (6.95,-2.3) and (3.31,-0.67) .. (0,0) .. controls (3.31,0.67) and (6.95,2.3) .. (10.93,4.9)   ;
\draw    (192.4,109.67) .. controls (194.07,108) and (195.73,108) .. (197.4,109.67) .. controls (199.07,111.34) and (200.73,111.34) .. (202.4,109.67) .. controls (204.07,108) and (205.73,108) .. (207.4,109.67) .. controls (209.07,111.34) and (210.73,111.34) .. (212.4,109.67) .. controls (214.07,108) and (215.73,108) .. (217.4,109.67) .. controls (219.07,111.34) and (220.73,111.34) .. (222.4,109.67) .. controls (224.07,108) and (225.73,108) .. (227.4,109.67) .. controls (229.07,111.34) and (230.73,111.34) .. (232.4,109.67) .. controls (234.07,108) and (235.73,108) .. (237.4,109.67) .. controls (239.07,111.34) and (240.73,111.34) .. (242.4,109.67) .. controls (244.07,108) and (245.73,108) .. (247.4,109.67) .. controls (249.07,111.34) and (250.73,111.34) .. (252.4,109.67) .. controls (254.07,108) and (255.73,108) .. (257.4,109.67) .. controls (259.07,111.34) and (260.73,111.34) .. (262.4,109.67) .. controls (264.07,108) and (265.73,108) .. (267.4,109.67) .. controls (269.07,111.34) and (270.73,111.34) .. (272.4,109.67) .. controls (274.07,108) and (275.73,108) .. (277.4,109.67) .. controls (279.07,111.34) and (280.73,111.34) .. (282.4,109.67) .. controls (284.07,108) and (285.73,108) .. (287.4,109.67) .. controls (289.07,111.34) and (290.73,111.34) .. (292.4,109.67) .. controls (294.07,108) and (295.73,108) .. (297.4,109.67) .. controls (299.07,111.34) and (300.73,111.34) .. (302.4,109.67) .. controls (304.07,108) and (305.73,108) .. (307.4,109.67) .. controls (309.07,111.34) and (310.73,111.34) .. (312.4,109.67) .. controls (314.07,108) and (315.73,108) .. (317.4,109.67) .. controls (319.07,111.34) and (320.73,111.34) .. (322.4,109.67) -- (323.2,109.67) -- (331.2,109.67) ;
\draw [shift={(333.2,109.67)}, rotate = 180] [color={rgb, 255:red, 0; green, 0; blue, 0 }  ][line width=0.75]    (10.93,-4.9) .. controls (6.95,-2.3) and (3.31,-0.67) .. (0,0) .. controls (3.31,0.67) and (6.95,2.3) .. (10.93,4.9)   ;

\draw (107.33,87.73) node [anchor=north west][inner sep=0.75pt]  [font=\small]  {$0\ \ \ \ \ \ \ \ \ \ \ \ \ \ \ \ \ \ \ \ \ 3\ \ \ \ \ \ \ \ \ \ 2\ \ \ \ \ \ \ \ \ 0$};
\draw (228,102.8) node [anchor=north west][inner sep=0.75pt]    {$\times \ \ \ \ \ \ \ \ \times $};

\end{tikzpicture},
\end{center}
for which it is easy to prove that not every sequence of Hanany--Witten transitions can be performed. Indeed, it is Hanany--Witten equivalent to the bow diagram
\begin{center}
\begin{tikzpicture}[x=0.75pt,y=0.75pt,yscale=-1,xscale=1]

\draw    (100,160.67) .. controls (101.67,159) and (103.33,159) .. (105,160.67) .. controls (106.67,162.34) and (108.33,162.34) .. (110,160.67) .. controls (111.67,159) and (113.33,159) .. (115,160.67) .. controls (116.67,162.34) and (118.33,162.34) .. (120,160.67) .. controls (121.67,159) and (123.33,159) .. (125,160.67) .. controls (126.67,162.34) and (128.33,162.34) .. (130,160.67) .. controls (131.67,159) and (133.33,159) .. (135,160.67) .. controls (136.67,162.34) and (138.33,162.34) .. (140,160.67) .. controls (141.67,159) and (143.33,159) .. (145,160.67) .. controls (146.67,162.34) and (148.33,162.34) .. (150,160.67) .. controls (151.67,159) and (153.33,159) .. (155,160.67) .. controls (156.67,162.34) and (158.33,162.34) .. (160,160.67) .. controls (161.67,159) and (163.33,159) .. (165,160.67) .. controls (166.67,162.34) and (168.33,162.34) .. (170,160.67) .. controls (171.67,159) and (173.33,159) .. (175,160.67) .. controls (176.67,162.34) and (178.33,162.34) .. (180,160.67) .. controls (181.67,159) and (183.33,159) .. (185,160.67) -- (189.87,160.67) -- (197.87,160.67) ;
\draw [shift={(199.87,160.67)}, rotate = 180] [color={rgb, 255:red, 0; green, 0; blue, 0 }  ][line width=0.75]    (8.74,-3.92) .. controls (5.56,-1.84) and (2.65,-0.53) .. (0,0) .. controls (2.65,0.53) and (5.56,1.84) .. (8.74,3.92)   ;
\draw    (199.87,160.67) -- (247.73,160.67) ;
\draw [shift={(249.73,160.67)}, rotate = 180] [color={rgb, 255:red, 0; green, 0; blue, 0 }  ][line width=0.75]    (8.74,-3.92) .. controls (5.56,-1.84) and (2.65,-0.53) .. (0,0) .. controls (2.65,0.53) and (5.56,1.84) .. (8.74,3.92)   ;
\draw    (249.73,160.67) .. controls (251.4,159) and (253.06,159) .. (254.73,160.67) .. controls (256.4,162.34) and (258.06,162.34) .. (259.73,160.67) .. controls (261.4,159) and (263.06,159) .. (264.73,160.67) .. controls (266.4,162.34) and (268.06,162.34) .. (269.73,160.67) .. controls (271.4,159) and (273.06,159) .. (274.73,160.67) .. controls (276.4,162.34) and (278.06,162.34) .. (279.73,160.67) .. controls (281.4,159) and (283.06,159) .. (284.73,160.67) .. controls (286.4,162.34) and (288.06,162.34) .. (289.73,160.67) .. controls (291.4,159) and (293.06,159) .. (294.73,160.67) .. controls (296.4,162.34) and (298.06,162.34) .. (299.73,160.67) .. controls (301.4,159) and (303.06,159) .. (304.73,160.67) .. controls (306.4,162.34) and (308.06,162.34) .. (309.73,160.67) .. controls (311.4,159) and (313.06,159) .. (314.73,160.67) .. controls (316.4,162.34) and (318.06,162.34) .. (319.73,160.67) .. controls (321.4,159) and (323.06,159) .. (324.73,160.67) .. controls (326.4,162.34) and (328.06,162.34) .. (329.73,160.67) .. controls (331.4,159) and (333.06,159) .. (334.73,160.67) -- (339.6,160.67) -- (347.6,160.67) ;
\draw [shift={(349.6,160.67)}, rotate = 180] [color={rgb, 255:red, 0; green, 0; blue, 0 }  ][line width=0.75]    (8.74,-3.92) .. controls (5.56,-1.84) and (2.65,-0.53) .. (0,0) .. controls (2.65,0.53) and (5.56,1.84) .. (8.74,3.92)   ;

\draw (138,155) node [anchor=north west][inner sep=0.75pt]    {$\times $};
\draw (281.33,155) node [anchor=north west][inner sep=0.75pt]    {$\times $};
\draw (122,140.73) node [anchor=north west][inner sep=0.75pt]    {$0\ \ \ \ \ \ \ 0\ \ \ \ \ \ \ \ \ \ \ \ \ \ \ \ \ \ \ 2\ \ \ \ \ \ \ 0\ $};

\end{tikzpicture}.
\end{center}
Therefore, it is natural to wonder if the possibility of performing every Hanany--Witten transitions for a bow diagram of affine type A guarantees the existence of a non-empty bow variety. We will prove this in a sequel paper on affine type A bow varieties and supersymmetric brane diagrams.
\end{Remark}

\section{Cobalanced bow varieties}\label{S5}
In his original work \cite{cherkis2011instantons}, Cherkis stated that Nakajima quiver varieties correspond to bow varieties associated with particular bow diagrams, called \textit{cobalanced}. For a proof in the affine type A case, see \cite[Theorem 2.15]{NT17}. In this section, we provide a proof of this correspondence for arbitrary bows using the quiver description. We deduce this correspondence as a corollary of a stronger result. Indeed, we show that the construction of cobalanced bow varieties can be divided into two steps, with the first step recovering the framework of Nakajima quiver varieties.

\subsection{Cobalanced bow diagrams}\label{s5.1}
\begin{Definition}
Given a bow diagram $(B,\Lambda,v)$, an x-point $x$ is called \textit{cobalanced} if $v_{\zeta_x^-}=v_{\zeta_x^+}$. A bow diagram is said to be a \textit{cobalanced bow diagram} if every x-point is cobalanced. In such a case, we also say that the dimension vector $v$ is a \textit{cobalanced dimension vector}.
\end{Definition}
    Let $B=(\II,\EE)$ be a bow with underlying quiver $Q=(I,E)$. Then, a cobalanced bow diagram over $B$ is equivalent to a framed dimension vector for $Q$, in the following way. Given $(v,w) \in \NN^I\times\NN^I$ a framed dimension vector for $Q$, because of the natural bijection between $I$ and $\II$, we can define a set of x-points over $B$ by taking $w_\sigma$ x-points on each wavy line $\sigma$ and we can define a dimension vector $\Tilde{v}\in \NN^{\IS}$ by defining for any segment $\zeta$, $\Tilde{v}_\zeta = v_\sigma$ where $\sigma$ is the wavy line containing $\zeta$. Conversely, let us take a cobalanced bow diagram $(\Lambda,\Tilde{v})$ over $B$. Then we can define $v \in \NN^\II$ by $v_\sigma=v_\zeta$ where $\zeta$ is any segment in the interval $\sigma$ and $w\in \NN^\II$ by $w_\sigma=$number of x-points on $\sigma$. This is a one-to-one correspondence between cobalanced bow diagrams and framed dimension vectors.
    
    From now on, given $(B,\Lambda)$ we will not distinguish between a cobalanced dimension vector for $(B,\Lambda)$ and a dimension vector for the underlying quiver $Q$, i.e. we will denote by $v$ both of them.

\begin{Remark}\label{r5.2}
    Thanks to Lemma \ref{3.9}, in the cobalanced case, if $(A,B^\pm,a,b,C,D) \in \TM$ then $A_x$ is an isomorphism for any x-point. Furthermore, given a free triangle $(A,B^-,B^+,a,b)$ satisfying condition $(a)$, if $A$ is an isomorphism, then $(A,B^-,B^+,a,b)$ satisfies conditions $(S1)$ and $(S2)$. In other words, in the cobalanced case we can replace conditions $(S1)$ and $(S2)$ on triangles in $\mu_1^{-1}(0)$ with the condition    that maps $A$ are isomorphisms.
\end{Remark}

\subsection{Example: from bow to quiver varieties}\label{P1}
Let us show through a concrete example how to recover the framework of Nakajima quiver varieties starting from bow data.

Let $B$ be a Dynkin bow of finite type $A_1$ (that is a wavy line) and $Q$ the underlying quiver. Let  $\Lambda=\{0,1\}$ be the set of x-points numbered following the wavy line orientation. Note that they partition the wavy line into three segments. Let $\lambda=0$ be the deformation parameter, $\theta \in \NN_{>0}$ the stability parameter and $v=(1,1,1)$ the (cobalanced) dimension vector. Hence $\GG=\CC^\times\times\CC^\times\times\CC^\times$ and the bow diagram is given by: 
\begin{equation*}
\begin{aligned}    \begin{tikzpicture}[x=0.75pt,y=0.75pt,yscale=-1,xscale=1]

\draw    (110,59) .. controls (111.67,57.33) and (113.33,57.33) .. (115,59) .. controls (116.67,60.67) and (118.33,60.67) .. (120,59) .. controls (121.67,57.33) and (123.33,57.33) .. (125,59) .. controls (126.67,60.67) and (128.33,60.67) .. (130,59) .. controls (131.67,57.33) and (133.33,57.33) .. (135,59) .. controls (136.67,60.67) and (138.33,60.67) .. (140,59) .. controls (141.67,57.33) and (143.33,57.33) .. (145,59) .. controls (146.67,60.67) and (148.33,60.67) .. (150,59) .. controls (151.67,57.33) and (153.33,57.33) .. (155,59) .. controls (156.67,60.67) and (158.33,60.67) .. (160,59) .. controls (161.67,57.33) and (163.33,57.33) .. (165,59) .. controls (166.67,60.67) and (168.33,60.67) .. (170,59) .. controls (171.67,57.33) and (173.33,57.33) .. (175,59) .. controls (176.67,60.67) and (178.33,60.67) .. (180,59) .. controls (181.67,57.33) and (183.33,57.33) .. (185,59) .. controls (186.67,60.67) and (188.33,60.67) .. (190,59) .. controls (191.67,57.33) and (193.33,57.33) .. (195,59) .. controls (196.67,60.67) and (198.33,60.67) .. (200,59) .. controls (201.67,57.33) and (203.33,57.33) .. (205,59) .. controls (206.67,60.67) and (208.33,60.67) .. (210,59) .. controls (211.67,57.33) and (213.33,57.33) .. (215,59) .. controls (216.67,60.67) and (218.33,60.67) .. (220,59) .. controls (221.67,57.33) and (223.33,57.33) .. (225,59) .. controls (226.67,60.67) and (228.33,60.67) .. (230,59) .. controls (231.67,57.33) and (233.33,57.33) .. (235,59) .. controls (236.67,60.67) and (238.33,60.67) .. (240,59) .. controls (241.67,57.33) and (243.33,57.33) .. (245,59) .. controls (246.67,60.67) and (248.33,60.67) .. (250,59) .. controls (251.67,57.33) and (253.33,57.33) .. (255,59) .. controls (256.67,60.67) and (258.33,60.67) .. (260,59) .. controls (261.67,57.33) and (263.33,57.33) .. (265,59) -- (268,59) -- (276,59) ;
\draw [shift={(278,59)}, rotate = 180] [color={rgb, 255:red, 0; green, 0; blue, 0 }  ][line width=0.75]    (8.74,-3.92) .. controls (5.56,-1.84) and (2.65,-0.53) .. (0,0) .. controls (2.65,0.53) and (5.56,1.84) .. (8.74,3.92)   ;

\draw (35,49.4) node [anchor=north west][inner sep=0.75pt]    {$(B,\Lambda,v):$};
\draw (152,53.5) node [anchor=north west][inner sep=0.75pt]    {$\times \ \ \ \ \ \ \ \ \ \ \ \times$};
\draw (125,36.07) node [anchor=north west][inner sep=0.75pt]    {$1\ \ \ \ \ \ \ \ \ \ \ \ 1\ \ \ \ \ \ \ \ \ \ \ \ 1$};
\draw (300,56) node [anchor=north west][inner sep=0.75pt]    {.};

\end{tikzpicture}
\end{aligned}
\end{equation*}

By definition, elements of $\TM$ are given by $(A_0,B_{0}^\pm,a_0,b_0,A_1,B_{1}^\pm,a_1,b_1)$ illustrated by the following diagram
\begin{center}
    \begin{tikzpicture}[x=0.75pt,y=0.75pt,yscale=-1,xscale=1]

\draw    (145.51,91.18) .. controls (125.78,81.88) and (149.71,63.63) .. (154.23,81.48) node[midway, above] {\small$B_{0}^{-}$};
\draw [shift={(154.59,83.24)}, rotate = 254.89] [color={rgb, 255:red, 0; green, 0; blue, 0 }  ][line width=0.75]    (6.56,-2.94) .. controls (4.17,-1.38) and (1.99,-0.4) .. (0,0) .. controls (1.99,0.4) and (4.17,1.38) .. (6.56,2.94)   ;
\draw    (222.51,91.18) .. controls (202.78,81.88) and (226.71,63.63) .. (231.23,81.48) node[midway, above] {\small$B_{0}^{+}$};
\draw [shift={(231.59,83.24)}, rotate = 254.89] [color={rgb, 255:red, 0; green, 0; blue, 0 }  ][line width=0.75]    (6.56,-2.94) .. controls (4.17,-1.38) and (1.99,-0.4) .. (0,0) .. controls (1.99,0.4) and (4.17,1.38) .. (6.56,2.94)   ;
\draw    (313.36,82.34) .. controls (318.46,63.75) and (341.39,82.76) .. (322.93,91.18) node[midway, above] {\small$B_{1}^{+}$};
\draw [shift={(312.88,84.51)}, rotate = 285.36] [color={rgb, 255:red, 0; green, 0; blue, 0 }  ][line width=0.75]    (6.56,-2.94) .. controls (4.17,-1.38) and (1.99,-0.4) .. (0,0) .. controls (1.99,0.4) and (4.17,1.38) .. (6.56,2.94)   ;
\draw    (238.36,82.34) .. controls (243.46,63.75) and (266.39,82.76) .. (247.93,91.18) node[midway, above] {\small$B_{1}^{-}$};
\draw [shift={(237.88,84.51)}, rotate = 285.36] [color={rgb, 255:red, 0; green, 0; blue, 0 }  ][line width=0.75]    (6.56,-2.94) .. controls (4.17,-1.38) and (1.99,-0.4) .. (0,0) .. controls (1.99,0.4) and (4.17,1.38) .. (6.56,2.94)   ;

\draw (149.99,90.54) node [anchor=north west][inner sep=0.75pt]  [font=\small] [align=left] {$\displaystyle \mathbb{C}$};
\draw (188.85,133.22) node [anchor=north west][inner sep=0.75pt]  [font=\small] [align=left] {$\displaystyle \mathbb{C}$};
\draw (227.06,90.54) node [anchor=north west][inner sep=0.75pt]  [font=\small] [align=left] {$\displaystyle \mathbb{C}$};
\draw (154.53,118.51) node [anchor=north west][inner sep=0.75pt]  [font=\tiny] [align=left] {$ $};
\draw (305.57,90.54) node [anchor=north west][inner sep=0.75pt]  [font=\small] [align=left] {$\displaystyle \mathbb{C}$};
\draw (268.35,132.72) node [anchor=north west][inner sep=0.75pt]  [font=\small] [align=left] {$\displaystyle \mathbb{C}$};
\draw    (242.06,98.54) -- (300.57,98.54) node[midway, above] {\small $A_{1}$};
\draw [shift={(302.57,98.54)}, rotate = 180] [color={rgb, 255:red, 0; green, 0; blue, 0 }  ][line width=0.75]    (6.56,-2.94) .. controls (4.17,-1.38) and (1.99,-0.4) .. (0,0) .. controls (1.99,0.4) and (4.17,1.38) .. (6.56,2.94)   ;
\draw    (164.99,98.54) -- (222.06,98.54) node[midway, above] {\small $A_0$};
\draw [shift={(224.06,98.54)}, rotate = 180] [color={rgb, 255:red, 0; green, 0; blue, 0 }  ][line width=0.75]    (6.56,-2.94) .. controls (4.17,-1.38) and (1.99,-0.4) .. (0,0) .. controls (1.99,0.4) and (4.17,1.38) .. (6.56,2.94)   ;
\draw    (203.85,131.17) -- (222.73,110.09) node[midway, below right] {\small$a_0$};
\draw [shift={(224.06,108.6)}, rotate = 131.84] [color={rgb, 255:red, 0; green, 0; blue, 0 }  ][line width=0.75]    (6.56,-2.94) .. controls (4.17,-1.38) and (1.99,-0.4) .. (0,0) .. controls (1.99,0.4) and (4.17,1.38) .. (6.56,2.94)   ;
\draw    (164.99,108.43) -- (184.51,129.86) node[midway, below left] {\small$b_0$};
\draw [shift={(185.85,131.34)}, rotate = 227.68] [color={rgb, 255:red, 0; green, 0; blue, 0 }  ][line width=0.75]    (6.56,-2.94) .. controls (4.17,-1.38) and (1.99,-0.4) .. (0,0) .. controls (1.99,0.4) and (4.17,1.38) .. (6.56,2.94)   ;
\draw    (242.06,107.74) -- (263.96,130.1) node[midway, below left] {\small$b_1$};
\draw [shift={(265.35,131.53)}, rotate = 225.61] [color={rgb, 255:red, 0; green, 0; blue, 0 }  ][line width=0.75]    (6.56,-2.94) .. controls (4.17,-1.38) and (1.99,-0.4) .. (0,0) .. controls (1.99,0.4) and (4.17,1.38) .. (6.56,2.94)   ;
\draw    (283.35,130.52) -- (301.25,110.24) node[midway, below right] {\small$a_1$};
\draw [shift={(302.57,108.74)}, rotate = 131.42] [color={rgb, 255:red, 0; green, 0; blue, 0 }  ][line width=0.75]    (6.56,-2.94) .. controls (4.17,-1.38) and (1.99,-0.4) .. (0,0) .. controls (1.99,0.4) and (4.17,1.38) .. (6.56,2.94)   ;

\end{tikzpicture}
\end{center}
such that $A_0$, $A_1$ are isomorphism (i.e. conditions $(S1)$ and $(S2)$, see Remark \ref{r5.2}) and $B_i^+A_i-A_iB_i^-+a_ib_i=0$ for $i=0,1$. 

Let us decompose $\GG=\CC^\times \times \bigl(\CC^\times\times\CC^\times\bigr)=G_1 \times H$, where $G_1=\GL(1)$. Since the $\GG$-action is Hamiltonian with moment map $\mu,$ the induced action of its subgroups $G_1$ and $H$ are also Hamiltonian with moment maps, respectively:
\begin{gather*}
\mu^{G_1}:\TM \rightarrow \CC, \ (A_0,B_{0}^\pm,a_0,b_0,A_1,B_{1}^\pm,a_1,b_1) \mapsto B_0^-, \\ 
\mu^H:\TM \rightarrow \CC \oplus \CC, \ (A_0,B_{0}^\pm,a_0,b_0,A_1,B_{1}^\pm,a_1,b_1) \mapsto \bigl(-B_o^++B_1^-, -B_1^+\bigr). \end{gather*}
Since $H$ acts freely on $\TM$, then the (GIT) quotient $(\mu^H)^{-1}(0)/H$ has a symplectic structure (this just follows from the general theory on Hamiltonian reductions). Furthermore, since the action of $G_1$ and $H$ commute, the $G_1$-action induces a Hamiltonian action on the quotient, with moment map induced by $\mu^{G_1}$. Therefore, we have
\begin{equation*}
    \Bigl((\mu^H)^{-1}(0)/H\Bigr)\HR_{\lambda,\theta}G_1=\Bigl(\TM\HR_{0,0} H\Bigr)\HR_{\lambda,\theta}G_1=\TM\HR_{\lambda,\theta}\GG,
\end{equation*}
where the previous identifications follow by construction. Let us focus on $\TM\HR_{0,0} H$. We have that $\bigl(\mu^H\bigr)^{-1}(0)$ is given by
\begin{center}
    \begin{tikzcd}
\mathbb{C} \arrow["B_0", loop, distance=2em, in=55, out=125] \arrow[rr, "A_0"] \arrow[rd, "b_0"'] &                               & \mathbb{C} \arrow["B_1", loop, distance=2em, in=55, out=125] \arrow[rr, "A_1"] \arrow[rd, "b_1"'] &                               & \mathbb{C} \arrow["B_2", loop, distance=2em, in=55, out=125] \\
                                                                                                  & \mathbb{C} \arrow[ru, "a_0"'] &                                                                                                   & \mathbb{C} \arrow[ru, "a_1"'] &                                                             
\end{tikzcd}
\end{center}
such that $A_0$, $A_1$ are isomorphisms, $B_2=0$ and $A_0B_0A_0^{-1}-a_0b_0A_0^{-1}=B_1=A_1^{-1}a_1b_1$. Thus, 
\begin{center}
$\TM\HR_{0,0} H=\bigl(\mu^H\bigr)^{-1}(0)/H\cong\Bigl\{
\begin{tikzcd}
\mathbb{C} \arrow[r, "a_0"', shift right] & \mathbb{C} \arrow[l, "b_0"', shift right] \arrow[r, "b_1", shift left] \arrow["B_0"', loop, distance=2em, in=125, out=55] & \mathbb{C} \arrow[l, "a_1", shift left]
\end{tikzcd} \ \vert \ B_0 - a_0b_0 = a_1b_1\Bigr\}\cong $
\end{center}
\begin{equation*}
\Bigl\{ \CC \mathrel{\mathop{\rightleftarrows}^{\mathrm{J}}_{\mathrm{I}}} \CC^2  \ \vert \ J=\begin{pmatrix}  b_0 \\ b_1 \end{pmatrix}, \ I = \begin{pmatrix}  a_0, & a_1\end{pmatrix} \Bigr\}=\mathrm{Rep}\bigl(\overline{Q^F},1,2\bigr). 
\end{equation*}
In particular, we have an isomorphism between $\TM\HR_{0,0} H$ and $\mathrm{Rep}\bigl(\overline{Q^F},1,2\bigr)$ as algebraic varieties. It is also $G_1$-equivariant if we consider the $G_1$-action on $\mathrm{Rep}\bigl(\overline{Q^F},1,2\bigr)$ given by $(\ref{eq.2.16})$. We will show that it also respects the symplectic structures (see \S\ref{s5.3}). Furthermore, it respects the moment maps:
\begin{center}
    \small{\begin{tikzcd}
\TM\HR H \arrow[rr, "\sim"] \arrow[rd, "\hat{\mu}"'] &            & {Rep(\overline{Q^F},1,2)} \arrow[ld, "{\mu_{1,2}}"] & {\Bigl(B_0,a_0,a_1,b_1,b_2\Bigr)} \arrow[rd, maps to] \arrow[rr, maps to] &        & {\Bigl(I,J\Bigr)} \arrow[ld, maps to] \\
                                                           & \mathbb{C} &                                                     &                                                                           & B_0=IJ &                                      
\end{tikzcd}}
\end{center}
where $\hat{\mu}$ is the induced moment map for the induced $G_1$-action on $\TM\HR_{0,0} H.$ Consequently, we obtain an isomorphism of algebraic varieties between the associated bow variety $\mathcal{M}_{\lambda,\theta}\bigl(B,\Lambda,v\bigr)$ and the Nakajima quiver variety $\mathcal{M}_{\lambda,\theta}\bigl(Q,v,w\bigr)$ which is isomorphic to $\T^*\mathbb{P}^1$ (see \S\ref{E.1}).

\subsection{Bow varieties and quiver varieties}\label{s5.3}
In this subsection we will generalize the previous example to every cobalanced bow diagram.

Let us establish the set-up. Let $B=(\II,\EE)$ be a bow. We will denote the underlying quiver by $Q=(\II,\EE)$. Let $(B,\Lambda,v)$ be a cobalanced bow diagram over $B$ and $(v,w)$ the corresponding framed dimension vector for $Q$, as in \S\ref{s5.1}. Let us assume that a basis for each involved vector space has been fixed, that is $V_\zeta=\CC^{v_\sigma}$ for any segment $\zeta \subset \sigma \in\II$ and $V_\sigma=\CC^{v_\sigma}$, $W_\sigma=\CC^{w_\sigma}$ for any $\sigma \in \II$.
Then, $G_v=\prod\limits_{\sigma \in \II}\GL(v_\sigma)$ and $\GG=\Bigr(\prod\limits_{\sigma \in \II}\GL(v_{\zeta_\sigma})\Bigl) \times \Bigl(\prod\limits_{\zeta \neq \zeta_\sigma}\GL(v_\zeta)\Bigr)=G_v \times H$ where we remind the reader that $\zeta_\sigma$ denotes the first segment on a wavy line $\sigma$. 
Finally, let us fix a pair of parameters $(\lambda,\theta)\in\CC^\II\times\ZZ^\II$, which will be simultaneously considered as deformation and stability parameters for both quiver and bow varieties by injecting $\II$ into $\IS$ via first segments of intervals. 
Let us recall that the $G_v$-action on $\T^*\mathrm{Rep}(Q,v,w)$ is Hamiltonian with respect to the standard symplectic structure and the moment map is denoted by $\mu_{v,w}$ (see \S\ref{NQV}). We also know that the $\GG$-action on $\TM$ is Hamiltonian with moment map $\mu$ (see \S\ref{triangles}). Furthermore, regarding $H$ as subgroup of $\GG$, we have a Hamiltonian $H$-action on $\TM$ with natural moment map $\mu^{H}$ induced by $\mu$.

We are now ready to state the main theorem of this paper.
\begin{Theorem}\label{4.1}
Under the previous assumption, $\TM\HR_{0,0} H$ inherits a structure of symplectic algebraic variety and a Hamiltonian $G_v$-action. Moreover, there is a  $G_v$-equivariant isomorphism of symplectic affine varieties
\begin{equation*}
\TM\HR_{0,0} H \cong \T^*\mathrm{Rep}(Q,v,w)\end{equation*}
which respects the moment maps.
\end{Theorem}

\begin{proof}
Let us take a closer look at the action of $H$ on $\TM$ in the cobalanced case. First, we notice that $H$ acts freely. Indeed, let us fix a wavy line $\sigma\in\II$ with at least an x-point on it (otherwise there is only the first segment in it and $H$ is trivial). We order x-points and segments in $\sigma$ following the wavy line's orientation; namely:
\begin{center}
    \tikzset{every picture/.style={line width=0.75pt}} 

\begin{tikzpicture}[x=0.75pt,y=0.75pt,yscale=-1,xscale=1]

\draw    (100.1,120.05) .. controls (101.77,118.38) and (103.43,118.38) .. (105.1,120.05) .. controls (106.77,121.72) and (108.43,121.72) .. (110.1,120.05) .. controls (111.77,118.38) and (113.43,118.38) .. (115.1,120.05) .. controls (116.77,121.72) and (118.43,121.72) .. (120.1,120.05) .. controls (121.77,118.38) and (123.43,118.38) .. (125.1,120.05) .. controls (126.77,121.72) and (128.43,121.72) .. (130.1,120.05) .. controls (131.77,118.38) and (133.43,118.38) .. (135.1,120.05) .. controls (136.77,121.72) and (138.43,121.72) .. (140.1,120.05) .. controls (141.77,118.38) and (143.43,118.38) .. (145.1,120.05) .. controls (146.77,121.72) and (148.43,121.72) .. (150.1,120.05) -- (150.6,120.05) -- (150.6,120.05) ;
\draw [shift={(150.6,120.05)}, rotate = 45] [color={rgb, 255:red, 0; green, 0; blue, 0 }  ][line width=0.75]    (-4.47,0) -- (4.47,0)(0,4.47) -- (0,-4.47)   ;
\draw    (150.6,120.05) .. controls (152.27,118.38) and (153.93,118.38) .. (155.6,120.05) .. controls (157.27,121.72) and (158.93,121.72) .. (160.6,120.05) .. controls (162.27,118.38) and (163.93,118.38) .. (165.6,120.05) .. controls (167.27,121.72) and (168.93,121.72) .. (170.6,120.05) .. controls (172.27,118.38) and (173.93,118.38) .. (175.6,120.05) .. controls (177.27,121.72) and (178.93,121.72) .. (180.6,120.05) .. controls (182.27,118.38) and (183.93,118.38) .. (185.6,120.05) .. controls (187.27,121.72) and (188.93,121.72) .. (190.6,120.05) .. controls (192.27,118.38) and (193.93,118.38) .. (195.6,120.05) -- (200.1,120.05) -- (200.1,120.05) ;
\draw [shift={(200.1,120.05)}, rotate = 45] [color={rgb, 255:red, 0; green, 0; blue, 0 }  ][line width=0.75]    (-4.47,0) -- (4.47,0)(0,4.47) -- (0,-4.47)   ;
\draw    (200.1,120.05) .. controls (201.77,118.38) and (203.43,118.38) .. (205.1,120.05) .. controls (206.77,121.72) and (208.43,121.72) .. (210.1,120.05) .. controls (211.77,118.38) and (213.43,118.38) .. (215.1,120.05) .. controls (216.77,121.72) and (218.43,121.72) .. (220.1,120.05) .. controls (221.77,118.38) and (223.43,118.38) .. (225.1,120.05) -- (225.1,120.05) ;
\draw  [dash pattern={on 0.84pt off 2.51pt}]  (226,120.05) -- (275.2,120.05) ;
\draw    (275.2,120.05) .. controls (276.87,118.38) and (278.53,118.38) .. (280.2,120.05) .. controls (281.87,121.72) and (283.53,121.72) .. (285.2,120.05) .. controls (286.87,118.38) and (288.53,118.38) .. (290.2,120.05) .. controls (291.87,121.72) and (293.53,121.72) .. (295.2,120.05) .. controls (296.87,118.38) and (298.53,118.38) .. (300.2,120.05) .. controls (301.87,121.72) and (303.53,121.72) .. (305.2,120.05) .. controls (306.87,118.38) and (308.53,118.38) .. (310.2,120.05) .. controls (311.87,121.72) and (313.53,121.72) .. (315.2,120.05) .. controls (316.87,118.38) and (318.53,118.38) .. (320.2,120.05) .. controls (321.87,121.72) and (323.53,121.72) .. (325.2,120.05) -- (325.7,120.05) -- (325.7,120.05) ;
\draw [shift={(325.7,120.05)}, rotate = 45] [color={rgb, 255:red, 0; green, 0; blue, 0 }  ][line width=0.75]    (-4.47,0) -- (4.47,0)(0,4.47) -- (0,-4.47)   ;
\draw    (325.7,120.05) .. controls (327.37,118.38) and (329.03,118.38) .. (330.7,120.05) .. controls (332.37,121.72) and (334.03,121.72) .. (335.7,120.05) .. controls (337.37,118.38) and (339.03,118.38) .. (340.7,120.05) .. controls (342.37,121.72) and (344.03,121.72) .. (345.7,120.05) .. controls (347.37,118.38) and (349.03,118.38) .. (350.7,120.05) .. controls (352.37,121.72) and (354.03,121.72) .. (355.7,120.05) .. controls (357.37,118.38) and (359.03,118.38) .. (360.7,120.05) -- (365.2,120.05) -- (373.2,120.05) ;
\draw [shift={(375.2,120.05)}, rotate = 180] [color={rgb, 255:red, 0; green, 0; blue, 0 }  ][line width=0.75]    (8.74,-3.92) .. controls (5.56,-1.84) and (2.65,-0.53) .. (0,0) .. controls (2.65,0.53) and (5.56,1.84) .. (8.74,3.92)   ;

\draw (63.5,119) node [anchor=north west][inner sep=0.75pt]  [font=\normalsize]  {$\sigma :$};
\draw (100,95.05) node [anchor=north west][inner sep=0.75pt]    {$\zeta _{\sigma } =\zeta _{0} \ \ \ \ \ \zeta _{1} \ \ \ \ \ \ \ \ \ \ \ \ \ \ \ \ \ \ \ \ \ \ \ \zeta _{w_{\sigma } -1} \ \ \ \ \ \zeta _{w_{\sigma }}$};
\draw (146.5,129.9) node [anchor=north west][inner sep=0.75pt]    {$1\ \ \ \ \  \ \ \ \ 2 \ \ \ \ \ \ \ \ \ \ \ \ \ \ \ \ \ \ \ \ \ \ \ \ \ \ w_{\sigma }$};

\end{tikzpicture}.
\end{center}
We consider the action of $\prod\limits_{i=1}^{w_\sigma}\GL(v_{\zeta_i})$ on $\prod\limits_{x=1}^{w_\sigma}\TM^x$ given by:
\begin{equation*}
    k\cdot(A_x,B^{\pm}_x,a_x,b_x)=(k_1A_0,B_0^{-},k_1B^{+}_0k_1^{-1},k_1a_o,b_0,k_2A_1k_1^{-1},\dots),
\end{equation*}
for any $k=(k_i)_i \in \prod\limits_{i=1}^{w_\sigma}\GL(v_{\zeta_i})$ and $(A_x,B_x^{\pm},a_x,b_x)_x \in \prod\limits_{x=1}^{w_\sigma}\TM^x$. It follows from Remark \ref{r5.2} that this action is free and so also the $H$-action on $\TM$ is free.
 Therefore, the Hamiltonian reduction $\TM\HR_{0,0} H:=\bigl(\mu^H\bigr)^{-1}(0)\GIT H=\bigl(\mu^H\bigr)^{-1}(0)/H$ is non-singular and inherits a symplectic structure from $\TM$. Let us describe $\TM\HR_{0,0} H$ in terms of Hurtubise normal forms. We have:
\begin{equation*}
    \TM\cong\prod\limits_{\sigma \in \II}\prod\limits_{x \in \sigma}\Bigl(\T^*\GL(v_\sigma) \times \T^*\CC^{v_\sigma}\Bigr) \times \bigoplus\limits_{s \in \EE}\M^s.
\end{equation*}
Using the notation in \S\ref{triangles}, for an element $(u_x,h_x,I_x,J_x,C_s,D_s)_{x,s} \in \TM$ the condition of belonging to $(\mu^H)^{-1}(0)$ is equivalent to satisfying the following conditions:
\begin{center}
    \tikzset{every picture/.style={line width=0.75pt}} 

\begin{tikzpicture}[x=0.75pt,y=0.75pt,yscale=-1,xscale=1]

\draw    (381.2,77.6) .. controls (382.87,75.93) and (384.53,75.93) .. (386.2,77.6) .. controls (387.87,79.27) and (389.53,79.27) .. (391.2,77.6) .. controls (392.87,75.93) and (394.53,75.93) .. (396.2,77.6) .. controls (397.87,79.27) and (399.53,79.27) .. (401.2,77.6) .. controls (402.87,75.93) and (404.53,75.93) .. (406.2,77.6) .. controls (407.87,79.27) and (409.53,79.27) .. (411.2,77.6) .. controls (412.87,75.93) and (414.53,75.93) .. (416.2,77.6) .. controls (417.87,79.27) and (419.53,79.27) .. (421.2,77.6) .. controls (422.87,75.93) and (424.53,75.93) .. (426.2,77.6) .. controls (427.87,79.27) and (429.53,79.27) .. (431.2,77.6) -- (431.2,77.6) .. controls (432.88,75.95) and (434.55,75.97) .. (436.2,77.65) .. controls (437.85,79.34) and (439.51,79.36) .. (441.2,77.71) .. controls (442.88,76.06) and (444.55,76.08) .. (446.2,77.76) .. controls (447.85,79.44) and (449.52,79.46) .. (451.2,77.81) .. controls (452.88,76.16) and (454.55,76.18) .. (456.2,77.86) .. controls (457.85,79.54) and (459.52,79.56) .. (461.2,77.91) .. controls (462.88,76.26) and (464.55,76.28) .. (466.2,77.96) -- (469.7,78) -- (477.7,78.08) ;
\draw [shift={(479.7,78.1)}, rotate = 180.59] [color={rgb, 255:red, 0; green, 0; blue, 0 }  ][line width=0.75]    (8.74,-3.92) .. controls (5.56,-1.84) and (2.65,-0.53) .. (0,0) .. controls (2.65,0.53) and (5.56,1.84) .. (8.74,3.92)   ;
\draw [shift={(406.2,77.6)}, rotate = 45] [color={rgb, 255:red, 0; green, 0; blue, 0 }  ][line width=0.75]    (-4.47,0) -- (4.47,0)(0,4.47) -- (0,-4.47)   ;
\draw [shift={(455.45,77.85)}, rotate = 45.59] [color={rgb, 255:red, 0; green, 0; blue, 0 }  ][line width=0.75]    (-4.47,0) -- (4.47,0)(0,4.47) -- (0,-4.47)   ;
\draw    (381.2,122.74) .. controls (382.87,121.07) and (384.53,121.07) .. (386.2,122.74) .. controls (387.87,124.41) and (389.53,124.41) .. (391.2,122.74) .. controls (392.87,121.07) and (394.53,121.07) .. (396.2,122.74) .. controls (397.87,124.41) and (399.53,124.41) .. (401.2,122.74) .. controls (402.87,121.07) and (404.53,121.07) .. (406.2,122.74) .. controls (407.87,124.41) and (409.53,124.41) .. (411.2,122.74) .. controls (412.87,121.07) and (414.53,121.07) .. (416.2,122.74) .. controls (417.87,124.41) and (419.53,124.41) .. (421.2,122.74) .. controls (422.87,121.07) and (424.53,121.07) .. (426.2,122.74) .. controls (427.87,124.41) and (429.53,124.41) .. (431.2,122.74) .. controls (432.87,121.07) and (434.53,121.07) .. (436.2,122.74) -- (440.2,122.74) -- (448.2,122.74) ;
\draw [shift={(451.2,122.74)}, rotate = 180] [fill={rgb, 255:red, 0; green, 0; blue, 0 }  ][line width=0.08]  [draw opacity=0] (7.14,-3.43) -- (0,0) -- (7.14,3.43) -- (4.74,0) -- cycle    ;
\draw [shift={(416.2,122.74)}, rotate = 45] [color={rgb, 255:red, 0; green, 0; blue, 0 }  ][line width=0.75]    (-3.35,0) -- (3.35,0)(0,3.35) -- (0,-3.35)   ;
\draw    (450.87,122.36) -- (476.82,139.26) ;
\draw [shift={(479.33,140.89)}, rotate = 213.07] [fill={rgb, 255:red, 0; green, 0; blue, 0 }  ][line width=0.08]  [draw opacity=0] (7.14,-3.43) -- (0,0) -- (7.14,3.43) -- (4.74,0) -- cycle    ;
\draw    (450.87,122.36) -- (475.56,104.64) ;
\draw [shift={(478,102.89)}, rotate = 144.34] [fill={rgb, 255:red, 0; green, 0; blue, 0 }  ][line width=0.08]  [draw opacity=0] (7.14,-3.43) -- (0,0) -- (7.14,3.43) -- (4.74,0) -- cycle    ;

\draw (76,65.6) node [anchor=north west][inner sep=0.75pt]  [font=\normalsize] [align=left] {$\displaystyle -( h_{x} -I_{x} J_{x}) +u_{x^{'}}^{-1} h_{x^{'}} u_{x^{'}} =0$};
\draw (335.33,69.1) node [anchor=north west][inner sep=0.75pt]   [align=left] {{if}};
\draw (399.89,50.08) node [anchor=north west][inner sep=0.75pt]  [font=\small] [align=left] {$\displaystyle x\ \ \ \ \zeta \ \ \ \ x'$};
\draw (336.68,114.24) node [anchor=north west][inner sep=0.75pt]   [align=left] {{if}};
\draw (56.38,103.24) node [anchor=north west][inner sep=0.75pt]  [font=\small] [align=left] {$\displaystyle -\sum _{s:t( s) =\zeta } D_{s} C_{s} \ -( h_{x} -I_{x} J_{x}) =0\ $};
\draw (473.1,106) node [anchor=north west][inner sep=0.75pt]  [font=\scriptsize] [align=left] {$\displaystyle \vdots $};
\draw (453.33,99) node [anchor=north west][inner sep=0.75pt]  [font=\small] [align=left] {$\displaystyle s$};
\draw (411,100.74) node [anchor=north west][inner sep=0.75pt]  [font=\small] [align=left] {$\displaystyle x\ \ \zeta $};

\end{tikzpicture}.
\end{center}
It follows that we have an isomorphism of affine $\GG$-varieties:
\begin{equation*}
    (\mu^H)^{-1}(0) \cong \prod\limits_{\sigma \in \II}\prod\limits_{x \in \sigma}\Bigl(\GL(v_\sigma) \times \T^*\CC^{v_\sigma}\Bigr) \times \bigoplus\limits_{s \in \EE}\M^s.
\end{equation*}
Let us notice that for any element $m \in(\mu^H)^{-1}(0)$, there exists a unique element in its orbit of the form $(u_x,h_x,I_x,J_x,C_s,D_s)_{x\in\Lambda,s\in\EE}$ such that $u_x$ is the identity map for any $x \in \Lambda$.
This induces an isomorphism of affine $\GG$-varieties
\begin{equation}\label{5.4}
    \TM\HR_{0,0} H \cong \bigoplus\limits_{\sigma \in \II}\bigoplus\limits_{x \in \sigma} \T^*\CC^{v_\sigma} \oplus \bigoplus\limits_{s \in \EE}\M^s.
\end{equation}
Furthermore, by equipping the variety on the right-hand side with the standard symplectic structure, the previous isomorphism respects the symplectic structures.
We will denote elements of $\bigoplus\limits_{\sigma \in \II}\bigoplus\limits_{x \in \sigma} \T^*\CC^{v_\sigma} \oplus \bigoplus\limits_{s \in \EE}\M^s$ by $\bigl(\Tilde{I}_x, \Tilde{J}_x, C_s,D_s\bigr)_{\substack{x\in\Lambda \\ s \in \EE}}$.
 Finally, regarding $G_v$ as subgroup of $\GG$ (via the first segments), we have a Hamiltonian $G_v$-action on $\TM$ with a natural moment map $\mu^{G_v}$. 
 Since the actions of $G_v$ and $H$ on $\TM$ commute, we have a Hamiltonian action of $G_v$ on the Hamiltonian reduction $\TM\HR_{0,0} H$ with an induced moment map denoted by $\hat{\mu}$. 
 Given an x-point $x$, let us denote by $\sigma(x)$ the wavy line containing $x$. Then, the isomorphism $(\ref{5.4})$ is $G_v$-equivariant with respect to the $G_v$-action on $\bigoplus\limits_{x \in \Lambda} \T^*\CC^{v_{\sigma(x)} } \oplus \bigoplus\limits_{s \in \EE}\M^s$ given by changing bases. 
 Let us notice that the $G_v$-action on $\bigoplus\limits_{x \in \Lambda} \T^*\CC^{v_{\sigma(x)} } \oplus \bigoplus\limits_{s \in \EE}\M^s$ is a Hamiltonian action with a standard moment map because it is the lifting of the $G_v$-action on $\bigoplus\limits_{x \in \Lambda} \CC^{v_{\sigma(x)} } \oplus \bigoplus\limits_{s \in \EE}\Hom(V_{t(s)},V_{h(s)})$ given by changing bases. Furthermore, isomorphism $(\ref{5.4})$ respects these moment maps, so we obtain the following description of $\hat{\mu}$:
 \begin{gather}
     \hat{\mu}: \bigoplus\limits_{x \in \Lambda} \T^*\CC^{v_{\sigma(x)} } \oplus \bigoplus\limits_{s \in \EE}\M^s \rightarrow \mathfrak{g}_v, \\
 \notag    \bigl(\Tilde{I}_x,\Tilde{J}_x,C_s,D_s\bigr) \mapsto \ \Tilde{I}\Tilde{J}+[C,D]
 \end{gather}
where $\Tilde{I}\Tilde{J}=\sum\limits_{x\in\Lambda}\Tilde{I}_x\Tilde{J}_x$.

Let us consider isomorphism $(\ref{5.4})$:
\begin{equation*}
    \TM\HR_{0,0} H= \bigoplus\limits_{\sigma \in \II}\bigoplus\limits_{x \in \sigma}\Bigl(\CC^{v_\sigma} \oplus \bigl(\CC^{v_\sigma}\bigr)^*\Bigr) \oplus \bigoplus\limits_{s \in \EE}\T^*\Hom(V_{o(s)},V_{i(s)}).
\end{equation*}
Then, for each wavy line, let us order its x-points by following the wavy line orientation. We define
\begin{gather*}
   \Phi: \TM\HR_{0,0} H \rightarrow \T^*\mathrm{Rep}(Q,v,w),\\
     \bigl(\Tilde{I}_x, \Tilde{J}_x, C_s,D_s\bigr)_{\substack{x \in \Lambda\\s \in \EE}} \mapsto \bigl(x_s,y_s,J_\sigma,I_\sigma\bigr)_{\substack{\sigma \in I\\ s \in E}}
\end{gather*}
where $x_s=C_s$, $y_s=D_s$ and the ordering of x-points on a wavy line $\sigma$ gives \begin{gather*}I_\sigma=\bigoplus\limits_{x \in \sigma}\Tilde{I}_x : \CC^{w_\sigma} \rightarrow \CC^{v_\sigma}\\J_\sigma=\bigoplus\limits_{x \in \sigma}\Tilde{J}_x : \CC^{v_\sigma} \rightarrow \CC^{w_\sigma}.\end{gather*} Hence, the first part of the statement follows. For the second part, it is sufficient to recall what are the moment maps here. Indeed, in terms of Hurtubise normal forms, we have:
\begin{gather*}
    \mu_{v,w}: \T^*\mathrm{Rep}(Q,v,w) \rightarrow \mathfrak{g}_v, \ \bigl(I,J,x,y) \mapsto IJ+[x,y],\\
    \hat{\mu}:\TM\HR_{0,0} H \rightarrow \mathfrak{g}_v, \ \bigl(\Tilde{I},\Tilde{J},C,D\bigr) \mapsto \sum\limits_x\Tilde{I}_x\Tilde{J}_x + [C,D].\end{gather*}\end{proof}

\begin{Remark}
The proof we made make clear that symplectic structures are preserved under the isomorphism $\Phi:\TM\HR_{0,0} H \rightarrow \T^*\mathrm{Rep}(Q,v,w)$. However, we could define it without using Hurtubise normal forms but only the quiver description as follows. We start defining a map $\phi : \bigl(\mu^H\bigr)^{-1}(0) \rightarrow \T^*\mathrm{Rep}(Q,v,w)$. Let us recall that if $(A,B^\pm,a,b,C,D) \in \bigl(\mu^H\bigr)^{-1}(0)$ then $B$ maps are encoded in $(A,a,b,C,D)$ and maps $A$ are isomorphisms (as for maps $u,h$ in the Hurtubise normal form, see proof of Theorem \ref{4.1}). Because of the fact that maps $A$ are isomorphisms, maps $a,b,C$ and $D$ are equivalent to linear maps defined without using vector spaces placed on non-first segments. This way to rewrite points in $\bigl(\mu^H\bigr)^{-1}(0)$ gives a natural map to $\T^*\mathrm{Rep}(Q,v,w)$. An example of this can be found in Example \ref{P1}.
\end{Remark}
It follows from Theorem \ref{4.1} that bow varieties with cobalanced dimension vector are Nakajima quiver varieties. Conversely, any Nakajima quiver variety can be described as a bow variety with a cobalanced bow diagram. 
\begin{Corollary}\label{maincor} Under the previous assumption, we have an isomorphism of Poisson algebraic varieties:
    \begin{equation*}
    \mathcal{M}_{\lambda,\theta}\bigl(B,\Lambda,v\bigr) = \mathcal{M}_{\lambda,\theta}\bigl(Q,v,w).
\end{equation*}
\end{Corollary}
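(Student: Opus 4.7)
The plan is to deduce the corollary directly from Theorem~\ref{4.1} by reducing in stages. Since the bow diagram is cobalanced, we have the decomposition $\GG = G_v \times H$, where $G_v = \prod_{\sigma \in \II} \GL(v_\sigma)$ is the factor supported on the first segments and $H = \prod_{\zeta \neq \zeta_\sigma} \GL(v_\zeta)$ is the residual group. The pair of parameters $(\lambda,\theta) \in \CC^\II \times \ZZ^\II$ is, by the conventions of Remark~\ref{rem3.22}, supported on first segments, hence corresponds to a pair of parameters for $G_v$ while restricting trivially to $H$; in particular the $H$-reduction takes place at the level $\mu^H{}^{-1}(0)$ with trivial character.

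The first step is to verify that reduction in stages applies. The $G_v$- and $H$-actions on $\TM$ commute and their moment maps $\mu^{G_v}$ and $\mu^H$ are the projections of $\mu$ onto $\mathfrak{g}_v$ and $\mathfrak{h}$ respectively. As shown in the proof of Theorem~\ref{4.1}, the action of $H$ on $(\mu^H)^{-1}(0)$ is free, so the Hamiltonian reduction $\TM \HR H = (\mu^H)^{-1}(0)/H$ is a smooth affine symplectic variety carrying an induced Hamiltonian $G_v$-action with moment map $\hat\mu$ inherited from $\mu^{G_v}$. Standard reduction in stages then gives
\begin{equation*}
  \mathcal{M}_{\lambda,\theta}(B,\Lambda,v) \;=\; \mu^{-1}(\lambda) \GIT_\theta \GG \;\cong\; \bigl(\TM \HR H\bigr) \HR_{\lambda,\theta} G_v \;=\; \hat\mu^{-1}(\lambda) \GIT_\theta G_v,
\end{equation*}
as Poisson algebraic varieties.

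The second step is to transport this through the isomorphism of Theorem~\ref{4.1}. That theorem provides a $G_v$-equivariant isomorphism of affine symplectic varieties $\Phi : \TM \HR H \xrightarrow{\sim} T^*\Rep(Q,v,w)$ which intertwines the moment maps $\hat\mu$ and $\mu_{v,w}$. Consequently $\Phi$ restricts to an isomorphism $\hat\mu^{-1}(\lambda) \cong \mu_{v,w}^{-1}(\lambda)$ of $G_v$-varieties, and because it is equivariant and compatible with the character $\chi_\theta$ on $G_v$ (which acts in the same way on both sides), it induces an isomorphism of GIT quotients
\begin{equation*}
  \hat\mu^{-1}(\lambda) \GIT_\theta G_v \;\cong\; \mu_{v,w}^{-1}(\lambda) \GIT_\theta G_v \;=\; \mathcal{M}_{\lambda,\theta}(Q,v,w).
\end{equation*}
Combining the two displays gives the required isomorphism. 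The Poisson structure is preserved throughout: symplectic reductions in stages agree with the one-step reduction as Poisson varieties, and $\Phi$ is symplectic, so the Poisson brackets on the two quotients correspond.

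No serious obstacle is expected at this stage since the heavy lifting has already been done in Theorem~\ref{4.1}; the only point requiring care is checking that the parameter identifications are consistent, namely that treating $(\lambda,\theta) \in \CC^\II \times \ZZ^\II$ as parameters for the $G_v$-action on $\TM \HR H$ matches the usual parameters for $\mathcal{M}_{\lambda,\theta}(Q,v,w)$ under $\Phi$. This is immediate from the construction of $\Phi$, which identifies the $G_v$-action on $\TM \HR H$ with the standard action on $T^*\Rep(Q,v,w)$ and identifies $\hat\mu$ with $\mu_{v,w}$.
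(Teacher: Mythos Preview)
Your proposal is correct and follows exactly the approach the paper intends: the paper treats this corollary as an immediate consequence of Theorem~\ref{4.1} via the reduction-in-stages identity $\mathcal{M}_{\lambda,\theta}(B,\Lambda,v)=(\TM\HR_0 H)\HR_{\lambda,\theta}G_v$ spelled out in the introduction and illustrated in \S\ref{P1}. You have simply written out the details that the paper leaves implicit, including the check that the parameter conventions match; nothing further is needed.
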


\appendix
\section{Cherkis' construction}\label{Appendix A}
In this appendix we will review the construction of bow varieties as finite dimensional hyperk\"ahler reductions by Cherkis. In this way, the quiver description above will be motivated.

Let us notice that, originally, Cherkis introduced bow varieties as infinite-dimensional hyperk\"ahler reductions, i.e. hyperk\"ahler reductions of infinite dimensional affine spaces by an infinite dimensional gauge group, see \cite[\S2]{cherkis2011instantons}. In the same article, Cherkis introduced an alternative description of bow varieties as finite dimensional hyperk\"ahler reductions, see \cite[\S8.2]{cherkis2011instantons} and \cite[\S3.3]{NT17}. 

\subsection{Hyperk\"ahler quotients}
In this subsection we introduce some notations and recall the definition of hyperk\"ahler manifolds and hyperk\"ahler quotients by an action of a compact real Lie group.
\begin{Definition}
A \textit{hyperk\"ahler manifold} is a Riemannian manifold $(M,g)$ with three K\"ahler structures $I,J,K$ satisfying the quaternionic relation $IJK=-1$.
\end{Definition}
Given $(M,g,I,J,K)$ a hyperk\"ahler manifold, we denote by $\omega_I,\omega_J,\omega_K$ the induced K\"ahler forms with respect to $I,J,K$ respectively.
\begin{Definition}
    Let $U$ be a compact real Lie group and $\lieu$ the corresponding Lie algebra. A \textit{tri-hamiltonian} action of $U$ on a hyperk\"ahler manifold $(M,g,I,J,K)$ is a smooth action which preserves the hyperk\"ahler structure and is Hamiltonian with respect to all the three K\"ahler forms. If $\mu_I,\mu_J,\mu_K$ denote the moment maps with respect to the three complex structures, we define the \textit{hyperk\"ahler moment map} by:
\begin{equation*}\mu=(\mu_I,\mu_J,\mu_K): M \rightarrow \lieu^*\otimes\RR^3.\end{equation*}
In this case, given $\nu=(\nu_1,\nu_2,\nu_3) \in \lieu^*\otimes\RR^3$ $U$-invariant,  we define the \textit{hyperk\"ahler reduction} of $M$ by $G$ with parameter $\nu$ by:
$$M\HR_\nu U \coloneqq    \mu^{-1}(\nu)/U.$$
\end{Definition}
\begin{Theorem}(Hitchin-Karlhede-Lindstr\"om-Ro\v{c}ek, \cite{hitchin1987hyperkahler})\label{HKLR}
    Under the previous assumptions, if $U$ acts freely, then $M\HR_\nu U$ is smooth and inherits a structure of a hyperk\"ahler manifold.
\end{Theorem}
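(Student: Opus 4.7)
The plan is to carry out the standard three-step construction: produce the level set as a smooth submanifold, pass to a smooth quotient, and descend the hyperk\"ahler data.

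First I would show that $\nu$ is a regular value of $\mu=(\mu_I,\mu_J,\mu_K)$. The moment map identity $d\langle\mu_I,\xi\rangle_x(v)=\omega_I(\xi^{*}_x,v)$ (and its $J,K$ analogues) implies that the image of $d\mu_x$ annihilates a vector $\xi\in\lieu$ precisely when $\xi^{*}_x$ is $\omega_I$, $\omega_J$ and $\omega_K$-orthogonal to all of $T_xM$, i.e.\ when $\xi^{*}_x=0$. Freeness of the $U$-action forces $\xi=0$, so $d\mu_x$ is surjective on $\mu^{-1}(\nu)$, hence $\mu^{-1}(\nu)\subset M$ is a smooth $U$-invariant submanifold. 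Since $U$ acts freely (and, being a compact real Lie group, properly), the orbit space $M\HR_\nu U=\mu^{-1}(\nu)/U$ is a smooth manifold and $\mu^{-1}(\nu)\to M\HR_\nu U$ is a principal $U$-bundle.

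Next I would identify the tangent spaces of the quotient with a concrete subspace of $T_xM$ on which the hyperk\"ahler tensors already live. At $x\in\mu^{-1}(\nu)$, write $\mathcal{O}_x\coloneqq T_x(U\cdot x)$ for the tangent space to the orbit. Using $\omega_I(u,v)=g(Iu,v)$ and the moment map identity, one sees
\begin{equation*}
T_x\mu^{-1}(\nu)=\bigl(I\mathcal{O}_x+J\mathcal{O}_x+K\mathcal{O}_x\bigr)^{\perp_g}.
\end{equation*}
Define the horizontal space $H_x\subset T_x\mu^{-1}(\nu)$ as the $g$-orthogonal complement of $\mathcal{O}_x$ in $T_x\mu^{-1}(\nu)$, i.e.
\begin{equation*}
H_x=\bigl(\mathcal{O}_x+I\mathcal{O}_x+J\mathcal{O}_x+K\mathcal{O}_x\bigr)^{\perp_g}.
\end{equation*}
Because $I,J,K$ are $g$-orthogonal and anticommuting, this description is manifestly invariant under each of $I,J,K$. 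The projection $\mu^{-1}(\nu)\to M\HR_\nu U$ induces a canonical isomorphism $H_x\cong T_{[x]}(M\HR_\nu U)$, so the restrictions of $g,I,J,K$ to $H_x$ define a Riemannian metric $\bar g$ and three almost complex structures $\bar I,\bar J,\bar K$ on the quotient; $U$-invariance of all four tensors makes these descents well-defined.

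The main remaining task, and the subtlest point, is integrability of $\bar I,\bar J,\bar K$. The cleanest route is to invoke K\"ahler reduction in parallel for each complex structure: the restricted forms $\omega_I|_{\mu^{-1}(\nu)}$, $\omega_J|_{\mu^{-1}(\nu)}$, $\omega_K|_{\mu^{-1}(\nu)}$ are $U$-basic closed $2$-forms (closedness is inherited from $M$; the orbit directions lie in the kernel by the moment map identity together with $\mu=\nu$), so they descend to closed $2$-forms $\bar\omega_I,\bar\omega_J,\bar\omega_K$ on $M\HR_\nu U$ compatible with $\bar g$ and $\bar I,\bar J,\bar K$. The relation $\bar\omega_L(\,\cdot\,,\,\cdot\,)=\bar g(\bar L\,\cdot\,,\,\cdot\,)$ for $L=I,J,K$ is preserved under the horizontal identification. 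The quaternionic relation $\bar I\bar J\bar K=-1$ is inherited from $IJK=-1$ because $H_x$ is $I,J,K$-stable. Integrability of each $\bar L$ then follows from the Newlander--Nirenberg criterion applied on the quotient: $d\bar\omega_L=0$ together with $\bar g$-compatibility forces the Nijenhuis tensor of $\bar L$ to vanish (equivalently, one checks that in any local horizontal frame the Nijenhuis tensor of $L$ on $M$ projects to zero, using that $H$ is $L$-stable and that brackets of horizontal $L$-invariant vector fields remain horizontal modulo $\mathcal{O}$). Consequently $(M\HR_\nu U,\bar g,\bar I,\bar J,\bar K)$ is a hyperk\"ahler manifold, which is the assertion.
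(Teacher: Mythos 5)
The paper does not prove this statement: it is quoted verbatim from Hitchin--Karlhede--Lindstr\"om--Ro\v{c}ek with a citation and no argument, so there is no in-paper proof to compare against. Your skeleton (regular value, free and proper quotient, horizontal spaces, descent of $g,I,J,K$ and of the three $2$-forms) is the standard one and most of it is fine; in particular the observation that $H_x=\bigl(\mathcal{O}_x+I\mathcal{O}_x+J\mathcal{O}_x+K\mathcal{O}_x\bigr)^{\perp_g}$ is $I,J,K$-stable is exactly the right mechanism for descending the quaternionic structure.

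There is, however, a genuine gap at the integrability step, which is the actual content of the theorem. You assert that for a single almost complex structure $\bar L$, the conditions $d\bar\omega_L=0$ and $\bar g$-compatibility "force the Nijenhuis tensor of $\bar L$ to vanish." That is false: a compatible almost complex structure with closed fundamental form is only \emph{almost} K\"ahler, and there are compact almost K\"ahler manifolds that are not K\"ahler (e.g.\ the Kodaira--Thurston nilmanifold). Newlander--Nirenberg gives you nothing from a single closed form, and the parenthetical alternative (projecting the Nijenhuis tensor of $L$ along horizontal frames) does not work either, since the bracket of horizontal lifts acquires vertical components that the Nijenhuis tensor of $\bar L$ does not see. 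The correct argument --- and the key lemma of \cite{hitchin1987hyperkahler} --- is that integrability comes from having \emph{all three} forms closed simultaneously: if $I,J,K$ are $g$-compatible, anticommuting, satisfy $IJK=-1$, and $\omega_I,\omega_J,\omega_K$ are all closed, then $I,J,K$ are parallel for the Levi-Civita connection of $g$ and hence integrable (one way to see the integrability of $I$: $\omega_J+i\omega_K$ is a closed nondegenerate form of type $(2,0)$ with respect to $I$, which forces the $(0,1)$-distribution of $I$ to be involutive). Your proof must invoke or reprove this lemma; as written the conclusion does not follow. Separately, a smaller imprecision: to show $\nu$ is a regular value you argue componentwise, but surjectivity of each $d\mu_L$ does not imply surjectivity of the joint map $d\mu_x\colon T_xM\to\lieu^*\otimes\RR^3$. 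The annihilator of the image is a triple $(\xi_1,\xi_2,\xi_3)$ with $I(\xi_1)^*_x+J(\xi_2)^*_x+K(\xi_3)^*_x=0$; one kills it by first noting that on $\mu^{-1}(\nu)$ one has $\mathcal{O}_x\perp_g I\mathcal{O}_x,J\mathcal{O}_x,K\mathcal{O}_x$, then applying $I$ (resp.\ $J$, $K$) to the relation to conclude $(\xi_i)^*_x\in\mathcal{O}_x\cap\mathcal{O}_x^{\perp_g}=0$ and finally using freeness. This is routine to repair, unlike the integrability point.
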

\begin{Remark}
    If the group action is not free, generally the quotient is not even a manifold. However, Dancer and Swann have proved in \cite{DS97-geometryofsingular} that it has a stratification into hyperk\"ahler manifolds. For more details, see also \cite{May22}.
\end{Remark}
Let us also recall that $\omega_\CC\coloneqq    \omega_J+i\omega_K$ is a holomorphic symplectic structure on the complex manifold $(M,I)$. In the previous set-up, the map $\mu_\CC\coloneqq    \mu_J+i\mu_K$ is holomorphic with respect to $I$.
\subsection{Fundamental moduli spaces of solutions to Nahm equations}
We will define in this subsection moduli spaces of solutions to Nahm equations over a compact interval having poles on the boundary points. We will mainly follow \cite{Bie97}, \cite{bielawski1998asymptotic} and \cite[\S1]{Tak16}. See also \cite{bielawski2007lie} as well as the expository work \cite{May20}.

Let $I=[0,c]\subset\RR$ be an interval parameterized by $0\leq s\leq c$. Given $m,n\in\NN$ such that $m\leq n$, we define the \textit{Nahm space} $\NA_n(m;c)$ as the set of analytic maps
\begin{equation}
    (T_0,T_1,T_2,T_3):(0,c] \rightarrow \lieu(n)\otimes\HH
\end{equation}
such that $T_0$ is analytic at $s=0$ and for $i=1,2,3$, the map $T_i$ has the expansion near $s=0$:
\begin{equation}\label{A.2}
    T_i(s)=\setcounter{MaxMatrixCols}{10}
\begin{NiceMatrix}[columns-width=10mm]
\NotEmpty \Block{1-3}{} & & &  \NotEmpty\Block{1-3}{}& & & \\
   \NotEmpty \Block{3-3}{T_i^-(s)+O(s)} & \NotEmpty & \NotEmpty & \NotEmpty\Block{3-3}{O(s^{\frac{n-m-1}{2}})} & \NotEmpty & \NotEmpty & \NotEmpty \Block{3-1}{\quad m}\\
   \\
   \\
   \NotEmpty \Block{3-3}{O(s^{\frac{n-m-1}{2}})} & \NotEmpty & \NotEmpty & \NotEmpty \Block{3-3}{\frac{1}{2}\frac{\rho_i}{s}+O(s^0)} & \NotEmpty & \NotEmpty & \NotEmpty\Block{3-1}{\quad n-m}\\
   \\
   \\
    \CodeAfter
    \OverBrace[shorten,yshift=0.7mm]{2-1}{2-3}{m}
  \OverBrace[shorten,yshift=0.7mm]{2-4}{2-6}{n-m}
  \SubMatrix[{2-1}{7-6}]
    \SubMatrix{.}{2-1}{4-6}{\}}[xshift=5mm]
    \SubMatrix{.}{5-1}{7-6}{\}}[xshift=5mm]
    \SubMatrix{.}{2-1}{7-3}{\vert}[xshift=5mm]
    \tikz\draw[shorten > = 1.5em, shorten < = 1.5em](5-|1) -- (5-|7);
\end{NiceMatrix}

\end{equation}
i.e., the $(n-m)\times(n-m)$ lower-diagonal blocks have simple poles, the $m\times m$ upper-diagonal block is analytic in $0$ and the off diagonal blocks are of the form $s^{\frac{n-m-1}{2}}\times(\text{analytic in }s)$. Let us notice that $(\rho_1,\rho_2,\rho_3)$ defines the standard $(n-m)$-dimensional irreducible representation of $\mathfrak{su}(2)$. For more details about boundary conditions, see \cite[\S5.1]{May20}. Since the pole is fixed at a boundary point, we can endow $\NA_n(m;c)$  with a $\operatorname{L}^2$-metric and in particular with a structure of infinite dimensional hyperk\"ahler manifold, as in \cite{bielawski1998asymptotic}. We define $\GG^\RR$ as the group given by smooth maps $g:I \rightarrow \LU(n)$ such that
\begin{equation}
    g(0)\in
    \begin{bmatrix}
        \LU(m) & 0 \\
        0 & \operatorname{id}
    \end{bmatrix}.
\end{equation}
$\GG^\RR$ acts on $\NA_n(m;c)$ by:
\begin{equation}
    g: (T_0,T_1,T_2,T_3) \mapsto (gT_0g^{-1}-\frac{dg}{ds}g^{-1},gT_1g^{-1},gT_2g^{-1},gT_3g^{-1}),
\end{equation}
where $g\in\GG^\RR$ and $(T_0,T_1,T_2,T_3) \in \NA_n(m;c)$. Then, we define the following normal subgroups of $\GG^\RR$:
\begin{equation}\begin{gathered}
\GG^\RR_{00} = \{g\in\GG^\RR \ \vert\ g(0)=g(c)=\operatorname{id}\}, \\
\end{gathered}\end{equation}
The $\GG^\RR_{00}$-action on $\NA_n(m;c)$ is tri-hamiltonian and the hyperk\"ahler moment map equations with parameter $0$ are given by the \textit{Nahm's equations}:
\begin{equation}
\begin{gathered}
    \mu_1=\frac{dT_1}{ds}+[T_0,T_1]+[T_2,T_3]=0, \\
    \mu_2=\frac{dT_2}{ds}+[T_0,T_2]+[T_3,T_1]=0, \\
    \mu_3=\frac{dT_3}{ds}+[T_0,T_3]+[T_1,T_2]=0.
\end{gathered}    
\end{equation}
So, we define the moduli spaces of solutions to Nahm's equations with at most a pole at the left endpoint, by:
\begin{equation}
    F_n(m;c) = \NA_n(m;c)\HR_{0} \GG^\RR_{00}.
\end{equation}
\begin{Remark}
    When $m=n$ we recover the moduli space of solutions to Nahm's equations with regular limits on the boundary points. These moduli spaces were originally studied by Kronheimer \cite{kronheimer2004hyperkahler}.
\end{Remark}
$F_n(m;c)$ inherits a structure of finite dimensional hyperk\"ahler manifold from $\NA_n(m;c)$. Explicitly, the tangent space at a solution $T$ is given by the solutions $(t_0,t_1,t_2,t_3)$ of the following system:
\begin{equation}
    \begin{gathered}
        \Dot{t}_0+[T_0,t_0]+[T_1,t_1]+[T_2,t_2]+[T_3,t_3]=0,\\
        \Dot{t}_1+[T_0,t_1]-[T_1,t_0]+[T_2,t_3]-[T_3,t_2]=0,\\
        \Dot{t}_2+[T_0,t_2]-[T_1,t_3]-[T_2,t_0]+[T_3,t_1]=0,\\
        \Dot{t}_3+[T_0,t_3]+[T_1,t_2]-[T_2,t_1]-[T_3,t_0]=0.
    \end{gathered}
\end{equation}
Let us define the norm on $\lieu(n)$ by $\lVert U \rVert^2=tr(UU^*)$.  Then, $F_n(m;c)$ can be endowed with the following hyperk\"ahler metric
\begin{equation}
    g\Bigl((t_0,t_1,t_2,t_3),(t_0',t_1',t_2',t_3')\Bigr)=\int_0^ctr(t_0t_0^*+t_1t_1'^*+t_2t_2'^*+t_3t_3'^*)ds,
\end{equation}
and the complex structures are given by $t_0+It_1+Jt_2+Kt_3$. Moreover, there is a natural residual action of $\GG^\RR/\GG^\RR_{00} \cong \LU(m)\times \LU(n)$ on $F_n(m;c)$. This action is tri-hamiltonian with hyperk\"ahler moment map given by:
\begin{equation}
\begin{gathered}\mu: F_n(m;c) \rightarrow (\lieu(m)\oplus\lieu(n))^3,\\
[T_0,T_1,T_2,T_3] \mapsto \begin{pmatrix}
    \pi T_1(0), & \pi T_2(0), & \pi T_3(0) \\
    -T_1(c),& -T_2(c), & -T_3(c)
\end{pmatrix},
\end{gathered}\end{equation}
where $\pi:u(n) \rightarrow u(m)$ is the projection onto $m\times m$ upper-diagonal blocks, see \cite[\S5.4]{May20}.
Next, we want to describe the complex-symplectic structure of these moduli spaces. Let us fix the complex structure $I$. Then $(F_n(m;c),I)$ is a holomorphic manifold with complex-symplectic form $\omega_\CC=\omega_J+i\omega_K$. Let us fix complex coordinates on $\NA_n(m;c)$:
\begin{equation}
    \alpha=T_0+iT_1, \ \ \ \beta=T_2+iT_3.
\end{equation}
Then, Nahm equations can be rewritten as:
    \begin{align*}
        \mu_\CC=\frac{d\beta}{ds}+[\alpha,\beta]=0 & & \textit{Complex Nahm equation}, \\
        \mu_\RR=\frac{d}{ds}(\alpha+\alpha^*)+[\alpha,\alpha^*]+[\beta,\beta^*]=0 & & \textit{Real Nahm equation}.
    \end{align*}
Finally, let us denote by $\GG^\CC$ and $\GG^\CC_{00}$ the complexifications of $\GG^\RR$ and $\GG^\RR_{00}$, respectively. We have a $\GG^\CC$-action given by:
\begin{equation}
g:(\alpha,\beta) \mapsto (Ad_g\alpha-\Dot{g}g^{-1},Ad_g\beta).    
\end{equation}
We notice that only the complex Nahm equation is preserved by the $\GG^\CC_{00}$-action. However, the following theorem allows us to see $(F_n(m;c),I)$ as a complex-symplectic quotient.
\begin{Theorem}[\cite{Don84}]
Every $\GG^\CC_{00}$ orbits in $\mu^{-1}_\CC(0)$ meets $\mu_\RR^{-1}(0)$ in exactly one $\GG^\RR_{00}$-orbit. In particular, we have an isomorphism between $F_n(m;c)$ and $\mu_\CC^{-1}(0)/\GG^\CC_{00}$.
\end{Theorem}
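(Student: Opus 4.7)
The plan is to adapt Donaldson's original variational argument for Nahm flows on an interval to our pole-boundary setting, following the Kempf--Ness style strategy that relates the real moment-map equation to minimization along complexified orbits. Concretely, starting from $(\alpha,\beta)\in\mu_\CC^{-1}(0)$, I would seek $g\in\GG^\CC_{00}$ with $g\cdot(\alpha,\beta)\in\mu_\RR^{-1}(0)$, and show this $g$ is unique up to right multiplication by $\GG^\RR_{00}$. Using the polar decomposition $\GG^\CC_{00}=\GG^\RR_{00}\cdot\exp(i\,\Lie(\GG^\RR_{00}))$, it suffices to produce a unique Hermitian gauge $h=\exp(iu)$, with $u$ a self-adjoint $\lieu(n)$-valued function on $[0,c]$ vanishing at the endpoints and compatible with the boundary block structure, such that $\mu_\RR(h\cdot(\alpha,\beta))=0$.

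First I would introduce the Donaldson functional $\Phi_{(\alpha,\beta)}(u)$ on the affine space of admissible Hermitian $u$'s, characterized by the property that its directional derivative at $u$ in direction $v$ equals the $\operatorname{L}^2$-pairing of $\mu_\RR(e^{iu}\cdot(\alpha,\beta))$ with $v$. Such a primitive exists because the one-form $u\mapsto\mu_\RR(e^{iu}\cdot(\alpha,\beta))$ is closed on the symmetric space $\GG^\CC_{00}/\GG^\RR_{00}$; concretely one can write $\Phi$ as an integral along the linear path from $0$ to $u$. The two key analytic facts to establish are then: (i) $\Phi$ is convex along straight lines $t\mapsto tv$, and strictly convex unless $v$ lies in the infinitesimal stabilizer of $(\alpha,\beta)$; and (ii) this stabilizer is trivial in $i\,\Lie(\GG^\RR_{00})$, because any such $v$ would commute with $\alpha$ and $\beta$ and satisfy $v(0)=v(c)=0$, which forces $v\equiv 0$ by the ODE $\dot v=[\alpha,v]$. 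Strict convexity together with trivial stabilizers immediately yields the uniqueness half of the theorem.

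The main obstacle is the existence of a minimizer of $\Phi$, and this is where the boundary/pole structure makes the argument nontrivial. The strategy is the direct method: establish a coercivity estimate $\Phi(u)\to\infty$ as $\|u\|\to\infty$ in a suitable weighted Sobolev norm that respects the pole expansion $(\ref{A.2})$ of $\beta$ near $s=0$, then extract a weakly convergent minimizing sequence. Coercivity follows from the quadratic growth of the leading term of $\Phi$ combined with the vanishing $u(0)=u(c)=0$, which allows a Poincaré-type control on $u$ in terms of $\dot u$; the pole of $\beta$ needs to be handled by weighting by appropriate powers of $s$ and using the block structure so that the singular commutators contribute nonnegatively to $\Phi$. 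Once a minimizer $u_\star$ is obtained, the gradient flow argument (or a second variation check) shows $u_\star$ is smooth and solves $\mu_\RR(e^{iu_\star}\cdot(\alpha,\beta))=0$.

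Finally, combining existence and uniqueness gives a well-defined map $\mu_\CC^{-1}(0)/\GG^\CC_{00}\to\mu^{-1}(0)/\GG^\RR_{00}=F_n(m;c)$; the inverse is the obvious inclusion-then-projection, and the two are continuous and $(\LU(m)\times\LU(n))$-equivariant. To upgrade the resulting set-theoretic bijection to an isomorphism of complex analytic (indeed algebraic) varieties compatible with the holomorphic symplectic form $\omega_\CC=\omega_J+i\omega_K$, I would invoke the standard fact that on the open stratum where $\GG^\RR_{00}$ acts freely the construction of Theorem \ref{HKLR} identifies the hyperk\"ahler reduction at level $0$ with the holomorphic-symplectic reduction $\mu_\CC^{-1}(0)\GIT\GG^\CC_{00}$; the above bijection is exactly the content needed to upgrade this identification to the global one. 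The hardest single step will be the analytic coercivity near the pole, since the naive estimates degenerate there and one must track the interaction between the $\frac{1}{2}\rho_i/s$ singular terms and the Hermitian perturbation $u$.
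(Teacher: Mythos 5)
The paper does not actually prove this statement: it is imported as a black box from Donaldson \cite{Don84}, with the extension to the polar boundary conditions of $\NA_n(m;c)$ due to Hurtubise and Bielawski, so there is no in-paper argument to compare yours against. Your sketch follows the standard variational (Kempf--Ness/Donaldson-functional) route and is correct in outline: pointwise polar decomposition of $\GG^\CC_{00}$ (which respects the endpoint conditions, since the identity factors as identity times identity), a functional whose critical points in the Hermitian direction are solutions of the real equation, convexity for uniqueness, and a properness estimate for existence; your observation that the infinitesimal stabilizer is killed by the first-order ODE $\dot v=[\alpha,v]$ together with $v(0)=v(c)=0$ is exactly why no stability hypothesis is needed here.

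Two points deserve more care than your sketch gives them. First, convexity along the rays $t\mapsto tv$ through the origin does not by itself yield uniqueness of the critical point: you need geodesic convexity on the symmetric space $\GG^\CC_{00}/\GG^\RR_{00}$ (geodesics through a general point are not of the form $u+tv$ in your linear coordinate), or, more in the spirit of Donaldson's original argument, the maximum-principle comparison applied to $h=g^*g$ for a complex gauge transformation $g$ carrying one real solution to another: one shows $\operatorname{tr}(h+h^{-1})$ is convex in $s$ and attains its minimal value $2n$ at both endpoints, forcing $h\equiv\operatorname{id}$ and hence $g\in\GG^\RR_{00}$. Second, you correctly identify coercivity near the pole as the hard analytic step, but the assertion that "the singular commutators contribute nonnegatively" is precisely where a real proof must work: it relies on the residues $\tfrac{1}{2}\rho_i$ forming an irreducible $\mathfrak{su}(2)$-representation and on the prescribed block structure of the admissible perturbations in (\ref{A.2}), and the required weighted estimates are those carried out in \cite{Bie97} and \cite{bielawski1998asymptotic}. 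With these two steps supplied, your plan reproduces the cited theorem; the final passage from the orbit bijection to the isomorphism $F_n(m;c)\cong\mu_\CC^{-1}(0)/\GG^\CC_{00}$ is then immediate and does not require the extra holomorphic-symplectic bookkeeping you append at the end.
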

Then, on $F_n(m;c)$ there is a natural residual action of $\GG^\CC/\GG^\CC_{00}\cong GL_m\times GL_n$.
Let us define the space of Hurtubise normal forms as follows. 
\begin{Definition}
Let $m,n \in \NN$ such that $m\leq n$. We define $\Hur(m,n)$, as:
    \begin{itemize}
        \item If $m<n$, 
        \begin{equation*}
     \left\{\bigl(u,\eta\bigr) \in \GL_n \times \mathfrak{gl}_n \ \vert \ \eta= \right\}
    \end{equation*}
  where $(e_0,e,f,g,h) \in \CC \times \CC^{n-m-1} \times (\CC^m)^* \times \CC^m \times \End(\CC^m).$  
    \item If $m=n$,
    \begin{equation*}
    \begin{gathered}
               \GL_n \times \mathfrak{gl}_n \ni  \bigl(u,\eta\bigr).
    \end{gathered}
    \end{equation*}
    \end{itemize}
On these spaces we have the following $\GL_{m}\times \GL_{n}$-action:
\begin{equation}\label{A.16}
    \bigl(g_n,g_m\bigr)\cdot\bigl(u,\eta) = \Bigl(\begin{bmatrix}
            g_m &  0 \\
            0 &  \operatorname{id}_{n-m} 
        \end{bmatrix}ug_n^{-1}, \begin{bmatrix}
            g_m &  0 \\
            0 &  \operatorname{id}_{n-m} 
        \end{bmatrix}\eta\begin{bmatrix}
            g_m^{-1} &  0 \\
            0 &  \operatorname{id}_{n-m} 
        \end{bmatrix}\Bigr),
        \end{equation}
    where $(g_m,g_n) \in \GL(m)\times \GL(n)$ and $(u,\eta) \in \Hur(m,n)$.
\end{Definition}
The previous spaces have a standard symplectic structure given by:
\begin{equation*}
    \omega_s=-tr(d\eta\wedge duu^{-1}+\eta duu^{-1}\wedge duu^{-1}),
\end{equation*}
where $duu^{-1}$ is the right invariant Maurer-Cartan form. With respect to this symplectic structure, the $\GL_m\times\GL_n$-action is Hamiltonian with moment map given by:
\begin{equation}\label{A.17}
    \mu: \Hur(m,n) \rightarrow \mathfrak{gl}_m\oplus\mathfrak{gl}_n, \ (u,\eta) \mapsto (\pi(\eta),-u^{-1}\eta u)
\end{equation}
where $\pi:\mathfrak{gl}_n \rightarrow \mathfrak{gl}_m$ is the projection onto $m \times m$ upper-diagonal blocks.
\begin{Remark}
    Let us notice that the symplectic $\GL_{m}\times \GL_{n}$-varieties $\Hur(m,n)$ are different from the spaces $F(m,n)$ defined in \S\ref{triangles}. The relation between these two constructions will be explained in \S\ref{THK}.
\end{Remark}
As shown in \cite[Theorem 1.15]{hurtubise1989classification}, given $(\alpha,\beta)$ a solution to the complex Nahm equation there exists a gauge transformation $u$ (not necessarily regular at $s=0$) such that:
\begin{equation}
    u\cdot(\alpha,\beta)=\Big(0, \begin{bmatrix}
            h & 0 & g \\
            f & 0 & e_0 \\
            0 & \operatorname{id} & e
        \end{bmatrix}\Bigr),
\end{equation}
where $(e_0,e,f,g,h) \in \CC \times \CC^{n-m-1} \times (\CC^m)^* \times \CC^m \times \End(\CC^m)$.
\begin{Proposition}[\cite{Bie97}, \cite{bielawski1998asymptotic}]\label{PropA18}
    Given $m\leq n$ integers, the following map
\begin{equation}
    F_n(m;c) \rightarrow \Hur(m,n), \ (\alpha,\beta) \mapsto \Bigl(u(c),(u\cdot\beta)(c)\Bigr),
\end{equation}
is a isomorphism of holomorphic symplectic $\GL_m\times\GL_n$-manifolds.
\end{Proposition}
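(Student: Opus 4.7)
The plan is to prove the stated isomorphism by reducing every verification to the Hurtubise normalization of solutions to the complex Nahm equation. By \cite[Theorem 1.15]{hurtubise1989classification} every $(\alpha,\beta)\in\mu_\CC^{-1}(0)\subset\NA_n(m;c)$ admits a $\GG^\CC$-gauge $u:(0,c]\to\GL_n$, regular on $(0,c]$ with the singularity at $s=0$ prescribed by the boundary behaviour (A.2), such that $u\cdot\alpha\equiv0$ and $u\cdot\beta$ is the constant block matrix appearing in the definition of $\Hur(m,n)$. The residual freedom in $u$ is left multiplication by the stabiliser of this normal form, which is constant in $s$; intersecting with $\GG^\CC_{00}$ gives only the identity. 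Hence the assignment $\Phi(\alpha,\beta):=(u(c),(u\cdot\beta)(c))$ descends to a well-defined map $F_n(m;c)\to\Hur(m,n)$. Equivariance under $\GL_m\times\GL_n\cong\GG^\CC/\GG^\CC_{00}$ follows by picking representatives $g$ with $g(0)=\operatorname{diag}(g_m,\operatorname{id}_{n-m})$ and $g(c)=g_n$: the normalizing gauge of $g\cdot(\alpha,\beta)$ is $\operatorname{diag}(g_m,\operatorname{id})\,u\,g^{-1}$, whose value at $s=c$ reproduces formula (A.16).

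For bijectivity I would write an explicit inverse. Given $(u_0,\eta)\in\Hur(m,n)$, set $\alpha\equiv 0$, $\beta\equiv\eta$ on $(0,c]$ and apply any complex gauge $v$ chosen so that $v(c)=u_0^{-1}$ and $v$ restores the singular boundary behaviour (A.2) as $s\to 0^+$; such a $v$ exists because the chosen $\eta$ already displays the required block form. The pair $(-\dot v\,v^{-1},v\eta v^{-1})$ lies in $\NA_n(m;c)$ and satisfies the complex Nahm equation automatically (the $\GG^\CC$-action preserves $\mu_\CC^{-1}(0)$); it thus defines an element of $F_n(m;c)$ sent by $\Phi$ back to $(u_0,\eta)$. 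Injectivity is immediate, because two Nahm pairs with the same image differ by a gauge trivial at $s=0$ and $s=c$, i.e.\ an element of $\GG^\CC_{00}$.

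The main obstacle is to check that $\Phi$ is a holomorphic symplectomorphism. Holomorphicity in the complex structure $I$ is clear once one fixes an $I$-holomorphic slice for $\GG^\CC_{00}$: the normalizing gauge, and hence the boundary data $(u(c),(u\cdot\beta)(c))$, depend holomorphically on $(\alpha,\beta)$. For the symplectic form, $\NA_n(m;c)$ carries the natural complex-symplectic form $\int_0^c \operatorname{tr}(d\alpha\wedge d\beta)\,ds$, which descends to $\omega_\CC$ on $F_n(m;c)$ via $\GG^\CC_{00}$-reduction. On the slice $\alpha=0$, $\beta=\eta$ (constant) an infinitesimal variation $(\dot\alpha,\dot\beta)$ satisfying the linearised complex Nahm equation determines an infinitesimal normalizing gauge $\dot u\,u^{-1}$ governed by $\tfrac{d}{ds}(\dot u\,u^{-1})=u\dot\alpha u^{-1}$. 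Substituting into the pullback of $\omega_s=-\operatorname{tr}(d\eta\wedge du\,u^{-1}+\eta\,du\,u^{-1}\wedge du\,u^{-1})$ and integrating by parts in $s$ together with the linearised Nahm equation converts the boundary expression at $s=c$ into $\int_0^c\operatorname{tr}(d\alpha\wedge d\beta)\,ds$, with the contribution at $s=0$ vanishing by the block-decay estimates in (A.2). This shows $\Phi^*\omega_s=\omega_\CC$ and, combined with the already verified equivariance, matches the $\GL_m\times\GL_n$-moment maps via (A.17), giving the desired isomorphism of holomorphic symplectic $\GL_m\times\GL_n$-manifolds.
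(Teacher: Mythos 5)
The paper does not actually prove this proposition: it is imported from Bielawski's work (resting on Hurtubise's normal form theorem), so there is no in-paper argument to compare yours against. Your reconstruction follows what is indeed the standard route --- Hurtubise normalization of the complex Nahm pair, the identification of $F_n(m;c)$ with the complex-symplectic quotient $\mu_\CC^{-1}(0)/\GG^\CC_{00}$ via Donaldson's theorem, and an integration-by-parts computation matching the $L^2$ symplectic form with the boundary form $\omega_s$ --- and in outline it is the right proof.

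That said, three steps are asserted rather than established. First, well-definedness: saying that the residual freedom in $u$ ``intersected with $\GG^\CC_{00}$ gives only the identity'' does not show that $\Phi$ is independent of the choice of normalizing gauge for a \emph{fixed} Nahm pair. A nontrivial constant $k$ carrying the normal form to a normal form would replace $u(c)$ by $ku(c)$ and change the first component of the output; what you actually need is the uniqueness clause of Hurtubise's theorem (the normalizing gauge compatible with the singular boundary behaviour (A.2) is unique outright, not merely up to $\GG^\CC_{00}$), which is also what pins down $u$ at $s=0$ in the $m=n$ case, where no pole is present to rigidify the gauge. Second, injectivity is not ``immediate'': if $g:=u'^{-1}u$ carries one pair to the other, you get $g(c)=\operatorname{id}$ for free, but a priori $g(0)$ is only an element of the block subgroup $\operatorname{diag}(\GL_m,\operatorname{id})$; you must invoke your own equivariance statement together with formula (A.16) to force $g(0)=\operatorname{id}$, since the $\GL_m\times\GL_n$-action on $\Hur(m,n)$ is not free. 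Third, the vanishing of the $s=0$ boundary term in the integration by parts is exactly where the decay exponents $s^{(n-m-1)/2}$ and the simple-pole structure in (A.2) must be fed in; as written this is a one-line assertion covering the only genuinely delicate analytic point of the proposition, and it is the technical heart of the result in Bielawski's papers.
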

\begin{Remark}
    Although the length of the interval affect the hyperk\"ahler structure on $F_n(m;c)$, up to a symplectomorphism it does not affect the differential structure and the complex-symplectic structure. We notice that this is not surprising, indeed rescaling fibers of Hurtubise normal forms we rescale the complex-symplectic structure on it.
\end{Remark}
Finally, let us discuss moduli spaces of solutions to Nahm equations having poles at the right endpoint of a compact interval and regular limit at the left endpoint. As above, we define $\overline{\NA_n(m;c)}$ the hyperk\"ahler manifold of Nahm data $(T_0(s),T_1(s),T_2(s),T_3(s))$ having poles at $s=c$. We denote by $\overline{\GG}^\RR$ the associated gauge group, which is defined as $\GG^\RR$ but interchanging the role of the boundary points. Hence, we have an isomorphism of groups send $g(s)$ to $g(c-s)$. Then, we define
\begin{equation}
    \overline{F_n(m;c)}=\overline{\NA_n(m;c)}\HR_0\GG^\RR_{00}.
\end{equation} 
We have an isomorphism of hyperk\"ahler manifolds (see \cite[Proposition 5.1]{May20}):
\begin{equation}\label{A.21}
    F_n(m;c) \rightarrow \overline{F_n(m;c)}, \ \ \  T(s) \mapsto -T(c-t).
\end{equation}
Therefore, using isomorphism (\ref{A.21}) and Proposition \ref{PropA18}, we have an isomorphism of complex-symplectic manifolds between $\Hur(m,n)$ and $\overline{F_n(m;c)}$. However, the idea of Hurtubise \cite{hurtubise1989classification} can be used to construct such an isomorphism without involving moduli spaces of solutions to Nahm equations having poles at the left endpoint. In particular, we find that $\overline{F_n(m;c)}$ is biholomorphic to $\Hur(m,n)$ and the pull-back of the holomorphic symplectic structure on $\overline{F_n(m;c)}$ is given by $-\omega_s=tr(d\eta\wedge duu^{-1}+\eta duu^{-1}\wedge duu^{-1})$.
We will denote the space of Hurtubise normal forms endowed with this symplectic structure by $\overline{\Hur(m,n)}$. Moreover, the residual action of $\overline{\GG}^\RR/\GG^\RR_{00}\cong \GL_n\times\GL_m$ on $\overline{F_n(m;c)}$ induces the action of $\GL_n\times\GL_m$ given in (\ref{A.16}). It is clear from the previous case that this action is Hamiltonian with a moment map that differ from the moment map in (\ref{A.17}) by a sign. Let us notice that the isomorphism $F_n(m;c) \rightarrow \overline{F_n(m;c)}$ can be rewritten in terms of Hurtubise normal forms as:
\begin{equation}
    \Hur(m,n) \rightarrow \overline{\Hur(m,n)}, \ (u,\eta) \mapsto (u,-\eta).
\end{equation}
We notice that a translation of the interval induces an isomorphism of hyperk\"ahler manifolds.
\begin{Remark}
    We can ``glue" the previous moduli spaces to obtain moduli spaces of solutions to Nahm equations having poles on both boundary points. Contrarily, we could define moduli spaces of solutions having poles at both boundary points and consider the previous constructions as special cases. This is the point of view taken in \cite[Chapter 5]{May20}.
\end{Remark}

\subsection{Triangles and Nahm equations}\label{THK}
In this subsection we will combine moduli spaces defined in the previous subsection to construct spaces of triangles, as defined in \S\ref{triangles}, from solutions to Nahm equations (see \cite{Tak16}). Let us fix $c,c' \in \RR^{>0}$ and $v_-,v_+\in\NN$. We define $\NA_{v_-,v_+}(c,c')$ the space of maps $(T_0,T_1,T_2,T_3)$ from $I=[-c,c']$ to $\lieu(v_-)\otimes\HH$ if $-c\leq s<0$ and to $\lieu(v_+)\otimes\HH$ if $0<s\leq c'$, that are analytic away from $s=0$ and satisfying the following matching conditions at $s=0$:
\begin{itemize}
    \item If $v_-=v_+$, they have regular left and right limits at $s=0$.
    \item If $v_-<v_+$, the limit of $T_i(s)$ at $s=0$ from the left is equal to the $v_-\times v_-$ upper-diagonal block of the limit from the right. Furthermore, $\lim\limits_{s\rightarrow0^+}T_i(s)$ is regular if $i=0$, and of the form (\ref{A.2}) if $i=1,2,3$.
    \item If $v_->v_+$, they satisfy analogous matching conditions.
\end{itemize}.
Associated to these spaces we have the gauge group $\GG^\RR$ given by maps $g$ on $I$ with image in $\LU(v_-)$ if $s\in[-c,0)$ and in $\LU(v_+)$ if $s\in(0,c']$, which are analytic away from $s=0$ and satisfying the following matching conditions at $s=0$:
\begin{itemize}
    \item If $v_-=v_+$, $g$ has regular left and right limits at $s=0$
    \item If $v_-<v_+$, then
    \begin{equation*}
        g(0^+)=\begin{bmatrix}
            g(0^-) & 0\\
            0 & \operatorname{id}_{(v_+-v_-)}
        \end{bmatrix}
    \end{equation*}
    \item If $v_->v_+$, $g$ satisfies analogous matching conditions.
\end{itemize}
$\GG^\RR$ acts on $\NA_{v_-,v_+}(c,c')$ by:
\begin{equation*}
    g:(T_0,T_1,T_2,T_3) \mapsto (gT_0g^{-1}-\Dot{g}g^{-1},gT_1g^{-1},gT_2g^{-1},gT_3g^{-1}).
\end{equation*}
We denote by $\GG^\RR_{00}$the subgroup of $\GG^\RR$ of maps $g$ such that $g$ is the identity at boundary points. Hence, we define
\begin{equation}
    F_{v_-,v_+}(c,c')=\begin{cases}
        \NA_{v_-,v_+}(c,c')\HR_0\GG^\RR_{00} & \text{ if } \ v_-\neq v_+, \\
        \NA_{v,v}(c,c') \times \CC^v\times(\CC^v)^*\HR_0\GG^\RR_{00} & \text{ if } \ v_-= v_+=v. \\
    \end{cases} 
\end{equation}
where in case $v_-=v_+$, $g\in\GG^\RR_{00}$ acts on $\CC^v\times(\CC^v)^*$ sending $(I,J)$ to $(g(0)I,Jg(0)^{-1})$ and $\CC^v\times(\CC^v)^*$ has the standard hyperk\"ahler structure as total space of a cotangent bundle.
We can separate these reductions into two steps: first we quotient by $\GG^\RR_0=\{g\in\GG^\RR_{00} \ \vert \ g(0)=\operatorname{id}\}$. Then, we take the quotient by $\GG^\RR_{00}/\GG^\RR_{0}\cong \LU(\min(v_-,v_+))$. So, we have:
\begin{equation}
    F_{v_-,v_+}(c,c')\cong \begin{cases}
        \overline{F_{v_-}(v_-;c)}\times F_{v_+}(v_-;c')\HR_0\LU(v_-) & \text{ if } \ v_-<v_+,\\
        \overline{F_{v}(v;c)}\times F_{v}(v;c')\times\CC^v\times(\CC^v)^*\HR_0\LU(v) & \text{ if } \ v_-=v_+=v,\\
        \overline{F_{v_-}(v_+;c)}\times F_{v_+}(v_+;c')\HR_0\LU(v_+) & \text{ if } \ v_->v_+,
    \end{cases}
\end{equation}
We notice that $F_{v_-,v_+}(c,c')$ inherits a structure of hyperk\"ahler manifold (see Theorem \ref{HKLR}). Moreover, there is a residual action of $\GG^\RR/\GG^{\RR}_{00}\cong \LU(v_-)\times \LU(v_+)$ which is tri-Hamiltonian with hyperk\"ahler moment map. Finally, let us discuss the complex-symplectic structure of these moduli spaces.
\begin{Theorem}[\cite{hurtubise1989classification}]\label{Theorem A.29}
    There is an isomorphism of holomorphic symplectic manifolds that intertwines the $\GL_{v_-}\times\GL_{v_+}$-actions:
    \begin{equation*}
        F_{v_-,v_+}(c,c')\cong\begin{cases}
            \overline{\Hur(v_-,v_-)}\times \Hur(v_-,v_+)\HR_0\GL(v_-) & \text{ if } \ v_-<v_+,\\
        \overline{\Hur(v,v)}\times \Hur(v,v)\times\CC^v\times(\CC^v)^*\HR_0\GL(v) & \text{ if } \ v_-=v_+=v,\\
        \overline{\Hur(v_+,v_-)}\times \Hur(v_+,v_+)\HR_0\GL(v_+) & \text{ if } \ v_->v_+,
        \end{cases}
    \end{equation*}
\end{Theorem}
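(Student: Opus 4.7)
The plan is to assemble the isomorphism from three ingredients already in hand: the two-step decomposition of $F_{v_-,v_+}(c,c')$ displayed just above the statement, Proposition \ref{PropA18} identifying $F_n(m;c)$ (resp.\ $\overline{F_n(m;c)}$) with $\Hur(m,n)$ (resp.\ $\overline{\Hur(m,n)}$) as holomorphic symplectic $\GL_m\times\GL_n$-manifolds, and Donaldson's theorem \cite{Don84} converting a hyperk\"ahler reduction by a compact group into a holomorphic symplectic reduction by its complexification.

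I would proceed as follows. First, take the decomposition already obtained in the excerpt,
\begin{equation*}
F_{v_-,v_+}(c,c')\;\cong\;
\begin{cases}
\overline{F_{v_-}(v_-;c)}\times F_{v_+}(v_-;c')\HR_0\LU(v_-) & \text{if } v_-<v_+,\\
\overline{F_{v}(v;c)}\times F_{v}(v;c')\times\CC^v\times(\CC^v)^*\HR_0\LU(v) & \text{if } v_-=v_+=v,\\
\overline{F_{v_-}(v_+;c)}\times F_{v_+}(v_+;c')\HR_0\LU(v_+) & \text{if } v_->v_+,
\end{cases}
\end{equation*}
which comes from performing the gauge reduction in two steps: first by $\GG^\RR_0\subset\GG^\RR_{00}$ (fixing the value at $s=0$), which decouples the left and right half-intervals and produces the two Nahm factors (plus the $T^*\CC^v$ contribution in the balanced case, which records the independent values $I=g(0^-),\ J=g(0^+)^{-1}$ permitted when $v_-=v_+$), and then by the residual $\GG^\RR_{00}/\GG^\RR_0\cong\LU(\min(v_-,v_+))$. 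Equivariance under the outer group $\GG^\RR/\GG^\RR_{00}\cong\LU(v_-)\times\LU(v_+)$ is preserved because it commutes with the inner reduction.

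Second, apply Proposition \ref{PropA18} factor by factor, in its $\GL_m\times\GL_n$-equivariant holomorphic-symplectic form, together with the gauge-reversal isomorphism (\ref{A.21}) that identifies $\overline{F_n(m;c)}\cong\overline{\Hur(m,n)}$ with the sign-flipped symplectic form and sign-flipped moment map (\ref{A.17}). This replaces the Nahm moduli spaces above by Hurtubise spaces, yielding holomorphic symplectic isomorphisms compatible with the $\LU(\min(v_-,v_+))$-action (which sits inside the $\GL$-action that Proposition \ref{PropA18} intertwines).

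Third, convert the hyperk\"ahler reduction by $\LU(\min(v_-,v_+))$ into a complex-symplectic GIT reduction by its complexification $\GL(\min(v_-,v_+))$. Here I invoke Donaldson's theorem: every $\GG^\CC_{00}$-orbit in $\mu_\CC^{-1}(0)$ meets $\mu_\RR^{-1}(0)$ in a unique $\GG^\RR_{00}$-orbit, applied to the residual factor $\LU(\min)\subset\GG^\RR_{00}$. This shows that the hyperk\"ahler quotient at level $0$ coincides with the complex-symplectic quotient $\HR_0\GL(\min(v_-,v_+))$ on the right-hand side of the statement. Moment-map compatibility for this last step follows from (\ref{A.17}) and its barred analogue: the two halves contribute $\pi(\eta_-)$ and $-\pi(\eta_+)$ respectively on the $\mathfrak{gl}_{\min}$-block, which is precisely the moment map for the diagonal $\GL(\min)$-action on the product.

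The main obstacle will be the careful bookkeeping of signs, boundary-value projections, and group actions: specifically, checking that the residual $\GL(\min(v_-,v_+))$ acts on the two Hurtubise factors by (\ref{A.16}) with compatible conventions so that the difference of moment maps (\ref{A.17}) governs the level-zero locus, and, in the balanced case $v_-=v_+=v$, verifying that the extra $\CC^v\times(\CC^v)^*$ factor correctly encodes the jump in the trivialization at $s=0$ and carries the standard cotangent-bundle symplectic and moment-map structure under the $\GL(v)$-action. Once these compatibilities are checked on each of the three cases, the three displayed isomorphisms follow by composing the isomorphisms from steps one, two, and three, and $\GL_{v_-}\times\GL_{v_+}$-equivariance is inherited at every stage.
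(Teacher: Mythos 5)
The paper does not actually prove this statement: it is quoted from \cite{hurtubise1989classification} (see also Takayama's treatment), so there is no in-paper argument to compare against. That said, your sketch assembles exactly the ingredients the appendix lays out for this purpose --- the two-step reduction by $\GG^\RR_0$ and then by $\GG^\RR_{00}/\GG^\RR_0\cong\LU(\min(v_-,v_+))$, the identification of each half-interval factor with a Hurtubise space via Proposition \ref{PropA18} and the reversal isomorphism (\ref{A.21}), and a real-versus-complex quotient comparison --- and this is the natural and, I believe, intended route.

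The one step whose justification does not go through as written is the third. The Donaldson theorem quoted in the paper concerns the infinite-dimensional gauge group $\GG^\CC_{00}$ acting on solutions of the complex Nahm equation over a single interval; it does not literally apply to the residual \emph{finite-dimensional} compact group $\LU(\min(v_-,v_+))$ acting on the product of two Hurtubise spaces, which is what you need to replace $\HR_0\LU(\min)$ by $\HR_0\GL(\min)$. To close this you must either (a) run a finite-dimensional Kempf--Ness argument for the residual action --- which does work here, because the residual $\GL(\min)$ acts on the invertible components $u$ of the two Hurtubise normal forms essentially by multiplication, so the action on $\mu_\CC^{-1}(0)$ is free with closed orbits and the hyperk\"ahler quotient agrees with the affine complex-symplectic quotient --- or (b) apply a Donaldson-type theorem to the full gauge group $\GG^\CC_{00}$ on the broken interval $[-c,c']$ with the matching condition at $s=0$, and only then perform the two-step decomposition entirely on the complex side. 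Either repair is standard, but as stated the appeal to ``Donaldson's theorem applied to the residual factor'' is not a proof of this step. The remaining bookkeeping you flag (signs of the moment maps $\pi(\eta_-)$ and $-\pi(\eta_+)$, the cotangent factor $\CC^v\times(\CC^v)^*$ recording the jump at $s=0$ in the balanced case) is handled correctly in outline.
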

\begin{Corollary}
    We have the following isomorphisms:
\begin{equation*}
    F_{v_-,v_+}(c,c')\cong\begin{cases}
            \Hur(v_-,v_+) & \text{ if } \ v_-<v_+,\\
        \overline{\Hur(v,v)}\times\CC^v\times(\CC^v)^* & \text{ if } \ v_-=v_+=v,\\
        \overline{\Hur(v_+,v_-)} & \text{ if } \ v_->v_+,
        \end{cases}
\end{equation*}
\end{Corollary}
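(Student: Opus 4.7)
The plan is to deduce the Corollary directly from Theorem \ref{Theorem A.29} by applying a standard cotangent-bundle reduction identity: for any Lie group $G$ and any Hamiltonian $G$-space $(M, \omega_M, \mu_M)$, one has $(T^*G \times M) \HR_0 G \cong M$ as symplectic manifolds, where $G$ acts on $T^*G$ by right translation. Under right-trivialization $T^*G \cong G \times \mathfrak{g}^*$, the moment map for that action is the projection onto $\mathfrak{g}^*$, so the constraint $\mu(g,\xi,m) = \xi + \mu_M(m) = 0$ uniquely determines $\xi$, and the free diagonal $G$-action then transports $g$ to the identity; what remains is $M$ with its symplectic form. This is the only non-trivial ingredient beyond Theorem \ref{Theorem A.29}.

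First I would record the identifications $\Hur(n,n) \cong T^*\GL_n$ (via right trivialization, with the canonical symplectic form $\omega_s$) and $\overline{\Hur(n,n)} \cong T^*\GL_n$ (with the opposite sign). In both cases the $\GL_n \times \GL_n$-action of (\ref{A.16}) splits as the pair (left-translation, right-translation) on $T^*\GL_n$, and the two components of the moment map (\ref{A.17}) are precisely the two standard $\mathfrak{gl}_n^*$-valued moment maps for these translation actions.

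Next, I would treat each case of Theorem \ref{Theorem A.29} in turn. For $v_- < v_+$, the $\GL(v_-)$ being reduced is the diagonal of one of the two $\GL(v_-)$-actions on $\overline{\Hur(v_-,v_-)} \cong T^*\GL(v_-)$ (namely, the one corresponding to the gauge group at $s=0$) and the $\GL(v_-)$-action on $\Hur(v_-,v_+)$. Applying the cotangent-bundle identity with $G = \GL(v_-)$ and $M = \Hur(v_-,v_+)$ immediately yields $\Hur(v_-,v_+)$. The case $v_- > v_+$ is symmetric: the factor $\Hur(v_+,v_+) \cong T^*\GL(v_+)$ is absorbed, leaving $\overline{\Hur(v_+,v_-)}$. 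For $v_- = v_+ = v$, both $\overline{\Hur(v,v)}$ and $\Hur(v,v)$ are $T^*\GL(v)$; applying the identity with $G = \GL(v)$, $T^*G$-factor $\Hur(v,v)$, and $M = \overline{\Hur(v,v)} \times \CC^v \times (\CC^v)^*$, the $\Hur(v,v)$ factor is absorbed, leaving the desired $\overline{\Hur(v,v)} \times \CC^v \times (\CC^v)^*$.

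The main obstacle I anticipate is purely bookkeeping: for each case one must identify precisely which of the two $\GL_n$-actions on $\overline{\Hur(n,n)}$ or $\Hur(n,n)$ is the one participating in the reduction (the residual action of the gauge group localized at $s = 0$) versus which persists as the residual $\GL(v_-) \times \GL(v_+)$-action on the quotient. The opposite sign on $\overline{\Hur}$ must be tracked carefully, because it is exactly what makes the total midpoint moment map of the form $\mu_{\text{right}}^{T^*G} + \mu_M$ (rather than a minus sign that would spoil the reduction identity). Once these conventions are pinned down, the symplectic structure, the residual $\GL(v_-) \times \GL(v_+)$-action, and the complex-Hamiltonian moment map on the quotient all match those of the right-hand sides by direct inspection.
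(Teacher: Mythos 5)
Your proposal is correct and follows what is clearly the intended (though unstated) argument in the paper: identify the square factor $\Hur(n,n)$ or $\overline{\Hur(n,n)}$ with $T^*\GL_n$ via right trivialization and absorb it using the standard reduction identity $(T^*G\times M)\HR_0 G\cong M$, applied case by case to the three reductions of Theorem \ref{Theorem A.29}. The bookkeeping points you flag (which of the two translation actions is gauged at $s=0$, and the sign on $\overline{\Hur}$) are exactly the ones that need checking, and they work out as you describe.
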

\begin{Remark}
Up to rescaling fibers when $v_-<v_+$, we have recovered the symplectic varieties $F(v_-,v_+)$ defined in section \ref{triangles}. By doing so, we can use same formulas used in \S\ref{triangles} for the symplectic structure and moment map.
\end{Remark}

\subsection{Cherkis' bow varieties}
Let $B=(\II,\EE)$ be a bow. Given an interval $\sigma=[\sigma_L,\sigma_R]$, we order x-points in the natural way and denote them by $x_{\sigma,1},\cdots,x_{\sigma,w_\sigma}$. We fix a point for each segment
\begin{align*}
    p_{\sigma,0}=\sigma_L, && p_{\sigma,i}=\frac{x_{\sigma,i}+x_{\sigma,i+1}}{2} \ \text{ if }\ i=1,\cdots,w_{\sigma}-1, && p_{\sigma,w_\sigma}=\sigma_R,
\end{align*}
and define $c_{\sigma,i}^-=\operatorname{length}(p_{\sigma,i-1},x_{\sigma,i})$, $c_{\sigma,i}^+=\operatorname{length}(x_{\sigma,i},p_{\sigma,i})$. That is:
\begin{equation*}
    \input{notation-original-description}
\end{equation*}
Let us fix a bow diagram $(B,\Lambda,v)$. Then we define
\begin{equation}
    \TM^{hk}\coloneqq\prod\limits_{x \in \Lambda}F_{v_x^-,v_x^+}(c_x^-,c_x^+)\times\prod\limits_{e\in\EE}\MM^e.
\end{equation}
It follows from Section \ref{THK} that $\TM^{hk}$ carries a structure of hyperk\"ahler manifold and a tri-hamiltonian action of $\GG^\RR=\prod\limits_{\zeta\in\IS}\LU(v_\zeta)$. Furthermore, there is an isomorphism between $\TM^{hk}$ and $\TM$ as algebraic varieties, where $\TM$ is the affine space defined in (\ref{eq3.19}). This endow $\TM$ with a hyperk\"ahler structure which induces the complex-symplectic structure considered in section \ref{S3}.
As for the quiver description (see Section \ref{s.gdp}), we consider $\RR^\II\subset\RR^{\IS}$ via first segments of intervals and $\RR^{\IS}$ diagonally embedded into $\operatorname{Lie}(\GG^\RR)=\bigoplus\limits_{\zeta\in{\IS}}\lieu(v_\zeta)$. So, for any $\nu\in\RR^{\II}\otimes\RR^3$, the corresponding Cherkis bow variety is given by:
\begin{equation}
    \mathcal{M}^{hk}=\mathcal{M}^{hk}_\nu(B,\Lambda,v)\coloneqq\TM^{hk}\HR_\nu\GG^\RR.
\end{equation}
\begin{Remark}
    As mentioned in the introduction of this appendix, Cherkis originally defined bow varieties as infinite-dimensional hyperk\"ahler reductions. In the same paper, he also introduced two ways to realize bow varieties as finite-dimensional reductions. One of these is the point of view we have taken. The other one was taken by Takayama (\cite{T15}) to construct some ALF spaces as bow varieties. Let us notice that Takayama obtained a different definition of type D bow varieties (see \cite[Remark 7.2]{T15}).

\end{Remark}
Finally, we have an equivalence between the original Cherkis construction and the quiver description in the sense of Kempf-Ness. We will define \textit{the regular locus} of $\mathcal{M}^{hk}_\nu$ as the subset induced by the set of points in the level set of the hyperk\"ahler moment map where $\GG^\RR$ acts freely. 
\begin{Theorem}
    Let $(B,\Lambda,v)$ be a bow diagram. There is a correspondence between hyperk\"ahler parameters $\nu\in \RR^\II\otimes(\RR^2\times\QQ^\II)$ and parameters in the quiver description $(\lambda,\theta)\in \CC^\II\times\QQ^\II$ such that there exists a homeomorphism between $\mathcal{M}^{hk}_\nu$ and $\mathcal{M}_{\lambda,\theta}$. Moreover, it induces an isomorphism of holomorphic symplectic manifolds between the regular locus of $\mathcal{M}^{hk}_\nu$ and the stable locus of $\mathcal{M}_{\lambda.\theta}$.
\end{Theorem}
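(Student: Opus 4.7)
The plan is to match parameters explicitly, identify the holomorphic symplectic data on both sides of the two constructions, and then invoke a Kempf--Ness-type theorem. First, decompose a hyperk\"ahler parameter as $\nu=(\nu_\RR,\nu_\CC)$ with $\nu_\CC\in\RR^\II\otimes\RR^2\cong\CC^\II$ and $\nu_\RR\in\QQ^\II$. Set $\lambda\coloneqq\nu_\CC$, and define $\theta\in\QQ^\II$ using the identification of the Lie algebra of the maximal torus of $\GG^\RR$ with $\RR^{\IS}\supset\RR^\II$ (via first segments, as in Remark \ref{rem3.22}) so that a positive integer multiple of $\theta$ corresponds to $\nu_\RR$; this is well defined on $\QQ^\II$ because positive rescaling of $\theta$ does not change the semistability condition.

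The next step is to identify the holomorphic symplectic structures. Applying Theorem \ref{Theorem A.29} to each Nahm factor, and noting that the two-way parts $\M^e$ are already built from the same cotangent data on both sides, one obtains a $\GG$-equivariant biholomorphism of holomorphic symplectic varieties $(\TM^{hk},I,\omega_J+i\omega_K)\cong\TM$ intertwining the complex hyperk\"ahler moment map $\mu_\CC=\mu_J+i\mu_K$ with the algebraic moment map $\mu$ of \S\ref{S3}. The real moment map $\mu_\RR=\mu_I$ becomes the standard K\"ahler moment map for the maximal compact $\GG^\RR\subset\GG$ acting on $\TM$ with respect to the K\"ahler form $\omega_I$ induced by the hyperk\"ahler metric. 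Consequently
\[
\mathcal{M}^{hk}_\nu=\bigl(\mu_\RR^{-1}(\nu_\RR)\cap\mu^{-1}(\lambda)\bigr)\bigl/\GG^\RR.
\]

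The final step is the Kempf--Ness comparison, applied to the Hamiltonian $\GG^\RR$-action on the $\GG$-invariant K\"ahler subvariety $\mu^{-1}(\lambda)\subset\TM$. It yields a natural homeomorphism
\[
\bigl(\mu_\RR^{-1}(\nu_\RR)\cap\mu^{-1}(\lambda)\bigr)\bigl/\GG^\RR\;\cong\;\mu^{-1}(\lambda)\GIT_\theta\GG=\mathcal{M}_{\lambda,\theta},
\]
matching closed $\GG$-orbits in the $\theta$-semistable locus bijectively with $\GG^\RR$-orbits in the level set of $\mu_\RR$. The regular locus of $\mathcal{M}^{hk}_\nu$ consists of points where $\GG^\RR$ acts freely, which by Proposition \ref{prop3.30} is equivalent to $\theta$-stability; on this open subset Theorem \ref{HKLR} together with Corollary \ref{prop3.31} ensures smoothness, and the reduced holomorphic symplectic form $\omega_\CC$ matches the algebraic reduced symplectic form, giving a biholomorphism of holomorphic symplectic manifolds.

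The main obstacle will be the Kempf--Ness step applied to the possibly singular variety $\mu^{-1}(\lambda)$, since the classical statement is cleanest on smooth K\"ahler manifolds. One route is to establish the correspondence first on the smooth ambient space $\TM$ (carrying a global hyperk\"ahler structure and a reductive $\GG$-action), where the standard Kempf--Ness theorem applies, and then restrict to the closed $\GG$-invariant affine subvariety $\mu^{-1}(\lambda)=\mu_\CC^{-1}(\lambda)$, using that $\GG$-orbit closures in $\mu^{-1}(\lambda)$ agree with their closures in $\TM$. An alternative is to invoke Sjamaar's singular Kempf--Ness together with the stratified hyperk\"ahler quotient framework of Dancer--Swann mentioned after Theorem \ref{HKLR}, which handles the non-free locus by a stratification and directly yields a stratified homeomorphism, smooth on the regular/stable stratum.
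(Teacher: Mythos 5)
Your overall strategy --- decompose $\nu$ into a complex part giving $\lambda$ and a real part giving $\theta$, use the Hurtubise identification (Theorem \ref{Theorem A.29}) to match $\TM^{hk}$ with $\TM$ as holomorphic symplectic $\GG$-varieties intertwining $\mu_\CC$ with $\mu$, and then conclude by a Kempf--Ness comparison --- is exactly the route the paper takes. The paper itself gives no more than this sketch: it observes that the finite-dimensional realization reduces the statement to a Kempf--Ness correspondence and then defers to the literature (Nakajima--Takayama for affine type $A$, Takayama for elemental bow varieties, and, crucially, Mayrand's recent version of Kempf--Ness).

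The one place where your justification is genuinely insufficient is the Kempf--Ness step itself, and it is not the singularity of $\mu^{-1}(\lambda)$ that is the main issue but the metric. The identification $\TM^{hk}\cong\TM$ is an isomorphism of \emph{holomorphic symplectic} varieties only; the K\"ahler form $\omega_I$ and the real moment map $\mu_\RR=\mu_I$ are those induced by the $L^2$ hyperk\"ahler metric on the Nahm moduli spaces, and under the Hurtubise coordinates these do \emph{not} become the flat K\"ahler form and the standard quadratic moment map on the affine space $\TM$. The classical Kempf--Ness theorem (Kempf--Ness, Kirwan, King) is stated for linear actions on vector spaces with the flat metric, so your claim that ``the standard Kempf--Ness theorem applies'' on the smooth ambient space $\TM$ does not go through as written; likewise Sjamaar's singular version assumes an algebraic/linear setting. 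This is precisely why the paper invokes Mayrand's theorem, which establishes the Kempf--Ness homeomorphism for Hamiltonian actions on these non-flat K\"ahler (indeed hyperk\"ahler) spaces arising from Nahm's equations. To complete your argument you would need to either cite such an analytic Kempf--Ness statement or verify directly the properness/convexity hypotheses needed to run the Kempf--Ness flow for the hyperk\"ahler metric on $\TM^{hk}$. The remainder of your outline (parameter matching, free action on the regular locus via Proposition \ref{prop3.30}, smoothness and the induced holomorphic symplectic form on the stable stratum via Theorem \ref{HKLR} and Corollary \ref{prop3.31}) is consistent with the paper.
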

This results is more or less known. In \cite[Theorem 2.1]{NT17}, Nakajima and Takayama proved it for the affine type A case. They started from the original construction of bow varieties as infinite hyperk\"ahler reduction. Their proof is divided into two steps. The first step of their proof consists in realizing Cherkis bow varieties as finite-dimensional reductions in the same way we defined them in this appendix. Then, the claim follows from the works of Kempf and Ness \cite{kempf1979length},  Kirwan \cite{kirwan1984cohomology}, King \cite{king1994moduli} and others. See also \cite[Chapter 3]{nakajima1999lectures}.
A more detailed proof was given by Takayama in \cite[Proposition 4.7]{T15} for the special case of \textit{elemental} bow varieties. Finally, a version of the Kempf-Ness correspondence which is suitable for our problem has been provided more recently by Mayrand, \cite[Theorem 1.1]{May19}. Let us notice that the mentioned case of elemental bow varieties studied by Takayama immediately follows from the work of Mayrand.

\end{sloppypar}
\noindent Department of Mathematics, Imperial College London, South Kensington Campus, London, SW7 2AZ, UK\\
\noindent \textit{Email address: }t.gaibisso22@imperial.ac.uk
\end{document}